\documentclass[11pt]{article}

\usepackage{empheq}
\usepackage{amsmath}
\usepackage{amsthm}
\usepackage{amssymb}
\usepackage{mathtools}
\usepackage{thmtools}
\usepackage[cal=euler,scr=boondoxupr,bb=bboldxLight]{mathalpha}
\usepackage{upgreek}
\usepackage[T1]{fontenc}
\usepackage{lmodern}
\usepackage{mleftright}
\usepackage[sf,bf]{titlesec}
\usepackage{bm}
\usepackage{euscript}
\usepackage{url}            % simple URL typesetting
\usepackage{booktabs}       % professional-quality tables
\usepackage{amsfonts}       % blackboard math symbols
\usepackage{nicefrac}       % compact symbols for 1/2, etc.
\usepackage{microtype}
\usepackage{color}
\usepackage{tabu}
\usepackage{multirow}
\usepackage{float}
\usepackage{booktabs}
\usepackage{tablefootnote}
\usepackage{algorithm}
\usepackage{algorithmic}
\usepackage{comment}
\usepackage{graphicx}
\usepackage{subcaption}
\usepackage{framed}
\usepackage{enumitem}
\usepackage{textcomp}
\usepackage{setspace}
\usepackage{hyperref}
\usepackage[capitalize,noabbrev]{cleveref}
\usepackage{latexsym}
\usepackage{tcolorbox}
\usepackage[numbers,compress]{natbib}
\usepackage{alltt}
\usepackage{stmaryrd}
\usepackage{xspace}
\usepackage{xfrac}    % for \xfrac macro
\usepackage{tikz}
\usepackage{tikz-qtree}
\usepackage{todonotes}
\usepackage{inconsolata} % for tt font
\usepackage{fontawesome5}

\usetikzlibrary{calc}

\usepackage[margin=1in]{geometry}
\usepackage{fancyhdr}
\fancypagestyle{plain}{\fancyhf{} 
	
	\fancyfoot[C]{\textsf{\thepage}}
}
\definecolor{mydarkblue}{rgb}{0,0.08,0.45}
\hypersetup{ %
	colorlinks=true,
	linkcolor=mydarkblue,
	citecolor=mydarkblue,
	filecolor=mydarkblue,
	urlcolor=mydarkblue}

\newcommand{\calC}{\mathcal{C}}

\newcommand{\calE}{\mathcal{E}}

\newcommand{\calG}{\mathcal{G}}

\newcommand{\calL}{\mathcal{L}}

\newcommand{\calS}{\mathcal{S}}
\newcommand{\calT}{\mathcal{T}}
\newcommand{\calU}{\mathcal{U}}

\newcommand{\scrO}{\mathscr{O}}

\newcommand{\scrU}{\mathscr{U}}

\newcommand{\Prob}{\mathbb{P}}

\newcommand{\One}{\bm{1}}

\newcommand{\RR}{\mathbb{R}}

\newcommand{\OO}{\mathbb{O}}
\newcommand{\Rp}{\RR_+}

\newcommand{\bbS}{\mathbb{S}}

\newcommand{\NN}{\mathbb{N}}

\DeclareMathOperator{\prox}{prox}

\DeclareMathOperator{\proj}{proj}

\DeclareMathOperator*{\argmin}{argmin}

\DeclareMathOperator*{\Diag}{Diag}

\newcommand{\diag}{\mathrm{diag}}
\newcommand{\sgn}{\mathrm{sgn}}
\DeclareMathOperator*{\dom}{dom}

\newcommand{\softmax}{\mathrm{softmax}}

\newcommand{\tf}{\widetilde{f}}

\newcommand{\tz}{\widetilde{z}}
\newcommand{\tG}{\widetilde{G}}

\newcommand{\tW}{\widetilde{W}}

\newcommand{\summ}{\sum_{i=1}^m}
\newcommand{\sumn}{\sum_{i=1}^n}

\newcommand{\tU}{\widetilde{U}}
\newcommand{\tV}{\widetilde{V}}
\newcommand{\tM}{\widetilde{M}}
\newcommand{\tN}{\widetilde{N}}

\newcommand{\oz}{\overline{z}}

\newcommand{\tSigma}{\widetilde{\Sigma}}

\newcommand{\tr}{\mathrm{tr}}

\renewcommand{\vec}{\mathrm{vec}}

\newcommand{\sfA}{\mathsf{A}}

\newcommand{\sfL}{\mathsf{L}}

\newcommand{\sfR}{\mathsf{R}}

\newcommand{\oRR}{\overline{\RR}}

\newcommand{\midd}{\,|\kern-0.25ex|\,}

\newcommand{\set}[1]{\llbracket #1\rrbracket}

\newcommand{\dotp}[2]{\langle #1, #2\rangle}
\newcommand{\dotpF}[2]{\left\langle #1, #2\right\rangle_{\rm F}}

\newcommand{\half}{\sfrac12}

\DeclareMathAlphabet\rsfscr{U}{rsfso}{m}{n}

\makeatletter
\DeclareFontFamily{OMX}{MnSymbolE}{}
\DeclareSymbolFont{MnLargeSymbols}{OMX}{MnSymbolE}{m}{n}
\SetSymbolFont{MnLargeSymbols}{bold}{OMX}{MnSymbolE}{b}{n}
\DeclareFontShape{OMX}{MnSymbolE}{m}{n}{
	<-6>  MnSymbolE5
	<6-7>  MnSymbolE6
	<7-8>  MnSymbolE7
	<8-9>  MnSymbolE8
	<9-10> MnSymbolE9
	<10-12> MnSymbolE10
	<12->   MnSymbolE12
}{}
\DeclareFontShape{OMX}{MnSymbolE}{b}{n}{
	<-6>  MnSymbolE-Bold5
	<6-7>  MnSymbolE-Bold6
	<7-8>  MnSymbolE-Bold7
	<8-9>  MnSymbolE-Bold8
	<9-10> MnSymbolE-Bold9
	<10-12> MnSymbolE-Bold10
	<12->   MnSymbolE-Bold12
}{}

\let\llangle\@undefined
\let\rrangle\@undefined
\DeclareMathDelimiter{\llangle}{\mathopen}%
{MnLargeSymbols}{'164}{MnLargeSymbols}{'164}
\DeclareMathDelimiter{\rrangle}{\mathclose}%
{MnLargeSymbols}{'171}{MnLargeSymbols}{'171}
\makeatother

\newtheoremstyle{sftheorem}{}{}{\itshape}{}{\bfseries\sffamily}{.}{.5em}{{\thmname{#1}\thmnumber{ #2}\thmnote{ (#3)}}}

\newtheoremstyle{sfdefinition}
{\topsep}{\topsep}{\normalfont}{}{\sffamily\bfseries}{.}{.5em}{}

\theoremstyle{sftheorem}
\newtheorem{theorem}{Theorem}[section]
\newtheorem{proposition}[theorem]{Proposition}
\newtheorem{lemma}[theorem]{Lemma}
\newtheorem{corollary}[theorem]{Corollary}

\theoremstyle{sfdefinition}
\newtheorem{definition}{Definition}[section]
\newtheorem{assumption}{Assumption}[section]
\newtheorem{example}{Example}[section]

\theoremstyle{remark}
\newtheorem{remark}{Remark}[section]

\crefname{assumption}{Assumption}{Assumptions}
\Crefname{assumption}{Assumption}{Assumptions}
\crefname{problem}{Problem}{Problems}
\Crefname{problem}{Problem}{Problems}
\crefname{example}{Example}{Examples}
\Crefname{example}{Example}{Examples}

\let\le\leqslant
\let\ge\geqslant

\let\succeq\succcurlyeq

\let\bar\overline

\newcommand{\lrangle}[1]{\left\llangle #1 \right\rrangle}
\renewcommand{\dotpF}[2]{\lrangle{#1, #2}_{\rm F}}
\newcommand{\norm}[1]{\left\lVert#1\right\rVert}
\newcommand{\euclidnorm}[1]{\left\lVert#1\right\rVert_2}

\newcommand{\vecnorm}[2]{\left\lVert #1 \right\rVert_{{#2}}}
\newcommand{\matsnorm}[2]{\lvert\kern-0.25ex\lvert\kern-0.25ex\lvert #1 \rvert\kern-0.25ex\rvert\kern-0.25ex\rvert_{#2}}

\newcommand{\fronorm}[1]{\matsnorm{#1}{\mathrm{F}}}

\newcommand{\onenorm}[1]{\vecnorm{#1}{1}}

\newcommand{\nucnorm}[1]{\matsnorm{#1}{\mathrm{nuc}}}

\newcommand{\specnorm}[1]{\matsnorm{#1}{\mathrm{S}}}

\renewcommand{\left}{\mleft}
\renewcommand{\right}{\mright}

\newcommand{\PL}{P\L{}\xspace}
\newcommand{\KLo}{K\L{}\xspace}

\newcommand{\rank}{\mathrm{rank}}
\newcommand{\srank}{\mathrm{srank}}

\newcommand{\Adam}{\textsc{Adam}\xspace}
\newcommand{\AdamW}{\textsc{AdamW}\xspace}

\newcommand{\RMSprop}{\textsc{RMSprop}\xspace}

\newcommand{\AdaGrad}{\textsc{AdaGrad}\xspace}

\newcommand{\Adafactor}{\textsc{Adafactor}\xspace}

\newcommand{\Muon}{\textsc{Muon}\xspace}
\newcommand{\Shampoo}{\textsc{Shampoo}\xspace}

\newcommand{\signSGD}{\textsc{signSGD}\xspace}

\newcommand{\StableAdamW}{\textsc{StableAdamW}\xspace}

\newcommand{\Scion}{\textsc{Scion}\xspace}
\newcommand{\PolarGrad}{\textsc{PolarGrad}\xspace}

\newcommand{\LeftPolarGradM}{\textsc{LeftPolarGradM}\xspace}
\newcommand{\RightPolarGradM}{\textsc{RightPolarGradM}\xspace}
\newcommand{\RowNormM}{\textsc{RowNormM}\xspace}
\newcommand{\HybridPolarGradM}{\textsc{HybridPolarGradM}\xspace}

\newcommand{\Dion}{\textsc{Dion}\xspace}
\newcommand{\Gluon}{\textsc{Gluon}\xspace}

\newcommand{\ProxPolarGrad}{\textsc{ProxPolarGrad}\xspace}

\newcommand{\Disco}{\textsc{Disco}\xspace}

\newcommand{\RMSNorm}{\textsc{RMSNorm}\xspace}
\newcommand{\MoE}{\textsc{MoE}\xspace}

\newcommand{\NorMuon}{\textsc{NorMuon}\xspace}
\newcommand{\OLMoE}{\textsc{OLMoE}\xspace}

\newcommand{\AlgoPerf}{\textsc{AlgoPerf}\xspace}

\newcommand{\msgn}{\mathrm{msgn}}

\newcommand{\polar}{\mathrm{polar}}
\newcommand{\sfp}{\mathsf{p}}

\newcommand{\rmout}{\mathrm{out}}

\newcommand{\GL}{\mathrm{GL}}
\newcommand{\reshape}{\mathrm{reshape}}
\newcommand{\LPRO}{\mathsf{LPRO}}
\newcommand{\hyb}{\mathsf{hyb}}
\newcommand{\nuc}{\mathsf{nuc}}
\newcommand{\row}{\mathsf{row}}
\newcommand{\router}{\mathsf{router}}

\begin{document}
\title{\sffamily\bfseries 
Symmetry-Compatible Principle for Optimizer Design: \\Embeddings, LM Heads, SwiGLU MLPs, and \MoE Routers
}
\author{
    Tim Tsz-Kit Lau%
    \thanks{University of Pennsylvania, Philadelphia, PA 19104, USA. Emails: 
    \href{mailto:timlautk@gmail.com}{\texttt{timlautk@gmail.com}}, 
    \href{mailto:suw@wharton.upenn.edu}{\texttt{suw@wharton.upenn.edu}}. 
    }
    \and 
    Weijie Su%
    \footnotemark[1]
}

\maketitle

\begin{abstract}
A striking geometric disparity has long persisted in the practice of deep learning. While modern neural network architectures naturally exhibit rich symmetry and equivariance properties, popular optimization methods, such as \Adam and its variants, operate inherently coordinate-wise, rendering them unable to respect the equivariance structures of the parameter space. In this paper, we address this disparity by introducing a \emph{symmetry-compatible principle} for optimizer design. Specifically, we argue that the gradient update rule should be equivariant under the symmetry group acting on the corresponding weight block, and should remove symmetry-redundant directions when the parameterization has quotient structure. Following this principle, we first provide a unified perspective on bi-orthogonally equivariant updates for general matrix layers, as employed by stochastic spectral descent, \Muon, \Scion, and polar gradient methods. More importantly, by moving from orthogonal groups to permutation and shared-shift symmetries, we derive new classes of symmetry-compatible optimizers tailored to parameter blocks whose symmetries differ from those of ordinary matrix layers: for embedding and LM head matrices, left-permutation and right-orthogonal equivariance leads to one-sided spectral, row-norm, and hybrid row-norm/spectral updates, with projected variants for LM heads; for SwiGLU MLP projections, intermediate-neuron permutation symmetry motivates row-aware and column-aware variants; and for \MoE routers, expert-permutation symmetry together with shared-logit-shift invariance gives rise to projected centered row-norm and left-spectral updates. These constructions yield an end-to-end layerwise optimizer stack in which each major matrix-valued parameter class is assigned an update whose equivariance matches its symmetry group. We corroborate this optimizer design principle through extensive pre-training experiments on dense and sparse \MoE language models, including Qwen3-0.6B-style, Gemma 3 1B-style, \OLMoE-1B-7B-style, and downsized gpt-oss architectures. Across these experiments, symmetry-compatible update rules consistently improve final validation loss, reduce expert load imbalance in sparse \MoE models, and in several cases control final vocabulary-logit growth, improve router stability, and overall training stability over the corresponding \AdamW updates. 
\begin{center}
\faGithub\, \url{https://github.com/timlautk/equivariant_optimizers}
\end{center}
\end{abstract}

\section{Introduction}
\label{sec:intro}
The most widely used optimizers in deep learning, such as \Adam \citep{kingma2015}, \Adafactor \citep{shazeer2018adafactor}, \RMSprop \citep{tieleman2012}, \AdaGrad \citep{duchi2011adagrad,mcmahan2010adaptive}, and their variants, all belong to the broad family of \emph{coordinate-wise adaptive gradient methods}. These methods treat model parameters as a single long concatenated vector and update each coordinate independently. Despite their empirical success, this design implicitly assumes that every entry of a weight matrix is an independent coordinate in a high-dimensional vector space. This assumption is rarely questioned, yet it strongly shapes the training dynamics of modern neural networks. In particular, such a geometry-blind treatment ignores the rich matrix structure of neural network parameters and fails to distinguish between the geometries of different layer types, such as embeddings, LM heads, dense linear layers, attention projections, SwiGLU MLP projections, and \MoE routers. 

At the same time, our theoretical understanding of optimizer behavior remains limited across the two major families most relevant to modern large-scale training: \emph{coordinate-wise adaptive gradient optimizers} and \emph{spectral optimizers}. In language model pre-training in particular, comparisons between these optimizer families are still largely empirical, relying on large-scale benchmarking exercises \citep{wen2025fantastic,semenov2025benchmarking} and speedrunning \citep{modded_nanogpt_2024}, with relatively little analysis of their different geometric behavior and training dynamics. Hyperparameter transfer rules \citep{yang2021tuning} and scaling-law prescriptions \citep{kaplan2020scaling,hoffmann2022training}, for example, are often applied across optimizers, even though their original development was tied primarily to coordinate-wise adaptive methods, particularly \AdamW \citep{loshchilov2019decoupled}. Another notable benchmarking effort is \AlgoPerf: \textsc{Training Algorithms} \citep{dahl2023benchmarking,kasimbeg2025accelerating}, which evaluates training speedups obtained solely from changes to the training algorithm and aims to provide a more comprehensive comparison of optimizers. However, \AlgoPerf does not include a language modeling workload, and its workloads are far smaller than the language models considered in modern pre-training. Such benchmarking practices implicitly assume that different optimizer families are directly comparable and share similar training phenomena, which need not be the case. 

The central thesis of this paper is that optimizer design for modern neural networks should be layerwise and symmetry-compatible. Rather than applying a single coordinate-wise optimizer to all parameters, we propose a layerwise symmetry-compatible principle: each major matrix-valued parameter class should be updated by an optimizer whose equivariance matches the symmetry of that parameter class. This leads to a broad family of \emph{equivariant optimizers}, whose update laws are matched to the symmetry groups of the parameter blocks on which they act. 

\Cref{fig:conceptual_comparison} summarizes this shift. The coordinate-wise view treats matrix-valued parameters as vectorized collections of independent coordinates, leading to updates that can discard spectral structure and break natural equivariances. In contrast, the symmetry-aware matrix view starts from the layerwise geometry of each parameter class and derives optimizer updates whose equivariance matches that geometry.

\begin{figure}[h!]
	\centering
	\resizebox{\textwidth}{!}{
		\begin{tikzpicture}[
			x=1cm, y=1cm,
			font=\small,
			box/.style={
				draw,
				rounded corners=2mm,
				thick,
				align=center,
				minimum height=1.25cm,
				inner sep=7pt
			},
			panel/.style={
				draw,
				rounded corners=2mm,
				thick,
				inner sep=12pt
			},
			arr/.style={->, thick}
			]
			
			% Left panel
			\node[panel, minimum width=6.5cm, minimum height=9.8cm, anchor=north west] (L) at (-7.5,0) {};
			\node[anchor=north] at ($(L.north)+(0,-0.45)$) {\textbf{Coordinate-wise view}};
			
			\node[box, fill=red!8, text width=4.9cm] (vec) at (-4.25,-1.75)
			{Parameters treated\\ as a long vector};
			
			\node[box, fill=red!8, text width=4.9cm] (entry) at (-4.25,-4.00)
			{Entrywise adaptive updates\\
				(\Adam, \AdaGrad, \Adafactor, \RMSprop)};
			
			\node[box, fill=red!8, text width=4.9cm] (bad1) at (-4.25,-6.35)
			{Break orthogonal equivariance\\
				for matrix layers};
			
			\node[box, fill=red!8, text width=4.9cm] (bad2) at (-4.25,-8.55)
			{Discard spectral structure \\
				and induce \\mismatched geometry};
			
			\draw[arr] (vec) -- (entry);
			\draw[arr] (entry) -- (bad1);
			\draw[arr] (bad1) -- (bad2);
			
			% Right panel
			\node[panel, minimum width=6.5cm, minimum height=9.8cm, anchor=north west] (R) at (1.0,0) {};
			\node[anchor=north] at ($(R.north)+(0,-0.45)$) {\textbf{Symmetry-aware matrix view}};
			
			\node[box, fill=blue!8, text width=5.15cm] (sym) at (4.25,-1.75)
			{Matrix parameters have
				layerwise symmetry and geometry};
			
			\node[box, fill=blue!8, text width=5.15cm] (eqv) at (4.25,-4.00)
			{Update maps should match\\
				the symmetry of each parameter class};
			
			\node[box, fill=blue!8, text width=5.15cm] (classes) at (4.25,-6.35)
			{Spectral, one-sided spectral,\\
				row-norm, and hybrid optimizers};
			
			\node[box, fill=blue!8, text width=5.15cm] (design) at (4.25,-8.55)
			{Architecture--optimizer co-design\\
				for linear layers, SwiGLU MLPs, \\
				embeds., heads and \MoE routers};
			
			\draw[arr] (sym) -- (eqv);
			\draw[arr] (eqv) -- (classes);
			\draw[arr] (classes) -- (design);
			
			% Middle annotation
			\draw[very thick, ->] (-0.25,-4.95) -- (0.35,-4.95);
			\node[align=center] at (0.0,-4.10) {\small rethink\\ optimizer\\ geometry};
			
		\end{tikzpicture}
	}
	\caption{Two perspectives on deep learning optimization. Left:
		coordinate-wise adaptive methods treat matrix parameters as vectors and
		ignore matrix geometry. Right: the symmetry- and equivariance-based
		viewpoint developed in this paper leads to a family of equivariant,
		layer-specific optimizer classes and architecture--optimizer co-design.}
	\label{fig:conceptual_comparison}
\end{figure}

\paragraph{Contributions.}
Our work makes the following contributions.

\begin{enumerate}[leftmargin=*]
	\item \textbf{A symmetry-compatible principle for matrix-gradient optimizer design.}
	We argue that optimizer updates should respect the symmetry structure of the parameter block on which they act. Popular coordinate-wise adaptive optimizers such as \Adam, \AdamW, and \RMSprop are often \emph{geometrically mismatched} for matrix-valued parameters, since their updates generally fail to respect the natural equivariance and quotient structures of modern neural network layers. Fully-connected layers, attention projections, embedding and LM head matrices, SwiGLU MLP projections, and \MoE router matrices all possess nontrivial row, column, permutation, spectral, or shared-shift geometries. Our central message is that neural network weight matrices live in layer-dependent geometries that coordinate-wise adaptive methods do not explicitly capture.

	\item \textbf{A unifying equivariance view of spectral optimizers.}
	We show that bi-orthogonal equivariance naturally leads to the class of \emph{spectral optimizers}. This class includes or provides a unifying interpretation of stochastic spectral descent (SSD) \citep{carlson2015stochasticRBM}, \Muon \citep{jordan2024muon}, \Scion \citep{pethick2025training}, and polar gradient methods (\PolarGrad) \citep{lau2025polargrad}. These methods compute, exactly or approximately, the orthogonal polar factor of an update direction $D$, such as a gradient $G$ or momentum $M$:
	\[
   	D = U\Sigma V^\top \quad\Rightarrow\quad U_{\sfp}\coloneqq \polar(D)=UV^\top .
   	\]
	Such updates are bi-orthogonally equivariant, preserve the singular-vector geometry of the update direction, and arise naturally from matrix geometry. This perspective also gives a symmetry-based interpretation of the spectral-norm steepest descent principle underlying \Muon \citep{bernstein2024old,bernstein2024modular,jordan2024muon}: because the spectral norm is unitarily invariant, the corresponding polar update is naturally bi-orthogonally equivariant.

    \item \textbf{Layerwise equivariant optimizers for architecture--optimizer co-design.}
    Beyond full spectral optimizers for ordinary matrix layers, we derive symmetry-compatible optimizer classes for layers whose symmetries differ from those of standard linear maps. For embeddings and LM heads, left-permutation/right-orthogonal equivariance leads to one-sided spectral, row-norm, and hybrid row-norm/spectral updates; for LM heads, softmax shared-logit-shift invariance further requires projected updates on the horizontal quotient space. For SwiGLU MLP projections, intermediate-neuron permutation symmetry motivates row-aware updates for gate and up projections and column-aware updates for down projections. For \MoE routers, expert-permutation symmetry together with shared-logit-shift invariance leads to projected centered row-norm, left-spectral, and hybrid updates. The corresponding practical momentum variants are denoted \RightPolarGradM, \LeftPolarGradM, \RowNormM, and \HybridPolarGradM. These constructions instantiate an architecture--optimizer co-design principle based on layerwise equivariance and quotient geometry.
    
    \item \textbf{End-to-end pre-training evidence.}
    We evaluate the proposed layerwise optimizer assignments in dense and sparse \MoE language model pre-training experiments (\Cref{sec:expt}). To the best of our knowledge, these experiments provide the first end-to-end pre-training optimizer stack in which all major matrix-valued parameter classes in language models are assigned updates according to their layerwise symmetry. Replacing \AdamW on large vocabulary-indexed matrices with row-norm or hybrid symmetry-compatible updates consistently improves final validation loss. The gains are modest but visible for the smaller Qwen3-0.6B-style dense model, become more pronounced for the larger Gemma 3 1B-style model, and persist in sparse \MoE experiments based on \OLMoE-1B-7B and downsized gpt-oss (\Cref{fig:gpt-oss_val}). In dense models, hybrid row-norm/spectral updates for SwiGLU MLP projections further improve validation loss, and projected symmetry-compatible LM head updates reduce final vocabulary-logit growth in Gemma 3 1B-style pre-training. In the \MoE setting, symmetry-compatible router updates improve over coordinate-wise router updates, reduce expert load imbalance, improve router stability as measured by router z-loss, and can reduce training loss spikes. 
\end{enumerate}

As a representative example, \Cref{fig:gpt-oss_val} shows the effect of symmetry-compatible assignments in a sparse \MoE pre-training experiment.
\begin{figure}[h!]
    \centering
    \includegraphics[width=\textwidth]{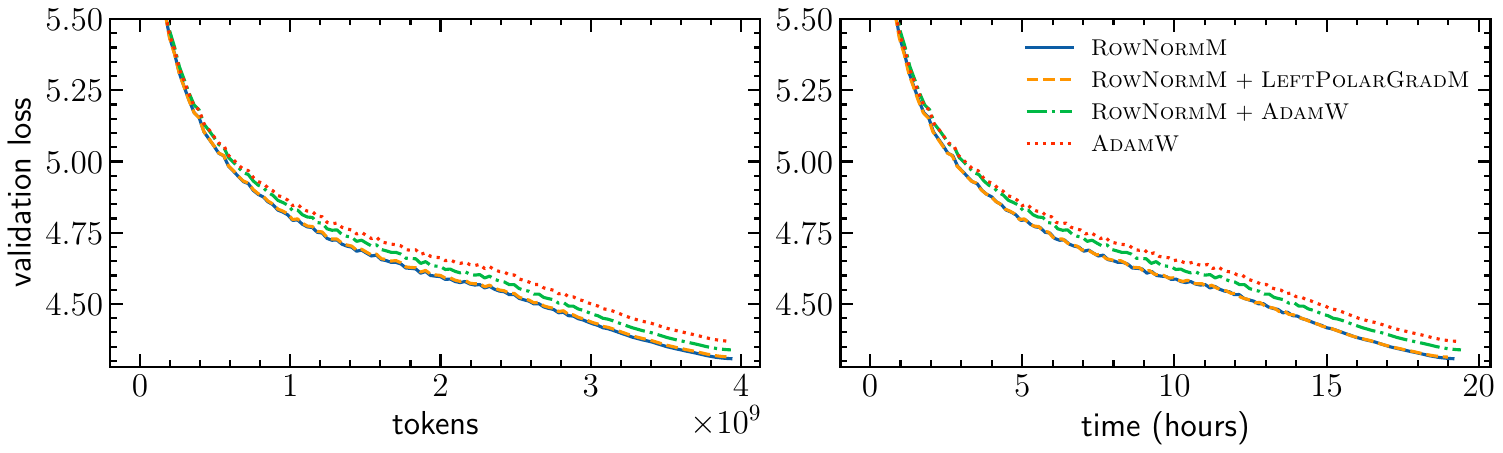}
    \caption{Validation losses for downsized gpt-oss pre-training. The configurations differ in the optimizers for the embedding, LM head, and router matrices; see \Cref{subsec:gpt-oss} for details. Configurations (i) and (ii) use symmetry-compatible optimizers derived from the layerwise equivariance principle, while configuration (iii) replaces the router update by \AdamW and configuration (iv) uses \AdamW for the embedding, LM head, and router matrices.}
    \label{fig:gpt-oss_val}
\end{figure}

\paragraph{Scope and limitations.}
Our goal is not to claim that equivariant optimizers dominate coordinate-wise adaptive methods in all regimes. Rather, we develop a layerwise equivariance principle for matrix-valued parameters and show that it leads to practical optimizer assignments that are competitive and often beneficial in representative pre-training settings. The empirical results should be viewed as evidence for the usefulness of the principle, not as an exhaustive large-scale optimizer benchmark. 

\paragraph{Organization.}
We first introduce notation and closely related work in \Cref{sec:prelim}. In \Cref{sec:general}, we develop the layerwise symmetry-compatible principle, beginning from a linear-operator view of matrix parameters and the resulting coordinate-free equivariance requirements. We then derive symmetry-compatible optimizer classes for embeddings, LM heads, SwiGLU MLP projections, and \MoE routers, including one-sided spectral, row-norm, projected, and hybrid variants. In \Cref{subsec:spectral_optim}, we characterize the spectral optimizer class arising from bi-orthogonal equivariance for ordinary matrix layers. We present practical momentum implementations in \Cref{subsec:practical} and dense and \MoE language model pre-training experiments in \Cref{sec:expt}. We conclude with a discussion of broader implications and future directions in \Cref{sec:discussion}.

\section{Preliminaries and Related Work}
\label{sec:prelim}
In this section, we introduce necessary notation and related work for self-containedness. For an extended overview of related work, we refer the readers to \Cref{sec:further_related}. 

\paragraph{Notation.} 
For any real-valued square matrix $S\in\RR^{d\times d}$, $\diag(S)\in\RR^d$ denotes the vector of its diagonal entries, $\Diag(S)\in\RR^{d\times d}$ the diagonal matrix with diagonal entries equal to those of $S$, and $\tr(S)$ is its trace. For any $x\in\RR^d$, $\Diag(x)\in\RR^{d\times d}$ is the diagonal matrix with diagonal entries equal to the entries of $x$. We define $m\wedge n\coloneqq\min\{m, n\}$. For any $m\times n$ real-valued matrices $A \coloneqq (a_{i,j})_{1\le i\le m, 1\le j\le n}$ and $B\coloneqq (b_{i,j})_{1\le i\le m, 1\le j\le n}$, we denote the Frobenius inner product of $A$ and $B$ by $\dotpF{A}{B} \coloneqq \tr(A^\top B) = \sum_{i,j} a_{i,j}b_{i,j}$. For a matrix $A\in\RR^{m\times n}$, we denote its Frobenius norm by $\fronorm{A} \coloneqq \sqrt{\dotpF{A}{A}}$, its spectral norm by $\specnorm{A} \coloneqq \sup_{x\in\RR^n, x\ne0}\{\|Ax\|_2/\|x\|_2\}$, its nuclear norm by $\nucnorm{A} \coloneqq \sum_{i=1}^{m\wedge n} \sigma_i(A)$, where $\sigma(A) = (\sigma_1(A), \ldots, \sigma_{m\wedge n}(A))^\top$ is the vector of nonincreasing ordered singular values of $A$, and its max norm by $\matsnorm{A}{\max} \coloneqq \max_{1\le i\le m, 1\le j\le n}|a_{i,j}|$. The Schatten $p$-norm of $A$ is denoted by $\matsnorm{A}{p} \coloneqq\norm{\sigma(A)}_p$. The Hadamard product of $A\in\RR^{m\times n}$ and $B\in\RR^{m\times n}$ is denoted by $A\odot B \coloneqq (a_{i,j}b_{i,j})_{1\le i\le m, 1\le j\le n}$. For the the matrix $A\in\RR^{m\times n}$, we denote by $\vec(A)\in\RR^{mn}$ its vectorization by rows. Conversely, for $x\in\RR^{mn}$, we write $\reshape(x,m,n)\in\RR^{m\times n}$ for the inverse operation, so that $\reshape(\vec(A),m,n)=A$ for all $A\in\RR^{m\times n}$. Let $\bbS^d\coloneqq \{A\in\RR^{d\times d} : A = A^\top\}$ denote the space of real symmetric matrices in $\RR^{d\times d}$, $\bbS^d_{+} \coloneqq \{A\in\bbS^d : A\succeq 0\}$ the set of symmetric positive semidefinite matrices, and $\bbS^d_{++} \coloneqq \{A\in\bbS^d : A\succ 0\}$ the set of symmetric positive definite matrices, where $\succeq$ and $\succ$ denote L\"{o}wner orders. Let $\OO^d \coloneqq \{A\in\RR^{d\times d} : A^\top A = AA^\top=I_d\}$ denote the set of $d\times d$ orthogonal matrices, where $I_d\in\RR^{d\times d}$ is the $d\times d$ identity matrix. Let $\Prob^d \coloneqq \{P\in\{0,1\}^{d\times d} : P\One_d=\One_d,\; P^\top \One_d=\One_d\}$ denote the set of $d\times d$ permutation matrices, where $\One_d$ is the all-ones vector in $\RR^d$. Let $\calE$ be a Euclidean space endowed with an inner product $\dotp{\cdot}{\cdot}$ and the induced norm $\|\cdot\|$. The domain of a function $f\colon\calE\to\oRR\coloneqq \RR\cup\{\pm\infty\}$ is $\dom f \coloneqq \{x\in\calE : f(x)<\infty\}$. A function $f\colon\calE\to\oRR$ is said to be \emph{proper} if it has a nonempty domain. The (convex) indicator function $\iota_\calC(x)$ of a nonempty closed convex set $\calC$ at $x$ equals $0$ if $x\in\calC$ and $+\infty$ otherwise. The Euclidean projection of $x$ onto a nonempty closed convex set $\calC$ is denoted by $\proj_{\calC}(x)$. $\NN$ denotes the set of nonnegative integers and $\NN^* \coloneqq \NN\setminus\{0\}$ denotes the set of positive integers. For a function $f\colon\calE\to\oRR$, we use $\argmin f$ to denote the unique minimizer of $f$.

\subsection{Matrix-Gradient Optimizers}
The recent release of \Muon \citep{jordan2024muon}, together with its strong empirical performance in the \texttt{modded-nanogpt} speedrun \citep{modded_nanogpt_2024}, has renewed interest in matrix-gradient optimizers for deep learning. This has led to a rapidly growing line of work on geometry-aware and matrix-structured optimization methods \citep{pethick2025training,lau2025polargrad,crawshaw2025exploration,veprikov2025preconditioned,chen2025muon,kravatskiy2025ky,frans2025stable,huang2025limuon,si2025adamuon,zhang2025adagrad,xu2026fismo,jiang2026adaptive,zhang2026mousse,gong2026aro,qi2026delving,xu2026width,du2026newton}. Conceptually, \Muon is closely related to stochastic spectral descent (SSD) \citep{carlson2015stochasticRBM,carlson2015preconditioned,carlson2016stochastic}, since both methods can be derived from steepest descent with respect to the spectral norm. We emphasize that this spectral-norm steepest descent perspective is already closely aligned with the equivariance view developed here: because the spectral norm is unitarily invariant, its steepest descent direction is the orthogonal polar factor, and the resulting \Muon update is implicitly bi-orthogonally equivariant. Our contribution is to make this equivariance principle explicit, to place \Muon and related methods inside a broader class of spectral optimizers, and to extend the same symmetry-based design logic to layers whose symmetries are not fully bi-orthogonal, such as embeddings, LM heads, SwiGLU MLP projections, and \MoE routers. 

On the theoretical side, local one-step analyses of simplified \Muon-type updates have been developed in \citep{su2025isotropic,davis2025spectral,gonon2026insights}, while several recent works study convergence rates and optimization guarantees under different assumptions \citep{li2025muon,lau2025polargrad,chang2025convergence,kovalev2025non,shen2025convergence,kim2026convergence,ma2026preconditioning}. Our work is aligned with this broad effort, but differs in emphasis: rather than viewing matrix-gradient optimizers primarily as normalization or preconditioning heuristics, we derive them from symmetry and equivariance principles for matrix-valued neural network parameters. 

A separate but related line of work develops matrix-gradient optimizers from second-order or preconditioning perspectives. These include Kronecker-factored or layerwise preconditioners such as K-FAC \citep{martens2015optimizing,eschenhagen2023kronecker}, \Shampoo \citep{gupta2018shampoo,anil2020scalable,shi2023distributed,eschenhagen2026clarifying}, BFGS and L-BFGS-type methods \citep{goldfarb2020practical}, SOAP \citep{vyas2024soap}, KL-Shampoo and KL-SOAP \citep{lin2025understanding}, and learned or adaptive preconditioners such as preconditioned SGD (PSGD) \citep{li2017preconditioned,pooladzandi2024curvature}. These methods typically approximate curvature or preconditioning structure, whereas spectral and polar updates can also be understood as enforcing equivariance properties of the update map itself. This distinction is important for our framework, since the appropriate optimizer geometry depends on the symmetry group of the layer, not only on curvature approximation. 

Other related directions include imposing constraints directly on the weights, such as Stiefel-manifold interpretations and manifold-constrained optimizers \citep{bernstein2025manifolds,buchanan2025mmuonadmm,xie2026controlled,yang2026manifold,gu2026mano}, as well as variance reduction and low-rank gradient projection methods such as MARS-M and GaLore \citep{liu2025mars,yuan2024mars,zhao2024galore,su2025galore}. These methods address complementary aspects of matrix-gradient optimization, including weight constraints, variance control, and computational efficiency. We refer readers to the recent review \citep{pethick2025training_review} for a broader overview of geometry-aware optimization methods in deep learning.

\subsection{Matrix Optimization Problems, L\"owner Operators, and Spectral Operators}
\label{subsec:related_mop_spectral}
Matrix optimization problems have long been studied as a distinct class of optimization problems because matrices carry algebraic and geometric structures, such as eigenvalues, singular values, ranks, invariant subspaces, and unitary symmetries, that are obscured by vectorization \citep{ding2018spectral,ding2020spectral}. The foundations for convex and unitarily invariant matrix functions, eigenvalue optimization, and spectral optimization were developed in convex matrix analysis and variational analysis \citep{lewis1995convex,lewis1996group,lewis1996eigenvalue,lewis2003mathematics}.

Our framework is also closely related to spectral functions and spectral operators \citep{horn1994topics,bhatia2013matrix,higham2008functions,sun2008lowner,chen2021spectral}. For rectangular matrices, such operators act on singular values while preserving singular vectors, $G = U \Diag(\sigma(G)) V^\top \mapsto \calT(G) = U \Diag(\psi(\sigma(G))) V^\top$. This is the same operator-theoretic structure underlying spectral matrix-gradient optimizers such as stochastic spectral descent, \Muon, \Scion, and polar gradient methods.

\subsection{Symmetry and Equivariance in Deep Learning}
\label{subsec:related_symmetry_equivariance}
There is a long line of work recognizing symmetry and equivariance as organizing principles in neural networks, both for understanding optimization, generalization, and representation learning \citep{ng2004feature,he2016deep,schubert2019circular,li2021why,abbe2022non,zhao2022symmetry,zhao2024improving,putterman2025gl,zhao2026symmetry}, and for designing equivariant architectures \citep{lim2023equivariant,bao2019equivariant,kondor2025principles}. Our work is complementary: rather than imposing equivariance on the architecture or studying equivariance of existing training dynamics, we impose equivariance on the optimizer update map acting on parameter tensors. Thus, our viewpoint extends the equivariance principle from architecture design to optimizer design, where the relevant symmetry is the internal geometry of the parameter space rather than only the symmetry of the input or output domain.

\section{Equivariant Optimizers from Layerwise Symmetry}
\label{sec:general}
Modern deep learning architectures contain matrix-valued parameters with different symmetry structures. The common principle is that a parameter matrix does not always represent an arbitrary array of coordinates, but often represents a linear map between two structured spaces. If the coordinates of these spaces are changed, the parameter and its gradient transform accordingly, and a geometry-compatible optimizer should transform in the same way.

We first state this principle in a general form. Let $W\in\RR^{m\times n}$ represent a linear map from an input space to an output space. Suppose the output and input coordinates are transformed by invertible matrices $P\in\GL(m)$ and $Q\in\GL(n)$. Then the same linear map is represented by $\tW = P W Q^{-1}$.  If $\tf(\tW)\coloneqq f(P^{-1}\tW Q)$, then standard matrix calculus gives
\[
    \nabla_{\tW}\tf(\tW) = P^{-\top}\,\nabla_W f(W)Q^\top.
\]
Thus, under a general change of coordinates, the gradient transforms contravariantly with respect to the output coordinates and covariantly with respect to the input coordinates. 

In this work, we study the equivariance of the update map $\scrU\colon\RR^{m\times n}\to\RR^{m\times n}$ in matrix-optimizer iterations
\[
    (\forall k\in\NN)\qquad W_{k+1}=W_k-\gamma_k\scrU(D_k), 
\]
where $D_k$ is an update direction, such as a gradient or momentum. The relevant requirement is not necessarily that the layerwise loss function be invariant under arbitrary transformations, but that the optimizer update transform consistently with the representation of its input direction. Thus, once a layer symmetry specifies a transformation law $D_k\mapsto g\cdot D_k$, we require
\[
    \scrU(g\cdot D_k)=g\cdot \scrU(D_k).
\]
When $D_k$ transforms equivariantly, the update $\scrU(D_k)$ therefore transforms equivariantly as well.

In this paper, however, we do not require equivariance under all invertible changes of coordinates. The relevant symmetry group depends on the layer. For ordinary linear and attention matrices, the natural coordinate changes are orthonormal changes of basis, so $P\in\OO^m$ and $Q\in\OO^n$. In this case $P^{-\top}=P$ and $Q^{-1}=Q^\top$, and both the parameter and gradient transform as $W\mapsto PWQ^\top$ and $G\mapsto PGQ^\top$. 
This leads to the bi-orthogonal equivariance condition
\[
    \scrU(PGQ^\top)=P\,\scrU(G)\,Q^\top .
\]
For embedding and LM head matrices $W\in\RR^{v\times d}$, the row axis indexes vocabulary items, so the admissible left action is not a general orthogonal rotation but a permutation $P\in\Prob^v$, while the hidden feature axis still admits right orthogonal transformations. For \MoE routers, the row axis indexes experts and additionally has a shared-logit-shift invariance. For SwiGLU MLP projections, the relevant symmetry is permutation of intermediate neurons, which acts on the rows of the gate and up projections and on the columns of the down projection.

This gives a layerwise equivariance principle: the optimizer update map should commute with the symmetry group of the parameter block on which it acts. Full bi-orthogonal equivariance leads to spectral optimizers for ordinary matrix layers; left-permutation/right-orthogonal equivariance leads to row-aware and right-spectral optimizers for embeddings and LM heads; intermediate-neuron permutation symmetry leads to row- and column-aware updates for SwiGLU MLP projections; and expert-permutation plus shared-shift symmetry leads to centered row-aware or left-spectral updates for \MoE routers.

\subsection{A General Symmetry-Induced Optimizer Geometry}
\label{subsec:general_symmetry_geometry}
Let $W\in\RR^{m\times n}$ be a layer parameter and let $f\colon\RR^{m\times n}\to\RR$ be the corresponding layerwise loss. Suppose a group $\calG$ acts on the parameter space by transformations $W\mapsto g\cdot W$. In the matrix settings considered below, this action is typically of the form $g\cdot W = P W Q^{-1}$, or, after restricting to orthogonal or permutation symmetries, $g\cdot W = P W Q^\top$. We say that the parameterization admits the symmetry group $\calG$ if $f(g\cdot W)=f(W)$ for all $g\in\calG$. The corresponding optimizer update map should satisfy
\[
    (\forall g\in\calG)\qquad \scrU(g\cdot G)=g\cdot \scrU(G), 
\]
where $G$ is an update direction, such as a gradient or momentum, expressed in the corresponding transformed coordinates. This condition ensures that the optimizer does not depend on arbitrary choices of representation that are invisible to the model.

\subsection{Bi-Orthogonal Equivariance for Ordinary Matrix Layers}
\label{subsec:biorthogonal_equivariance}
The general reparameterization $W\mapsto PWQ^{-1}$ specializes to $W\mapsto PWQ^\top$ when the admissible coordinate changes are orthogonal. This is the natural case for ordinary linear layers and attention projection matrices, where both input and output coordinates represent continuous feature bases. Therefore an update map for such layers should satisfy
\[
    (\forall P\in\OO^m,\forall Q\in\OO^n)\qquad\scrU(PGQ^\top)=P\,\scrU(G)Q^\top.
\]
This is exactly \emph{bi-orthogonal equivariance}. 

\begin{definition}[Bi-orthogonal equivariance]
\label{def:orthogonal_equivariance}
Let $\scrU\colon\RR^{m\times n}\to\RR^{m\times n}$ be a matrix-valued map. We say that $\scrU$ is \emph{bi-orthogonally equivariant} if, for all $G\in\RR^{m\times n}$ and all $P\in\OO^m$, $Q\in\OO^n$,
\[
    \scrU(PGQ^\top)=P\,\scrU(G)Q^\top.
\]
\end{definition}
Thus, bi-orthogonal equivariance is exactly the requirement that if $W$ and its gradient are transformed as $W\mapsto PWQ^\top$ and $G\mapsto PGQ^\top$, then the optimizer update transforms in the same way. More generally, if an update rule has the form $\Delta W=\scrU(W,G)$, one may require
\[
    \scrU(PWQ^\top,PGQ^\top)=P\,\scrU(W,G)Q^\top.
\]
In this paper, we focus on update maps of the form $\scrU(G)$ or $\scrU(D)$, where $D$ is a gradient-derived direction such as momentum. Terms that depend explicitly on $W$, such as decoupled weight decay or scalar step-size scaling, can typically be handled separately and preserve the same equivariance, so we suppress the dependence on $W$ for simplicity. 

For ordinary matrix layers, bi-orthogonal equivariance motivates polar and spectral update directions. In particular, the orthogonal polar factor  \citep{autonne1902sur,higham1986computing} satisfies 
\begin{equation}\label{eqn:polar}
    \polar(PGQ^\top)=P\,\polar(G)Q^\top,
\end{equation}
and hence \Muon-style and polar-gradient updates are symmetry-compatible for ordinary matrix layers. We defer the full characterization of bi-orthogonally equivariant maps as spectral operators to \Cref{subsec:spectral_optim}. Standard momentum constructions such as EMA, Polyak, and Nesterov momentum preserve the same equivariance because their buffers are linear combinations of past gradients. We state this fact for Nesterov momentum in the following proposition.

\begin{proposition}[Nesterov momentum is bi-orthogonally equivariant]
\label{prop:momentum}
Let $(W_k)_{k\in\mathbb{N}}\subset\RR^{m\times n}$ be a parameter sequence, and define $G_k \coloneqq \nabla_W f(W_k)$ for $k\in\NN$. Fix orthogonal matrices $P\in\OO^m$ and $Q\in\OO^n$, and define the transformed parameter sequence $\tW_k \coloneqq P W_k Q^\top$ for $k\in\NN$, with transformed gradients $\tG_k \coloneqq \nabla_W f(\tW_k)$ for $k\in\NN$. Let the momentum buffer be $M_k = \beta M_{k-1} + G_k$ with $M_{-1}=0$, and define the update direction by $N_k \coloneqq G_k + \beta M_k$. For the transformed sequence, let $\tM_k = \beta \tM_{k-1} + \tG_k$ with $\tM_{-1}=0$, and $\tN_k \coloneqq \tG_k+ \beta \tM_k$. Then we have $\tM_k = PM_kQ^\top$, $\tN_k = PN_kQ^\top$, and $\polar(\tN_k)=P\,\polar(N_k)Q^\top$.
\end{proposition}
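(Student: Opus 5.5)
The plan is to reduce everything to a single gradient transformation law and then propagate it through the linear recursions by induction. The first step is to establish that $\tG_k = P G_k Q^\top$ for every $k\in\NN$. This is where the statement needs care. The transformed gradient must be read in the sense of \Cref{subsec:general_symmetry_geometry}: either $f$ admits the symmetry, $f(PWQ^\top)=f(W)$, or equivalently $\tG_k$ is the gradient of the transformed loss $\tf(\tW)\coloneqq f(P^\top\tW Q)$ at $\tW_k$.

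Under that reading the step follows from the chain-rule computation at the start of \Cref{sec:general}. Specialising $\nabla_{\tW}\tf(\tW)=P^{-\top}\nabla_W f(W)Q^\top$ to orthogonal $P,Q$ gives $P^{-\top}=P$. Hence $\tG_k = P\,\nabla_W f(W_k)\,Q^\top = PG_kQ^\top$ at $W_k=P^\top\tW_kQ$. If one instead assumes invariance $f(PWQ^\top)=f(W)$, differentiating both sides gives $P^\top \nabla f(PWQ^\top) Q = \nabla f(W)$, which is the same identity. I expect this step to be the only real obstacle. It is mainly a matter of stating the interpretation precisely, since the literal expression $\nabla_W f(\tW_k)$ equals $PG_kQ^\top$ only under such an invariance or transformed-loss convention.

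Given the gradient law, I would prove $\tM_k = PM_kQ^\top$ by induction on $k$. The base case is $\tM_{-1}=0=P\,0\,Q^\top$. For the inductive step,
\[
\tM_k=\beta\tM_{k-1}+\tG_k=\beta PM_{k-1}Q^\top+PG_kQ^\top=P(\beta M_{k-1}+G_k)Q^\top=PM_kQ^\top,
\]
using that $X\mapsto PXQ^\top$ is linear. The same linearity then gives $\tN_k=\tG_k+\beta\tM_k=P(G_k+\beta M_k)Q^\top=PN_kQ^\top$.

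Finally, the polar statement follows from \eqref{eqn:polar}. To make it self-contained I would verify that identity directly. Take a compact SVD $N_k=U\Sigma V^\top$. Then $PN_kQ^\top=(PU)\Sigma(QV)^\top$, and $PU$ and $QV$ still have orthonormal columns, so this is a valid SVD of $\tN_k$. Therefore
\[
\polar(\tN_k)=(PU)(QV)^\top=P\,UV^\top Q^\top=P\,\polar(N_k)Q^\top.
\]
If $N_k$ is rank-deficient, $\polar$ is defined via the compact SVD, which is unique in the product $UV^\top$ on the range. Alternatively one can use $\polar(N)=N(N^\top N)^{\dagger/2}$, for which $(QN^\top NQ^\top)^{\dagger/2}=Q(N^\top N)^{\dagger/2}Q^\top$ gives the same conclusion without choosing an SVD.
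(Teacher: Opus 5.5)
Your proposal is correct and follows essentially the same route as the paper. Both arguments establish $\tG_k = PG_kQ^\top$, propagate it through the linear momentum and Nesterov recursions by induction, and conclude with the polar-factor equivariance \eqref{eqn:polar}. The differences are minor: you state explicitly the invariance (or transformed-loss) hypothesis that the paper invokes only implicitly as ``bi-orthogonal equivariance of the gradient,'' and you verify the polar identity via the compact SVD rather than citing it.
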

All proofs are given in \Cref{sec:proofs}. 

\begin{remark}[Momentum-first or polar-first \PolarGrad?]
\citet{lau2025polargrad} studied two variants of \PolarGrad---\emph{momentum-first}, which uses the orthogonal polar factor of the EMA momentum of the gradient as the update direction, and \emph{polar-first}, which uses the EMA momentum of the orthogonal polar factor of the gradient as the update direction. They resemble previous orthogonalized gradient optimizers in a similar spirit. \citet{tuddenham2022orthogonalising} adopt a polar-first update, while \citet{jordan2024muon} adopt a momentum-first update, which outperforms the polar-first one in practice. Indeed, \Cref{prop:momentum} provides an intuitive explanation for why momentum-first \PolarGrad is preferred to its polar-first counterpart. 
Only momentum-first updates are directly covered by the bi-orthogonal equivariance argument above: the gradient and its EMA momentum transform equivariantly, and the polar map is then applied to this equivariant momentum direction. In contrast, $\polar(\beta M_{k-1} + (1-\beta)G_k) \ne \beta\polar(M_{k-1}) + (1-\beta)\polar(G_k)$ in general because $\polar(\cdot)$ is nonlinear. Thus, the polar-first update is generally not obtained by applying a spectral map to an equivariant momentum direction. In the momentum-first update, the polar step instead extracts a ``best orthogonal direction'' from the smoothed matrix signal via EMA momentum, which tends to be less noisy and better behaved.
\end{remark}

\subsection{Optimizers for Embeddings and LM Heads via Left-Permutation Right-Orthogonal Equivariance}
\label{subsec:lpro}
For vocabulary-indexed matrices such as input embeddings and untied LM heads, rows index vocabulary items and columns correspond to hidden features. This motivates left-permutation and right-orthogonal equivariant update maps. For an update direction $D\in\RR^{v\times d}$, representative examples include row-norm updates,
right-spectral updates, and hybrid row-norm/right-spectral updates. In the embedding case, these take the form
\[
    \scrU_{\row}(D) = \Diag(\eta(\|D_{1:}\|_2),\dots,\eta(\|D_{v:}\|_2))D,
\]
\[
    \scrU_{\sfR}(D) = D(D^\top D+\varepsilon I)^{-\half},
\]
and
\[
    \scrU_{\hyb}(D) = \scrU_{\sfR}(\scrU_{\row}(D)) \quad\text{or}\quad \scrU_{\row}(\scrU_{\sfR}(D)).
\]
The row-norm update acts locally on vocabulary rows, the right-spectral update acts globally through the hidden-feature Gram matrix, and the hybrid update combines the two.

For untied LM heads, there is an additional quotient geometry. Since softmax probabilities are invariant under adding the same scalar to all vocabulary logits, adding a shared row vector $\One_v a^\top$ to the LM head changes $W_{\rmout}h$ only by a shared logit shift and therefore leaves the output distribution unchanged. Thus, the shared-row direction is vertical, and the intrinsic LM head update should be horizontal. Let
\[
    \Pi_v^\perp \coloneqq I_v-\frac{1}{v}\One_v\One_v^\top .
\]
For LM head updates, we therefore apply the corresponding row-norm, right-spectral, or hybrid maps to the centered direction  $D_c=\Pi_v^\perp D$, and project the final update back to the horizontal subspace when rowwise nonlinear operations are used. For example, the horizontal LM head row-norm update is
\[
    \scrU_{\row}^{\rm LM}(D)
    =
    \Pi_v^\perp
    \Diag(\eta(\|D_{c,1:}\|_2),\dots,\eta(\|D_{c,v:}\|_2))D_c,
    \qquad
    D_c=\Pi_v^\perp D .
\]
The final projection is not redundant: although $\mathbf 1_v^\top D_c=0$, the rowwise-rescaled matrix
\[
    \Diag(\eta(\|D_{c,1:}\|_2),\dots,\eta(\|D_{c,v:}\|_2))D_c
\]
need not be row-centered, since the scaling factors vary across rows. Similarly, for hybrid row-norm/right-spectral LM head updates, we reproject after rowwise nonlinear stages and, in practice, also project the final update to ensure that the actual parameter update remains horizontal.

We now explain why these updates are natural from the symmetry of embeddings and LM heads. In empirical uses of \Muon \citep{jordan2024muon}, it is often recommended that \AdamW be used for embedding and LM head matrices. For embeddings, this recommendation is motivated in part by modular norm theory \citep{large2024scalable}; for LM heads, it appears to be driven more by empirical considerations. Relatedly, \Scion \citep{pethick2025training} derives embedding updates from induced operator norms in its linear minimization oracle framework. These approaches depend on a particular choice of norm. Here we instead derive optimizer classes directly from the symmetry of the parameterization.

Let $v\in\NN^*$ denote the vocabulary size and $d\in\NN^*$ the embedding dimension, typically with $v\gg d$. Consider an input embedding matrix $E\in\RR^{v\times d}$ and an untied LM head matrix $W_{\rm out}\in\RR^{v\times d}$. In both cases, rows index vocabulary items, while columns correspond to hidden features. Thus, the row axis admits permutation symmetry, whereas the hidden feature axis admits right orthogonal symmetry. The natural equivariance condition for an update map is therefore
\begin{equation}\label{eqn:left_perm_right_ortho}
    \scrU_{\LPRO}(PDR^\top)=P\,\scrU_{\LPRO}(D)R^\top,
\end{equation}
for all $D\in\RR^{v\times d}$, permutation matrices $P\in\Prob^v$, and orthogonal matrices $R\in\OO^d$. We call such maps \emph{left-permutation right-orthogonal} (LPRO) equivariant. For untied LM heads, the same principle is applied on the horizontal quotient space defined by $\Pi_v^\perp$.

\begin{definition}[Left-permutation and right-orthogonal equivariant maps]
\label{def:lpro_equivariant}
A map $\scrU_{\LPRO}\colon\RR^{v\times d}\to\RR^{v\times d}$ is said to be \emph{left-permutation and right-orthogonal equivariant} if \eqref{eqn:left_perm_right_ortho} holds for all $D\in\RR^{v\times d}$, $P\in\Prob^v$, and $R\in\OO^d$. We denote the set of such maps by $\calU_{\LPRO}^{v\times d}$.
\end{definition}

\subsubsection{Right-Spectral Optimizers}
A first natural subclass of LPRO-equivariant maps is given by right-spectral updates, 
\begin{equation}\label{eqn:right_spectral}
    \scrU_{\sfR}(D)=D\,\Phi(D^\top D),
\end{equation}
where $\Phi\colon\bbS_+^d\to\RR^{d\times d}$ is an orthogonally equivariant spectral operator. Equivalently, if $D^\top D=V\Diag(\lambda(D^\top D))V^\top$, then
\[
    \Phi(D^\top D) = V\Diag(\psi(\lambda(D^\top D)))V^\top
\]
for some absolutely symmetric map $\psi\colon\Rp^d\to\RR^d$.

\begin{theorem}[Right-spectral updates are LPRO-equivariant]
\label{thm:right_spectral}
Let $\scrU_{\sfR}$ be of the form \eqref{eqn:right_spectral}, where $\Phi(RXR^\top)=R\Phi(X)R^\top$ for all $X\in\bbS_+^d$ and $R\in\OO^d$. Then
\[
    \scrU_{\sfR}(PDR^\top)=P\,\scrU_{\sfR}(D)R^\top
\]
for all $D\in\RR^{v\times d}$, $P\in\Prob^v$, and $R\in\OO^d$. We denote the set of right-spectral maps by $\calU_{\sfR}^{v\times d}$.
\end{theorem}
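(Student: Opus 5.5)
The plan is a direct substitution. The only structural input is that permutation matrices are orthogonal, so $P^\top P=I_v$ for every $P\in\Prob^v$.

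Fix $D\in\RR^{v\times d}$, $P\in\Prob^v$ and $R\in\OO^d$, and set $\tilde D\coloneqq PDR^\top$. First compute the right Gram matrix:
\[
    \tilde D^\top\tilde D = R D^\top P^\top P D R^\top = R(D^\top D)R^\top .
\]
The left permutation cancels here. This uses only $P^\top P=I_v$, so the same argument would apply to any orthogonal left factor.

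Since $D^\top D\in\bbS^d_+$ and congruence by an orthogonal matrix preserves positive semidefiniteness, $R(D^\top D)R^\top\in\bbS^d_+$. It therefore lies in the domain of $\Phi$, and the hypothesis $\Phi(RXR^\top)=R\Phi(X)R^\top$ with $X=D^\top D$ gives
\[
    \Phi(\tilde D^\top\tilde D)=R\,\Phi(D^\top D)R^\top .
\]

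Substituting into \eqref{eqn:right_spectral} then yields
\[
    \scrU_{\sfR}(\tilde D)=PDR^\top R\,\Phi(D^\top D)R^\top = P\,D\,\Phi(D^\top D)R^\top = P\,\scrU_{\sfR}(D)R^\top,
\]
using $R^\top R=I_d$. This is \eqref{eqn:left_perm_right_ortho}, so $\scrU_{\sfR}\in\calU_{\LPRO}^{v\times d}$.

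There is no genuine obstacle. The one point to check is that $\Phi$ is well defined as a matrix function of $X$, independent of the choice of eigenbasis $V$, so that the equivariance hypothesis is meaningful. The statement assumes this equivariance directly. When $\Phi$ comes from an absolutely symmetric $\psi$, it follows from the standard fact that spectral operators are well defined and satisfy $\Phi(RXR^\top)=R\Phi(X)R^\top$, because $RXR^\top$ has eigendecomposition $(RV)\Diag(\lambda)(RV)^\top$.
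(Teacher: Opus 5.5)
Your proposal is correct and follows the paper's proof essentially line for line: compute $(PDR^\top)^\top(PDR^\top)=R\,D^\top D\,R^\top$ using $P^\top P=I_v$, apply the orthogonal equivariance of $\Phi$, and cancel $R^\top R=I_d$. Your check that $R\,D^\top D\,R^\top\in\bbS_+^d$ lies in the domain of $\Phi$ is left implicit in the paper, and your observation is correct that only the orthogonality of $P$ is used, so right-spectral maps are in fact equivariant under arbitrary orthogonal left factors.
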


The choice $\Phi(X)=(X+\varepsilon I)^{-\half}$ yields the damped right-polar update
\[
    \scrU_{\sfR}(D) = D(D^\top D+\varepsilon I)^{-\half}.
\]
When $\varepsilon=0$ and $D$ has full column rank, this is the orthogonal polar factor of $D$. Thus, right-spectral updates are the one-sided analogue of spectral or polar-gradient updates, but they require only the smaller right Gram matrix $D^\top D\in\RR^{d\times d}$.

For LM heads, right-spectral updates preserve horizontality once the input direction is centered. Indeed, if $\mathbf 1_v^\top D_c=0$, then
\[
    \mathbf 1_v^\top D_c\Phi(D_c^\top D_c)=0 .
\]
Thus, in exact arithmetic, the final projection is not necessary for purely right-spectral maps. We nevertheless use the quotient-space form
\[
    \scrU_{\sfR}^{\rm LM}(D)
    =
    \Pi_v^\perp D_c(D_c^\top D_c+\varepsilon I)^{-\half},
    \qquad
    D_c=\Pi_v^\perp D,
\]
as a numerically robust horizontal update, especially when the inverse square root or polar factor is computed by an inexact oracle.

This computational distinction is important for embeddings and LM heads, where $v\gg d$. Although coordinate-wise adaptive methods such as \Adam are often used for these matrices in practice, they are not LPRO-equivariant. The reason for their empirical use may be computational rather than geometric: accurately approximating polar factors of tall-skinny, ill-conditioned vocabulary matrices can be challenging with simple Newton--Schulz iterations. More robust polar decomposition algorithms, such as QDWH \citep{nakatsukasa2010optimizing} and ZOLO-PD \citep{nakatsukasa2016computing}, can compute such updates more accurately \citep{lau2025polargrad}.

Right-spectral normalization is also natural statistically. In a mini-batch of size $b$, gradients of embedding and LM head matrices have rank at most $O(b)$, since they factor through token occurrences and hidden features. Thus, even when the vocabulary dimension is very large, the gradient often lies in a low-dimensional feature subspace. Right-spectral updates act on this intrinsic singular geometry through $D^\top D$, rather than on individual coordinates.

\subsubsection{Row-Norm and Hybrid LPRO-Equivariant Optimizers}
\label{subsubsec:general_LPRO}
Right-spectral maps do not exhaust all LPRO-equivariant updates. Restricting the left symmetry from the full orthogonal group to the permutation group allows update maps that depend on individual row norms. In particular, row-norm maps of the form 
\begin{equation}\label{eqn:row_norm}
    \scrU_{\row}(D) = \Diag(\eta(\|D_{1:}\|_2),\dots,\eta(\|D_{v:}\|_2))D
\end{equation}
are LPRO-equivariant for any scalar function $\eta\colon\Rp\to\RR$, because left multiplication by a permutation matrix permutes the row norms and right orthogonal transformations preserve them. We denote the set of such maps by $\calU_{\row}^{v\times d}$.

For LM heads, the horizontal version is
\begin{equation}\label{eqn:lm_head_horizontal_row_norm}
    \scrU_{\row}^{\rm LM}(D)
    =
    \Pi_v^\perp
    \Diag(\eta(\|D_{c,1:}\|_2),\dots,\eta(\|D_{c,v:}\|_2))D_c,
    \qquad
    D_c=\Pi_v^\perp D .
\end{equation}
This map is permutation equivariant and right-orthogonal equivariant on the horizontal quotient space, and it satisfies
\[
    \One_v^\top \scrU_{\row}^{\rm LM}(D)=0 .
\]
The final projection in \eqref{eqn:lm_head_horizontal_row_norm} is essential for row-norm updates, because row-dependent rescaling does not preserve the zero-row-mean constraint.

Thus, right-spectral maps form a global Gram-based subclass, while row-norm maps form a local row-adaptive subclass:
\[
    \calU_{\sfR}^{v\times d}\subset\calU_{\LPRO}^{v\times d},
    \qquad
    \calU_{\row}^{v\times d}\subset\calU_{\LPRO}^{v\times d}.
\]
Hybrid LPRO maps are obtained by composing these two types of maps. For LM heads, hybrid maps are obtained by the same compositions, but with horizontal projections inserted after rowwise nonlinear stages and on the final update. For example, a row-then-right-spectral horizontal hybrid update may be written as
\[
    D_0=\Pi_v^\perp D,\qquad
    D_1=\Pi_v^\perp
    \Diag(\eta(\|D_{0,1:}\|_2),\dots,\eta(\|D_{0,v:}\|_2))D_0,
\]
\[
    \scrU_{\hyb}^{\rm LM}(D)
    =
    \Pi_v^\perp D_1(D_1^\top D_1+\varepsilon I)^{-\half}.
\]

\begin{proposition}[Closure under composition]
\label{prop:closure_composition}
If $\scrU_1,\scrU_2\colon\RR^{v\times d}\to\RR^{v\times d}$ are both left-permutation and right-orthogonal equivariant, then $\scrU_2\circ \scrU_1$ is also left-permutation and right-orthogonal equivariant. 
\end{proposition}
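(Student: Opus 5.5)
The plan is to verify the defining identity \eqref{eqn:left_perm_right_ortho} directly for the composite map, applying the hypothesis on each factor in turn. No structural facts about $\scrU_1$ or $\scrU_2$ beyond \Cref{def:lpro_equivariant} are needed. In particular, neither the right-spectral form of \Cref{thm:right_spectral} nor the row-norm form \eqref{eqn:row_norm} has to be invoked.

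Fix arbitrary $D\in\RR^{v\times d}$, $P\in\Prob^v$, and $R\in\OO^d$.

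\begin{enumerate}
\item Apply the LPRO equivariance of $\scrU_1$ to obtain $\scrU_1(PDR^\top)=P\,\scrU_1(D)R^\top$.
\item Set $D'\coloneqq\scrU_1(D)\in\RR^{v\times d}$. This is again an admissible input, since $\scrU_1$ maps $\RR^{v\times d}$ into $\RR^{v\times d}$.
\item Apply the LPRO equivariance of $\scrU_2$ at the input $D'$, with the same pair $(P,R)$, to get
\[
(\scrU_2\circ\scrU_1)(PDR^\top)=\scrU_2(PD'R^\top)=P\,\scrU_2(D')R^\top=P\,(\scrU_2\circ\scrU_1)(D)R^\top.
\]
\end{enumerate}

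Since $D$, $P$, and $R$ were arbitrary, $\scrU_2\circ\scrU_1\in\calU_{\LPRO}^{v\times d}$.

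There is no real obstacle here. The only point needing care is that the quantifier in \Cref{def:lpro_equivariant} ranges over all inputs $D$, so equivariance can be applied to the intermediate matrix $\scrU_1(D)$ rather than only to gradients. The same argument works for any group action.

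A remark can follow the proof. By induction, the argument extends to finite compositions, which covers both orderings of $\scrU_{\hyb}$. It also extends to the LM head variants, provided $\Pi_v^\perp$ is itself LPRO-equivariant. This holds because $P\One_v=\One_v$ gives $P\Pi_v^\perp P^\top=\Pi_v^\perp$, and therefore $\Pi_v^\perp(PDR^\top)=P(\Pi_v^\perp D)R^\top$.
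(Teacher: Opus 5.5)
Your proof is correct and is exactly the paper's argument: apply the LPRO equivariance of $\scrU_1$ to pull $(P,R)$ outside, then apply the LPRO equivariance of $\scrU_2$ at the intermediate matrix $\scrU_1(D)$. Your closing remark is also correct and useful for the projected LM head variants. Since $P\Pi_v^\perp=\Pi_v^\perp P$, the projector $\Pi_v^\perp$ is itself LPRO-equivariant, so the same argument covers compositions that include it.
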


\begin{definition}[Hybrid LPRO-equivariant maps]
A map $\scrU$ is called a \emph{hybrid LPRO-equivariant map} if it is a finite composition of right-spectral maps and row-norm maps. We denote the set of such maps by $\calU_{\hyb}^{v\times d}$. 
\end{definition}

Accordingly, LPRO-compatible optimizers for embeddings are obtained by applying maps in $\calU_{\sfR}^{v\times d}$, $\calU_{\row}^{v\times d}$, or $\calU_{\hyb}^{v\times d}$ to update directions that transform in the same way as the gradient, such as momentum buffers formed from past gradients. For untied LM heads, the corresponding optimizer should additionally be horizontal with respect to the shared-logit-shift quotient, which is enforced by the projections $\Pi_v^\perp$ above.

\begin{remark}
Unlike the bi-orthogonal case, LPRO equivariance does not characterize a single spectral form. Right-spectral updates are a canonical global subclass, but row-norm and hybrid maps are also symmetry-compatible. This distinction is important: embeddings and LM heads have a discrete vocabulary symmetry on the left, not a full continuous orthogonal symmetry, so row-aware operations are allowed by the layer geometry. For LM heads, softmax shift invariance imposes an additional horizontal constraint; the actual update should be projected back to the quotient space whenever rowwise nonlinear operations are used.
\end{remark}

\paragraph{Examples.}
Several recent optimizers can be interpreted through this framework. SCALE \citep{glentis2025minimalist} applies column normalization to the EMA momentum for LM heads; under our row-vocabulary convention, this corresponds to a row-norm-based update. Other row- or column-norm-based optimizers include RMNP \citep{deng2026rmnp}, REG \citep{liu2025reg} and \textsc{Nora} \citep{yuan2026nora}. Finally, \NorMuon \citep{li2025normuon}, \textsc{Muon}+ \citep{zhang2026muon+}, and \textsc{MuonEq} \citep{chang2026muoneq}, which apply row-wise and/or column-wise normalization to the orthogonal polar factor of the EMA momentum, can be viewed as hybrid spectral/row-norm optimizers.

\subsection{Optimizers for SwiGLU MLP Projections}
\label{subsec:swiglu}
We next consider SwiGLU MLP projection matrices \citep{dauphin2017language,shazeer2020glu}. Unlike ordinary linear and attention projection matrices, SwiGLU projections do not possess full bi-orthogonal symmetry. Instead, their natural symmetry is the permutation symmetry of intermediate neurons. This suggests that the gate and up projections should use left-permutation/right-orthogonal equivariant updates, while the down projection should use the corresponding transposed geometry.
Concretely, for $W_{\mathrm{gate}},W_{\mathrm{up}}\in\RR^{d_{\mathrm{ff}}\times d_{\mathrm{model}}}$ and $W_{\mathrm{down}}\in\RR^{d_{\mathrm{model}}\times d_{\mathrm{ff}}}$, we apply the LPRO-compatible optimizer classes of \Cref{subsec:lpro} to $W_{\mathrm{gate}}$, $W_{\mathrm{up}}$, and $W_{\mathrm{down}}^\top$.

This viewpoint is closely related to Aurora \citep{aurora2026leverage}, an optimizer designed for the up and gate projections in SwiGLU MLPs. Aurora alternates between row-normalization and polar steps on the momentum, which is similar in spirit to the hybrid row-norm/right-spectral optimizers developed above. Our contribution here is to derive this type of update from the intermediate-neuron permutation symmetry of the SwiGLU block.

\begin{proposition}[Intermediate-neuron permutation symmetry of SwiGLU MLPs]
\label{prop:swiglu_permutation_symmetry}
Let us consider the SwiGLU MLP block defined by 
\[
\mathrm{SwiGLU}(x;W_{\mathrm{gate}},W_{\mathrm{up}},W_{\mathrm{down}}) \coloneqq W_{\mathrm{down}} \left(\sigma(W_{\mathrm{gate}}x)\odot(W_{\mathrm{up}}x) \right),
\]
where $W_{\mathrm{gate}},W_{\mathrm{up}}\in\RR^{d_{\mathrm{ff}}\times d_{\mathrm{model}}}$, $W_{\mathrm{down}}\in\RR^{d_{\mathrm{model}}\times d_{\mathrm{ff}}}$, $\sigma$ is applied coordinatewise, and $\odot$ denotes the Hadamard product. Let $P\in\Prob^{d_{\mathrm{ff}}}$ be a permutation matrix, and define $\tW_{\mathrm{gate}} \coloneqq PW_{\mathrm{gate}}$, $\tW_{\mathrm{up}} \coloneqq PW_{\mathrm{up}}$, and $\tW_{\mathrm{down}} \coloneqq W_{\mathrm{down}}P^\top$.
Then, for every input $x\in\RR^{d_{\mathrm{model}}}$,
\[
\mathrm{SwiGLU}(x; \tW_{\mathrm{gate}}, \tW_{\mathrm{up}}, \tW_{\mathrm{down}}) = \mathrm{SwiGLU}(x;W_{\mathrm{gate}},W_{\mathrm{up}},W_{\mathrm{down}}).
\]
\end{proposition}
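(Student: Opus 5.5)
The plan is a direct computation that uses two elementary facts about permutation matrices: they commute with coordinatewise maps, and they distribute over the Hadamard product. First I would record these facts. Let $P\in\Prob^{d_{\mathrm{ff}}}$ correspond to a permutation $\pi$, so that $(Pz)_i=z_{\pi(i)}$. Then for any $z\in\RR^{d_{\mathrm{ff}}}$ and any scalar function $\sigma$ applied coordinatewise, we have $\sigma(Pz)=P\sigma(z)$, since both sides have $i$-th entry $\sigma(z_{\pi(i)})$. Similarly, for $a,b\in\RR^{d_{\mathrm{ff}}}$ we have $(Pa)\odot(Pb)=P(a\odot b)$, since both sides have $i$-th entry $a_{\pi(i)}b_{\pi(i)}$.

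Second, I would substitute the transformed weights into the definition of $\mathrm{SwiGLU}$. Fix $x$ and write $g\coloneqq W_{\mathrm{gate}}x$ and $u\coloneqq W_{\mathrm{up}}x$. Then $\tW_{\mathrm{gate}}x=Pg$ and $\tW_{\mathrm{up}}x=Pu$, so the hidden activation becomes $\sigma(Pg)\odot(Pu)=(P\sigma(g))\odot(Pu)=P(\sigma(g)\odot u)$ by the two facts above.

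Third, I would apply the down projection: $\tW_{\mathrm{down}}P(\sigma(g)\odot u)=W_{\mathrm{down}}P^\top P(\sigma(g)\odot u)=W_{\mathrm{down}}(\sigma(g)\odot u)$. The last equality uses $P^\top P=I_{d_{\mathrm{ff}}}$, which holds because permutation matrices are orthogonal ($\Prob^{d}\subset\OO^{d}$). This is exactly the original output, which proves the claim for every $x$.

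None of these steps is a real obstacle. The only point that needs care is the first one: stating precisely that a permutation commutes with entrywise operations. It is also worth noting why the argument stops at permutations. For a general orthogonal or invertible $P$, the identity $\sigma(Pz)=P\sigma(z)$ fails for nonlinear $\sigma$, and so does the distribution over the Hadamard product. This is why the symmetry group here is $\Prob^{d_{\mathrm{ff}}}$ and not $\OO^{d_{\mathrm{ff}}}$, which matches the LPRO assignment for $W_{\mathrm{gate}}$, $W_{\mathrm{up}}$, and $W_{\mathrm{down}}^\top$.
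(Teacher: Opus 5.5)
Your proposal is correct and follows the paper's proof essentially line for line. Both use $\sigma(Pz)=P\sigma(z)$ and $(Pa)\odot(Pb)=P(a\odot b)$ to pull $P$ out of the hidden activation, then cancel it against the down projection via $P^\top P=I$; your entrywise justification through $(Pz)_i=z_{\pi(i)}$ is just a slightly more explicit version of what the paper asserts directly.
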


\begin{remark}[Failure of full left-orthogonal symmetry]
The permutation matrix $P$ in \Cref{prop:swiglu_permutation_symmetry} cannot generally be replaced by an arbitrary orthogonal matrix $Q\in\OO^{d_{\mathrm{ff}}}$. For an elementwise nonlinearity $\sigma$, one typically has $\sigma(Qz)\neq Q\sigma(z)$, and similarly $(Qa)\odot(Qb)\neq Q(a\odot b)$ for a general orthogonal matrix $Q$. Thus, the SwiGLU intermediate dimension has a discrete permutation symmetry, not a full left-orthogonal symmetry.
\end{remark}

\Cref{prop:swiglu_permutation_symmetry} implies that optimizers for $W_{\mathrm{gate}}$ and $W_{\mathrm{up}}$ should commute with permutations of their rows and orthogonal changes of basis in the model dimension. Thus, the same LPRO-compatible row-norm, right-spectral, and hybrid row-norm/right-spectral updates developed for embeddings and LM heads apply directly to these matrices. For the down projection, the intermediate-neuron axis appears as the column dimension, so the same principle applies to $W_{\mathrm{down}}^\top$. Equivalently, one may view the down-projection update as a right-permutation/left-orthogonal analogue of the LPRO updates.

More generally, this intermediate-neuron permutation symmetry is not specific to SwiGLU. Any feed-forward block whose hidden nonlinearity is applied coordinatewise admits a permutation symmetry of the hidden units: simultaneously permuting the rows of the input projection and the corresponding columns of the output projection leaves the represented function unchanged. For gated MLPs such as GLU, GeGLU, ReGLU, and SwiGLU, the same symmetry holds provided the gate and value projections are permuted together, along with the corresponding columns of the down projection.

\subsection{Optimizers for \MoE Routers}
\label{subsec:moe_router}
We now consider the router matrix in a mixture-of-experts (\MoE) model. Unlike ordinary linear layers, embeddings, and LM heads, the router has an additional symmetry: experts are interchangeable, and the softmax is invariant under adding a shared scalar offset to all expert logits.

Let $W\in\RR^{e\times d}$ denote the router matrix, where $e\in\NN^*$ is the number of experts and $d\in\NN^*$ is the hidden dimension. The routing distribution is $p(x;W)=\softmax(Wx)$ for $x\in\RR^d$. Since permuting the rows of $W$ only relabels the experts, and since
$\softmax(z+c\One_e)=\softmax(z)$ for all $z\in\RR^e$ and $c\in\RR$, the router parameters admit the symmetry
\[
(\forall P\in\Prob^e,\forall a\in\RR^d)\qquad W\mapsto PW+\One_e a^\top .
\]
Indeed,
\[
(\forall x\in\RR^d)\qquad (PW+\One_e a^\top)x=P(Wx)+(a^\top x)\One_e,
\]
so the router logits are unchanged up to expert relabeling and a shared logit shift.

This symmetry suggests that router updates should be defined on the centered expert geometry. Let
\[
    \Pi_\perp\coloneqq I_e-\frac1e\One_e\One_e^\top
\]
be the orthogonal projector onto $\One_e^\perp$, and define $D_c\coloneqq \Pi_\perp D$. 

The centered direction $D_c$ removes the shared-row component of $D$ and captures the intrinsic variation across experts. This motivates router update maps built from the centered direction, for example through the centered left Gram matrix
\[
    D_cD_c^\top=\Pi_\perp DD^\top\Pi_\perp .
\]

\begin{definition}[Router-compatible update maps]
\label{def:router_equivariance}
A map $\scrU\colon\RR^{e\times d}\to\RR^{e\times d}$ is called \emph{router-compatible} if it is expert-permutation equivariant, shared-row-shift invariant, and horizontal, namely,
\[
    \scrU(PD+\One_e a^\top)=P\,\scrU(D),
    \qquad
    \One_e^\top \scrU(D)=0,
\]
for all $D\in\RR^{e\times d}$, all permutation matrices $P\in\Prob^e$, and all $a\in\RR^d$.
\end{definition}

The horizontality condition reflects the quotient geometry induced by softmax shift invariance. An update component in the shared-row direction $\One_e a^\top$ changes all expert logits by the same scalar and therefore does not affect the router probabilities. A geometry-compatible router update should therefore lie in the horizontal subspace
\[
    \{U\in\RR^{e\times d}:\One_e^\top U=0\}.
\]

We consider two basic router-compatible update families. The first is a left-spectral update in the centered expert geometry:
\[
    \scrU_{\sfL}^{\router}(D)=\Psi(D_cD_c^\top)D_c,
    \qquad
    D_c=\Pi_\perp D,
\]
where $\Psi\colon\bbS_+^e\to\RR^{e\times e}$ is permutation equivariant and preserves the centered expert subspace. For example, the damped centered left-polar update corresponds to
\[
    \Psi(X)=(X+\varepsilon \Pi_\perp)^{-\half}
\]
on $\One_e^\perp$, or equivalently to applying the inverse square root on the nonzero centered expert subspace. Since left multiplication by a matrix acting on the centered expert space preserves $\One_e^\perp$, this update is horizontal:
\[
    \One_e^\top \scrU_{\sfL}^{\router}(D)=0 .
\]

The second family is a centered row-norm update. Importantly, for row-norm maps it is not sufficient to center the input direction once. Although $D_c=\Pi_\perp D$ satisfies $\One_e^\top D_c=0$, the rowwise-rescaled matrix
\[
    \Diag(\eta(\|D_{c,1:}\|_2),\dots,\eta(\|D_{c,e:}\|_2))D_c
\]
need not remain centered, because the scaling factors vary across expert rows. Therefore the router-compatible row-norm update is
\[
    \scrU_{\row}^{\router}(D)
    =
    \Pi_\perp
    \Diag(\eta(\|D_{c,1:}\|_2),\dots,\eta(\|D_{c,e:}\|_2))D_c,
    \qquad
    D_c=\Pi_\perp D,
\]
where $\eta\colon\Rp\to\RR$ is applied pointwise to the centered expert-row norms. The final projection is essential: it ensures that the actual parameter update, not only the input to the row-norm map, lies in the horizontal router subspace.

\begin{proposition}[Router-compatible update families]
\label{prop:router_compatible_updates}
Assume that $\Psi$ is permutation equivariant and maps the centered expert subspace to itself. Then the centered left-spectral update
\[
    \scrU_{\sfL}^{\router}(D)=\Psi(D_cD_c^\top)D_c,
    \qquad D_c=\Pi_\perp D,
\]
is router-compatible. Moreover, for any scalar function $\eta\colon\Rp\to\RR$, the projected centered row-norm update
\[
    \scrU_{\row}^{\router}(D)
    =
    \Pi_\perp
    \Diag(\eta(\|D_{c,1:}\|_2),\dots,\eta(\|D_{c,e:}\|_2))D_c,
    \qquad D_c=\Pi_\perp D,
\]
is router-compatible. In particular, for all $D\in\RR^{e\times d}$, $P\in\Prob^e$, and $a\in\RR^d$,
\[
    \scrU_{\sfL}^{\router}(PD+\One_e a^\top)=P\,\scrU_{\sfL}^{\router}(D),
    \qquad
    \scrU_{\row}^{\router}(PD+\One_e a^\top)=P\,\scrU_{\row}^{\router}(D),
\]
and both updates satisfy
\[
    \One_e^\top \scrU_{\sfL}^{\router}(D)=0,
    \qquad
    \One_e^\top \scrU_{\row}^{\router}(D)=0 .
\]
\end{proposition}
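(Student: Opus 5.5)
The argument reduces both families to three elementary facts about the centering projector $\Pi_\perp = I_e - \frac1e\One_e\One_e^\top$:
\begin{enumerate}
\item[(i)] $\Pi_\perp \One_e = 0$, so $\Pi_\perp \One_e a^\top = 0$;
\item[(ii)] $P\One_e=\One_e$ and $P^\top\One_e=\One_e$ for $P\in\Prob^e$, hence $P\Pi_\perp = \Pi_\perp P$;
\item[(iii)] $\One_e^\top\Pi_\perp = 0$.
\end{enumerate}
From (i) and (ii), the centered direction of the transformed input is
\[
(PD+\One_e a^\top)_c=\Pi_\perp(PD+\One_e a^\top)=P\,\Pi_\perp D=PD_c .
\]
So for both families it suffices to show that the map $D_c\mapsto$ (update) is permutation equivariant and lands in $\One_e^\perp$.

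\textbf{Left-spectral family.} The Gram matrix transforms as $(PD_c)(PD_c)^\top = P\,D_cD_c^\top P^\top$. Permutation equivariance of $\Psi$ means $\Psi(PXP^\top)=P\Psi(X)P^\top$, which gives
\[
\Psi(PD_cD_c^\top P^\top)\,PD_c = P\,\Psi(D_cD_c^\top)P^\top P\,D_c = P\,\scrU_{\sfL}^{\router}(D),
\]
using $P^\top P=I_e$. For horizontality, each column of $D_c$ lies in $\One_e^\perp$. Since $\Psi(D_cD_c^\top)$ maps the centered subspace into itself, each column of $\Psi(D_cD_c^\top)D_c$ also lies in $\One_e^\perp$, i.e., $\One_e^\top\scrU_{\sfL}^{\router}(D)=0$.

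The one delicate point is the damped example $(X+\varepsilon\Pi_\perp)^{-\half}$, which must be read as an operator defined on $\One_e^\perp$ (for instance, extended by zero on $\mathrm{span}(\One_e)$). I would note this in a sentence. Once that convention is fixed, the hypothesis ``maps the centered subspace to itself'' is exactly what the argument needs, so no extra work arises.

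\textbf{Row-norm family.} Write $\Lambda(D_c)\coloneqq\Diag(\eta(\|D_{c,1:}\|_2),\dots,\eta(\|D_{c,e:}\|_2))$. Row $i$ of $PD_c$ is row $\pi^{-1}(i)$ of $D_c$, so the vector of row norms is permuted accordingly. This gives $\Lambda(PD_c)=P\Lambda(D_c)P^\top$, and hence $\Lambda(PD_c)PD_c=P\Lambda(D_c)D_c$. Applying $\Pi_\perp$ and commuting it past $P$ by (ii) yields $\scrU_{\row}^{\router}(PD+\One_e a^\top)=P\,\scrU_{\row}^{\router}(D)$. Horizontality follows directly from (iii), since the outer $\Pi_\perp$ gives $\One_e^\top\scrU_{\row}^{\router}(D)=\One_e^\top\Pi_\perp(\cdots)=0$.

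I do not expect any step to be a real obstacle; the proof is bookkeeping. The only place requiring care is the identity $\Lambda(PD_c)=P\Lambda(D_c)P^\top$, where the direction of the index permutation should be checked explicitly.
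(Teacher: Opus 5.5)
Your proposal is correct and follows essentially the same route as the paper's proof: derive $\widetilde D_c = PD_c$ from $\Pi_\perp P = P\Pi_\perp$ and $\Pi_\perp \One_e a^\top = 0$. Then use permutation equivariance of $\Psi$ (respectively, the conjugation identity for the diagonal row-scaling matrix) for equivariance. Horizontality comes from invariance of $\One_e^\perp$ under $\Psi(D_cD_c^\top)$ (respectively, from the outer $\Pi_\perp$). Your remark that the damped example must be read as an operator on $\One_e^\perp$ is a sensible clarification that the paper leaves implicit.
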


The converse is false in general: router compatibility does not force an update map to be left-spectral. Projected centered row-norm updates are already router-compatible, but they depend on individual centered expert-row norms rather than only on the centered Gram matrix $D_cD_c^\top$. Thus, left-spectral and row-norm updates should be viewed as two natural subclasses of router-compatible maps.

Hybrid router maps are obtained by composing router-compatible maps and projecting the final update back to the horizontal subspace. If $\scrU_1$ and $\scrU_2$ are router-compatible, then the horizontally projected composition
\[
    \Pi_\perp(\scrU_2\circ\scrU_1)
\]
is also router-compatible. Therefore, finite compositions of centered left-spectral and projected centered row-norm updates remain router-compatible. A representative row-after-left-spectral hybrid router update is
\[
    Z=\Psi(D_cD_c^\top)D_c,\qquad D_c=\Pi_\perp D,
\]
\[
    \scrU_{\hyb}^{\router}(D)
    =
    \Pi_\perp
    \Diag(\eta(\|Z_{1:}\|_2),\dots,\eta(\|Z_{e:}\|_2))Z .
\]
Such hybrid router optimizers combine the global expert-mixing geometry of left-spectral updates with the local expert-wise normalization of row-norm updates, while preserving expert-permutation equivariance, shared-row-shift invariance, and horizontality of the actual update.

\begin{definition}[Router-compatible optimizers]
A matrix optimizer for an \MoE router is called \emph{router-compatible} if its update rule has the form
\[
    (\forall k\in\NN)\qquad W_{k+1}=W_k-\gamma_k\scrU(D_k),
\]
where $\scrU$ is router-compatible and $D_k\in\RR^{e\times d}$ is an update direction that transforms under expert permutations and shared row shifts in the same way as the gradient.
\end{definition}

\begin{remark}[Why the final projection matters]
The final projection in the row-norm and hybrid router updates is not a numerical detail. Centering the direction $D_c=\Pi_\perp D$ removes the shared-row component before computing row norms, but row-dependent rescaling generally reintroduces a shared-row component. The final projection $\Pi_\perp$ removes this component and ensures that the update is horizontal. This is the router analogue of the horizontal update condition for untied LM heads under softmax shared-logit-shift invariance.
\end{remark}

\begin{remark}
Left- and right-spectral optimizers bear some formal resemblance to one-sided \Shampoo \citep{xie2025structured,li2026convergence} and ASGO \citep{an2025asgo}. A key difference is that our framework applies the spectral transformation directly to the current update direction, or to a symmetry-compatible momentum direction, rather than maintaining moving averages of Gram matrices. More importantly, our design principle identifies which layer types such one-sided updates are appropriate for, according to the symmetry group of the corresponding parameter block.
\end{remark}

\begin{remark}[Connection to non-Euclidean norm-based steepest descent]
This symmetry-based construction differs from non-Euclidean steepest descent approaches based on choosing a matrix norm \citep{bernstein2024old,veprikov2025preconditioned,xu2026width}. Row-wise or column-wise normalizations generally preserve only one-sided or permutation symmetries, rather than full bi-orthogonal symmetry. Thus, while such geometries may be useful in layer-specific settings, they should be distinguished from fully spectral constructions for ordinary matrix layers.
\end{remark}

\begin{table*}[t]
\centering
\small
\begin{tabular}{lll}
    \toprule
    \textbf{Layer} &
    \textbf{Symmetry group} &
    \textbf{Optimizer classes} \\
    \midrule
    Linear / MLP / attention weights
        & $\OO^{d_\mathrm{out}}\times \OO^{d_\mathrm{in}}$
        & full spectral \\
    Embedding
        & $\Prob^v \times \OO^d$
        & LPRO: row-norm / right-spectral / hybrid \\
    \multirow{2}{*}{LM head}
        & $\Prob^v \times \OO^d$
        & LPRO: row-norm / right-spectral / hybrid \\
        & $\RR^{v\times d}/(\One_v\RR^{1\times d})$
        & projected updates \\
    SwiGLU MLP $(W_{\mathrm{gate}},W_{\mathrm{up}},W_{\mathrm{down}}^\top)$
        & $\Prob^{d_{\mathrm{ff}}}\times \OO^d$
        & LPRO: row-norm / right-spectral / hybrid \\
    \multirow{2}{*}{\MoE router}
        & $\Prob^e \times (\One_e\RR^{1\times d})$
        & centered row-norm / left-spectral / hybrid \\
        & $\RR^{e\times d}/(\One_e\RR^{1\times d})$
        & projected updates \\
    \bottomrule
\end{tabular}
\caption{
    \textbf{Optimizer classes across LLM layers induced by symmetry.}
    Each matrix parameter has a natural symmetry group, which determines the corresponding symmetry-compatible optimizer class. For LM heads and \MoE routers, softmax shared-logit-shift invariance further induces quotient geometries; projected updates remove the shared-row direction so that the actual update acts on the non-redundant degrees of freedom.
}
\label{tab:optimizer-classes}
\end{table*}

\subsection{Symmetry-to-Optimizer Principle and Architecture--Optimizer Co-Design}
\label{subsec:symmetry_to_optimizer}
The preceding developments suggest a unifying principle for optimizer design in modern deep learning architectures: the optimizer geometry should be determined by the symmetry structure of the underlying parameterization. This does not require the full layerwise loss to be globally invariant under every group action considered. Rather, it is enough that the gradient, or more generally the chosen update direction, transforms equivariantly under the relevant representation of the parameter block. The optimizer update should then transform in the same way, and should lie in the corresponding quotient or horizontal subspace whenever the parameterization contains symmetry-redundant directions.

Accordingly, whenever the update direction associated with a layer parameter $W$ transforms under a symmetry group $\calG$, a natural optimizer should use an update map $\scrU$ that is equivariant under the same induced action. For matrix-valued parameters, this principle leads to several canonical optimizer geometries: full spectral updates under bi-orthogonal symmetry; right-spectral updates under right-orthogonal symmetry; left-spectral updates under left-orthogonal symmetry; row-norm and hybrid updates under left-permutation/right-orthogonal symmetry; and projected centered left-spectral or row-norm updates under the expert-permutation and shared-row-shift quotient geometry of \MoE routers. For LM heads, the same softmax shared-logit-shift invariance induces a horizontal quotient constraint on the vocabulary rows. Thus, the symmetry structure of the layer determines not only the appropriate optimizer class, but also whether the final update should be projected to remove redundant directions.

This principle also gives a practical recipe for architecture--optimizer co-design. Given a new architecture block, one should:
\begin{enumerate}
    \item identify the symmetry group of the parameterization;
    \item determine whether the symmetry acts on the left, on the right, on both sides, or only after quotienting out symmetry-redundant directions;
    \item choose the matching symmetry-compatible optimizer class;
    \item project the update onto the horizontal subspace when the parameterization has redundant shift directions; and
    \item use the smallest Gram matrix or invariant statistic compatible with that symmetry.
\end{enumerate}
In \Cref{subsec:practical}, we instantiate this recipe by extending momentum \PolarGrad \citep{lau2025polargrad} to one-sided and hybrid variants, including \LeftPolarGradM, \RightPolarGradM, \RowNormM, and \HybridPolarGradM.

\begin{remark}[Projected and proximal extensions]
The symmetry-compatible optimizer classes above extend naturally to projected and proximal updates. If $\calT(D_k)$ is $\calG$-equivariant and the constraint set $\calC$ is $\calG$-invariant, then the projected update
\[
    W_{k+1}=\proj_{\calC}\bigl(W_k-\gamma_k\calT(D_k)\bigr)
\]
is symmetry-compatible, since the Euclidean projection onto a $\calG$-invariant set is $\calG$-equivariant whenever it is uniquely defined. Similarly, if $h$ is a $\calG$-invariant regularizer, then
\[
    W_{k+1}=\prox_{\gamma_k h}\bigl(W_k-\gamma_k\calT(D_k)\bigr)
\]
preserves the same symmetry whenever the proximal map is uniquely defined. In the bi-orthogonal setting, this includes unitarily invariant constraints and regularizers, such as spectral-norm, nuclear-norm, rank, and Schatten-$p$ constraints or penalties. We leave a systematic study of such variants, including \ProxPolarGrad, to future work.
\end{remark}

\subsection{Practical Optimizers for Embeddings, LM Heads, SwiGLU MLP Projections, and \MoE Routers}
\label{subsec:practical}
The preceding subsections introduced three main classes of symmetry-compatible optimizers: one-sided spectral optimizers, row-norm-based optimizers, and hybrid variants obtained by composing row-wise normalization with one-sided spectral updates. We now describe their practical momentum implementations. The main computational issue is the efficient and stable approximation of matrix inverse square roots, or equivalently orthogonal polar factors, which we compute using GPU-friendly numerical linear algebra routines. In addition, for untied LM heads and \MoE routers, the practical updates must respect the horizontal quotient geometry induced by softmax shared-logit-shift invariance. Thus, when row-wise nonlinear operations are used for these blocks, we project the final update back to the corresponding centered subspace.

\subsubsection{One-Sided Spectral Optimizers}
For embedding, LM head, and SwiGLU MLP projection matrices, the relevant matrices are often tall-skinny. In this regime, right-spectral updates are attractive because they only require the inverse square root of the smaller right Gram matrix
\[
C_k \coloneqq G_k^\top G_k
\qquad\text{or, with momentum,}\qquad
C_k \coloneqq M_k^\top M_k .
\]
For an untied LM head, we use the centered momentum direction
\[
    \Pi_v^\perp \coloneqq I_v-\frac{1}{v}\One_v\One_v^\top,
    \qquad
    M_{k,c}\coloneqq \Pi_v^\perp M_k,
\]
so that the update removes the shared vocabulary-logit-shift direction. The corresponding right Gram matrix is
\[
    C_k \coloneqq M_{k,c}^\top M_{k,c}.
\]
For \MoE routers, the corresponding left-spectral update is applied in the centered expert geometry. Writing
\[
    \Pi_\perp \coloneqq I_e-\frac{1}{e}\One_e\One_e^\top,
    \qquad
    M_{k,c}\coloneqq \Pi_\perp M_k,
\]
the relevant Gram matrix is
\[
    C_k \coloneqq M_{k,c}M_{k,c}^\top .
\]

To compute the inverse square roots in these updates, we use Newton--Schulz iterations with the polynomial coefficients of \textsc{Polar Express} \citep{amsel2025polar}. This is motivated by the connection between polynomial iterations for polar decomposition and inverse-square-root computation \citep{higham1997stable}. For numerical stability, the Gram inverse-square-root implementation is performed in \texttt{float32}. Other fast inverse-square-root routines, such as PRISM \citep{yang2026prism}, could be used in the same role. For \RightPolarGradM, we also provide a Gram Newton--Schulz implementation \citep{zhang2026gram}, which directly approximates
\[
    M_k(M_k^\top M_k)^{-\half}
\]
and supports more inner iterations in lower-precision formats such as \texttt{bfloat16} or \texttt{float16}. The resulting one-sided momentum polar-gradient algorithms are summarized in \Cref{alg:one-sided_polargrad}.

\begin{algorithm}[h!]
	\caption{\LeftPolarGradM and \RightPolarGradM}
	\label{alg:one-sided_polargrad}
    	\begin{algorithmic}[1]
    		\REQUIRE $W_0\in\RR^{m\times n}$, $M_{-1}=0$, learning rates $\{\gamma_k\}_{k\ge 0}$, momentum $\beta\in[0,1)$, scaling exponent $\alpha\in[0,1]$, damping $\varepsilon>0$, weight decay $\lambda\ge0$
    		\FOR{$k=0, \ldots, K-1$}
    			\STATE $G_k = \nabla_W f(W_k)$
    			\STATE $M_k = \beta M_{k-1} + (1-\beta)G_k$
    			\IF{\LeftPolarGradM}
    				\STATE $C_k = M_k M_k^\top$
    				\STATE $L_k = (C_k + \varepsilon I)^{-\half}$ via \textsc{Polar Express} or Newton--Schulz iteration
    				\STATE $\nu_k = \max\{\tr(C_k L_k), \varepsilon\}$
    				\STATE $W_{k+1} = (1-\gamma_k\lambda)W_k - \gamma_k \nu_k^\alpha L_k M_k$
    			\ELSIF{\RightPolarGradM}
    				\STATE $C_k = M_k^\top M_k$
    				\STATE $R_k = (C_k + \varepsilon I)^{-\half}$ via \textsc{Polar Express} or Newton--Schulz iteration
    				\STATE $\nu_k = \max\{\tr(C_k R_k), \varepsilon\}$
    				\STATE $W_{k+1} = (1-\gamma_k\lambda)W_k - \gamma_k \nu_k^\alpha M_k R_k$
    			\ENDIF
    		\ENDFOR
    		\ENSURE $W_K$
	\end{algorithmic}
\end{algorithm}

At the level of exact polar decomposition, \LeftPolarGradM and \RightPolarGradM compute the same polar direction whenever both sides are well-defined. Their distinction is therefore computational and architectural: they differ in which Gram matrix is formed, which inverse square root is computed, and which layer symmetry they are intended to respect. For LM heads and \MoE routers, the one-sided spectral updates are applied to centered momentum directions when the corresponding softmax shared-logit-shift quotient geometry is present.

\subsubsection{Row-Norm-Based and Hybrid Variants}
For embedding, LM head, and SwiGLU MLP projection matrices, row-norm-based updates provide a cheaper LPRO-compatible alternative. Given a momentum direction $M_k$, define
\[
D_\eta(M_k)
\coloneqq
\Diag(\eta(\|M_{k,1:}\|_2),\dots,\eta(\|M_{k,m:}\|_2)).
\]
For embeddings and SwiGLU gate/up projections, a row-norm update takes the form
\[
    \calT_{\row}(M_k)=D_\eta(M_k)M_k .
\]
Here $\eta$ may be chosen as a bounded row-scaling rule or as a smoothed normalization rule such as $\eta(t)=1/(t+\varepsilon)$.

For untied LM heads, the update must also remove the shared vocabulary-logit-shift direction. Let
\[
    \Pi_v^\perp \coloneqq I_v-\frac1v\One_v\One_v^\top,
    \qquad
    M_{k,c}\coloneqq \Pi_v^\perp M_k .
\]
The horizontal LM head row-norm update is
\[
    \calT_{\row}^{\rm LM}(M_k)
    =
    \Pi_v^\perp D_\eta(M_{k,c})M_{k,c}.
\]
The final projection is essential for row-norm updates: even though $\One_v^\top M_{k,c}=0$, row-dependent rescaling by $D_\eta(M_{k,c})$ need not preserve the zero-row-mean constraint.

Hybrid variants combine row-wise scaling with a one-sided spectral step. For embeddings and SwiGLU gate/up projections, there are two natural orders:
\[
    \text{right-spectral/row-norm:}
    \qquad
    Z_k=M_k(M_k^\top M_k+\varepsilon I)^{-\half},
    \qquad
    \calT_{\hyb}(M_k)=D_\eta(Z_k)Z_k,
\]
and
\[
    \text{row-norm/right-spectral:}
    \qquad
    Z_k=D_\eta(M_k)M_k,
    \qquad
    \calT_{\hyb}(M_k)=Z_k(Z_k^\top Z_k+\varepsilon I)^{-\half}.
\]
Both remain left-permutation and right-orthogonal equivariant. For LM heads, the same constructions are applied to the centered momentum $M_{k,c}=\Pi_v^\perp M_k$, with horizontal projections inserted after rowwise nonlinear stages and on the final update. For example, the row-norm/right-spectral order becomes
\[
    Z_k=\Pi_v^\perp D_\eta(M_{k,c})M_{k,c},
    \qquad
    \calT_{\hyb}^{\rm LM}(M_k)
    =
    \Pi_v^\perp Z_k(Z_k^\top Z_k+\varepsilon I)^{-\half}.
\]

For \MoE routers, the same principle applies in the centered expert geometry. Writing
\[
    \Pi_\perp \coloneqq I_e-\frac1e\One_e\One_e^\top,
    \qquad
    M_{k,c}=\Pi_\perp M_k,
\]
the router row-norm update is
\[
    \calT_{\row}^{\router}(M_k)
    =
    \Pi_\perp D_\eta(M_{k,c})M_{k,c}.
\]
Hybrid router updates combine a centered left-spectral step with projected row-wise normalization across experts. We refer to these practical hybrid variants collectively as \HybridPolarGradM. The algorithms for \RowNormM and \HybridPolarGradM are summarized in \Cref{alg:row_hybrid_practical}.

\begin{algorithm}[h!]
\caption{\RowNormM and \HybridPolarGradM with Optional Horizontal Projection}
\label{alg:row_hybrid_practical}
\begin{algorithmic}[1]
    \REQUIRE $W_0$, $M_{-1}=0$, learning rates $\{\gamma_k\}_{k\ge0}$, momentum $\beta\in[0,1)$, scaling exponent $\alpha\in[0,1]$, row-scaling rule $\eta$, damping $\varepsilon>0$, weight decay $\lambda\ge0$
    \FOR{$k=0,\ldots,K-1$}
        \STATE $G_k=\nabla_W f(W_k)$
        \STATE $M_k=\beta M_{k-1}+(1-\beta)G_k$

        \STATE Set the working direction $\bar M_k$ and final projector $\Pi$ by layer type:
        \[
        (\bar M_k,\Pi)=
        \begin{cases}
        (M_k,I), & \text{embedding or SwiGLU gate/up},\\
        (\Pi_v^\perp M_k,\Pi_v^\perp), & \text{LM head},\\
        (\Pi_e^\perp M_k,\Pi_e^\perp), & \MoE\text{ router},
        \end{cases}
        \]
        where
        \[
            \Pi_v^\perp=I_v-\frac1v\One_v\One_v^\top,
            \qquad
            \Pi_e^\perp=I_e-\frac1e\One_e\One_e^\top .
        \]

        \IF{\RowNormM}
            \STATE $D_k=\Diag(\eta(\|\bar M_{k,1:}\|_2),\dots,\eta(\|\bar M_{k,m:}\|_2))$
            \STATE $U_k=\Pi D_k\bar M_k$

        \ELSIF{\HybridPolarGradM, right-spectral/row-norm order}
            \STATE $C_k=\bar M_k^\top\bar M_k$
            \STATE $R_k=(C_k+\varepsilon I)^{-\half}$ via \textsc{Polar Express} or Newton--Schulz iteration
            \STATE $\nu_k=\max\{\tr(C_kR_k),\varepsilon\}$
            \STATE $Z_k=\Pi \nu_k^\alpha \bar M_kR_k$
            \STATE $D_k=\Diag(\eta(\|Z_{k,1:}\|_2),\dots,\eta(\|Z_{k,m:}\|_2))$
            \STATE $U_k=\Pi D_kZ_k$

        \ELSIF{\HybridPolarGradM, row-norm/right-spectral order}
            \STATE $D_k=\Diag(\eta(\|\bar M_{k,1:}\|_2),\dots,\eta(\|\bar M_{k,m:}\|_2))$
            \STATE $Z_k=\Pi D_k\bar M_k$
            \STATE $C_k=Z_k^\top Z_k$
            \STATE $R_k=(C_k+\varepsilon I)^{-\half}$ via \textsc{Polar Express} or Newton--Schulz iteration
            \STATE $\nu_k=\max\{\tr(C_kR_k),\varepsilon\}$
            \STATE $U_k=\Pi\nu_k^\alpha Z_kR_k$

        \ELSIF{\MoE router and \HybridPolarGradM, left-spectral/row-norm order}
            \STATE $C_k=\bar M_k\bar M_k^\top$
            \STATE $L_k=(C_k+\varepsilon I)^{-\half}$ via \textsc{Polar Express} or Newton--Schulz iteration
            \STATE $\nu_k=\max\{\tr(C_kL_k),\varepsilon\}$
            \STATE $Z_k=\Pi\nu_k^\alpha L_k\bar M_k$
            \STATE $D_k=\Diag(\eta(\|Z_{k,1:}\|_2),\dots,\eta(\|Z_{k,e:}\|_2))$
            \STATE $U_k=\Pi D_kZ_k$
        \ENDIF

        \STATE $W_{k+1}=(1-\gamma_k\lambda)W_k-\gamma_k U_k$
    \ENDFOR
    \ENSURE $W_K$
\end{algorithmic}
\end{algorithm}

For down projections in SwiGLU MLPs, we apply the same procedure to $W_{\mathrm{down}}^\top$ and transpose the resulting update back. For routers, the hybrid update uses the left-spectral branch; for embeddings, LM heads, and SwiGLU gate/up projections, it uses the right-spectral branches.

The practical distinction among these variants is both geometric and computational. \RightPolarGradM preserves the right-orthogonal geometry through a Gram inverse square root, while \RowNormM is a purely row-adaptive LPRO-compatible alternative. \HybridPolarGradM interpolates between the two by composing row normalization and one-sided spectral normalization. For LM heads and \MoE routers, softmax shared-logit-shift invariance imposes an additional quotient geometry; therefore, row-norm and hybrid updates are projected back to the horizontal subspace so that the actual update removes the shared-row direction. For \MoE routers, the same alternatives appear in the centered expert geometry: one may use a projected centered row-norm update, a centered left-spectral update, or a projected hybrid left-spectral/row-norm update.

\begin{remark}[Projection and proximal extensions]
The practical optimizer families described above can be combined with projection or proximal steps. For example, given a regularizer $h$ or a feasible set $\calC$, one may consider
\[
    W_{k+1}
    =
    \prox_{\gamma_k h}\left(W_k-\gamma_k\calT(D_k)\right)
    \qquad\text{or}\qquad
    W_{k+1}
    =
    \proj_{\calC}\left(W_k-\gamma_k\calT(D_k)\right),
\]
where $\calT(D_k)$ is any symmetry-compatible update direction from the preceding constructions. If $h$ or $\calC$ is invariant under the same layer symmetry, then the resulting proximal or projected update preserves the same equivariance. Thus, the geometry-aware optimizer classes introduced here are compatible with standard regularization and constrained-optimization extensions.
\end{remark}

\subsection{Spectral Optimizers as the Bi-Orthogonally Equivariant Class}
\label{subsec:spectral_optim}
We have used bi-orthogonal equivariance as the symmetry principle for ordinary matrix layers. We now record the corresponding structural characterization: direction-wise update maps satisfying this equivariance are precisely spectral operators. This explains why SSD \citep{carlson2016stochastic}, \Muon \citep{jordan2024muon}, \Scion \citep{pethick2025training}, \PolarGrad \citep{lau2025polargrad}, and related spectral updates form the canonical optimizer class for ordinary matrix layers. The preceding sections show how different symmetry groups lead instead to different optimizer classes for embeddings, LM heads, SwiGLU MLP projections, and \MoE routers.

We first recall the relevant matrix-analytic notions. A proper function $\varphi\colon\RR^{m\times n}\to\oRR$ is called \emph{spectral} if there exists a proper function $\psi\colon\RR^r\to\oRR$, where $r=\min\{m,n\}$, such that $\varphi=\psi\circ\sigma$. Its matrix-valued analogue is a \emph{spectral operator}: a map $\scrU\colon\RR^{m\times n}\to\RR^{m\times n}$ is spectral if there exists an absolutely symmetric map $\psi\colon\RR^r\to\RR^r$ such that, for every singular value decomposition $D=U\Diag(\sigma(D))V^\top$,
\[
    \scrU(D)=U\Diag(\psi(\sigma(D)))V^\top.
\]
Thus, spectral operators preserve the singular-vector geometry of their input and act only on singular values.

We next recall the standard gradient formula for spectral functions; see, e.g., \citet{lewis1995convex}. 

\begin{theorem}[Gradient formula for spectral functions]
	\label{thm:grad_spectral}
	Let $f\colon \RR^r\to\oRR$ be convex and absolutely symmetric. Then the corresponding spectral function $f\circ\sigma$ is differentiable at $W\in\RR^{m\times n}$ if and only if $f$ is differentiable at $\sigma(W)$. In this case, if $W=U\Diag(\sigma(W))V^\top$ is a singular value decomposition of $W$, then
    \[
        \nabla (f\circ\sigma)(W)=U\Diag(\nabla f(\sigma(W)))V^\top.
    \]
\end{theorem}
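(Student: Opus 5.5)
The plan is to reduce the statement to Lewis's classical result for convex spectral functions of symmetric matrices, via the Jordan--Wielandt embedding. Concretely, set $r=m\wedge n$. For $W\in\RR^{m\times n}$, define the symmetric matrix
\[
    J(W)\coloneqq\begin{pmatrix}0&W\\ W^\top&0\end{pmatrix}\in\bbS^{m+n}.
\]
Its eigenvalues are $\pm\sigma_i(W)$ for $i=1,\dots,r$, together with $|m-n|$ zeros. Define $F\colon\RR^{m+n}\to\oRR$ by
\[
    F(\lambda)\coloneqq \tfrac12 f\bigl(\text{top $r$ entries of } |\lambda| \text{ sorted}\bigr),
\]
with the appropriate symmetric extension. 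This $F$ is permutation-invariant, and $F(\lambda(J(W)))=\tfrac12\cdot 2 f(\sigma(W))$ can be arranged by a suitable normalization. Then $f\circ\sigma=(F\circ\lambda)\circ J$, with $J$ linear. Rather than check convexity of such an $F$ directly, I would instead follow the direct route used by \citet{lewis1995convex}, which works natively with singular values.

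The direct route proceeds in three steps.

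\emph{Step 1 (convexity and conjugacy).} By von Neumann's trace inequality, $\dotpF{X}{Y}\le\dotp{\sigma(X)}{\sigma(Y)}$, with equality iff $X$ and $Y$ admit a simultaneous SVD. Using this, I show that $(f\circ\sigma)^*=f^*\circ\sigma$. Since $f$ is absolutely symmetric, so is $f^*$. It follows that $f\circ\sigma=(f^*\circ\sigma)^*$ is convex and lower semicontinuous whenever $f$ is closed; the general case is handled by a closure argument, or by assuming $f$ is closed, as is standard.

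\emph{Step 2 (subdifferential).} I characterize $Y\in\partial(f\circ\sigma)(W)$ through the Fenchel--Young equality
\[
    f(\sigma(W))+f^*(\sigma(Y))=\dotpF{W}{Y}.
\]
Chaining von Neumann's inequality with the vector Fenchel--Young inequality forces both $\dotpF{W}{Y}=\dotp{\sigma(W)}{\sigma(Y)}$ and $\sigma(Y)\in\partial f(\sigma(W))$. Hence
\[
    \partial(f\circ\sigma)(W)=\{U\Diag(\mu)V^\top : \mu\in\partial f(\sigma(W)),\ W=U\Diag(\sigma(W))V^\top\}.
\]
The point to be careful about is the sign and ordering convention: $\mu$ need not be sorted or nonnegative. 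Absolute symmetry of $f$ allows rearranging $\mu$ consistently with the SVD, with sign flips absorbed into $U$ and $V$ where $\sigma_i=0$ and joint permutations applied within blocks of equal singular values.

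\emph{Step 3 (differentiability).} A convex function is differentiable at a point iff its subdifferential there is a singleton. If $\partial f(\sigma(W))=\{\nabla f(\sigma(W))\}$, I must show that $U\Diag(\nabla f(\sigma(W)))V^\top$ does not depend on the choice of SVD. Absolute symmetry together with uniqueness of the gradient forces $\nabla f(\sigma(W))$ to be constant on blocks of equal singular values and to vanish on zero singular values; this uses that the gradient at a point fixed by a signed permutation is fixed by it as well. Within such blocks the SVD freedom is $U\mapsto UO$, $V\mapsto VO$ for block-orthogonal $O$ (with independent freedom on the zero block), and this freedom leaves the matrix unchanged. Conversely, if $\partial(f\circ\sigma)(W)$ is a singleton, then fixing one SVD gives $\{U\Diag(\mu)V^\top:\mu\in\partial f(\sigma(W))\}\subseteq$ a singleton, so $\partial f(\sigma(W))$ is a singleton and $f$ is differentiable there.

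The main obstacle is Step 2, specifically the equality case of von Neumann's trace inequality for rectangular matrices, i.e., the existence of a simultaneous ordered SVD. I will cite it as standard \citep{horn1994topics,lewis1995convex} rather than reprove it.
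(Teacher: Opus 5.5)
Your proposal is correct and follows the argument of \citet{lewis1995convex}: conjugacy via von Neumann's trace inequality, the subdifferential formula via the Fenchel--Young equality, then the singleton-subdifferential criterion for differentiability. The paper gives no proof of its own and simply cites that source. The one loose end, convexity of $f\circ\sigma$ when $f$ is convex but not closed, is harmless. Under either hypothesis $\sigma(W)$ lies in the interior of the domain of $f$, where $f$ coincides with its (still absolutely symmetric) closure, so $f\circ\sigma$ agrees with $(\mathrm{cl}\,f)\circ\sigma$ near $W$ and you may run the closed-case argument with $\mathrm{cl}\,f$.
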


This formula shows that spectral scalar functions act through singular values while preserving singular directions. The same structure appears when one requires an update map to commute with arbitrary left and right orthogonal changes of coordinates. We now state the corresponding characterization.

\begin{theorem}[Characterization of bi-orthogonally equivariant update maps]
	\label{thm:orth_invar}
    A continuous matrix-valued map $\scrU\colon\RR^{m\times n}\to\RR^{m\times n}$ satisfies
    \[
        \scrU(PDQ^\top)=P\,\scrU(D)Q^\top
    \]
    for all $D\in\RR^{m\times n}$, $P\in\OO^m$, and $Q\in\OO^n$ if and only if it is a spectral operator. Equivalently, $\scrU$ is bi-orthogonally equivariant if and only if, for every singular value decomposition $D=U\Diag(\sigma(D))V^\top$, there exists an absolutely symmetric map $\psi\colon\RR^r\to\RR^r$ such that
    \[
        \scrU(D)=U\Diag(\psi(\sigma(D)))V^\top,
        \qquad r=\min\{m,n\}.
    \]
\end{theorem}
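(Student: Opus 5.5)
The argument has two directions: the easy one verifies that every spectral operator is bi-orthogonally equivariant, and the substantive one shows that equivariance forces the spectral form. Throughout, write $r=m\wedge n$ and let $\Sigma_s\in\RR^{m\times n}$ denote the rectangular diagonal matrix with diagonal $s\in\RR^r$.

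\emph{Sufficiency.} Let $\scrU(D)=U\Diag(\psi(\sigma(D)))V^\top$ with $\psi$ absolutely symmetric, and assume (as is implicit in the definition) that the value does not depend on the chosen SVD. For $P\in\OO^m$ and $Q\in\OO^n$, the factorization $PDQ^\top=(PU)\Diag(\sigma(D))(QV)^\top$ is an SVD of $PDQ^\top$, and $\sigma(PDQ^\top)=\sigma(D)$. Therefore
\[
\scrU(PDQ^\top)=PU\Diag(\psi(\sigma(D)))V^\top Q^\top=P\,\scrU(D)Q^\top .
\]

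\emph{Necessity.} Suppose $\scrU$ is bi-orthogonally equivariant. Define $\psi\colon\RR^r\to\RR^r$ by $\psi(s)\coloneqq\diag_r(\scrU(\Sigma_s))$, the leading $r$ diagonal entries. The plan has four steps.
\begin{enumerate}
\item Show that $\scrU(\Sigma_s)$ is itself rectangular diagonal. Take $P=\Diag(\epsilon)$ and $Q=\Diag(\delta)$ with sign vectors $\epsilon\in\{\pm1\}^m$, $\delta\in\{\pm1\}^n$ such that $\epsilon_i=\delta_i$ for $i\le r$. Then $P\Sigma_sQ^\top=\Sigma_s$, so equivariance gives $\scrU(\Sigma_s)_{ij}=\epsilon_i\delta_j\,\scrU(\Sigma_s)_{ij}$. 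For any off-diagonal position $(i,j)$ we can choose the signs so that $\epsilon_i\delta_j=-1$. If $i\neq j$ and both indices are at most $r$, flip only $\epsilon_i=\delta_i$. If one index exceeds $r$, say $i>r$, that sign is free. Hence every off-diagonal entry vanishes.
\item Show that $\psi$ is absolutely symmetric. A permutation $\pi$ of the first $r$ coordinates is realized by block permutation matrices acting on both sides. A sign flip $s_i\mapsto -s_i$ is realized by taking $P=\Diag(\epsilon)$ with $\epsilon_i=-1$ and $Q=I$. Equivariance then gives $\psi(\pi s)=\pi\psi(s)$ and $\psi(\epsilon\odot s)=\epsilon\odot\psi(s)$.
\item Recover $\scrU$ on all matrices. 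Given any SVD $D=U\Sigma_{\sigma(D)}V^\top$, equivariance yields
\[
\scrU(D)=U\,\scrU(\Sigma_{\sigma(D)})V^\top=U\Diag(\psi(\sigma(D)))V^\top .
\]
\item Show that $\psi$ commutes with the stabilizer, so the formula is consistent across SVDs. Any two SVDs differ by $(U,V)\mapsto(UA,VB)$ with $A\Sigma_sB^\top=\Sigma_s$. Equivariance gives $A\Sigma_{\psi(s)}B^\top=\Sigma_{\psi(s)}$ automatically, since $\Sigma_{\psi(s)}=\scrU(\Sigma_s)=\scrU(A\Sigma_sB^\top)$. So there is nothing extra to check here.
\end{enumerate}

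\emph{Main obstacle.} The delicate point is not equivariance but the convention in the definition of a spectral operator. When singular values repeat or vanish, the stabilizer pairs $(A,B)$ force $\psi$ to take equal values on tied coordinates and zero on zero coordinates. Absolute symmetry alone does not guarantee this: for example, $\psi(s)=(s_1,2s_2)$ restricted to the region $s_1=s_2$ would be inconsistent. I would therefore make explicit that ``spectral operator'' means a well-defined map of the form $U\Diag(\psi(\sigma(D)))V^\top$, meaning the value is independent of the SVD chosen. Under that reading, the sufficiency direction uses this independence, and the necessity direction obtains it from step 4 for free. Continuity is not actually needed for the equivalence. It would enter only if one wanted $\psi$ to be continuous, which follows by restricting $\scrU$ to diagonal matrices.
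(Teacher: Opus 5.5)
Your proof is correct and follows the paper's argument essentially step for step: SVD transport for sufficiency, and for necessity the reduction to $\scrU(\Diag(\sigma(D)))$, diagonal sign matrices to kill the off-diagonal entries, and signed permutations to obtain the absolute symmetry of $\psi$. Your version handles the rectangular sign choices and the domain of $\psi$ more carefully than the paper does.

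One correction to your closing remark. Absolute symmetry already forces $\psi_i(s)=\psi_j(s)$ whenever $s_i=s_j$, and $\psi_i(s)=0$ whenever $s_i=0$, because the corresponding transposition or sign flip fixes $s$. So independence of the choice of SVD is automatic rather than an extra convention you need to impose. Your example $(s_1,2s_2)$ fails precisely because it is not absolutely symmetric.
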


We denote the set of bi-orthogonally equivariant matrix maps from $\RR^{m\times n}$ to $\RR^{m\times n}$ by $\mathcal U_{\OO}^{m\times n}$. \Cref{thm:orth_invar} shows that bi-orthogonal equivariance is not merely a desirable property: it is a complete structural characterization of direction-wise matrix update maps. Any such update must act through the singular values of the update direction and preserve its singular vectors.

\begin{remark}
\Cref{thm:orth_invar} concerns equivariance of the optimizer update map with respect to transformations of the matrix gradient or update direction. It does not assume that the layerwise loss itself satisfies $f(PWQ^\top)=f(W)$ for every individual layer parameter $W$. The claim is instead that, whenever a matrix gradient or momentum direction is represented in a rotated basis, a symmetry-compatible optimizer should transform its update in the same way.
\end{remark}

Accordingly, we call a matrix optimizer \emph{spectral} if its update rule is bi-orthogonally equivariant. Concretely, its iterates satisfy
\[
    W_{k+1}=W_k-\gamma_k\scrU(D_k),
\]
where $\gamma_k>0$, $\scrU\in\calU_{\OO}^{m\times n}$, and $D_k$ is an update direction that itself transforms bi-orthogonally, such as the gradient or a symmetry-compatible momentum direction. By \Cref{thm:orth_invar}, every such direction-wise spectral optimizer is determined by a singular-value transformation $\psi$.

This characterization applies to maps acting on a single update direction. It does not exclude broader stateful matrix optimizers such as \Shampoo, whose auxiliary states may themselves evolve equivariantly. Spectral optimizers should therefore be understood as the bi-orthogonally equivariant class of memoryless, or direction-wise, matrix update maps; stateful equivariant optimizers form a larger class. 

\subsubsection{Examples of Spectral and Equivariant Matrix Optimizers}
\label{subsubsec:spectral_examples}
The simplest spectral optimizer is vanilla gradient descent, for which the spectral operator is the identity map. Its Polyak, Nesterov, and EMA-momentum variants remain spectral because bi-orthogonally equivariant momentum constructions composed with spectral operators preserve spectrality. Other examples include stochastic spectral descent \citep{carlson2015stochasticRBM,carlson2016stochastic}, \Muon \citep{jordan2024muon}, \Scion \citep{pethick2025training}, and \PolarGrad \citep{lau2025polargrad}. Power-type spectral maps $\psi(t)=t^p$, including $p\in\{1/2,1/4\}$, are also studied in \citep{qi2026delving}.

It is useful to distinguish spectral operators acting on the current update direction from history-dependent optimizers whose states evolve equivariantly. For example, \Shampoo \citep{gupta2018shampoo,anil2020scalable} maintains left and right preconditioners based on moving averages of $G_kG_k^\top$ and $G_k^\top G_k$, and applies
\[
    L_k^{-\sfrac14}G_kR_k^{-\sfrac14}.
\]
This update is bi-orthogonally equivariant when the state variables $(L_k,R_k)$ transform accordingly, but it is not generally a spectral operator of the current gradient alone, since the preconditioners need not share the singular-vector basis of $G_k$. In special aligned cases, it reduces to a singular-value transformation and hence becomes spectral. One-sided \Shampoo \citep{xie2025structured,li2026convergence} and ASGO \citep{an2025asgo} admit a similar interpretation.

SOAP \citep{vyas2024soap} is also geometry-aware but generally not spectral in the strict direction-wise sense. It rotates gradients into learned eigenspaces and applies coordinate-wise adaptive scaling in those evolving coordinates. Thus, the update depends on history-dependent bases and coordinate-wise statistics, not only on the singular values of the current gradient. It becomes spectral only in special cases where the learned eigenspaces align with the singular-vector basis and the adaptive scaling acts symmetrically across singular directions.

Normalization preserves spectrality when the normalizing scalar is unitarily invariant. If $\scrU$ is spectral and $\alpha\colon\RR^{m\times n}\to\Rp$ is a positive spectral scalar function, then $D\mapsto \scrU(D)/\alpha(D)$ is again spectral. In particular, normalization by a unitarily invariant matrix norm preserves spectrality. By contrast, row-wise or column-wise normalization generally breaks full bi-orthogonal equivariance because it depends on a preferred coordinate system. Such normalizations may still be appropriate for layers with one-sided or permutation symmetries, but they are not spectral operators for ordinary matrix layers.

More broadly, steepest descent for matrix-valued parameters is compatible with bi-orthogonal symmetry when the underlying norm is unitarily invariant. The unit ball of a unitarily invariant norm is invariant under $D\mapsto PDQ^\top$, so the associated steepest descent direction transforms equivariantly under left and right orthogonal changes of coordinates. Non-unitarily invariant norms, such as general induced $\ell_p\to\ell_q$ operator norms with $(p,q)\ne(2,2)$, generally fail this property and impose a coordinate-dependent geometry on the update.

\section{Numerical Experiments}
\label{sec:expt}
Our experiments test the layerwise equivariance principle in full language model pre-training. Rather than changing a single optimizer in isolation, we instantiate a symmetry-compatible optimizer stack in which each major matrix-valued parameter class is assigned an update whose equivariance matches its layerwise symmetry: attention matrices use spectral or head-wise spectral updates; embeddings and LM heads use row-norm or hybrid updates; dense and expert SwiGLU MLP projections use right-spectral, row-aware, column-aware, or hybrid updates along the appropriate intermediate-neuron axis; \MoE routers use centered row-aware or left-spectral updates; and scalar or vector parameters use standard coordinate-wise optimizers.

We validate the proposed equivariant optimizer classes on four open-weight dense and \MoE language model architectures spanning different vocabulary sizes, hidden sizes, embedding and LM head matrix dimensions, and numbers of trainable parameters. Our goal is to test the practical implications of the symmetry- and geometry-based design principles developed above across architectures with distinct matrix-parameter geometries. Because our focus is optimizer behavior rather than scaling-law-optimal pre-training, we do not train the models for the large token budgets typically prescribed by scaling laws. We pre-train all models on a 10B-token subset of FineWeb-Edu \citep{lozhkov2024fineweb-edu} with context length 1024. These settings allow us to examine optimizer behavior in controlled yet nontrivial pre-training regimes.

Unless otherwise specified, we use \Muon \citep{jordan2024muon} with \textsc{Polar Express} coefficients \citep{amsel2025polar} for hidden and attention matrices, and \AdamW \citep{loshchilov2019decoupled} for scalar and vector parameters. For consistency with the intended layerwise geometry, fused attention weights are handled by applying \textsc{Polar Express} to the momentum of each attention head or fused component separately, in a similar spirit to \textsc{Muon Split} \citep{glm5team2026glm5}. Likewise, for \OLMoE-1B-7B and downsized gpt-oss, we treat fused expert SwiGLU projection tensors according to their intermediate-neuron geometry. The fused expert gate/up tensors are reshaped or interpreted so that gate/up channels associated with intermediate neurons receive row-aware or hybrid updates along the correct axis, while expert down projections are updated along the corresponding intermediate-neuron axis. This avoids applying a geometry-aware optimizer to an artifact of tensor storage rather than to the functional symmetry of the layer. We use untied input embeddings and output LM head weights, which allows us to assign different optimizers to these two large vocabulary-indexed matrices. Full experimental details are given in \Cref{sec:details_expt}. Code is available at \url{https://github.com/timlautk/equivariant_optimizers}.

\subsection{Qwen3-0.6B-Style Pre-Training}
\label{subsec:qwen3}
We first pre-train a Qwen3-0.6B-style dense language model \citep{qwen3technicalreport} incorporating several recent architectural innovations, including Grouped Query Attention (GQA) \citep{ainslie2023gqa}, the SwiGLU activation function \citep{dauphin2017language,shazeer2020glu}, Rotary Positional Embeddings (RoPE) \citep{su2024roformer}, pre-norm normalization \citep{xiong2020layer} with \RMSNorm \citep{zhang2019root,jiang2023pre}, and QK-Norm \citep{dehghani2023scaling}, without QKV bias terms. The model uses a vocabulary size of 151,936 and hidden dimension 1024. Since untying the embeddings increases the number of trainable parameters, we reduce the number of hidden layers from 28 to 20, resulting in a total of 625,784,832 trainable parameters.

We compare three optimizer assignments for the vocabulary-indexed matrices, namely the input embedding and LM head matrices: (i) \RowNormM, (ii) \HybridPolarGradM with row-norm/right-spectral, (iii) \AdamW. For SwiGLU MLP projections, we compare (a) right-spectral updates, equivalently \Muon-style updates, with (b) hybrid row-norm/right-spectral updates applied along the appropriate intermediate-neuron axis: row-aware for gate and up projections, and column-aware for down projections.

Although different LPRO-compatible optimizers could in principle be assigned to the embedding and LM head matrices, we use the same optimizer for both layers in each configuration for simplicity. 

\begin{figure}[h!]
    \centering
    \begin{subfigure}[t]{\textwidth}
        \centering
        \includegraphics[width=\textwidth]{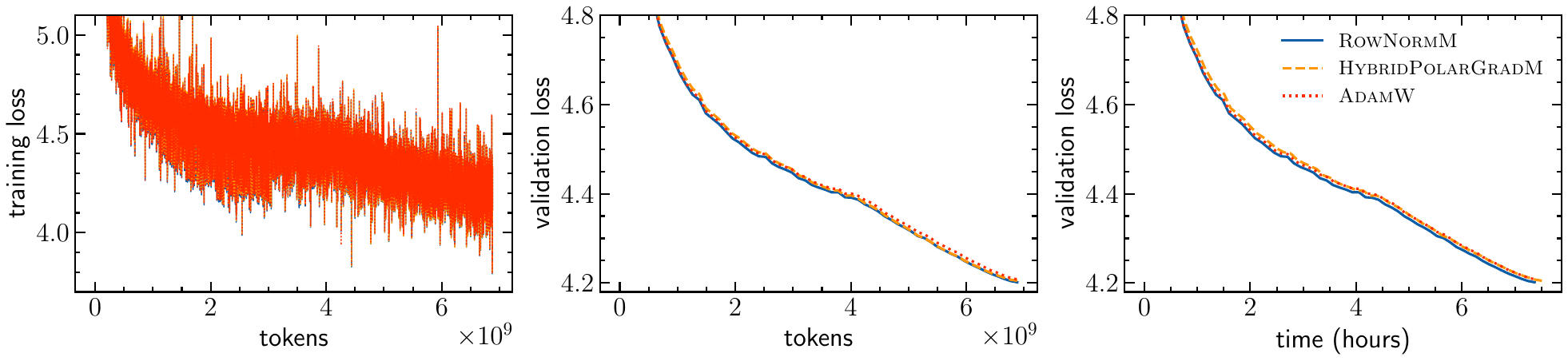}
        \caption{SwiGLU MLP projection matrices use \Muon, equivalently \RightPolarGradM with $\alpha=0$.}
        \label{fig:qwen3_muon_mlp}
    \end{subfigure}%
    \vspace*{2.5mm}
    \begin{subfigure}[t]{\textwidth}
        \centering
        \includegraphics[width=\textwidth]{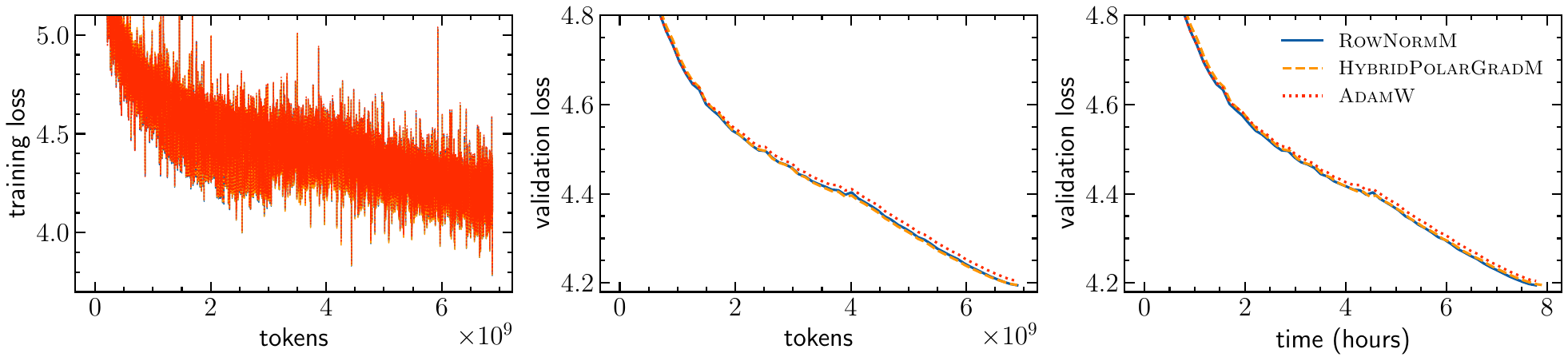}
        \caption{SwiGLU MLP projection matrices use \HybridPolarGradM with a row-norm/right-spectral composition.}
        \label{fig:qwen3_hybrid_mlp}
    \end{subfigure}
    \caption{Training and validation losses for Qwen3-0.6B-style pre-training. In each subfigure, the three configurations differ only in the optimizer assigned to the input embedding and LM head matrices: \RowNormM, \HybridPolarGradM, or \AdamW. }
    \label{fig:qwen3}
\end{figure}

The final validation losses for configurations (i)--(iii) are 4.2017, 4.2050, and 4.2084 in \Cref{fig:qwen3_muon_mlp}, and 4.1950, 4.1955, and 4.2046 in \Cref{fig:qwen3_hybrid_mlp}, respectively. As shown in \Cref{fig:qwen3}, in both settings, configuration (iii) makes comparable initial progress to configuration (i) and has lower validation loss at the earlier stage of training, but is subsequently overtaken by both \RowNormM and \HybridPolarGradM. The final gap between \HybridPolarGradM and \AdamW is smaller than that between \RowNormM and \AdamW, but the validation loss still favors the symmetry-compatible update. 

Across \Cref{fig:qwen3_muon_mlp} and \Cref{fig:qwen3_hybrid_mlp}, using \HybridPolarGradM for the SwiGLU MLP projection matrices improves all three embedding/LM head configurations relative to using \Muon for the SwiGLU MLP projections. The final validation losses decrease from 4.2017 to 4.1950 for \RowNormM, from 4.2050 to 4.1955 for \HybridPolarGradM, and from 4.2084 to 4.2046 for \AdamW. The improvement is largest when \HybridPolarGradM is also used for the input embedding and LM head matrices, suggesting a possible complementary effect between symmetry-compatible updates on vocabulary-indexed matrices and row-aware/right-spectral updates on SwiGLU MLP projections. Nevertheless, \RowNormM remains the best-performing assignment for the input embedding and LM head matrices in both settings. Thus, the comparison between (a) and (b) suggests that applying row-norm/right-spectral hybrid updates to SwiGLU MLP projections can further improve validation loss, while the relative ranking among the three vocabulary-indexed optimizer assignments remains stable. This improvement is plausibly due to the tall-skinny geometry of the SwiGLU MLP projections. Since $d_{\mathrm{model}}=1024$ and $d_{\mathrm{ff}}=3072$, the gate and up projections have many more rows than columns, with rows corresponding to intermediate neurons. Row-scale imbalance can therefore be important. While \Muon captures the right-spectral geometry, \HybridPolarGradM additionally normalizes intermediate-neuron rows before the spectral step, which may yield a better update geometry in this regime.

In terms of wall-clock training time, \HybridPolarGradM incurs additional overhead due to the inner Gram Newton--Schulz iterations used to approximate the right-spectral component. Consequently, configurations (ii) and (iii) require a similar amount of training time to reach comparable validation loss values, despite \HybridPolarGradM achieving a slightly lower final loss. Configurations (i) and (ii) both follow symmetry-compatible geometries for the embedding and LM head matrices, whereas configuration (iii) applies coordinate-wise \AdamW updates to these matrix-valued parameters, thereby introducing a geometry mismatch. Overall, these results are consistent with our symmetry-aware matrix view of optimizer design: even when applied only to the vocabulary-indexed matrices, geometry-compatible updates can improve the optimization trajectory and final validation loss.

\subsection{Gemma 3 1B-Style Pre-Training}
\label{subsec:gemma3}
We next pre-train a Gemma 3 1B-style dense language model \citep{gemmateam2025gemma3technicalreport}. Compared with the Qwen3-0.6B-style experiment, this model has both a larger vocabulary size of 262,144, and a larger hidden dimension of 1152. Consequently, the input embedding and LM head matrices are substantially larger, and their matrix gradients may be more anisotropic or ill-conditioned. This setting therefore provides a more stringent test of geometry-aware optimizers for vocabulary-indexed matrix parameters. For right-spectral updates, such as those used inside \RightPolarGradM and \HybridPolarGradM, the larger hidden dimension also makes the Gram matrix inverse-square-root computation more demanding, often requiring more accurate or additional Newton--Schulz iterations. To keep the total model size close to one billion trainable parameters after using untied embeddings, we reduce the number of hidden layers from 26 to 18, resulting in 1,087,138,944 trainable parameters. We use the same three optimizer assignments for the input embedding and LM head matrices as in the Qwen3-0.6B-style experiment, and compare two assignments for the SwiGLU MLP projection matrices: (a) \Muon and (b) \HybridPolarGradM.

\begin{figure}[h!]
    \centering
    \begin{subfigure}[t]{\textwidth}
        \centering
        \includegraphics[width=\textwidth]{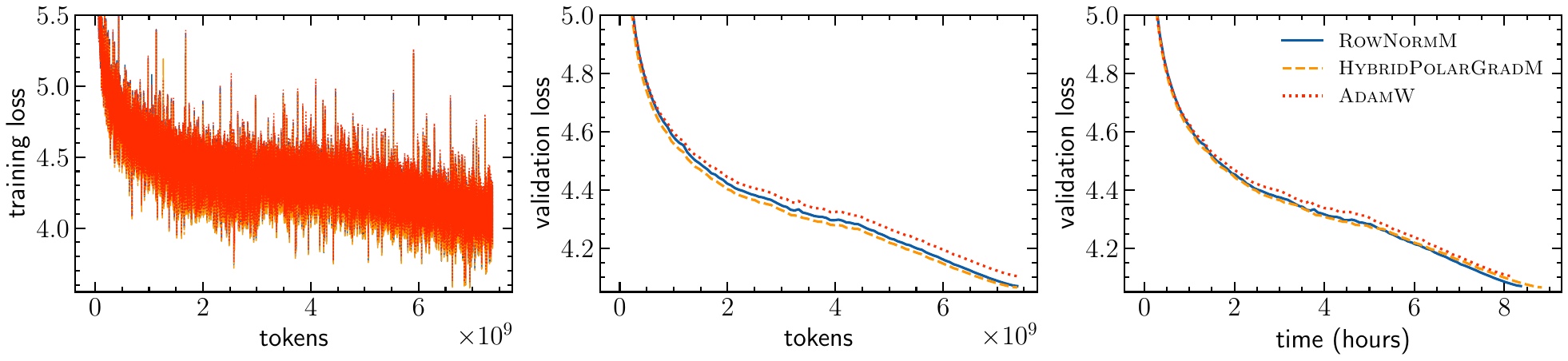}
        \caption{SwiGLU MLP projection matrices use \Muon, equivalently \RightPolarGradM with $\alpha=0$.}
        \label{fig:gemma3_muon_mlp}
    \end{subfigure}%
    \vspace*{2.5mm}
    \begin{subfigure}[t]{\textwidth}
        \centering
        \includegraphics[width=\textwidth]{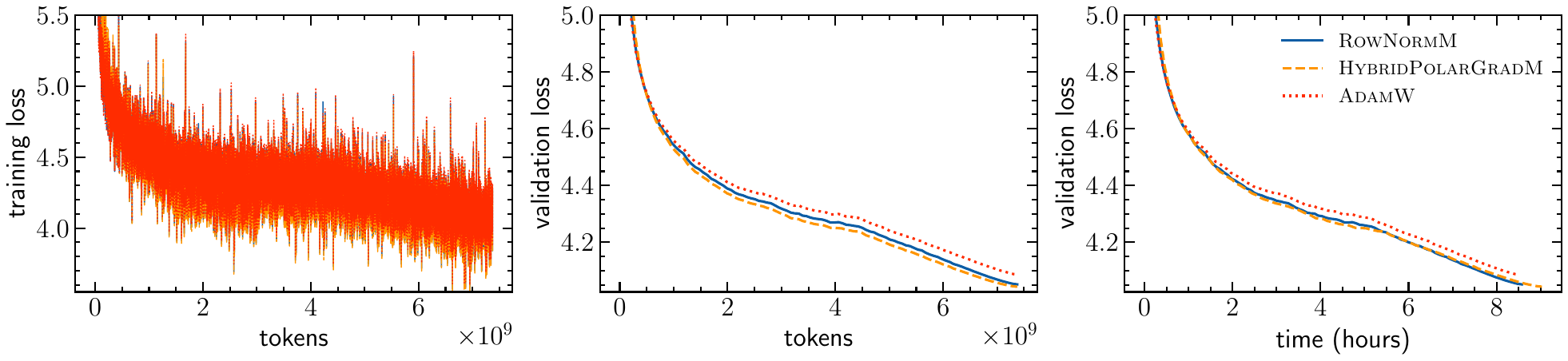}
        \caption{SwiGLU MLP projection matrices use \HybridPolarGradM with a row-norm/right-spectral composition.}
        \label{fig:gemma3_hybrid_mlp}
    \end{subfigure}
    \caption{Training and validation losses for Gemma 3 1B-style pre-training. In each subfigure, the three configurations differ only in the optimizer assigned to the input embedding and LM head matrices: \RowNormM, \HybridPolarGradM, or \AdamW. }
    \label{fig:gemma3}
\end{figure}

The final validation losses for configurations (i)--(iii) are 4.0702, 4.0655, and 4.1046 in \Cref{fig:gemma3_muon_mlp}, and 4.0516, 4.0435, and 4.0862 in \Cref{fig:gemma3_hybrid_mlp}, respectively. In both settings, \HybridPolarGradM achieves the lowest final validation loss, while \RowNormM also substantially outperforms \AdamW. This behavior is consistent with the hypothesis that geometry-compatible updates become increasingly important as vocabulary-indexed matrices grow larger and their gradients become more anisotropic or ill-conditioned.

Comparing \Cref{fig:gemma3_muon_mlp} and \Cref{fig:gemma3_hybrid_mlp}, using \HybridPolarGradM for the SwiGLU MLP projection matrices improves all three embedding/LM head optimizer assignments. The final validation loss decreases from 4.0702 to 4.0516 for \RowNormM, from 4.0655 to 4.0435 for \HybridPolarGradM, and from 4.1046 to 4.0862 for \AdamW. This improvement is plausibly related to the tall-skinny geometry of the SwiGLU MLP projections: in Gemma 3 1B-style models, $d_{\mathrm{model}}=1152 \ll d_{\mathrm{ff}}=6912$, so the gate and up projections have many more rows than columns, with rows corresponding to intermediate neurons. While \Muon captures the right-spectral geometry, \HybridPolarGradM additionally normalizes intermediate-neuron rows before the spectral step, which can better account for row-scale imbalance in this regime.

At the same time, \HybridPolarGradM incurs higher computational overhead than \RowNormM because it requires approximating matrix inverse square roots through inner Gram Newton--Schulz iterations. In contrast, \RowNormM performs only row-wise normalization and therefore avoids spectral computations altogether. Thus, the Gemma 3 1B-style experiment highlights a practical tradeoff: the hybrid row-norm/right-spectral update achieves the best validation loss, while the row-norm-only update provides a computationally cheaper symmetry-compatible alternative that still improves markedly over the coordinate-wise \AdamW baseline. We also provide additional experimental results for a base learning rate sweep and two extra random seeds in \Cref{subsec:gemma3_add_expt}. Finally, \Cref{subsubsec:gemma_logit_softcapping} shows that projected symmetry-compatible LM head updates reduce final vocabulary-logit growth in Gemma 3 1B-style pre-training, providing an optimizer-side mechanism that can reduce the reliance on final logit soft-capping.

\subsection{\OLMoE-1B-7B-Style Pre-Training}
\label{subsec:olmoe}
In addition to dense language models, we also pre-train a sparse Mixture-of-Experts (\MoE) model, a widely used architecture in recent open-weight language models
\citep{jiang2024mixtral,liu2024deepseek,agarwal2025gpt,qwen3.5,glm5team2026glm5,stepfun2026step,gemma4,deepseekai2026deepseekv4}. We use AllenAI's \OLMoE-1B-7B \citep{muennighoff2024olmoe}, which provides a comprehensive training recipe together with open-source data, code, and training logs. The model has vocabulary size 50,304 and hidden dimension 2048, making the embedding and LM head matrices considerably large. Relative to the original pre-training setup, we remove the auxiliary load-balancing loss \citep{shazeer2017outrageously} and the router z-loss \citep{zoph2022st} in order to reduce confounding effects from auxiliary objectives and isolate the effect of optimizer geometry. We also reduce the number of hidden layers from 16 to 12 and the number of experts from 64 to 32, yielding a total of 2,824,177,664 trainable parameters.

For matrix-valued parameters other than the hidden and attention matrices, we compare four optimizer assignments:
\begin{enumerate}[label=(\roman*)]
\itemsep0em 
\item \RowNormM for embeddings, LM head, and routers,
\item \RowNormM for embeddings and LM head, and \LeftPolarGradM for routers, 
\item \RowNormM for embeddings and LM head, and \AdamW for routers,
\item \AdamW for embeddings, LM head, and routers.
\end{enumerate}
We choose \RowNormM for the embedding and LM head matrices because it adds minimal computational overhead relative to \AdamW while preserving a symmetry-compatible geometry for vocabulary-indexed matrices. By contrast, \RightPolarGradM and \HybridPolarGradM require numerical polar decomposition, which is computationally more demanding and may require higher numerical precision for large vocabulary matrices. We leave broader ablations over these alternatives to future work. 

We also expect the relative behavior of \Muon and \HybridPolarGradM on SwiGLU MLP projections to depend on matrix aspect ratio. When $d_{\mathrm{ff}}\gg d_{\mathrm{model}}$, as in the dense Qwen3-0.6B-style and Gemma 3 1B-style models, the gate and up projections are tall-skinny and row-scale imbalance across intermediate
neurons can be substantial. In this regime, the row-normalization step in \HybridPolarGradM can be beneficial. In the \MoE experiments below, however, $d_{\mathrm{ff}}$ and $d_{\mathrm{model}}$ are much closer, so pure \Muon-style right-spectral updates may already capture much of the relevant matrix geometry. For this reason, in the \MoE experiments we use \Muon-style right-spectral updates for the SwiGLU MLP projection tensors and focus our ablations on the vocabulary-indexed matrices and \MoE routers.

\begin{figure}[h!]
    \centering
    \includegraphics[width=\textwidth]{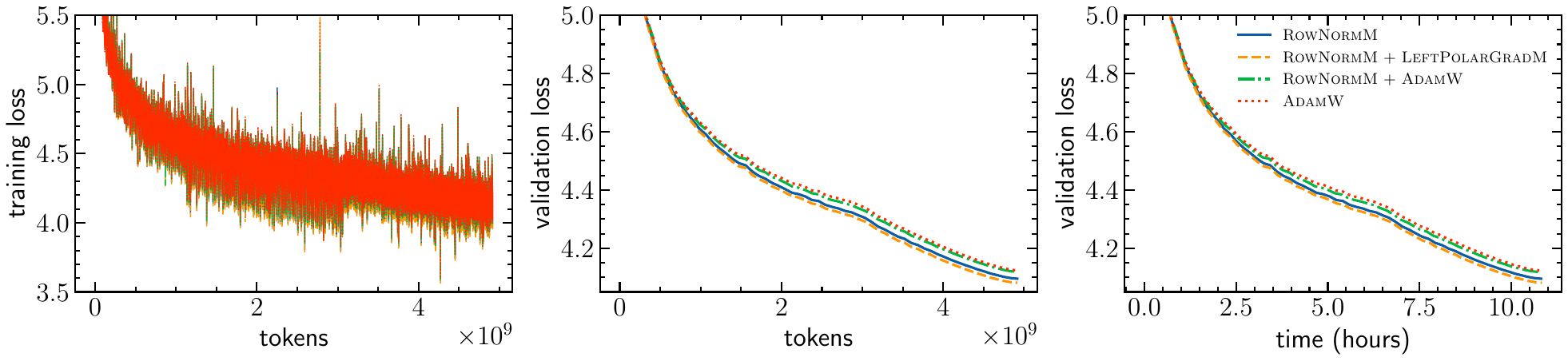}
    \caption{Training and validation losses for \OLMoE-1B-7B-style pre-training. The configurations differ in the optimizers assigned to the embedding, LM head, and router matrices.}
    \label{fig:olmoe}
\end{figure}

The final validation losses for configurations (i)--(iv) are 4.0955, 4.0815, 4.1187, and 4.1240 respectively. As shown in \Cref{fig:olmoe}, configuration (iv), which uses \AdamW for the embedding, LM head, and router matrices, makes faster initial progress over roughly the first 500 steps, but is eventually overtaken by configurations (i)--(iii). This reversal occurs before the onset of learning rate decay, which linearly decreases to zero for the last 40\% of the training tokens. The validation loss gaps continue to widen during the decay phase.

Configurations (i) and (ii) use symmetry-compatible updates for all three special matrix classes considered here: embeddings, LM head, and routers. Configuration (iii) retains symmetry-compatible updates for the embedding and LM head matrices, but introduces a geometry mismatch in the router updates by using coordinate-wise \AdamW. Configuration (iv) applies \AdamW to all three classes and therefore departs most strongly from the proposed symmetry-compatible optimizer design. Empirically, both (i) and (ii) outperform (iii), while (iv) performs worst overall. These results are consistent with our theoretical perspective that respecting parameter symmetry and matrix geometry can matter for optimizer design, especially in large sparse architectures where router dynamics play a central role.

The performance gap between configurations (i) and (ii) is relatively small, suggesting that the row-norm and left-spectral router updates behave similarly in this setting. This small gap may reflect suboptimal learning-rate tuning for \RowNormM, the effect of inexact polar oracles, or that left-spectral normalization is able to further capture the relevant router geometry in this experiment. Finally, we observe that configuration (iv) might exhibit slightly more pronounced training-loss spikes at around 2.1B seen training tokens than the other configurations, despite using \Muon for the hidden and attention matrices. This suggests that geometry-matched optimizer choices for embeddings, LM heads, and routers may also improve training stability in practice.

\subsection{Downsized gpt-oss Pre-Training}
\label{subsec:gpt-oss}
We finally pre-train a downsized variant of gpt-oss-20b \citep{agarwal2025gpt}. This architecture differs from \OLMoE in several important ways. For example, it uses QKV bias terms in its GQA modules and includes bias vectors in its \MoE router networks. The model has vocabulary size 201,088, making the embedding and LM head matrices substantially larger than those in \OLMoE. To obtain a tractable experimental variant, we downsize gpt-oss-20b by reducing the number of hidden layers from 24 to 12, the hidden and intermediate dimensions from 2880 to 2048, and the number of experts from 32 to 16. This yields a total of 3,467,779,008 trainable parameters. We use the same loss function and the same four optimizer assignments as in the \OLMoE-1B-7B experiment.

\begin{figure}[h!]
    \centering
    \includegraphics[width=\textwidth]{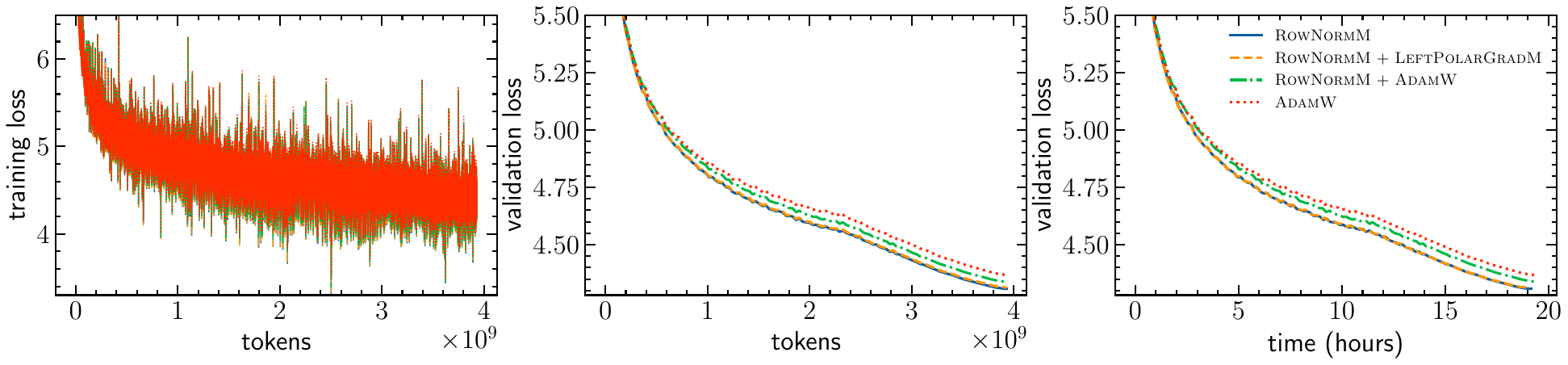}
    \caption{Training and validation losses for downsized gpt-oss pre-training. The configurations differ in the optimizers assigned to the embedding, LM head, and router matrices.}
    \label{fig:gpt-oss}
\end{figure}

The final validation losses for configurations (i)--(iv) are 4.3080, 4.3136, 4.3388, and 4.3687, respectively. As in the \OLMoE experiment, the fully coordinate-wise baseline in configuration (iv), which uses \AdamW for the embedding, LM head, and router matrices, obtains the worst final validation loss. The three configurations using \RowNormM for the embedding and LM head matrices all substantially improve over this baseline, suggesting that the benefit of symmetry-compatible updates for vocabulary-indexed matrices persists in a distinct sparse \MoE architecture. 

Among configurations (i)--(iii), the differences are smaller. As shown in \Cref{fig:gpt-oss}, configuration (i), which uses \RowNormM for embeddings, LM head, and routers, achieves the lowest final validation loss, followed closely by configuration (ii), which uses \LeftPolarGradM for routers. Configuration (iii), which keeps \RowNormM for embeddings and LM head but uses \AdamW for routers, is slightly worse than both geometry-compatible router variants. This ordering is consistent with the view that router geometry can matter, although the smaller gap between configurations (i)--(iii) suggests that the dominant improvement in this setting comes from replacing \AdamW on the large vocabulary-indexed matrices. 

Additional router diagnostics in \Cref{subsubsec:gpt_oss_router_diagnostics} show that symmetry-compatible router updates reduce load imbalance and router z-loss relative to the \AdamW router baseline, both with and without auxiliary router losses. 

Overall, the downsized gpt-oss experiment provides an additional architecture check for our optimizer design principle. Despite architectural differences from \OLMoE, including QKV biases and router bias terms, the same qualitative pattern holds: geometry-compatible optimizer assignments for special matrix-valued parameters improve the final validation loss relative to using coordinate-wise \AdamW for those parameters.

\subsection{Cross-Model Comparison}
\label{subsec:cross_model}
We compare the results across
\Cref{subsec:qwen3,subsec:gemma3,subsec:olmoe,subsec:gpt-oss}. These experiments span dense and sparse language models with increasing numbers of trainable parameters, from the Qwen3-0.6B-style dense model to the downsized gpt-oss sparse \MoE model. Across these models, the vocabulary sizes differ substantially, while the hidden dimensions are relatively close. Consequently, the embedding and LM head matrices have comparable column dimensions, determined by $d_{\mathrm{model}}$, but very different numbers of rows, determined by the vocabulary size $v$. Thus, as $v$ grows, these vocabulary-indexed matrices become increasingly tall. This changes both their computational cost and the conditioning properties of their matrix gradients, and makes them a natural testbed for row-aware and one-sided spectral optimizer design. 

The same row-versus-column distinction is also important for SwiGLU MLP projection matrices. For a dense SwiGLU block, the gate and up projections have shape $d_{\mathrm{ff}}\times d_{\mathrm{model}}$, so their rows correspond to intermediate neurons, whereas the down projection has shape $d_{\mathrm{model}}\times d_{\mathrm{ff}}$, so the same intermediate-neuron geometry appears along the column axis. In the dense Qwen3-0.6B-style and Gemma 3 1B-style models, $d_{\mathrm{ff}}$ is substantially larger than $d_{\mathrm{model}}$, placing the gate and up projections in a tall-skinny regime. This is precisely the setting where row-normalization before a right-spectral step can be useful: \HybridPolarGradM can correct row-scale imbalance across intermediate neurons while retaining spectral geometry. In the \MoE experiments, by contrast, the hidden and intermediate dimensions are closer in our downsized settings, so a pure \Muon-style right-spectral update may already capture much of the relevant matrix geometry for expert SwiGLU projection tensors. 

Across all experiments, replacing \AdamW on the large vocabulary-indexed matrices with symmetry-compatible optimizers consistently improves final validation loss. The gains are modest but visible for the smaller dense model, become more pronounced in the larger Gemma 3 1B-style experiment, and persist in both sparse \MoE experiments. This trend is consistent with our matrix-geometry perspective: as embedding and LM head matrices grow in their row dimension, coordinate-wise updates increasingly operate in a parameterization-dependent augmented space, whereas row-norm and spectral updates preserve the relevant vocabulary-indexed matrix geometry. 

The comparison should not be interpreted as a scaling law, since the models differ in architecture, vocabulary size, training length, and sparsity structure. Nevertheless, the consistent ordering across dense and sparse \MoE models provides evidence that the benefit of symmetry-compatible optimizer assignments is not restricted to a single model family or architecture. In particular, the results suggest that large vocabulary-indexed matrices are a robust setting in which geometry-compatible updates can improve optimization. The sparse \MoE experiments further show that router matrices and expert SwiGLU projection tensors provide additional layer types where symmetry-aware optimizer design can matter. 

Taken together, these experiments support the view that symmetry-compatible optimizer design is most naturally applied as a layerwise optimizer stack, rather than as a single global replacement for \AdamW.

\section{Discussion and Outlook}
\label{sec:discussion}
This work suggests a different view of deep learning optimization: optimizer design should be layerwise, geometry-aware, and symmetry-compatible. Popular coordinate-wise adaptive methods such as \Adam and \AdamW remain strong default optimizers because of their robustness, efficiency, and practical momentum. However, when applied indiscriminately to matrix-valued parameters, they treat matrices and tensors as collections of independent coordinates and therefore ignore the intrinsic geometry of the parameter blocks they update. This mismatch is especially apparent in modern architectures, where different modules play different algebraic and semantic roles. 

Our main contribution is a symmetry-compatible equivariance principle for designing optimizers for matrix-valued neural network parameters. For ordinary matrix layers, this principle recovers spectral optimizers as natural bi-orthogonally equivariant update maps. For embedding and LM head matrices, it leads to left-permutation/right-orthogonal equivariant updates. For SwiGLU MLP projections, it motivates row- and column-aware updates aligned with intermediate-neuron permutation geometry. For \MoE router weights, it yields expert-permutation equivariant and shared-logit-shift invariant updates. Together, these examples support an architecture--optimizer co-design principle: different parameter classes should be updated by optimizers whose equivariance matches their layerwise symmetry. 

This perspective connects recent progress in matrix-gradient optimization to a broader transition from generic coordinate-wise methods toward \emph{module-aware} and \emph{geometry-aware} optimization. Layerwise training itself is not new: classical examples include LARS and LAMB \citep{you2017large,you2020large}, layerwise hyperparameter prescriptions such as those arising in $\mu$P \citep{yang2021tuning}, and block-coordinate views of neural network training \citep{zeng2019global,lau2018proximal}. What is emerging, however, is a more refined class of layerwise optimizers that account for the geometry of each parameter block. Recent methods such as \Shampoo \citep{gupta2018shampoo}, \Muon \citep{jordan2024muon}, SOAP \citep{vyas2024soap}, \Scion \citep{pethick2025training}, \Gluon \citep{riabinin2025gluon}, and \PolarGrad \citep{lau2025polargrad} can be viewed as part of this trend. 

The case for geometry-aware optimizer design becomes stronger as foundation models become more heterogeneous. Large language models \citep{vaswani2017attention,radford2018improving,radford2019language,brown2020language}, vision transformers \citep{dosovitskiy2021an}, multimodal models \citep{radford2021learning,bordes2024introduction}, diffusion language models \citep{lou2024discrete,gong2025scaling,nie2025large}, \textsc{MoE}s \citep{shazeer2017outrageously,lepikhin2021gshard}, and state space models \citep{gu2024mamba,dao2024transformers,lahoti2026mamba} all contain parameter blocks with distinct natural symmetries. From this viewpoint, it is increasingly unnatural to optimize all such layers with a single coordinate-wise rule. Instead, optimizer design for modern deep learning systems should be treated as an architecture-aware problem, rather than as the selection of one universal update rule. 

This viewpoint also offers a possible interpretation of some training stabilization practices. As model scale increases, coordinate-wise adaptive optimizers are rarely used in isolation; they are typically accompanied by stabilization tricks, modified variants such as \StableAdamW \citep{wortsman2023stable}, and many recipe-level heuristics \citep{hu2025yulan}. While such techniques are practically important, some of them may be understood as partial corrections for a mismatch between optimizer geometry and model geometry. Geometry-aware optimizers provide a complementary path by encoding more appropriate invariance, normalization, and scaling structure directly into the update rule.

Several challenges remain. First, geometry-aware optimizers depend on efficient numerical linear algebra. The practical success of \Muon and related methods was enabled in part by making polar decomposition or matrix orthogonalization efficient at scale, especially through Newton--Schulz iterations. Further progress will likely depend on fast, stable, and GPU-friendly matrix decomposition routines, including inexact polar oracles based on Newton--Schulz iteration \citep{kim2026convergence}, QDWH \citep{lau2025polargrad}, \textsc{Polar Express} \citep{amsel2025polar}, CANS \citep{grishina2025accelerating}, PRISM \citep{yang2026prism}, Turbo-\Muon \citep{boissin2025turbo}, Flash-\Muon \citep{lin2025flash}, and Gram Newton--Schulz \citep{zhang2026gram}. Second, non-elementwise optimizers raise new distributed-systems challenges, including communication costs, synchronization, memory sharding, and compatibility with tensor, pipeline, sequence, and data parallelism. Recent work on Distributed \Muon \citep{liu2025muon,essentialai2025layer,primeintellectteam2025intellect3}, \Dion \citep{ahn2025dion,ahn2025dion2}, \Disco \citep{filatov2025optimal}, and Parallel \Muon \citep{lim2025motif} suggests that these challenges can be addressed in practice.

Encouragingly, matrix-aware optimizers have already begun to appear in industry-scale model training, including work by Moonshot AI \citep{liu2025muon,kimi2025kimik2}, Essential AI \citep{essentialai2025practical_full}, Prime Intellect \citep{primeintellectteam2025intellect3}, Zhipu AI \citep{glm2025glm,glm5team2026glm5}, Zyphra \citep{anthony2025training,washbourne2026zaya18b}, Motif Technologies \citep{lim2025motif}, Arcee AI \citep{singh2026arcee}, StepFun \citep{stepfun2026step}, and DeepSeek-AI \citep{deepseekai2026deepseekv4}. These developments suggest that the question is no longer whether matrix-aware optimizers can be scaled, but how far they can be pushed once algorithmic geometry, numerical linear algebra, and distributed systems are designed in concert. 

Recent work by \citet{du2026uncovering} provides a complementary perspective, showing through a layer-peeled optimization model that symmetries in next-token distributions can transfer to learned LLM weights, logits, and context embeddings. The Newton--\Muon optimizer \citep{du2026newton} provides a complementary example of symmetry-aware matrix-gradient optimization. By deriving a \Muon-like update from a quadratic surrogate involving the layer input matrix, Newton--\Muon shows that the polar update can be combined with right preconditioning by the input second moment. In this sense, it extends the geometry of \Muon from a purely weight-gradient update to one that also reflects data geometry. 

Overall, our results suggest that much of modern deep learning still relies on optimizer updates whose geometry is mismatched to the layers they train. This mismatch may affect robustness, stability, scalability, and interpretability, since coordinate-wise adaptivity is sensitive to arbitrary parameterizations and can be viewed as operating in a pathological diagonal lifting of the original matrix space. By contrast, symmetry-compatible updates offer the prospect of more stable, parameterization-consistent, and theoretically grounded training procedures. Introducing symmetry and equivariance into optimizer design therefore opens a path toward a more principled science of large-scale model pre-training.

\subsection*{Acknowledgments}
This work was supported by computational resources from Prime Intellect.

\newpage    
\addcontentsline{toc}{section}{\protect\textbf{References}}
\bibliographystyle{custom}
{\small \bibliography{ref}}

@inproceedings{kingma2015,
	title = {Adam: a method for stochastic optimization},
	author = {Kingma, Diederik P. and Ba, Jimmy Lei},
	booktitle={International Conference on Learning Representations (ICLR)},
	year={2015},
}

@article{duchi2011adagrad,
	author = {Duchi, John and Hazan, Elad and Singer, Yoram},
	title = {Adaptive Subgradient Methods for Online Learning and Stochastic Optimization},
	journal = {Journal of Machine Learning Research},
	volume = {12},
	year = {2011},
	pages = {2121--2159},
}

@misc{tieleman2012,
	title={{Lecture 6.5---RMSProp: Divide the gradient by a running average of its recent magnitude}},
	author={Tieleman, Tijmen and Hinton, Geoffrey},
	howpublished={Coursera: Neural Networks for Machine Learning},
	year={2012}
}

@inproceedings{you2020large,
	title={Large Batch Optimization for Deep Learning: Training {BERT} in 76 minutes},
	author={Yang You and Jing Li and Sashank Reddi and Jonathan Hseu and Sanjiv Kumar and Srinadh Bhojanapalli and Xiaodan Song and James Demmel and Kurt Keutzer and Cho-Jui Hsieh},
	booktitle={International Conference on Learning Representations (ICLR)},
	year={2020},
}

@inproceedings{hoffmann2022training,
	title={Training Compute-Optimal Large Language Models},
	author={Jordan Hoffmann and Sebastian Borgeaud and Arthur Mensch and Elena Buchatskaya and Trevor Cai and Eliza Rutherford and Diego de las Casas and Lisa Anne Hendricks and Johannes Welbl and Aidan Clark and Tom Hennigan and Eric Noland and Katherine Millican and George van den Driessche and Bogdan Damoc and Aurelia Guy and Simon Osindero and Karen Simonyan and Erich Elsen and Oriol Vinyals and Jack William Rae and Laurent Sifre},
	booktitle={Advances in Neural Information Processing Systems (NeurIPS)},
	year={2022}
}

@article{chowdhery2022palm,
	title={{PaLM}: Scaling language modeling with pathways},
	author	= {Aakanksha Chowdhery and Sharan Narang and Jacob Devlin and Maarten Bosma and Gaurav Mishra and Adam Roberts and Paul Barham and Hyung Won Chung and Charles Sutton and Sebastian Gehrmann and Parker Schuh and Kensen Shi and Sasha Tsvyashchenko and Joshua Maynez and Abhishek Rao and Parker Barnes and Yi Tay and Noam Shazeer and Vinodkumar Prabhakaran and Emily Reif and Nan Du and Ben Hutchinson and Reiner Pope and James Bradbury and Jacob Austin and Michael Isard and Guy Gur-Ari and Pengcheng Yin and Toju Duke and Anselm Levskaya and Sanjay Ghemawat and Sunipa Dev and Henryk Michalewski and Xavier Garcia and Vedant Misra and Kevin Robinson and Liam Fedus and Denny Zhou and Daphne Ippolito and David Luan and Hyeontaek Lim and Barret Zoph and Alexander Spiridonov and Ryan Sepassi and David Dohan and Shivani Agrawal and Mark Omernick and Andrew M. Dai and Thanumalayan Sankaranarayana Pillai and Marie Pellat and Aitor Lewkowycz and Erica Moreira and Rewon Child and Oleksandr Polozov and Katherine Lee and Zongwei Zhou and Xuezhi Wang and Brennan Saeta and Mark Diaz and Orhan Firat and Michele Catasta and Jason Wei and Kathy Meier-Hellstern and Douglas Eck and Jeff Dean and Slav Petrov and Noah Fiedel},
	journal = {Journal of Machine Learning Research},
	year    = {2023},
	volume  = {24},
	number  = {240},
	pages   = {1--113},
}

@inproceedings{brown2020language,
	title={Language models are few-shot learners},
	author = {Brown, Tom B. and Mann, Benjamin and Ryder, Nick and Subbiah, Melanie and Kaplan, Jared and Dhariwal, Prafulla and Neelakantan, Arvind and Shyam, Pranav and Sastry, Girish and Askell, Amanda and Agarwal, Sandhini and Herbert-Voss, Ariel and Krueger, Gretchen and Henighan, Tom and Child, Rewon and Ramesh, Aditya and Ziegler, Daniel M. and Wu, Jeffrey and Winter, Clemens and Hesse, Chris and Chen, Mark and Sigler, Eric and Litwin, Mateusz and Gray, Scott and Chess, Benjamin and Clark, Jack and Berner, Christopher and McCandlish, Sam and Radford, Alec and Sutskever, Ilya and Amodei, Dario},
	booktitle={Advances in Neural Information Processing Systems (NeurIPS)},
	year={2020}
}

@article{radford2019language,
	title={Language models are unsupervised multitask learners},
	author={Radford, Alec and Wu, Jeffrey and Child, Rewon and Luan, David and Amodei, Dario and Sutskever, Ilya},
	journal={OpenAI blog},
	year={2019}
}

@article{radford2018improving,
	title={Improving language understanding by generative pre-training},
	author={Radford, Alec and Narasimhan, Karthik and Salimans, Tim and Sutskever, Ilya},
	year={2018},
	publisher={OpenAI}
}

@inproceedings{vaswani2017attention,
	title={Attention is all you need},
	author={Vaswani, Ashish and Shazeer, Noam and Parmar, Niki and Uszkoreit, Jakob and Jones, Llion and Gomez, Aidan N. and Kaiser, {\L}ukasz and Polosukhin, Illia},
	booktitle={Advances in Neural Information Processing Systems (NeurIPS)},
	year={2017}
}

@inproceedings{dosovitskiy2021an,
	title={An Image is Worth 16x16 Words: Transformers for Image Recognition at Scale},
	author={Alexey Dosovitskiy and Lucas Beyer and Alexander Kolesnikov and Dirk Weissenborn and Xiaohua Zhai and Thomas Unterthiner and Mostafa Dehghani and Matthias Minderer and Georg Heigold and Sylvain Gelly and Jakob Uszkoreit and Neil Houlsby},
	booktitle={International Conference on Learning Representations (ICLR)},
	year={2021},
}

@article{kaplan2020scaling,
	title={Scaling laws for neural language models},
	author={Kaplan, Jared and McCandlish, Sam and Henighan, Tom and Brown, Tom B. and Chess, Benjamin and Child, Rewon and Gray, Scott and Radford, Alec and Wu, Jeffrey and Amodei, Dario},
	journal={arXiv preprint arXiv:2001.08361},
	year={2020}
}

@article{dahl2023benchmarking,
	title={Benchmarking Neural Network Training Algorithms},
	author={George E. Dahl and Frank Schneider and Zachary Nado and Naman Agarwal and Chandramouli Shama Sastry and Philipp Hennig and Sourabh Medapati and Runa Eschenhagen and Priya Kasimbeg and Daniel Suo and Juhan Bae and Justin Gilmer and Abel L. Peirson and Bilal Khan and Rohan Anil and Mike Rabbat and Shankar Krishnan and Daniel Snider and Ehsan Amid and Kongtao Chen and Chris J. Maddison and Rakshith Vasudev and Michal Badura and Ankush Garg and Peter Mattson},
	journal={arXiv preprint arXiv:2306.07179},
	year={2023}
}

@inproceedings{loshchilov2019decoupled,
	title={Decoupled Weight Decay Regularization},
	author={Loshchilov, Ilya and Hutter, Frank},
	booktitle={International Conference on Learning Representations (ICLR)},
	year={2019}
}

@inproceedings{he2016deep,
	title={Deep residual learning for image recognition},
	author={He, Kaiming and Zhang, Xiangyu and Ren, Shaoqing and Sun, Jian},
	booktitle={Proceedings of the IEEE/CVF Conference on Computer Vision and Pattern Recognition (CVPR)},
	year={2016}
}

@inproceedings{wortsman2024small,
  title={Small-scale proxies for large-scale Transformer training instabilities},
  author={Mitchell Wortsman and Peter J. Liu and Lechao Xiao and Katie Everett and Alex Alemi and Ben Adlam and John D. Co-Reyes and Izzeddin Gur and Abhishek Kumar and Roman Novak and Jeffrey Pennington and Jascha Sohl-dickstein and Kelvin Xu and Jaehoon Lee and Justin Gilmer and Simon Kornblith},
  booktitle={International Conference on Learning Representations (ICLR)},
  year={2024}
}

@article{shi2023distributed,
    title={A Distributed Data-Parallel {PyTorch} Implementation of the Distributed {Shampoo} Optimizer for Training Neural Networks At-Scale},
    author={Shi, Hao-Jun Michael and Lee, Tsung-Hsien and Iwasaki, Shintaro and Gallego-Posada, Jose and Li, Zhijing and Rangadurai, Kaushik and Mudigere, Dheevatsa and Rabbat, Michael},
    journal={arXiv preprint arXiv:2309.06497},
    year={2023}
}

@inproceedings{mcmahan2010adaptive,
    title={Adaptive bound optimization for online convex optimization},
    author={McMahan, H. Brendan and Streeter, Matthew},
    booktitle = {Proceedings of the Conference on Learning Theory (COLT)},
    year={2010}
}

@inproceedings{yang2021tuning,
  title={Tuning Large Neural Networks via Zero-Shot Hyperparameter Transfer},
  author={Greg Yang and Edward J. Hu and Igor Babuschkin and Szymon Sidor and Xiaodong Liu and David Farhi and Nick Ryder and Jakub Pachocki and Weizhu Chen and Jianfeng Gao},
  booktitle={Advances in Neural Information Processing Systems (NeurIPS)},
  year={2021},
}

@inproceedings{lepikhin2021gshard,
  title={{GShard}: Scaling Giant Models with Conditional Computation and Automatic Sharding},
  author={Dmitry Lepikhin and HyoukJoong Lee and Yuanzhong Xu and Dehao Chen and Orhan Firat and Yanping Huang and Maxim Krikun and Noam Shazeer and Zhifeng Chen},
  booktitle={International Conference on Learning Representations (ICLR)},
  year={2021},
}

@article{team2024gemma2,
  title={Gemma 2: Improving open language models at a practical size},
  author={{Gemma Team} and Morgane Riviere and Shreya Pathak and Pier Giuseppe Sessa and Cassidy Hardin and Surya Bhupatiraju and Léonard Hussenot and Thomas Mesnard and Bobak Shahriari and Alexandre Ramé and Johan Ferret and Peter Liu and Pouya Tafti and Abe Friesen and Michelle Casbon and Sabela Ramos and Ravin Kumar and Charline Le Lan and Sammy Jerome and Anton Tsitsulin and Nino Vieillard and Piotr Stanczyk and Sertan Girgin and Nikola Momchev and Matt Hoffman and Shantanu Thakoor and Jean-Bastien Grill and Behnam Neyshabur and Olivier Bachem and Alanna Walton and Aliaksei Severyn and Alicia Parrish and Aliya Ahmad and Allen Hutchison and Alvin Abdagic and Amanda Carl and Amy Shen and Andy Brock and Andy Coenen and Anthony Laforge and Antonia Paterson and Ben Bastian and Bilal Piot and Bo Wu and Brandon Royal and Charlie Chen and Chintu Kumar and Chris Perry and Chris Welty and Christopher A. Choquette-Choo and Danila Sinopalnikov and David Weinberger and Dimple Vijaykumar and Dominika Rogozińska and Dustin Herbison and Elisa Bandy and Emma Wang and Eric Noland and Erica Moreira and Evan Senter and Evgenii Eltyshev and Francesco Visin and Gabriel Rasskin and Gary Wei and Glenn Cameron and Gus Martins and Hadi Hashemi and Hanna Klimczak-Plucińska and Harleen Batra and Harsh Dhand and Ivan Nardini and Jacinda Mein and Jack Zhou and James Svensson and Jeff Stanway and Jetha Chan and Jin Peng Zhou and Joana Carrasqueira and Joana Iljazi and Jocelyn Becker and Joe Fernandez and Joost van Amersfoort and Josh Gordon and Josh Lipschultz and Josh Newlan and Ju-yeong Ji and Kareem Mohamed and Kartikeya Badola and Kat Black and Katie Millican and Keelin McDonell and Kelvin Nguyen and Kiranbir Sodhia and Kish Greene and Lars Lowe Sjoesund and Lauren Usui and Laurent Sifre and Lena Heuermann and Leticia Lago and Lilly McNealus and Livio Baldini Soares and Logan Kilpatrick and Lucas Dixon and Luciano Martins and Machel Reid and Manvinder Singh and Mark Iverson and Martin Görner and Mat Velloso and Mateo Wirth and Matt Davidow and Matt Miller and Matthew Rahtz and Matthew Watson and Meg Risdal and Mehran Kazemi and Michael Moynihan and Ming Zhang and Minsuk Kahng and Minwoo Park and Mofi Rahman and Mohit Khatwani and Natalie Dao and Nenshad Bardoliwalla and Nesh Devanathan and Neta Dumai and Nilay Chauhan and Oscar Wahltinez and Pankil Botarda and Parker Barnes and Paul Barham and Paul Michel and Pengchong Jin and Petko Georgiev and Phil Culliton and Pradeep Kuppala and Ramona Comanescu and Ramona Merhej and Reena Jana and Reza Ardeshir Rokni and Rishabh Agarwal and Ryan Mullins and Samaneh Saadat and Sara Mc Carthy and Sarah Cogan and Sarah Perrin and Sébastien M. R. Arnold and Sebastian Krause and Shengyang Dai and Shruti Garg and Shruti Sheth and Sue Ronstrom and Susan Chan and Timothy Jordan and Ting Yu and Tom Eccles and Tom Hennigan and Tomas Kocisky and Tulsee Doshi and Vihan Jain and Vikas Yadav and Vilobh Meshram and Vishal Dharmadhikari and Warren Barkley and Wei Wei and Wenming Ye and Woohyun Han and Woosuk Kwon and Xiang Xu and Zhe Shen and Zhitao Gong and Zichuan Wei and Victor Cotruta and Phoebe Kirk and Anand Rao and Minh Giang and Ludovic Peran and Tris Warkentin and Eli Collins and Joelle Barral and Zoubin Ghahramani and Raia Hadsell and D. Sculley and Jeanine Banks and Anca Dragan and Slav Petrov and Oriol Vinyals and Jeff Dean and Demis Hassabis and Koray Kavukcuoglu and Clement Farabet and Elena Buchatskaya and Sebastian Borgeaud and Noah Fiedel and Armand Joulin and Kathleen Kenealy and Robert Dadashi and Alek Andreev},
  journal={arXiv preprint arXiv:2408.00118},
  year={2024}
}

@inproceedings{bernstein2024modular,
  title={Modular Duality in Deep Learning},
  author={Bernstein, Jeremy and Newhouse, Laker},
  booktitle={Proceedings of the International Conference on Machine Learning (ICML)},
  year={2025}
}

@inproceedings{bernstein2024old,
  title={Old Optimizer, New Norm: An Anthology},
  author={Bernstein, Jeremy and Newhouse, Laker},
  booktitle={OPT 2024: Optimization for Machine Learning},
  year={2024}
}

@inproceedings{large2024scalable,
  title={Scalable Optimization in the Modular Norm},
  author={Large, Tim and Liu, Yang and Huh, Minyoung and Bahng, Hyojin and Isola, Phillip and Bernstein, Jeremy},
  booktitle={Advances in Neural Information Processing Systems (NeurIPS)},
  year={2024}
}

@article{lewis1995convex,
  title={The convex analysis of unitarily invariant matrix functions},
  author={Lewis, Adrian S.},
  journal={Journal of Convex Analysis},
  volume={2},
  number={1},
  pages={173--183},
  year={1995}
}

@article{lewis1996eigenvalue,
  title={Eigenvalue Optimization},
  author={Lewis, Adrian S. and Overton, Michael L.},
  journal={Acta Numerica},
  volume={5},
  pages={149--190},
  year={1996},
  publisher={Cambridge University Press}
}

@article{lewis2003mathematics,
  title={The mathematics of eigenvalue optimization},
  author={Lewis, Adrian S.},
  journal={Mathematical Programming},
  volume={97},
  pages={155--176},
  year={2003},
  publisher={Springer}
}

@misc{jordan2024muon,
  author       = {Keller Jordan and Yuchen Jin and Vlado Boza and You Jiacheng and
                  Franz Cecista and Laker Newhouse and Jeremy Bernstein},
  title        = {Muon: An optimizer for hidden layers in neural networks},
  year         = {2024},
  url          = {https://kellerjordan.github.io/posts/muon/}
}

@inproceedings{gupta2018shampoo,
  title={Shampoo: Preconditioned stochastic tensor optimization},
  author={Gupta, Vineet and Koren, Tomer and Singer, Yoram},
  booktitle={Proceedings of the International Conference on Machine Learning (ICML)},
  year={2018},
}

@article{anil2020scalable,
  title={Scalable second order optimization for deep learning},
  author={Anil, Rohan and Gupta, Vineet and Koren, Tomer and Regan, Kevin and Singer, Yoram},
  journal={arXiv preprint arXiv:2002.09018},
  year={2020}
}

@inproceedings{vyas2024soap,
  title={{SOAP}: Improving and stabilizing {Shampoo} using {Adam}},
  author={Vyas, Nikhil and Morwani, Depen and Zhao, Rosie and Shapira, Itai and Brandfonbrener, David and Janson, Lucas and Kakade, Sham},
  booktitle={International Conference on Learning Representations (ICLR)},
  year={2025}
}

@misc{modded_nanogpt_2024,
  author       = {Keller Jordan and Jeremy Bernstein and Brendan Rappazzo and
                  @fernbear.bsky.social and Boza Vlado and You Jiacheng and
                  Franz Cesista and Braden Koszarsky and @Grad62304977},
  title        = {\texttt{modded-nanogpt}: Speedrunning the {NanoGPT} baseline},
  year         = {2024},
  url          = {https://github.com/KellerJordan/modded-nanogpt},
}

@article{li2017preconditioned,
  title={Preconditioned stochastic gradient descent},
  author={Li, Xi-Lin},
  journal={IEEE Transactions on Neural Networks and Learning Systems},
  volume={29},
  number={5},
  pages={1454--1466},
  year={2017},
  publisher={IEEE}
}

@inproceedings{bernstein2018signsgd,
  title={sign{SGD}: Compressed optimisation for non-convex problems},
  author={Bernstein, Jeremy and Wang, Yu-Xiang and Azizzadenesheli, Kamyar and Anandkumar, Animashree},
  booktitle={Proceedings of the International Conference on Machine Learning (ICML)},
  year={2018},
}

@inproceedings{carlson2015stochasticRBM,
  title={Stochastic spectral descent for restricted {B}oltzmann machines},
  author={Carlson, David and Cevher, Volkan and Carin, Lawrence},
  booktitle={Proceedings of the International Conference on Artificial Intelligence and Statistics (AISTATS)},
  year={2015},
}

@article{carlson2016stochastic,
  title={Stochastic spectral descent for discrete graphical models},
  author={Carlson, David and Hsieh, Ya-Ping and Collins, Edo and Carin, Lawrence and Cevher, Volkan},
  journal={IEEE Journal of Selected Topics in Signal Processing},
  volume={10},
  number={2},
  pages={296--311},
  year={2016},
  publisher={IEEE}
}

@inproceedings{carlson2015preconditioned,
  title={Preconditioned spectral descent for deep learning},
  author={Carlson, David and Collins, Edo and Hsieh, Ya-Ping and Carin, Lawrence and Cevher, Volkan},
  booktitle={Advances in Neural Information Processing Systems (NeurIPS)},
  year={2015}
}

@inproceedings{shazeer2018adafactor,
  title={Adafactor: Adaptive learning rates with sublinear memory cost},
  author={Shazeer, Noam and Stern, Mitchell},
  booktitle={Proceedings of the International Conference on Machine Learning (ICML)},
  year={2018},
}

@inproceedings{yuan2024mars,
  title={{MARS}: Unleashing the Power of Variance Reduction for Training Large Models},
  author={Yuan, Huizhuo and Liu, Yifeng and Wu, Shuang and Zhou, Xun and Gu, Quanquan},
  booktitle={Proceedings of the International Conference on Machine Learning (ICML)},
  year={2025}
}

@article{tuddenham2022orthogonalising,
  title={Orthogonalising gradients to speed up neural network optimisation},
  author={Tuddenham, Mark and Pr{\"u}gel-Bennett, Adam and Hare, Jonathan},
  journal={arXiv preprint arXiv:2202.07052},
  year={2022}
}

@inproceedings{dehghani2023scaling,
  title={Scaling vision transformers to 22 billion parameters},
  author = {{Dehghani}, Mostafa and {Djolonga}, Josip and {Mustafa}, Basil and {Padlewski}, Piotr and {Heek}, Jonathan and {Gilmer}, Justin and {Steiner}, Andreas and {Caron}, Mathilde and {Geirhos}, Robert and {Alabdulmohsin}, Ibrahim and {Jenatton}, Rodolphe and {Beyer}, Lucas and {Tschannen}, Michael and {Arnab}, Anurag and {Wang}, Xiao and {Riquelme}, Carlos and {Minderer}, Matthias and {Puigcerver}, Joan and {Evci}, Utku and {Kumar}, Manoj and {van Steenkiste}, Sjoerd and {Elsayed}, Gamaleldin F. and {Mahendran}, Aravindh and {Yu}, Fisher and {Oliver}, Avital and {Huot}, Fantine and {Bastings}, Jasmijn and {Collier}, Mark Patrick and {Gritsenko}, Alexey and {Birodkar}, Vighnesh and {Vasconcelos}, Cristina and {Tay}, Yi and {Mensink}, Thomas and {Kolesnikov}, Alexander and {Paveti{\'c}}, Filip and {Tran}, Dustin and {Kipf}, Thomas and {Lu{\v{c}}i{\'c}}, Mario and {Zhai}, Xiaohua and {Keysers}, Daniel and {Harmsen}, Jeremiah and {Houlsby}, Neil},
  booktitle={Proceedings of the International Conference on Machine Learning (ICML)},
  year={2023},
}

@article{chen2021spectral,
  title={Spectral methods for data science: A statistical perspective},
  author={Chen, Yuxin and Chi, Yuejie and Fan, Jianqing and Ma, Cong},
  journal={Foundations and Trends{\textregistered} in Machine Learning},
  volume={14},
  number={5},
  pages={566--806},
  year={2021},
  publisher={Now Publishers, Inc.}
}

@article{pooladzandi2024curvature,
  title={Curvature-Informed {SGD} via General Purpose {L}ie-Group Preconditioners},
  author={Pooladzandi, Omead and Li, Xi-Lin},
  journal={arXiv preprint arXiv:2402.04553},
  year={2024}
}

@article{nakatsukasa2016computing,
  title={Computing fundamental matrix decompositions accurately via the matrix sign function in two iterations: The power of {Z}olotarev's functions},
  author={Nakatsukasa, Yuji and Freund, Roland W.},
  journal={SIAM Review},
  volume={58},
  number={3},
  pages={461--493},
  year={2016},
}

@inproceedings{zhao2024galore,
title={{GaLore}: Memory-Efficient {LLM} Training by Gradient Low-Rank Projection},
author={Jiawei Zhao and Zhenyu Zhang and Beidi Chen and Zhangyang Wang and Anima Anandkumar and Yuandong Tian},
booktitle={Proceedings of the International Conference on Machine Learning (ICML)},
year={2024},
}

@inproceedings{wortsman2023stable,
title={Stable and low-precision training for large-scale vision-language models},
author={Mitchell Wortsman and Tim Dettmers and Luke Zettlemoyer and Ari S. Morcos and Ali Farhadi and Ludwig Schmidt},
booktitle={Advances in Neural Information Processing Systems (NeurIPS)},
year={2023},
}

@article{nakatsukasa2010optimizing,
  title={Optimizing {H}alley's iteration for computing the matrix polar decomposition},
  author={Nakatsukasa, Yuji and Bai, Zhaojun and Gygi, Fran{\c{c}}ois},
  journal={SIAM Journal on Matrix Analysis and Applications},
  volume={31},
  number={5},
  pages={2700--2720},
  year={2010},
  publisher={SIAM}
}

@inproceedings{kasimbeg2025accelerating,
title={Accelerating neural network training: An analysis of the {AlgoPerf} competition},
author={Priya Kasimbeg and Frank Schneider and Runa Eschenhagen and Juhan Bae and Chandramouli Shama Sastry and Mark Saroufim and Boyuan Feng and Less Wright and Edward Z. Yang and Zachary Nado and Sourabh Medapati and Philipp Hennig and Michael Rabbat and George E. Dahl},
booktitle={International Conference on Learning Representations (ICLR)},
year={2025},
}

@book{higham2008functions,
  title={Functions of Matrices: Theory and Computation},
  author={Higham, Nicholas J.},
  year={2008},
  publisher = {Society for Industrial and Applied Mathematics},
}

@inproceedings{martens2015optimizing,
  title={Optimizing neural networks with {K}ronecker-factored approximate curvature},
  author={Martens, James and Grosse, Roger},
  booktitle={Proceedings of the International Conference on Machine Learning (ICML)},
  year={2015},
}

@inproceedings{hu2025yulan,
  title={{YuLan-Mini}: Pushing the Limits of Open Data-efficient Language Model},
  author={Yiwen Hu and Huatong Song and Jia Deng and Jiapeng Wang and Jie Chen and Kun Zhou and Yutao Zhu and Jinhao Jiang and Zican Dong and Wayne Xin Zhao and Ji-Rong Wen},
  booktitle={Proceedings of the Annual Meeting of the Association for Computational Linguistics (ACL) (Volume 1: Long Papers)},
  year={2025}
}

@inproceedings{muennighoff2024olmoe,
  title={{OLMoE}: Open Mixture-of-Experts Language Models}, 
  author={Niklas Muennighoff and Luca Soldaini and Dirk Groeneveld and Kyle Lo and Jacob Morrison and Sewon Min and Weijia Shi and Pete Walsh and Oyvind Tafjord and Nathan Lambert and Yuling Gu and Shane Arora and Akshita Bhagia and Dustin Schwenk and David Wadden and Alexander Wettig and Binyuan Hui and Tim Dettmers and Douwe Kiela and Ali Farhadi and Noah A. Smith and Pang Wei Koh and Amanpreet Singh and Hannaneh Hajishirzi},
  booktitle={International Conference on Learning Representations (ICLR)},
  year={2025}
}

@article{higham1986computing,
  title={Computing the polar decomposition---with applications},
  author={Higham, Nicholas J.},
  journal={SIAM Journal on Scientific and Statistical Computing},
  volume={7},
  number={4},
  pages={1160--1174},
  year={1986},
  publisher={SIAM}
}

@article{higham1997stable,
  title={Stable iterations for the matrix square root},
  author={Higham, Nicholas J.},
  journal={Numerical Algorithms},
  volume={15},
  number={2},
  pages={227--242},
  year={1997},
  publisher={Springer}
}

@article{lewis1996group,
  title={Group invariance and convex matrix analysis},
  author={Lewis, Adrian S.},
  journal={SIAM Journal on Matrix Analysis and Applications},
  volume={17},
  number={4},
  pages={927--949},
  year={1996},
  publisher={SIAM}
}

@book{horn2012matrix,
  title={Matrix Analysis},
  author={Horn, Roger A. and Johnson, Charles R.},
  year={2012},
  edition={2nd},
  publisher={Cambridge University Press}
}

@book{horn1994topics,
  title={Topics in Matrix Analysis},
  author={Horn, Roger A. and Johnson, Charles R.},
  year={1994},
  publisher={Cambridge University Press}
}

@article{li2025muon,
      title={A Note on the Convergence of {Muon}}, 
      author={Jiaxiang Li and Mingyi Hong},
      year={2025},
      journal={arXiv preprint arXiv:2502.02900},
}

@inproceedings{pethick2025training,
      title={Training Deep Learning Models with Norm-Constrained {LMOs}}, 
      author={Thomas Pethick and Wanyun Xie and Kimon Antonakopoulos and Zhenyu Zhu and Antonio Silveti-Falls and Volkan Cevher},
      year={2025},
      booktitle={Proceedings of the International Conference on Machine Learning (ICML)},
}

@article{liu2025muon,
  author = {Jingyuan Liu and Jianlin Su and Xingcheng Yao and Zhejun Jiang and Guokun Lai and Yulun Du and Yidao Qin and Weixin Xu and Enzhe Lu and Junjie Yan and Yanru Chen and Huabin Zheng and Yibo Liu and Shaowei Liu and Bohong Yin and Weiran He and Han Zhu and Yuzhi Wang and Jianzhou Wang and Mengnan Dong and Zheng Zhang and Yongsheng Kang and Hao Zhang and Xinran Xu and Yutao Zhang and Yuxin Wu and Xinyu Zhou and Zhilin Yang},
  title = {Muon is Scalable For {LLM} Training},
  journal={arXiv preprint arXiv:2502.16982},
  year = {2025},
}

@misc{bernstein2025deriving,
  author = {Jeremy Bernstein},
  title = {Deriving {Muon}},
  howpublished = {\url{https://jeremybernste.in/writing/deriving-muon}},
  year = {2025}
}

@article{kovalev2025understanding,
      title={Understanding Gradient Orthogonalization for Deep Learning via Non-{E}uclidean Trust-Region Optimization}, 
      author={Dmitry Kovalev},
      year={2025},
      journal={arXiv preprint arXiv:2503.12645},
}

@inproceedings{xie2025structured,
  title={Structured Preconditioners in Adaptive Optimization: A Unified Analysis},
  author={Xie, Shuo and Wang, Tianhao and Reddi, Sashank and Kumar, Sanjiv and Li, Zhiyuan},
  booktitle={Proceedings of the International Conference on Machine Learning (ICML)},
  year={2025}
}

@inproceedings{an2025asgo,
      title={{ASGO}: Adaptive Structured Gradient Optimization}, 
      author={Kang An and Yuxing Liu and Rui Pan and Shiqian Ma and Donald Goldfarb and Tong Zhang},
      year={2025},
      booktitle={Advances in Neural Information Processing Systems (NeurIPS)},
}

@article{essentialai2025practical_full,
      title={Practical Efficiency of {Muon} for Pretraining}, 
      author={{Essential AI} and Ishaan Shah and Anthony M. Polloreno and Karl Stratos and Philip Monk and Adarsh Chaluvaraju and Andrew Hojel and Andrew Ma and Anil Thomas and Ashish Tanwer and Darsh J. Shah and Khoi Nguyen and Kurt Smith and Michael Callahan and Michael Pust and Mohit Parmar and Peter Rushton and Platon Mazarakis and Ritvik Kapila and Saurabh Srivastava and Somanshu Singla and Tim Romanski and Yash Vanjani and Ashish Vaswani},
      year={2025},
      journal={arXiv preprint arXiv:2505.02222},
}

@article{autonne1902sur,
     author = {Autonne, Léon},
     title = {Sur les groupes lin\'eaires, r\'eels et orthogonaux},
     journal = {Bulletin de la Soci\'et\'e Math\'ematique de France},
     pages = {121--134},
     publisher = {Soci\'et\'e math\'ematique de France},
     volume = {30},
     year = {1902},
}

@inproceedings{amsel2025polar,
title={The {Polar Express}: Optimal Matrix Sign Methods and Their Application to the {Muon} Algorithm},
author={Amsel, Noah and Persson, David and Musco, Christopher and Gower, Robert M.},
booktitle={International Conference on Learning Representations (ICLR)},
year={2026},
}

@article{shen2025convergence,
  title={On the Convergence Analysis of {M}uon},
  author={Shen, Wei and Huang, Ruichuan and Huang, Minhui and Shen, Cong and Zhang, Jiawei},
  journal={arXiv preprint arXiv:2505.23737},
  year={2025}
}

@article{chen2025muon,
  title={Muon Optimizes Under Spectral Norm Constraints},
  author={Chen, Lizhang and Li, Jonathan and Liu, Qiang},
  journal={Transactions on Machine Learning Research},
  issn={2835-8856},
  year={2026},
  url={https://openreview.net/forum?id=Blz4hjxLwU},
}

@inproceedings{goldfarb2020practical,
  title={Practical quasi-{N}ewton methods for training deep neural networks},
  author={Goldfarb, Donald and Ren, Yi and Bahamou, Achraf},
  booktitle={Advances in Neural Information Processing Systems (NeurIPS)},
  year={2020}
}

@article{kimi2025kimik2,
    author = {{Kimi Team}},
    title = {Kimi {K}2: Open Agentic Intelligence},
    journal={arXiv preprint arXiv:2507.20534},
    year = {2025}
}

@inproceedings{riabinin2025gluon,
  title={From {Muon} to {Gluon}: Bridging Theory and Practice of {LMO}-based Optimizers for {LLMs}},
  author={Riabinin, Artem and Shulgin, Egor and Gruntkowska, Kaja and Richt{\'a}rik, Peter},
  booktitle={Proceedings of the International Conference on Machine Learning (ICML)},
  year={2026}
}

@article{you2017large,
  title={Large batch training of convolutional networks},
  author={You, Yang and Gitman, Igor and Ginsburg, Boris},
  journal={arXiv preprint arXiv:1708.03888},
  year={2017}
}

@article{ahn2025dion,
  title={Dion: Distributed Orthonormalized Updates},
  author={Kwangjun Ahn and Byron Xu and Natalie Abreu and Ying Fan and Gagik Magakyan and Pratyusha Sharma and Zheng Zhan and John Langford},
  journal={arXiv preprint arXiv:2504.05295},
  year={2025}
}

@article{bernstein2025manifolds,
  author = {Jeremy Bernstein},
  title = {Modular Manifolds},
  journal = {Thinking Machines Lab: Connectionism},
  year = {2025},
  note = {\url{https://thinkingmachines.ai/blog/modular-manifolds/}},
}

@article{grishina2025accelerating,
  title={Accelerating {N}ewton-{S}chulz Iteration for Orthogonalization via {C}hebyshev-type Polynomials},
  author={Grishina, Ekaterina and Smirnov, Matvey and Rakhuba, Maxim},
  journal={arXiv preprint arXiv:2506.10935},
  year={2025}
}

@article{su2025isotropic,
  title={Isotropic Curvature Model for Understanding Deep Learning Optimization: Is Gradient Orthogonalization Optimal?},
  author={Su, Weijie},
  journal={arXiv preprint arXiv:2511.00674},
  year={2025}
}

@inproceedings{crawshaw2025exploration,
  title={An Exploration of Non-{E}uclidean Gradient Descent: {M}uon and its Many Variants},
  author={Crawshaw, Michael and Modi, Chirag and Liu, Mingrui and Gower, Robert M.},
  booktitle={Proceedings of the International Conference on Machine Learning (ICML)},
  year={2026}
}

@article{veprikov2025preconditioned,
  title={Preconditioned Norms: A Unified Framework for Steepest Descent, Quasi-{N}ewton and Adaptive Methods},
  author={Veprikov, Andrey and Bolatov, Arman and Horv{\'a}th, Samuel and Beznosikov, Aleksandr and Tak{\'a}{\v{c}}, Martin and Hanzely, Slavomir},
  journal={arXiv preprint arXiv:2510.10777},
  year={2025}
}

@article{filatov2025optimal,
	title={Optimal scaling needs optimal norm},
	author={Filatov, Oleg and Wang, Jiangtao and Ebert, Jan and Kesselheim, Stefan},
	journal={arXiv preprint arXiv:2510.03871},
	year={2025}
}

@article{anthony2025training,
	title={Training Foundation Models on a Full-Stack {AMD} Platform: Compute, Networking, and System Design},
	author={Quentin Anthony and Yury Tokpanov and Skyler Szot and Srivatsan Rajagopal and Praneeth Medepalli and Rishi Iyer and Vasu Shyam and Anna Golubeva and Ansh Chaurasia and Xiao Yang and Tomas Figliolia and Robert Washbourne and Drew Thorstensen and Amartey Pearson and Zack Grossbart and Jason van Patten and Emad Barsoum and Zhenyu Gu and Yao Fu and Beren Millidge},
	journal={arXiv preprint arXiv:2511.17127},
	year={2025}
}

@book{bhatia2013matrix,
	title={Matrix Analysis},
	author={Bhatia, Rajendra},
	volume={169},
	year={2013},
	publisher={Springer Science \& Business Media}
}

@inproceedings{wen2025fantastic,
	title={Fantastic pretraining optimizers and where to find them},
	author={Wen, Kaiyue and Hall, David and Ma, Tengyu and Liang, Percy},
	booktitle={International Conference on Learning Representations (ICLR)},
	year={2026}
}

@article{semenov2025benchmarking,
	title={Benchmarking optimizers for large language model pretraining},
	author={Semenov, Andrei and Pagliardini, Matteo and Jaggi, Martin},
	journal={arXiv preprint arXiv:2509.01440},
	year={2025}
}

@article{kravatskiy2025ky,
	title={The {Ky Fan} Norms and Beyond: Dual Norms and Combinations for Matrix Optimization},
	author={Kravatskiy, Alexey and Kozyrev, Ivan and Kozlov, Nikolai and Vinogradov, Alexander and Merkulov, Daniil and Oseledets, Ivan},
	journal={arXiv preprint arXiv:2512.09678},
	year={2025}
}

@article{pethick2025training_review,
	title={Training Neural Networks at Any Scale: An exposition},
	author={Pethick, Thomas and Antonakopoulos, Kimon and Silveti-Falls, Antonio and Vankadara, Leena Chennuru and Cevher, Volkan},
	journal={IEEE Signal Processing Magazine},
    volume={43},
    number={3},
    pages={21--36},
    year={2026},
    publisher={IEEE}
}

@article{boissin2025turbo,
	title={{Turbo-Muon}: Accelerating Orthogonality-Based Optimization with Pre-Conditioning},
	author={Boissin, Thibaut and Massena, Thomas and Mamalet, Franck and Serrurier, Mathieu},
	journal={arXiv preprint arXiv:2512.04632},
	year={2025}
}

@article{ma2026preconditioning,
  title={Preconditioning Benefits of Spectral Orthogonalization in {M}uon},
  author={Ma, Jianhao and Huang, Yu and Chi, Yuejie and Chen, Yuxin},
  journal={arXiv preprint arXiv:2601.13474},
  year={2026}
}

@article{davis2025spectral,
  title={When do spectral gradient updates help in deep learning?},
  author={Davis, Damek and Drusvyatskiy, Dmitriy},
  journal={arXiv preprint arXiv:2512.04299},
  year={2025}
}

@article{xie2026controlled,
  title={Controlled {LLM} Training on Spectral Sphere},
  author={Tian Xie and Haoming Luo and Haoyu Tang and Yiwen Hu and Jason Klein Liu and Qingnan Ren and Yang Wang and Wayne Xin Zhao and Rui Yan and Bing Su and Chong Luo and Baining Guo},
  journal={arXiv preprint arXiv:2601.08393},
  year={2026}
}

@article{yang2026manifold,
  title={Manifold constrained steepest descent},
  author={Yang, Kaiwei and Lai, Lexiao},
  journal={arXiv preprint arXiv:2601.21487},
  year={2026}
}

@article{gu2026mano,
  title={Mano: Restriking Manifold Optimization for {LLM} Training},
  author={Gu, Yufei and Xie, Zeke},
  journal={arXiv preprint arXiv:2601.23000},
  year={2026}
}

@inproceedings{qi2026delving,
  title={Delving into {M}uon and Beyond: Deep Analysis and Extensions},
  author={Qi, Xianbiao and Chen, Marco and Ye, Jiaquan and He, Yelin and Xiao, Rong},
  booktitle={Proceedings of the International Conference on Machine Learning (ICML)},
  year={2026}
}

@inproceedings{yang2026prism,
  title={{PRISM}: Distribution-free Adaptive Computation of Matrix Functions for Accelerating Neural Network Training},
  author={Yang, Shenghao and Wang, Zhichao and Balabanov, Oleg and Erichson, N. Benjamin and Mahoney, Michael W.},
  booktitle={Proceedings of the International Conference on Machine Learning (ICML)},
  year={2026}
}

@article{gong2026aro,
  title={{ARO}: A New Lens On Matrix Optimization For Large Models},
  author={Gong, Wenbo and Zazo, Javier and Luo, Qijun and Wang, Puqian and Hensman, James and Ma, Chao},
  journal={arXiv preprint arXiv:2602.09006},
  year={2026}
}

@inproceedings{zhao2022symmetry,
  title={Symmetry teleportation for accelerated optimization},
  author={Zhao, Bo and Dehmamy, Nima and Walters, Robin and Yu, Rose},
  booktitle={Advances in Neural Information Processing Systems (NeurIPS)},
  year={2022}
}

@inproceedings{frans2025stable,
title={A Stable Whitening Optimizer for Efficient Neural Network Training},
author={Kevin Frans and Sergey Levine and Pieter Abbeel},
booktitle={Advances in Neural Information Processing Systems (NeurIPS)},
year={2025},
}

@article{ahn2025dion2,
	title={Dion2: A Simple Method to Shrink Matrix in {Muon}},
	author={Ahn, Kwangjun and Amsel, Noah and Langford, John},
	journal={arXiv preprint 2512.16928},
	year={2025}
}

@article{ding2018spectral,
	title={Spectral operators of matrices},
	author={Ding, Chao and Sun, Defeng and Sun, Jie and Toh, Kim-Chuan},
	journal={Mathematical Programming},
	volume={168},
	number={1},
	pages={509--531},
	year={2018},
	publisher={Springer}
}

@article{ding2020spectral,
	title={Spectral operators of matrices: Semismoothness and characterizations of the generalized {J}acobian},
	author={Ding, Chao and Sun, Defeng and Sun, Jie and Toh, Kim-Chuan},
	journal={SIAM Journal on Optimization},
	volume={30},
	number={1},
	pages={630--659},
	year={2020},
	publisher={SIAM}
}

@inproceedings{kim2026convergence,
title={Convergence of {M}uon with {N}ewton-{S}chulz},
author={Kim, Gyu Yeol and Oh, Min-hwan},
booktitle={International Conference on Learning Representations (ICLR)},
year={2026},
}

@article{kovalev2025non,
  title={Non-{E}uclidean {SGD} for Structured Optimization: Unified Analysis and Improved Rates},
  author={Kovalev, Dmitry and Borodich, Ekaterina},
  journal={arXiv preprint arXiv:2511.11466},
  year={2025}
}

@article{chang2025convergence,
  title={On the Convergence of {M}uon and Beyond},
  author={Chang, Da and Liu, Yongxiang and Yuan, Ganzhao},
  journal={arXiv preprint arXiv:2509.15816},
  year={2025}
}

@article{qwen3technicalreport,
      title={Qwen3 Technical Report}, 
      author={{Qwen Team}},
      year={2025},
      journal={arXiv preprint arXiv:2505.09388},
}

@inproceedings{li2025normuon,
  title={{NorMuon}: Making {M}uon more efficient and scalable},
  author={Li, Zichong and Liu, Liming and Liang, Chen and Chen, Weizhu and Zhao, Tuo},
  booktitle={Proceedings of the International Conference on Machine Learning (ICML)},
  year={2026}
}

@inproceedings{li2026convergence,
  title={Convergence Rate Analysis of the {AdamW}-Style {S}hampoo: Unifying One-sided and Two-Sided Preconditioning},
  author={Li, Huan and Dong, Yiming and Lin, Zhouchen},
  booktitle={Proceedings of the International Conference on Machine Learning (ICML)},
  year={2026}
}

@article{zhang2026mousse,
      title={Mousse: Rectifying the Geometry of {M}uon with Curvature-Aware Preconditioning}, 
      author={Yechen Zhang and Shuhao Xing and Junhao Huang and Kai Lv and Yunhua Zhou and Xipeng Qiu and Qipeng Guo and Kai Chen},
      year={2026},
      journal={arXiv preprint arXiv:2603.09697},
}

@article{gemmateam2025gemma3technicalreport,
      title={Gemma 3 Technical Report}, 
      author={{Gemma Team} and Aishwarya Kamath and Johan Ferret and Shreya Pathak and Nino Vieillard and Ramona Merhej and Sarah Perrin and Tatiana Matejovicova and Alexandre Ramé and Morgane Rivière and Louis Rouillard and Thomas Mesnard and Geoffrey Cideron and Jean-Bastien Grill and Sabela Ramos and Edouard Yvinec and Michelle Casbon and Etienne Pot and Ivo Penchev and Gaël Liu and Francesco Visin and Kathleen Kenealy and Lucas Beyer and Xiaohai Zhai and Anton Tsitsulin and Robert Busa-Fekete and Alex Feng and Noveen Sachdeva and Benjamin Coleman and Yi Gao and Basil Mustafa and Iain Barr and Emilio Parisotto and David Tian and Matan Eyal and Colin Cherry and Jan-Thorsten Peter and Danila Sinopalnikov and Surya Bhupatiraju and Rishabh Agarwal and Mehran Kazemi and Dan Malkin and Ravin Kumar and David Vilar and Idan Brusilovsky and Jiaming Luo and Andreas Steiner and Abe Friesen and Abhanshu Sharma and Abheesht Sharma and Adi Mayrav Gilady and Adrian Goedeckemeyer and Alaa Saade and Alex Feng and Alexander Kolesnikov and Alexei Bendebury and Alvin Abdagic and Amit Vadi and András György and André Susano Pinto and Anil Das and Ankur Bapna and Antoine Miech and Antoine Yang and Antonia Paterson and Ashish Shenoy and Ayan Chakrabarti and Bilal Piot and Bo Wu and Bobak Shahriari and Bryce Petrini and Charlie Chen and Charline Le Lan and Christopher A. Choquette-Choo and CJ Carey and Cormac Brick and Daniel Deutsch and Danielle Eisenbud and Dee Cattle and Derek Cheng and Dimitris Paparas and Divyashree Shivakumar Sreepathihalli and Doug Reid and Dustin Tran and Dustin Zelle and Eric Noland and Erwin Huizenga and Eugene Kharitonov and Frederick Liu and Gagik Amirkhanyan and Glenn Cameron and Hadi Hashemi and Hanna Klimczak-Plucińska and Harman Singh and Harsh Mehta and Harshal Tushar Lehri and Hussein Hazimeh and Ian Ballantyne and Idan Szpektor and Ivan Nardini and Jean Pouget-Abadie and Jetha Chan and Joe Stanton and John Wieting and Jonathan Lai and Jordi Orbay and Joseph Fernandez and Josh Newlan and Ju-yeong Ji and Jyotinder Singh and Kat Black and Kathy Yu and Kevin Hui and Kiran Vodrahalli and Klaus Greff and Linhai Qiu and Marcella Valentine and Marina Coelho and Marvin Ritter and Matt Hoffman and Matthew Watson and Mayank Chaturvedi and Michael Moynihan and Min Ma and Nabila Babar and Natasha Noy and Nathan Byrd and Nick Roy and Nikola Momchev and Nilay Chauhan and Noveen Sachdeva and Oskar Bunyan and Pankil Botarda and Paul Caron and Paul Kishan Rubenstein and Phil Culliton and Philipp Schmid and Pier Giuseppe Sessa and Pingmei Xu and Piotr Stanczyk and Pouya Tafti and Rakesh Shivanna and Renjie Wu and Renke Pan and Reza Rokni and Rob Willoughby and Rohith Vallu and Ryan Mullins and Sammy Jerome and Sara Smoot and Sertan Girgin and Shariq Iqbal and Shashir Reddy and Shruti Sheth and Siim Põder and Sijal Bhatnagar and Sindhu Raghuram Panyam and Sivan Eiger and Susan Zhang and Tianqi Liu and Trevor Yacovone and Tyler Liechty and Uday Kalra and Utku Evci and Vedant Misra and Vincent Roseberry and Vlad Feinberg and Vlad Kolesnikov and Woohyun Han and Woosuk Kwon and Xi Chen and Yinlam Chow and Yuvein Zhu and Zichuan Wei and Zoltan Egyed and Victor Cotruta and Minh Giang and Phoebe Kirk and Anand Rao and Kat Black and Nabila Babar and Jessica Lo and Erica Moreira and Luiz Gustavo Martins and Omar Sanseviero and Lucas Gonzalez and Zach Gleicher and Tris Warkentin and Vahab Mirrokni and Evan Senter and Eli Collins and Joelle Barral and Zoubin Ghahramani and Raia Hadsell and Yossi Matias and D. Sculley and Slav Petrov and Noah Fiedel and Noam Shazeer and Oriol Vinyals and Jeff Dean and Demis Hassabis and Koray Kavukcuoglu and Clement Farabet and Elena Buchatskaya and Jean-Baptiste Alayrac and Rohan Anil and Dmitry and Lepikhin and Sebastian Borgeaud and Olivier Bachem and Armand Joulin and Alek Andreev and Cassidy Hardin and Robert Dadashi and Léonard Hussenot},
  journal={arXiv preprint arXiv:2503.19786},
  year={2025},
}

@article{glm2025glm,
  title={{GLM}-4.5: Agentic, Reasoning, and Coding ({ARC}) Foundation Models}, 
        author={{{GLM}-4.5 Team} and Aohan Zeng and Xin Lv and Qinkai Zheng and Zhenyu Hou and Bin Chen and Chengxing Xie and Cunxiang Wang and Da Yin and Hao Zeng and Jiajie Zhang and Kedong Wang and Lucen Zhong and Mingdao Liu and Rui Lu and Shulin Cao and Xiaohan Zhang and Xuancheng Huang and Yao Wei and Yean Cheng and Yifan An and Yilin Niu and Yuanhao Wen and Yushi Bai and Zhengxiao Du and Zihan Wang and Zilin Zhu and Bohan Zhang and Bosi Wen and Bowen Wu and Bowen Xu and Can Huang and Casey Zhao and Changpeng Cai and Chao Yu and Chen Li and Chendi Ge and Chenghua Huang and Chenhui Zhang and Chenxi Xu and Chenzheng Zhu and Chuang Li and Congfeng Yin and Daoyan Lin and Dayong Yang and Dazhi Jiang and Ding Ai and Erle Zhu and Fei Wang and Gengzheng Pan and Guo Wang and Hailong Sun and Haitao Li and Haiyang Li and Haiyi Hu and Hanyu Zhang and Hao Peng and Hao Tai and Haoke Zhang and Haoran Wang and Haoyu Yang and He Liu and He Zhao and Hongwei Liu and Hongxi Yan and Huan Liu and Huilong Chen and Ji Li and Jiajing Zhao and Jiamin Ren and Jian Jiao and Jiani Zhao and Jianyang Yan and Jiaqi Wang and Jiayi Gui and Jiayue Zhao and Jie Liu and Jijie Li and Jing Li and Jing Lu and Jingsen Wang and Jingwei Yuan and Jingxuan Li and Jingzhao Du and Jinhua Du and Jinxin Liu and Junkai Zhi and Junli Gao and Ke Wang and Lekang Yang and Liang Xu and Lin Fan and Lindong Wu and Lintao Ding and Lu Wang and Man Zhang and Minghao Li and Minghuan Xu and Mingming Zhao and Mingshu Zhai and Pengfan Du and Qian Dong and Shangde Lei and Shangqing Tu and Shangtong Yang and Shaoyou Lu and Shijie Li and Shuang Li and Shuang-Li and Shuxun Yang and Sibo Yi and Tianshu Yu and Wei Tian and Weihan Wang and Wenbo Yu and Weng Lam Tam and Wenjie Liang and Wentao Liu and Xiao Wang and Xiaohan Jia and Xiaotao Gu and Xiaoying Ling and Xin Wang and Xing Fan and Xingru Pan and Xinyuan Zhang and Xinze Zhang and Xiuqing Fu and Xunkai Zhang and Yabo Xu and Yandong Wu and Yida Lu and Yidong Wang and Yilin Zhou and Yiming Pan and Ying Zhang and Yingli Wang and Yingru Li and Yinpei Su and Yipeng Geng and Yitong Zhu and Yongkun Yang and Yuhang Li and Yuhao Wu and Yujiang Li and Yunan Liu and Yunqing Wang and Yuntao Li and Yuxuan Zhang and Zezhen Liu and Zhen Yang and Zhengda Zhou and Zhongpei Qiao and Zhuoer Feng and Zhuorui Liu and Zichen Zhang and Zihan Wang and Zijun Yao and Zikang Wang and Ziqiang Liu and Ziwei Chai and Zixuan Li and Zuodong Zhao and Wenguang Chen and Jidong Zhai and Bin Xu and Minlie Huang and Hongning Wang and Juanzi Li and Yuxiao Dong and Jie Tang},
  journal={arXiv preprint arXiv:2508.06471},
  year={2025}
}

@article{xu2026width,
  title={On the Width Scaling of Neural Optimizers Under Matrix Operator Norms {I}: Row/Column Normalization and Hyperparameter Transfer},
  author={Xu, Ruihan and Li, Jiajin and Lu, Yiping},
  journal={arXiv preprint arXiv:2603.09952},
  year={2026}
}

@article{eschenhagen2026clarifying,
	title={Clarifying {Shampoo}: Adapting Spectral Descent to Stochasticity and the Parameter Trajectory},
	author={Eschenhagen, Runa and Cai, Anna and Lee, Tsung-Hsien and Shi, Hao-Jun Michael},
	journal={arXiv preprint arXiv:2602.09314},
	year={2026}
}

@misc{lozhkov2024fineweb-edu,
	author       = {Lozhkov, Anton and Ben Allal, Loubna and von Werra, Leandro and Wolf, Thomas},  
	title        = {{FineWeb-Edu}: the Finest Collection of Educational Content}, 
	year         = {2024},  
	url          = {https://huggingface.co/datasets/HuggingFaceFW/fineweb-edu},  
}

@article{deng2026rmnp,
      title={{RMNP}: Row-Momentum Normalized Preconditioning for Scalable Matrix-Based Optimization}, 
      author={Shenyang Deng and Zhuoli Ouyang and Tianyu Pang and Zihang Liu and Ruochen Jin and Shuhua Yu and Yaoqing Yang},
      year={2026},
      journal={arXiv preprint arXiv:2603.20527}, 
}

@inproceedings{glentis2025minimalist,
  title={Memory-Efficient {LLM} Pretraining via Minimalist Optimizer Design},
  author={Glentis, Athanasios and Li, Jiaxiang and Han, Andi and Hong, Mingyi},
  booktitle={Proceedings of the International Conference on Machine Learning (ICML)},
  year={2026}
}

@article{lojasiewicz1993geometrie,
  title={Sur la g{\'e}om{\'e}trie semi-et sous-analytique},
  author={{\L}ojasiewicz, Stanislas},
  journal={Annales de l'institut Fourier},
  volume={43},
  number={5},
  pages={1575--1595},
  year={1993}
}

@article{kurdyka1998gradients,
  title={On gradients of functions definable in o-minimal structures},
  author={Kurdyka, Krzysztof},
  journal={Annales de l'institut Fourier},
  volume={48},
  number={3},
  pages={769--783},
  year={1998}
}

@article{glm5team2026glm5,
	title={{GLM-5}: from Vibe Coding to Agentic Engineering},
	author={{GLM-5 Team} and Aohan Zeng and Xin Lv and Zhenyu Hou and Zhengxiao Du and Qinkai Zheng and Bin Chen and Da Yin and Chendi Ge and Chenghua Huang and Chengxing Xie and Chenzheng Zhu and Congfeng Yin and Cunxiang Wang and Gengzheng Pan and Hao Zeng and Haoke Zhang and Haoran Wang and Huilong Chen and Jiajie Zhang and Jian Jiao and Jiaqi Guo and Jingsen Wang and Jingzhao Du and Jinzhu Wu and Kedong Wang and Lei Li and Lin Fan and Lucen Zhong and Mingdao Liu and Mingming Zhao and Pengfan Du and Qian Dong and Rui Lu and Shuang-Li and Shulin Cao and Song Liu and Ting Jiang and Xiaodong Chen and Xiaohan Zhang and Xuancheng Huang and Xuezhen Dong and Yabo Xu and Yao Wei and Yifan An and Yilin Niu and Yitong Zhu and Yuanhao Wen and Yukuo Cen and Yushi Bai and Zhongpei Qiao and Zihan Wang and Zikang Wang and Zilin Zhu and Ziqiang Liu and Zixuan Li and Bojie Wang and Bosi Wen and Can Huang and Changpeng Cai and Chao Yu and Chen Li and Chengwei Hu and Chenhui Zhang and Dan Zhang and Daoyan Lin and Dayong Yang and Di Wang and Ding Ai and Erle Zhu and Fangzhou Yi and Feiyu Chen and Guohong Wen and Hailong Sun and Haisha Zhao and Haiyi Hu and Hanchen Zhang and Hanrui Liu and Hanyu Zhang and Hao Peng and Hao Tai and Haobo Zhang and He Liu and Hongwei Wang and Hongxi Yan and Hongyu Ge and Huan Liu and Huanpeng Chu and Jia'ni Zhao and Jiachen Wang and Jiajing Zhao and Jiamin Ren and Jiapeng Wang and Jiaxin Zhang and Jiayi Gui and Jiayue Zhao and Jijie Li and Jing An and Jing Li and Jingwei Yuan and Jinhua Du and Jinxin Liu and Junkai Zhi and Junwen Duan and Kaiyue Zhou and Kangjian Wei and Ke Wang and Keyun Luo and Laiqiang Zhang and Leigang Sha and Liang Xu and Lindong Wu and Lintao Ding and Lu Chen and Minghao Li and Nianyi Lin and Pan Ta and Qiang Zou and Rongjun Song and Ruiqi Yang and Shangqing Tu and Shangtong Yang and Shaoxiang Wu and Shengyan Zhang and Shijie Li and Shuang Li and Shuyi Fan and Wei Qin and Wei Tian and Weining Zhang and Wenbo Yu and Wenjie Liang and Xiang Kuang and Xiangmeng Cheng and Xiangyang Li and Xiaoquan Yan and Xiaowei Hu and Xiaoying Ling and Xing Fan and Xingye Xia and Xinyuan Zhang and Xinze Zhang and Xirui Pan and Xu Zou and Xunkai Zhang and Yadi Liu and Yandong Wu and Yanfu Li and Yidong Wang and Yifan Zhu and Yijun Tan and Yilin Zhou and Yiming Pan and Ying Zhang and Yinpei Su and Yipeng Geng and Yong Yan and Yonglin Tan and Yuean Bi and Yuhan Shen and Yuhao Yang and Yujiang Li and Yunan Liu and Yunqing Wang and Yuntao Li and Yurong Wu and Yutao Zhang and Yuxi Duan and Yuxuan Zhang and Zezhen Liu and Zhengtao Jiang and Zhenhe Yan and Zheyu Zhang and Zhixiang Wei and Zhuo Chen and Zhuoer Feng and Zijun Yao and Ziwei Chai and Ziyuan Wang and Zuzhou Zhang and Bin Xu and Minlie Huang and Hongning Wang and Juanzi Li and Yuxiao Dong and Jie Tang},
	journal={arXiv preprint arXiv:2602.15763},
	year={2026}
}

@inproceedings{radford2021learning,
	title={Learning transferable visual models from natural language supervision},
	author={Alec Radford and Jong Wook Kim and Chris Hallacy and Aditya Ramesh and Gabriel Goh and Sandhini Agarwal and Girish Sastry and Amanda Askell and Pamela Mishkin and Jack Clark and Gretchen Krueger and Ilya Sutskever},
	booktitle={Proceedings of the International Conference on Machine Learning (ICML)},
	year={2021},
}

@article{bordes2024introduction,
	title={An introduction to vision-language modeling},
	author={Florian Bordes and Richard Yuanzhe Pang and Anurag Ajay and Alexander C. Li and Adrien Bardes and Suzanne Petryk and Oscar Mañas and Zhiqiu Lin and Anas Mahmoud and Bargav Jayaraman and Mark Ibrahim and Melissa Hall and Yunyang Xiong and Jonathan Lebensold and Candace Ross and Srihari Jayakumar and Chuan Guo and Diane Bouchacourt and Haider Al-Tahan and Karthik Padthe and Vasu Sharma and Hu Xu and Xiaoqing Ellen Tan and Megan Richards and Samuel Lavoie and Pietro Astolfi and Reyhane Askari Hemmat and Jun Chen and Kushal Tirumala and Rim Assouel and Mazda Moayeri and Arjang Talattof and Kamalika Chaudhuri and Zechun Liu and Xilun Chen and Quentin Garrido and Karen Ullrich and Aishwarya Agrawal and Kate Saenko and Asli Celikyilmaz and Vikas Chandra},
	journal={arXiv preprint arXiv:2405.17247},
	year={2024}
}

@inproceedings{lou2024discrete,
	title={Discrete Diffusion Modeling by Estimating the Ratios of the Data Distribution},
	author={Aaron Lou and Chenlin Meng and Stefano Ermon},
	booktitle={Proceedings of the International Conference on Machine Learning (ICML)},
	year={2024},
}

@inproceedings{gong2025scaling,
	title={Scaling Diffusion Language Models via Adaptation from Autoregressive Models},
	author={Shansan Gong and Shivam Agarwal and Yizhe Zhang and Jiacheng Ye and Lin Zheng and Mukai Li and Chenxin An and Peilin Zhao and Wei Bi and Jiawei Han and Hao Peng and Lingpeng Kong},
	booktitle={International Conference on Learning Representations (ICLR)},
	year={2025},
}

@inproceedings{nie2025large,
	title={Large Language Diffusion Models},
	author={Shen Nie and Fengqi Zhu and Zebin You and Xiaolu Zhang and Jingyang Ou and Jun Hu and Jun Zhou and Yankai Lin and Ji-Rong Wen and Chongxuan Li},
	booktitle={Advances in Neural Information Processing Systems (NeurIPS)},
	year={2025},
}

@inproceedings{shazeer2017outrageously,
	title={Outrageously Large Neural Networks: The Sparsely-Gated Mixture-of-Experts Layer},
	author={Noam Shazeer and Azalia Mirhoseini and Krzysztof Maziarz and Andy Davis and Quoc Le and Geoffrey Hinton and Jeff Dean},
	booktitle={International Conference on Learning Representations (ICLR)},
	year={2017},
}

@inproceedings{ainslie2023gqa,
	title={{GQA}: Training generalized multi-query transformer models from multi-head checkpoints},
	author={Ainslie, Joshua and Lee-Thorp, James and De Jong, Michiel and Zemlyanskiy, Yury and Lebr{\'o}n, Federico and Sanghai, Sumit},
	booktitle={Proceedings of the Conference on Empirical Methods in Natural Language Processing (EMNLP)},
	year={2023}
}

@article{primeintellectteam2025intellect3,
  title={{INTELLECT}-3: Technical Report},
  author={{Prime Intellect Team} and Mika Senghaas and Fares Obeid and Sami Jaghouar and William Brown and Jack Min Ong and Daniel Auras and Matej Sirovatka and Jannik Straube and Andrew Baker and Sebastian Müller and Justus Mattern and Manveer Basra and Aiman Ismail and Dominik Scherm and Cooper Miller and Ameen Patel and Simon Kirsten and Mario Sieg and Christian Reetz and Kemal Erdem and Vincent Weisser and Johannes Hagemann},
  journal={arXiv preprint arXiv:2512.16144},
  year={2025}
}

@article{singh2026arcee,
  title={{Arcee Trinity Large} Technical Report},
  author={Varun Singh and Lucas Krauss and Sami Jaghouar and Matej Sirovatka and Charles Goddard and Fares Obied and Jack Min Ong and Jannik Straube and Fern and Aria Harley and Conner Stewart and Colin Kealty and Maziyar Panahi and Simon Kirsten and Anushka Deshpande and Anneketh Vij and Arthur Bresnu and Pranav Veldurthi and Raghav Ravishankar and Hardik Bishnoi and {DatologyAI Team} and {Arcee AI Team} and {Prime Intellect Team} and Mark McQuade and Johannes Hagemann and Lucas Atkins},
  journal={arXiv preprint arXiv:2602.17004},
  year={2026}
}

@article{stepfun2026step,
  title={{Step 3.5 Flash}: Open Frontier-Level Intelligence with 11{B} Active Parameters},
  author={{StepFun Team} and Ailin Huang and Ang Li and Aobo Kong and Bin Wang and Binxing Jiao and Bo Dong and Bojun Wang and Boyu Chen and Brian Li and Buyun Ma and Chang Su and Changxin Miao and Changyi Wan and Chao Lou and Chen Hu and Chen Xu and Chenfeng Yu and Chengting Feng and Chengyuan Yao and Chunrui Han and Dan Ma and Dapeng Shi and Daxin Jiang and Dehua Ma and Deshan Sun and Di Qi and Enle Liu and Fajie Zhang and Fanqi Wan and Guanzhe Huang and Gulin Yan and Guoliang Cao and Guopeng Li and Han Cheng and Hangyu Guo and Hanshan Zhang and Hao Nie and Haonan Jia and Haoran Lv and Hebin Zhou and Hekun Lv and Heng Wang and Heung-Yeung Shum and Hongbo Huang and Hongbo Peng and Hongyu Zhou and Hongyuan Wang and Houyong Chen and Huangxi Zhu and Huimin Wu and Huiyong Guo and Jia Wang and Jian Zhou and Jianjian Sun and Jiaoren Wu and Jiaran Zhang and Jiashu Lv and Jiashuo Liu and Jiayi Fu and Jiayu Liu and Jie Cheng and Jie Luo and Jie Yang and Jie Zhou and Jieyi Hou and Jing Bai and Jingcheng Hu and Jingjing Xie and Jingwei Wu and Jingyang Zhang and Jishi Zhou and Junfeng Liu and Junzhe Lin and Ka Man Lo and Kai Liang and Kaibo Liu and Kaijun Tan and Kaiwen Yan and Kaixiang Li and Kang An and Kangheng Lin and Lei Yang and Liang Lv and Liang Zhao and Liangyu Chen and Lieyu Shi and Liguo Tan and Lin Lin and Lina Chen and Luck Ma and Mengqiang Ren and Michael Li and Ming Li and Mingliang Li and Mingming Zhang and Mingrui Chen and Mitt Huang and Na Wang and Peng Liu and Qi Han and Qian Zhao and Qinglin He and Qinxin Du and Qiuping Wu and Quan Sun and Rongqiu Yang and Ruihang Miao and Ruixin Han and Ruosi Wan and Ruyan Guo and Shan Wang and Shaoliang Pang and Shaowen Yang and Shengjie Fan and Shijie Shang and Shiliang Yang and Shiwei Li and Shuangshuang Tian and Siqi Liu and Siye Wu and Siyu Chen and Song Yuan and Tiancheng Cao and Tianchi Yue and Tianhao Cheng and Tianning Li and Tingdan Luo and Wang You and Wei Ji and Wei Yuan and Wei Zhang and Weibo Wu and Weihao Xie and Wen Sun and Wenjin Deng and Wenzhen Zheng and Wuxun Xie and Xiangfeng Wang and Xiangwen Kong and Xiangyu Liu and Xiangyu Zhang and Xiaobo Yang and Xiaojia Liu and Xiaolan Yuan and Xiaoran Jiao and Xiaoxiao Ren and Xiaoyun Zhang and Xin Li and Xin Liu and Xin Wu and Xing Chen and Xingping Yang and Xinran Wang and Xu Zhao and Xuan He and Xuanti Feng and Xuedan Cai and Xuqiang Zhou and Yanbo Yu and Yang Li and Yang Xu and Yanlin Lai and Yanming Xu and Yaoyu Wang and Yeqing Shen and Yibo Zhu and Yichen Lv and Yicheng Cao and Yifeng Gong and Yijing Yang and Yikun Yang and Yin Zhao and Yingxiu Zhao and Yinmin Zhang and Yitong Zhang and Yixuan Zhang and Yiyang Chen and Yongchi Zhao and Yongshen Long and Yongyao Wang and Yousong Guan and Yu Zhou and Yuang Peng and Yuanhao Ding and Yuantao Fan and Yuanwei Lu and Yuanzhen Yang and Yuchu Luo and Yudi Zhao and Yue Peng and Yueqiang Lin and Yufan Lu and Yuling Zhao and Yunzhou Ju and Yurong Zhang and Yusheng Li and Yuxiang Yang and Yuyang Chen and Yuzhu Cai and Zejia Weng and Zetao Hong and Zexi Li and Zhe Xie and Zheng Ge and Zheng Gong and Zheng Zeng and Zhenyi Lu and Zhewei Huang and Zhichao Chang and Zhiguo Huang and Zhiheng Hu and Zidong Yang and Zili Wang and Ziqi Ren and Zixin Zhang and Zixuan Wang},
  journal={arXiv preprint arXiv:2602.10604},
  year={2026}
}

@article{lim2025motif,
	title={Motif 2 12.7{B} technical report},
	author={Junghwan Lim and Sungmin Lee and Dongseok Kim and Taehyun Kim and Eunhwan Park and Jeesoo Lee and Jeongdoo Lee and Junhyeok Lee and Wai Ting Cheung and Dahye Choi and Jaeheui Her and Jaeyeon Huh and Hanbin Jung and Changjin Kang and Beomgyu Kim and Minjae Kim and Taewhan Kim and Youngrok Kim and Hyukjin Kweon and Haesol Lee and Kungyu Lee and Dongpin Oh and Yeongjae Park and Bokki Ryu and Dongjoo Weon},
	journal={arXiv preprint arXiv:2511.07464},
	year={2025}
}

@misc{lin2025flash,
	author       = {Tianyang Lin},
	title        = {{Flash-Muon}: An Efficient Implementation of {M}uon Optimizer},
	year         = {2025},
	url          = {https://github.com/nil0x9/flash-muon}
}

@inproceedings{jiang2026adaptive,
  title={Adaptive Matrix Online Learning through Smoothing with Guarantees for Nonsmooth Nonconvex Optimization},
  author={Jiang, Ruichen and Mhammedi, Zakaria and Mohri, Mehryar and Mokhtari, Aryan},
  booktitle={Proceedings of the Conference on Learning Theory (COLT)},
  year={2026}
}

@inproceedings{huang2025limuon,
  title={{LiMuon}: Light and fast {M}uon optimizer for large models},
  author={Huang, Feihu and Luo, Yuning and Chen, Songcan},
  booktitle={Proceedings of the International Conference on Machine Learning (ICML)},
  year={2026}
}

@article{zhang2025adagrad,
  title={{AdaGrad} meets {M}uon: Adaptive stepsizes for orthogonal updates},
  author={Zhang, Minxin and Liu, Yuxuan and Schaeffer, Hayden},
  journal={arXiv preprint arXiv:2509.02981},
  year={2025}
}

@article{si2025adamuon,
  title={Ada{M}uon: Adaptive {M}uon optimizer},
  author={Si, Chongjie and Zhang, Debing and Shen, Wei},
  journal={arXiv preprint arXiv:2507.11005},
  year={2025}
}

@article{gonon2026insights,
  title={Insights on {M}uon from Simple Quadratics},
  author={Gonon, Antoine and Mu{\c{s}}at, Andreea-Alexandra and Boumal, Nicolas},
  journal={arXiv preprint arXiv:2602.11948},
  year={2026}
}

@inproceedings{eschenhagen2023kronecker,
  title={Kronecker-factored approximate curvature for modern neural network architectures},
  author={Eschenhagen, Runa and Immer, Alexander and Turner, Richard and Schneider, Frank and Hennig, Philipp},
  booktitle={Advances in Neural Information Processing Systems (NeurIPS)},
  year={2023}
}

@misc{buchanan2025mmuonadmm,
  author = {Buchanan, Sam},
  title = {A Faster Manifold {M}uon with {ADMM}},
  year = {2025},
  howpublished = {\url{https://sdbuchanan.com/blog/manifold-muon/}}
}

@misc{zhang2026gram,
	title   = {{Gram Newton-Schulz}},
	author  = {Jack Zhang and Noah Amsel and Berlin Chen and Tri Dao},
	year    = {2026},
	url     = {https://dao-ailab.github.io/blog/2026/gram-newton-schulz/}
}

@misc{essentialai2025layer,
	author = {{Essential AI}},
	title = {Layer Sharding for Large‑Scale Training with {M}uon},
	year = {2025},
	howpublished = {\url{https://www.essential.ai/research/infra}}
}

@article{zhao2026symmetry,
title={Symmetry in Neural Network Parameter Spaces},
author={Bo Zhao and Robin Walters and Rose Yu},
journal={Transactions on Machine Learning Research},
issn={2835-8856},
year={2026},
url={https://openreview.net/forum?id=jLpWq5QY6I},
}

@inproceedings{zhao2024improving,
title={Improving Convergence and Generalization Using Parameter Symmetries},
author={Bo Zhao and Robert M. Gower and Robin Walters and Rose Yu},
booktitle={International Conference on Learning Representations (ICLR)},
year={2024},
}

@inproceedings{putterman2025gl,
  title={{GL} Equivariant Metanetworks for Learning on Low Rank Weight Spaces},
  author={Putterman, Theo and Lim, Derek and Gelberg, Yoav and Bronstein, Michael M and Jegelka, Stefanie and Maron, Haggai},
  booktitle={Learning on Graphs Conference (LoG)},
  year={2025}
}

@inproceedings{abbe2022non,
  title={On the non-universality of deep learning: quantifying the cost of symmetry},
  author={Abbe, Emmanuel and Boix-Adsera, Enric},
  booktitle={Advances in Neural Information Processing Systems (NeurIPS)},
  year={2022}
}

@inproceedings{ng2004feature,
  title={Feature selection, $L_1$ vs.~$L_2$ regularization, and rotational invariance},
  author={Ng, Andrew Y.},
  booktitle={Proceedings of the International Conference on Machine Learning (ICML)},
  year={2004}
}

@inproceedings{schubert2019circular,
  title={Circular convolutional neural networks for panoramic images and laser data},
  author={Schubert, Stefan and Neubert, Peer and P{\"o}schmann, Johannes and Protzel, Peter},
  booktitle={IEEE Intelligent Vehicles Symposium (IV)},
  year={2019},
}

@inproceedings{li2021why,
title={Why Are Convolutional Nets More Sample-Efficient than Fully-Connected Nets?},
author={Zhiyuan Li and Yi Zhang and Sanjeev Arora},
booktitle={International Conference on Learning Representations (ICLR)},
year={2021},
}

@article{zhang2026muon+,
  title={Muon+: Towards Better {M}uon via One Additional Normalization Step},
  author={Zhang, Ruijie and Zhao, Yequan and Liu, Ziyue and Wang, Zhengyang and Zhang, Zheng},
  journal={arXiv preprint arXiv:2602.21545},
  year={2026}
}

@article{xu2026fismo,
  title={{FISMO}: Fisher-Structured Momentum-Orthogonalized Optimizer},
  author={Xu, Chenrui and Yan, Wenjing and Zhang, Ying-Jun Angela},
  journal={arXiv preprint arXiv:2601.21750},
  year={2026}
}

@article{su2025galore,
  title={Ga{L}ore 2: Large-scale {LLM} pre-training by gradient low-rank projection},
  author={Su, DiJia and Gu, Andrew and Xu, Jane and Tian, Yuandong and Zhao, Jiawei},
  journal={arXiv preprint arXiv:2504.20437},
  year={2025}
}

@article{du2026newton,
  title={The {N}ewton--{M}uon Optimizer},
  author={Du, Zhehang and Su, Weijie},
  journal={arXiv preprint arXiv:2604.01472},
  year={2026}
}

@article{jiang2024mixtral,
  title={Mixtral of experts},
  author={Albert Q. Jiang and Alexandre Sablayrolles and Antoine Roux and Arthur Mensch and Blanche Savary and Chris Bamford and Devendra Singh Chaplot and Diego de las Casas and Emma Bou Hanna and Florian Bressand and Gianna Lengyel and Guillaume Bour and Guillaume Lample and Lélio Renard Lavaud and Lucile Saulnier and Marie-Anne Lachaux and Pierre Stock and Sandeep Subramanian and Sophia Yang and Szymon Antoniak and Teven Le Scao and Théophile Gervet and Thibaut Lavril and Thomas Wang and Timothée Lacroix and William El Sayed},
  journal={arXiv preprint arXiv:2401.04088},
  year={2024}
}

@article{liu2024deepseek,
  title={{DeepSeek-V3} Technical Report}, 
        author={DeepSeek-AI and Aixin Liu and Bei Feng and Bing Xue and Bingxuan Wang and Bochao Wu and Chengda Lu and Chenggang Zhao and Chengqi Deng and Chenyu Zhang and Chong Ruan and Damai Dai and Daya Guo and Dejian Yang and Deli Chen and Dongjie Ji and Erhang Li and Fangyun Lin and Fucong Dai and Fuli Luo and Guangbo Hao and Guanting Chen and Guowei Li and H. Zhang and Han Bao and Hanwei Xu and Haocheng Wang and Haowei Zhang and Honghui Ding and Huajian Xin and Huazuo Gao and Hui Li and Hui Qu and J. L. Cai and Jian Liang and Jianzhong Guo and Jiaqi Ni and Jiashi Li and Jiawei Wang and Jin Chen and Jingchang Chen and Jingyang Yuan and Junjie Qiu and Junlong Li and Junxiao Song and Kai Dong and Kai Hu and Kaige Gao and Kang Guan and Kexin Huang and Kuai Yu and Lean Wang and Lecong Zhang and Lei Xu and Leyi Xia and Liang Zhao and Litong Wang and Liyue Zhang and Meng Li and Miaojun Wang and Mingchuan Zhang and Minghua Zhang and Minghui Tang and Mingming Li and Ning Tian and Panpan Huang and Peiyi Wang and Peng Zhang and Qiancheng Wang and Qihao Zhu and Qinyu Chen and Qiushi Du and R. J. Chen and R. L. Jin and Ruiqi Ge and Ruisong Zhang and Ruizhe Pan and Runji Wang and Runxin Xu and Ruoyu Zhang and Ruyi Chen and S. S. Li and Shanghao Lu and Shangyan Zhou and Shanhuang Chen and Shaoqing Wu and Shengfeng Ye and Shengfeng Ye and Shirong Ma and Shiyu Wang and Shuang Zhou and Shuiping Yu and Shunfeng Zhou and Shuting Pan and T. Wang and Tao Yun and Tian Pei and Tianyu Sun and W. L. Xiao and Wangding Zeng and Wanjia Zhao and Wei An and Wen Liu and Wenfeng Liang and Wenjun Gao and Wenqin Yu and Wentao Zhang and X. Q. Li and Xiangyue Jin and Xianzu Wang and Xiao Bi and Xiaodong Liu and Xiaohan Wang and Xiaojin Shen and Xiaokang Chen and Xiaokang Zhang and Xiaosha Chen and Xiaotao Nie and Xiaowen Sun and Xiaoxiang Wang and Xin Cheng and Xin Liu and Xin Xie and Xingchao Liu and Xingkai Yu and Xinnan Song and Xinxia Shan and Xinyi Zhou and Xinyu Yang and Xinyuan Li and Xuecheng Su and Xuheng Lin and Y. K. Li and Y. Q. Wang and Y. X. Wei and Y. X. Zhu and Yang Zhang and Yanhong Xu and Yanhong Xu and Yanping Huang and Yao Li and Yao Zhao and Yaofeng Sun and Yaohui Li and Yaohui Wang and Yi Yu and Yi Zheng and Yichao Zhang and Yifan Shi and Yiliang Xiong and Ying He and Ying Tang and Yishi Piao and Yisong Wang and Yixuan Tan and Yiyang Ma and Yiyuan Liu and Yongqiang Guo and Yu Wu and Yuan Ou and Yuchen Zhu and Yuduan Wang and Yue Gong and Yuheng Zou and Yujia He and Yukun Zha and Yunfan Xiong and Yunxian Ma and Yuting Yan and Yuxiang Luo and Yuxiang You and Yuxuan Liu and Yuyang Zhou and Z. F. Wu and Z. Z. Ren and Zehui Ren and Zhangli Sha and Zhe Fu and Zhean Xu and Zhen Huang and Zhen Zhang and Zhenda Xie and Zhengyan Zhang and Zhewen Hao and Zhibin Gou and Zhicheng Ma and Zhigang Yan and Zhihong Shao and Zhipeng Xu and Zhiyu Wu and Zhongyu Zhang and Zhuoshu Li and Zihui Gu and Zijia Zhu and Zijun Liu and Zilin Li and Ziwei Xie and Ziyang Song and Ziyi Gao and Zizheng Pan},
  journal={arXiv preprint arXiv:2412.19437},
  year={2024}
}

@article{agarwal2025gpt,
  title={gpt-oss-120b \& gpt-oss-20b model card},
  author={{OpenAI} and Sandhini Agarwal and Lama Ahmad and Jason Ai and Sam Altman and Andy Applebaum and Edwin Arbus and Rahul K. Arora and Yu Bai and Bowen Baker and Haiming Bao and Boaz Barak and Ally Bennett and Tyler Bertao and Nivedita Brett and Eugene Brevdo and Greg Brockman and Sebastien Bubeck and Che Chang and Kai Chen and Mark Chen and Enoch Cheung and Aidan Clark and Dan Cook and Marat Dukhan and Casey Dvorak and Kevin Fives and Vlad Fomenko and Timur Garipov and Kristian Georgiev and Mia Glaese and Tarun Gogineni and Adam Goucher and Lukas Gross and Katia Gil Guzman and John Hallman and Jackie Hehir and Johannes Heidecke and Alec Helyar and Haitang Hu and Romain Huet and Jacob Huh and Saachi Jain and Zach Johnson and Chris Koch and Irina Kofman and Dominik Kundel and Jason Kwon and Volodymyr Kyrylov and Elaine Ya Le and Guillaume Leclerc and James Park Lennon and Scott Lessans and Mario Lezcano-Casado and Yuanzhi Li and Zhuohan Li and Ji Lin and Jordan Liss and Lily and Liu and Jiancheng Liu and Kevin Lu and Chris Lu and Zoran Martinovic and Lindsay McCallum and Josh McGrath and Scott McKinney and Aidan McLaughlin and Song Mei and Steve Mostovoy and Tong Mu and Gideon Myles and Alexander Neitz and Alex Nichol and Jakub Pachocki and Alex Paino and Dana Palmie and Ashley Pantuliano and Giambattista Parascandolo and Jongsoo Park and Leher Pathak and Carolina Paz and Ludovic Peran and Dmitry Pimenov and Michelle Pokrass and Elizabeth Proehl and Huida Qiu and Gaby Raila and Filippo Raso and Hongyu Ren and Kimmy Richardson and David Robinson and Bob Rotsted and Hadi Salman and Suvansh Sanjeev and Max Schwarzer and D. Sculley and Harshit Sikchi and Kendal Simon and Karan Singhal and Yang Song and Dane Stuckey and Zhiqing Sun and Philippe Tillet and Sam Toizer and Foivos Tsimpourlas and Nikhil Vyas and Eric Wallace and Xin Wang and Miles Wang and Olivia Watkins and Kevin Weil and Amy Wendling and Kevin Whinnery and Cedric Whitney and Hannah Wong and Lin Yang and Yu Yang and Michihiro Yasunaga and Kristen Ying and Wojciech Zaremba and Wenting Zhan and Cyril Zhang and Brian Zhang and Eddie Zhang and Shengjia Zhao},
  journal={arXiv preprint arXiv:2508.10925},
  year={2025}
}

@misc{qwen3.5,
    title  = {{Qwen3.5}: Towards Native Multimodal Agents},
    author = {{Qwen Team}},
    month  = {February},
    year   = {2026},
    url    = {https://qwen.ai/blog?id=qwen3.5}
}

@misc{gemma4,
    title  = {Gemma 4 model card},
    author = {{Google DeepMind}},
    month  = {April},
    year   = {2026},
    url    = {https://ai.google.dev/gemma/docs/core/model_card_4}
}

@article{zoph2022st,
  title={{ST-MoE}: Designing stable and transferable sparse expert models},
  author={Zoph, Barret and Bello, Irwan and Kumar, Sameer and Du, Nan and Huang, Yanping and Dean, Jeff and Shazeer, Noam and Fedus, William},
  journal={arXiv preprint arXiv:2202.08906},
  year={2022}
}

@article{liu2025reg,
	title={{REG}: A Regularization Optimizer for Robust Training Dynamics},
	author={Liu, Zehua and Wu, Han and Fu, Xiaojin and Liu, Shuqi and Han, Xiongwei and Zhong, Tao and Yuan, Mingxuan},
	journal={arXiv preprint arXiv:2510.03691},
	year={2025}
}

@misc{deepseekai2026deepseekv4,
      title={{DeepSeek-V4}: Towards Highly Efficient Million-Token Context Intelligence},
      author={DeepSeek-AI},
      year={2026},
}

@inproceedings{dauphin2017language,
  title={Language modeling with gated convolutional networks},
  author={Dauphin, Yann N. and Fan, Angela and Auli, Michael and Grangier, David},
  booktitle={Proceedings of the International Conference on Machine Learning (ICML)},
  year={2017},
}

@article{shazeer2020glu,
  title={{GLU} variants improve transformer},
  author={Shazeer, Noam},
  journal={arXiv preprint arXiv:2002.05202},
  year={2020}
}

@article{su2024roformer,
  title = {{RoFormer}: Enhanced transformer with Rotary Position Embedding},
  author = {Jianlin Su and Murtadha Ahmed and Yu Lu and Shengfeng Pan and Wen Bo and Yunfeng Liu},
  journal={Neurocomputing},
  volume={568},
  pages={127063},
  year={2024},
  publisher={Elsevier},
  doi = {10.1016/j.neucom.2023.127063},
}

@inproceedings{jiang2023pre,
  title={{Pre-RMSNorm} and {Pre-CRMSNorm} transformers: equivalent and efficient {Pre-LN} transformers},
  author={Jiang, Zixuan and Gu, Jiaqi and Zhu, Hanqing and Pan, David},
  booktitle={Advances in Neural Information Processing Systems (NeurIPS)},
  year={2023}
}

@inproceedings{xiong2020layer,
  title={On layer normalization in the transformer architecture},
  author={Xiong, Ruibin and Yang, Yunchang and He, Di and Zheng, Kai and Zheng, Shuxin and Xing, Chen and Zhang, Huishuai and Lan, Yanyan and Wang, Liwei and Liu, Tieyan},
  booktitle={Proceedings of the International Conference on Machine Learning (ICML)},
  year={2020},
}

@inproceedings{zhang2019root,
	title={Root mean square layer normalization},
	author={Zhang, Biao and Sennrich, Rico},
	booktitle={Advances in Neural Information Processing Systems (NeurIPS)},
	year={2019}
}

@inproceedings{lin2025understanding,
  title={Understanding and Improving {Shampoo} and {SOAP} via {K}ullback--{L}eibler Minimization},
  author={Lin, Wu and Lowe, Scott C. and Dangel, Felix and Eschenhagen, Runa and Xu, Zikun and Grosse, Roger B.},
  booktitle={International Conference on Learning Representations (ICLR)},
  year={2026}
}

@article{sun2008lowner,
  title={L{\"o}wner's operator and spectral functions in {E}uclidean {J}ordan algebras},
  author={Sun, Defeng and Sun, Jie},
  journal={Mathematics of Operations Research},
  volume={33},
  number={2},
  pages={421--445},
  year={2008},
  publisher={INFORMS}
}

@article{chang2026muoneq,
	title={{MuonEq}: Balancing Before Orthogonalization with Lightweight Equilibration},
	author={Chang, Da and Shi, Qiankun and Zhang, Lvgang and Li, Yu and Zhang, Ruijie and Lu, Yao and Liu, Yongxiang and Yuan, Ganzhao},
	journal={arXiv preprint arXiv:2603.28254},
	year={2026}
}

@article{liu2025mars,
  title={{MARS-M}: When variance reduction meets matrices},
  author={Liu, Yifeng and Yuan, Angela and Gu, Quanquan},
  journal={arXiv preprint arXiv:2510.21800},
  year={2025}
}

@misc{aurora2026leverage,
  author = {Alec Dewulf and Dhruv Pai and Li Yang and Ashley Zhang and Ben Keigwin},
  title = {Aurora: A Leverage-Aware Optimizer for Rectangular Matrices},
  year = {2026},
  month = {May},
  day = {5},
  url = {https://blog.tilderesearch.com/blog/aurora},
}

@inproceedings{gu2024mamba,
title={Mamba: Linear-Time Sequence Modeling with Selective State Spaces},
author={Albert Gu and Tri Dao},
booktitle={Proceedings of the Conference on Language Modeling (COLM)},
year={2024},
}

@inproceedings{dao2024transformers,
  title={Transformers are {SSM}s: Generalized Models and Efficient Algorithms Through Structured State Space Duality},
  author={Dao, Tri and Gu, Albert},
  booktitle={Proceedings of the International Conference on Machine Learning (ICML)},
  year={2024},
}

@inproceedings{lahoti2026mamba,
title={Mamba-3: Improved Sequence Modeling using State Space Principles},
author={Aakash Lahoti and Kevin Li and Berlin Chen and Caitlin Wang and Aviv Bick and J Zico Kolter and Tri Dao and Albert Gu},
booktitle={International Conference on Learning Representations (ICLR)},
year={2026},
}

@article{washbourne2026zaya18b,
      title={{ZAYA1-8B} Technical Report}, 
      author={Robert Washbourne and Rishi Iyer and Tomas Figliolia and Henry Zheng and Ryan Lorig-Roach and Sungyeon Yang and Pritish Yuvraj and Quentin Anthony and Yury Tokpanov and Xiao Yang and Ganesh Nanduru and Stephen Ebert and Praneeth Medepalli and Skyler Szot and Srivatsan Rajagopal and Alex Ong and Bhavana Mehta and Beren Millidge},
      year={2026},
      journal={arXiv preprint arXiv:2605.05365},
}

@article{lim2023equivariant,
  author  = {Lim, Lek-Heng and Nelson, Bradley J.},
  title   = {What Is an Equivariant Neural Network?},
  journal = {Notices of the American Mathematical Society},
  volume  = {70},
  number  = {4},
  pages   = {619--625},
  year    = {2023},
}

@article{kondor2025principles,
  author  = {Kondor, Risi},
  title   = {The Principles Behind Equivariant Neural Networks for Physics and Chemistry},
  journal = {Proceedings of the National Academy of Sciences},
  volume  = {122},
  number  = {41},
  pages   = {e2415656122},
  year    = {2025},
}

@article{bao2019equivariant,
  title={Equivariant neural networks and equivarification},
  author={Bao, Erkao and Lu, Jingcheng and Song, Linqi and Hart-Hodgson, Nathan and Parson, William and Zhou, Yanheng},
  journal={arXiv preprint arXiv:1906.07172},
  year={2019}
}

@article{du2026uncovering,
  title={Uncovering Symmetry Transfer in Large Language Models via Layer-Peeled Optimization},
  author={Du, Zhehang and He, Hangfeng and Su, Weijie},
  journal={arXiv preprint arXiv:2605.12756},
  year={2026}
}

@article{yuan2026nora,
  title={Nora: Normalized Orthogonal Row Alignment for Scalable Matrix Optimizer},
  author={Yuan, Jinghui and Zou, Jiaxuan and Wang, Shuo and Liu, Yong and Nie, Feiping},
  journal={arXiv preprint arXiv:2605.03769},
  year={2026}
}

@article{xue2024openmoe,
  title={{OpenMoE}: An Early Effort on Open {Mixture-of-Experts} Language Models},
  author={Xue, Fuzhao and Zheng, Zian and Fu, Yao and Ni, Jinjie and Zheng, Zangwei and Zhou, Wangchunshu and You, Yang},
  journal={arXiv preprint arXiv:2402.01739},
  year={2024}
}

@article{shen2024jetmoe,
  title={{JetMoE}: Reaching {Llama2} Performance with 0.1{M} Dollars}, 
  author={Shen, Yikang and Guo, Zhen and Cai, Tianle and Qin, Zengyi},
  journal={arXiv preprint arXiv:2404.07413},
  year={2024}
}

@article{lau2025polargrad,
	title={\textsc{PolarGrad}: A Class of Matrix-Gradient Optimizers from a Unifying Preconditioning Perspective},
	author={Lau, Tim Tsz-Kit and Qi Long and Weijie Su},
	year={2025},
	journal={arXiv preprint arXiv:2505.21799}
}

@InProceedings{zeng2019global,
  title = "Global convergence of block coordinate descent in deep learning",
  author = 	 {Jinshan Zeng and Tim Tsz-Kit Lau and Shaobo Lin and Yuan Yao},
  booktitle = {Proceedings of the International Conference on Machine Learning (ICML)},
  year = 	 {2019},
}

@InProceedings{lau2018proximal,
	title = "A Proximal Block Coordinate Descent Algorithm for Deep Neural Network Training",
	author = 	 {Tim Tsz-Kit Lau and Jinshan Zeng and Baoyuan Wu and Yuan Yao},
	booktitle = {International Conference on Learning Representations (ICLR), Workshop Track},
	year = 	 {2018},
}

\newpage
\appendix
\addcontentsline{toc}{section}{\protect\textbf{Appendix}}
\numberwithin{equation}{section}
\numberwithin{theorem}{section}
\numberwithin{algorithm}{section}
% \numberwithin{proposition}{section}
% \numberwithin{lemma}{section}
% \numberwithin{definition}{section}
% \numberwithin{corollary}{section}
% \numberwithin{example}{section}
% \numberwithin{remark}{section}
% \numberwithin{problem}{section}
\numberwithin{figure}{section}
\numberwithin{table}{section}

\begin{center}
    {\LARGE\bfseries\sffamily Appendix}
\end{center}

\tableofcontents

\newpage
\paragraph{Organization.}
In the appendix, we provide discussion on further related work (\Cref{sec:further_related}), and supplementary technical background materials on amtrix analysis (\Cref{sec:supp}). Furthermore, we illustrate the geometric misalignment of coordinate-wise adaptive gradient methods in \Cref{sec:misalignment}. We then provide omitted proofs from the main text in \Cref{sec:proofs}, as well as details of the implementation of practical optimizers \Cref{sec:practical}. We then give the convergence analysis in \Cref{sec:conv}. We also provide details of numerical experiments in \Cref{sec:details_expt}.

\section{Further Related Work}
\label{sec:further_related}
We further discuss connections between our framework and other existing paradigms for designing matrix-gradient optimizers in modern deep learning.

\subsection{Non-Euclidean Norm-Based Steepest Descent and LMO-Based Frameworks}
A prominent line of recent work interprets \Muon and related methods through the lens of non-Euclidean steepest descent, trust-region methods, and linear minimization oracle (LMO) frameworks \citep{jordan2024muon,bernstein2025deriving,bernstein2024old,bernstein2024modular,pethick2025training,kovalev2025understanding,riabinin2025gluon}. In these views, \Muon can be understood as a normalized steepest descent method with respect to a non-Euclidean matrix norm---most notably the spectral norm---and this perspective has also been used to connect \Muon to optimizers such as \signSGD\ \citep{bernstein2018signsgd} and \Shampoo \citep{gupta2018shampoo}. Related work has further explored alternative norm choices and generalized preconditioned steepest descent constructions \citep{crawshaw2025exploration,veprikov2025preconditioned,kravatskiy2025ky,xu2026width}.

While these frameworks provide useful algorithmic interpretations of \Muon and its variants, they leave open a fundamental question: what is the \emph{principled} criterion for choosing the ``correct'' norm for a given layer? In practice, existing prescriptions are not always fully aligned with actual usage. For example, \citet{large2024scalable} suggest using the maximum column $\ell_2$-norm for embedding layers, whereas in practice \AdamW is often used for embeddings while \Muon is applied to other matrix parameters in \texttt{modded-nanogpt} speedrunning \citep{modded_nanogpt_2024}. More broadly, several works suggest that one may use various $\ell_p\to\ell_q$ operator norms for steepest descent or LMO for embedding and LM head layers (see e.g., \citep{pethick2025training}). Our theoretical development suggests that such norm choices might generally not be geometrically principled for matrix-valued parameters.

In contrast, our symmetry-based analysis provides a concrete design principle. For matrix parameters whose geometry should respect orthogonal changes of basis, only unitarily invariant matrix norms are appropriate, since they are precisely the norms compatible with orthogonal invariance. By comparison, the $\ell_p\to\ell_q$ operator norm is generally \emph{not} unitarily invariant, except in the special case $p=q=2$, which reduces to the spectral norm. Accordingly, non-unitarily invariant norms typically induce an incorrect matrix geometry. This offers a possible explanation for the gap between certain norm-based theoretical prescriptions and empirical practice.

\subsection{Modular Norm Theory}
Our right-spectral viewpoint is closely related in spirit to modular norm theory \citep{bernstein2024modular,large2024scalable}: both seek architecture-aware optimizer geometries derived from structural properties of the module rather than from an ambient coordinate system. The two approaches are nevertheless conceptually distinct.

Modular norm theory derives updates from steepest descent under module-adapted operator norms, and is primarily motivated by scale transfer across width and depth through recursively constructed global norms. By contrast, right-spectral optimizers are derived directly from left-permutation and right-orthogonal equivariance, leading to update rules of the form \eqref{eqn:right_spectral}, namely spectral transformations determined by the right Gram matrix $D^\top D$. In this sense, right-spectral optimizers form a symmetry-derived subclass of LPRO-equivariant updates, whereas modular-norm-based methods may also include coordinate-dependent row-wise or column-wise transformations that are compatible with only part of the underlying symmetry.

From this perspective, the two frameworks may be viewed as complementary. Modular norm theory aims to provide the correct \emph{scale invariance}, namely how learning rates and normalized updates should behave as architecture width and depth vary. Our spectral framework instead aims to provide the correct \emph{directional geometry}, namely how update directions should respect the invariance structure of different layer types. Their combination suggests the possibility of a fully geometry-aware optimizer that uses modular-norm-based global scaling together with layerwise spectral, right-spectral, or left-spectral update directions dictated by symmetry.

\subsection{Rotation-Based Optimizers}
The recent work \citep{gong2026aro} is perhaps the closest in spirit to our approach, but there are several important differences. First, we do not assume that the layerwise loss function itself is rotationally, or more generally bi-orthogonally, invariant. Rather, we derive optimizer classes from the symmetry structure of the parameter geometry and the corresponding equivariance properties of update rules. This distinction matters, since exact rotational invariance of the layerwise loss may be difficult to verify and can fail in practice.

Second, we do not advocate a single update rule for all matrix-valued parameters. Instead, our framework emphasizes that different layer types admit different symmetry groups and therefore should, in principle, use different optimizer geometries. In particular, embedding and LM head matrices possess different equivariance properties from weight matrices in linear and attention layers, which leads naturally to an architecture--optimizer co-design perspective.

Third, the update rules in ARO for matrix parameters are based only on left multiplication by a rotation matrix. By contrast, our framework includes full spectral, right-spectral, left-spectral, row-norm-based, and hybrid classes, depending on the relevant symmetry structure of the layer. In this sense, our theory provides a broader symmetry-based taxonomy of matrix-gradient optimizers.

\section{Supplemental Technical Background on Matrix Analysis}
\label{sec:supp}
We include several technical definitions arising in matrix analysis, which is closely related to the derivation of spectral optimizers via steepest descent. Details of these materials can be found in \citep{lewis1996eigenvalue,horn2012matrix,horn1994topics,bhatia2013matrix}. 

Let us recall the notion of unitarily invariant norms, which provide the norm-level analogue of left-right orthogonal symmetry. Since we work over the field of real numbers $\RR$, ``unitarily invariant'' here is equivalent to invariance under left and right orthogonal transformations. 

\begin{definition}[Unitarily invariant norms]
    \label{def:unitarily_invariant}
	A norm $\norm{\cdot}\colon\RR^{m\times n}\to\Rp$ is said to be \emph{unitarily invariant} if $\norm{A} = \norm{UAV}$ for all $A\in\RR^{m\times n}$ and all orthogonal matrices $U\in\OO^m$ and $V\in\OO^n$. A \emph{unitarily invariant matrix norm} is a unitarily invariant norm on $\mathbb{R}^{m\times n}$ that is also submultiplicative.
\end{definition}

Unitarily invariant norms can be completely characterized through symmetric gauge functions acting on singular values.

\begin{definition}[Symmetric gauge functions]
	A function $\psi\colon\RR^r\to\Rp$ is a \emph{symmetric gauge function} if:
	(i) $\psi$ is a norm;
	(ii) $\psi(|x|)=\psi(x)$ for all $x\in\RR^r$, where $|\cdot|$ is understood coordinatewise; and
	(iii) $\psi(Px)=\psi(x)$ for all permutation matrices $P\in\Prob^r$ and all $x\in\RR^r$.
\end{definition}

\begin{proposition}
	A norm $\norm{\cdot}\colon\RR^{m\times n}\to\Rp$ is unitarily invariant if and only if there exists a symmetric gauge function $\psi$ such that $\norm{W} = \psi(\sigma(W)) = (\psi\circ\sigma)(W)$, where $\sigma(W)$ is the vector of singular values of $W$ arranged in descending order.
\end{proposition}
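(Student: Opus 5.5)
We prove the two implications separately. Throughout we write $r=\min\{m,n\}$ and $\Sigma=\Diag(\sigma(W))\in\RR^{m\times n}$ for the rectangular diagonal matrix of singular values.

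\emph{The direction ($\Leftarrow$).} Suppose $\|W\|=\psi(\sigma(W))$ for a symmetric gauge function $\psi$. Since $\sigma(UAV)=\sigma(A)$ for all $U\in\OO^m$ and $V\in\OO^n$, unitary invariance is immediate. What must be shown is that $\psi\circ\sigma$ is a norm on $\RR^{m\times n}$.
\begin{itemize}
\item Positivity follows from $\psi$ being a norm and $\sigma(W)=0$ iff $W=0$.
\item Homogeneity follows from $\sigma(tW)=|t|\sigma(W)$ together with $\psi(|x|)=\psi(x)$.
\item The triangle inequality is the only nontrivial point. We use Ky Fan's inequality $\sigma(A+B)\prec_w \sigma(A)+\sigma(B)$ (weak majorization, from the variational formula $\sum_{i\le k}\sigma_i(A)=\max\{\tr(U^\top AV): U^\top U=V^\top V=I_k\}$).
\item We then need the standard fact that a symmetric gauge function is monotone with respect to weak majorization on $\Rp^r$. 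If $x\prec_w y$ with $x,y\ge0$, then $x$ lies in the convex hull of the vectors $(\pm)Py$ (signed permutations), after first enlarging $x$ coordinatewise to some $x'\ge x$ with $x'\prec y$. Convexity, permutation invariance and sign invariance then give $\psi(x)\le\psi(x')\le\psi(y)$, where the first inequality is absolute monotonicity of gauge norms.
\item Combining, $\psi(\sigma(A+B))\le\psi(\sigma(A)+\sigma(B))\le\psi(\sigma(A))+\psi(\sigma(B))$.
\end{itemize}

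\emph{The direction ($\Rightarrow$).} Given a unitarily invariant norm, define $\psi(x)\coloneqq\|\Diag(x)\|$ for $x\in\RR^r$, embedding $x$ into the leading diagonal of an $m\times n$ matrix.
\begin{itemize}
\item $\psi$ is a norm because $x\mapsto\Diag(x)$ is linear and injective.
\item Sign invariance: $\Diag(|x|)=S\,\Diag(x)$ with $S$ a diagonal $\pm1$ orthogonal matrix in $\OO^m$.
\item Permutation invariance: $\Diag(Px)=\tilde P\,\Diag(x)\,\hat P^\top$, where $\tilde P\in\OO^m$ and $\hat P\in\OO^n$ are permutation matrices acting as $P$ on the first $r$ coordinates and fixing the rest.
\item For any $W$ with SVD $W=U\Sigma V^\top$, unitary invariance gives $\|W\|=\|\Sigma\|=\psi(\sigma(W))$.
\end{itemize}

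The main obstacle is the majorization monotonicity in the ($\Leftarrow$) direction, specifically the representation of weakly submajorized nonnegative vectors via signed permutation hulls. I would cite this as the classical von Neumann/Ky Fan argument (e.g., Horn--Johnson, Bhatia) rather than reprove it.
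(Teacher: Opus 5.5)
Your proof is correct. The ($\Rightarrow$) direction via $\psi(x)=\|\Diag(x)\|$, and the ($\Leftarrow$) direction via Ky Fan's weak majorization $\sigma(A+B)\prec_w\sigma(A)+\sigma(B)$ combined with monotonicity of symmetric gauge functions under weak majorization, is von Neumann's classical argument. The paper does not prove the proposition itself; it defers to the standard references (Horn--Johnson, Bhatia, Lewis), which give essentially this argument, so your proposal matches the proof the paper relies on.
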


\begin{example}
	Important examples of unitarily invariant matrix norms include the Schatten $p$-norms $\matsnorm{\cdot}{p}$ for $p\in\left[1, \infty\right]$, including the nuclear norm $\nucnorm{\cdot}$, Frobenius norm $\fronorm{\cdot}$, and the spectral norm $\specnorm{\cdot}$. Another notable example is the Ky Fan $k$-norm.
\end{example}

\section{Geometric Misalignment of Coordinate-wise Adaptive Gradient Methods}
\label{sec:misalignment}
If an optimizer for matrix parameters in linear layers does not respect the intrinsic geometry of the parameter space through bi-orthogonal equivariance, several issues arise. In particular, for coordinate-wise adaptive gradient methods, the optimizer iterates depend on arbitrary coordinate choices: rotating the input space can change the optimizer itself, leading to different training dynamics under equivalent reparameterizations.

For a vector parameter $w\in\mathbb{R}^n$, the general form of a coordinate-wise adaptive gradient method is
\[
(\forall k\in\NN)\qquad w_{k+1} = w_k - \gamma_k\scrU(d_k), 
\]
where $d_k$ is an update direction and $\scrU\colon\RR^n\to\RR^n$ is a vector-valued map. A simple but important observation is that optimizers built only from linear operations, such as additions, subtractions, scalar multiplications, and their linear combinations, behave analogously for vector and matrix parameters. By contrast, coordinate-wise adaptive methods such as \Adam rely on elementwise nonlinear operations, including division, squaring, and square roots. These operations distort the geometry of the original update direction, whether it is formed from the gradient itself or from a momentum term.

While spectral methods (e.g., spectral descent and \Muon; formally introduced in \Cref{subsec:spectral_optim}) respect bi-orthogonal equivariance, coordinate-wise methods are typically equivariant only under the much smaller signed permutation group. As a result, they fail to respect the intrinsic matrix geometry of the optimization problem. This mismatch is especially problematic in deep learning, where the optimization landscape often exhibits an intrinsically low-dimensional structure. Spectral optimizers are designed to adapt to this geometry, whereas coordinate-wise methods largely ignore it. Moreover, as model size increases, the advantage of such geometry-aware methods typically becomes more pronounced.

\paragraph{Sign descent. }
For a matrix gradient $G$, sign descent or \signSGD \citep{bernstein2018signsgd} uses the coordinate-wise signs $\sgn(G)$ of the gradient as the update direction, which satisfies $\dotpF{G}{\sgn(G)} = \onenorm{G}$. 
Thus, sign descent is governed by the duality associated with the coordinate-wise $\ell_\infty$-norm, rather than by any unitarily invariant matrix geometry (see \Cref{def:unitarily_invariant}).

\paragraph{\Adam.}
More generally, \Adam \citep{kingma2015} can be viewed as a smoothed version of sign descent, since its update is dominated by coordinate-wise normalization. In particular, \Adam applies the coordinate-wise scaling (omitting bias corrections for simplicity)
\[
W_{ij}\leftarrow W_{ij}-\gamma\cdot \frac{M_{ij}}{\sqrt{V_{ij}}+\varepsilon},
\]
where $M_{ij}$ and $V_{ij}$ denote the first- and second-moment statistics (with bias corrections) at coordinate $(i,j)$, $\gamma>0$ is the learning rate , and $\varepsilon>0$ is a small constant. This update does not transform equivariantly under bi-orthogonal reparameterizations $W\mapsto \tW\coloneqq PWQ^\top$ for $P\in\OO^m$ and $Q\in\OO^n$. Consequently, the update direction of \Adam depends on how the entries of $W$ are indexed, rather than only on the intrinsic geometry of $W$ as a matrix.

Furthermore, coordinate-wise methods tend to inject high-rank coordinate noise even when the underlying gradient $G$ is low rank. In contrast, spectral methods act directly on the matrix structure of $G$, and therefore remain aligned with the low-dimensional geometry that frequently governs deep network optimization.

\paragraph{Sign descent as a special case of spectral descent.}
Following similar arguments in \citep{lau2025polargrad}, we further interpret sign descent as spectral descent applied to the diagonal matrization of the vectorized matrix parameter. Define the \emph{diagonal matrization of the vectorization} of $G$ by $\tG \coloneqq \Diag(\vec(G))\in\RR^{mn\times mn}$. The following lemma makes this connection precise.

\begin{lemma}
   	\label{lem:sign_as_polar_diagonal}
   	Let $G\coloneqq \nabla_W f(W)\in\RR^{m\times n}$ contain no exact zero entries, and define $\tG \coloneqq \Diag(\vec(G))\in\RR^{mn\times mn}$. Then the orthogonal polar factor of $\tG$ is $\polar(\tG) = \Diag(\sgn(\vec(G)))$. 
   	Consequently, $\sgn(G)=\reshape(\diag(\polar(\tG)), m,n)$.
\end{lemma}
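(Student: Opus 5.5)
The plan is to write down an explicit singular value decomposition of the diagonal matrix $\tG=\Diag(\vec(G))$ and read off its orthogonal polar factor; by \eqref{eqn:polar} and the definition $\polar(D)=UV^\top$ for $D=U\Sigma V^\top$, this is all that is needed.

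Set $g\coloneqq\vec(G)\in\RR^{mn}$. Every entry of $g$ is nonzero by assumption, so
\[
    \tG=\Diag(g)=\Diag(\sgn(g))\,\Diag(|g|)\,I_{mn},
\]
where $|g|$ is taken entrywise. Here $S\coloneqq\Diag(\sgn(g))$ has diagonal entries $\pm1$, so $S\in\OO^{mn}$, and $\Diag(|g|)$ is diagonal with strictly positive entries. This is therefore an SVD of $\tG$ up to ordering of the singular values. Reordering is harmless: conjugating by a permutation matrix $\Pi$ gives $\tG=(S\Pi^\top)(\Pi\Diag(|g|)\Pi^\top)(\Pi I_{mn})$, and the product of the outer factors is $S\Pi^\top\Pi=S$.

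Hence $\polar(\tG)=S\cdot I_{mn}=\Diag(\sgn(\vec(G)))$.

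The one point needing care is that this is independent of the chosen SVD. Since $\tG$ is square and invertible, its polar factor is uniquely given by $\tG(\tG^\top\tG)^{-\half}$. One can also verify the result directly:
\[
    \tG^\top\tG=\Diag(g\odot g),\qquad (\tG^\top\tG)^{-\half}=\Diag(|g|)^{-1},\qquad \tG\,\Diag(|g|)^{-1}=\Diag(\sgn(g)).
\]
This direct computation settles uniqueness, and I expect it to be the only nontrivial step; it is exactly where the no-zero-entries hypothesis is used.

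Finally, $\diag(\polar(\tG))=\sgn(\vec(G))=\vec(\sgn(G))$, because the sign is applied entrywise and therefore commutes with row-wise vectorization. Applying $\reshape(\cdot,m,n)$ and using $\reshape(\vec(A),m,n)=A$ gives $\sgn(G)=\reshape(\diag(\polar(\tG)),m,n)$.
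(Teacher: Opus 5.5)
Your proof is correct and takes the same route as the paper: write an explicit SVD of the diagonal matrix $\tG$, with $g=\vec(G)$, and read off $\polar(\tG)=UV^\top=\Diag(\sgn(\vec(G)))$, then reshape. The only differences are that you put the signs in the left factor, $\tG=\Diag(\sgn(g))\Diag(|g|)I_{mn}$, whereas the paper puts them in the right factor, $\tG=I_{mn}\Diag(|g|)\Diag(\sgn(g))$, and that you also handle two points the paper leaves implicit: the permutation argument for ordering the singular values, and the uniqueness check $\tG(\tG^\top\tG)^{-\half}=\Diag(\sgn(g))$ for the invertible $\tG$.
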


\begin{proof}
	Since $\tG=\Diag(\vec(G))$ is diagonal, its singular values are given by the absolute values of its diagonal entries. Hence we may write its singular value decomposition as
	\[ \tG = I_{mn}\,\Diag(|\vec(G)|)\,\Diag(\sgn(\vec(G))), \]
	or equivalently,
	\[ \tG = \Diag(\sgn(\vec(G)))\,\Diag(|\vec(G)|). \]
	Therefore, by the definition of the orthogonal polar factor, we have $\polar(\tG)=\Diag(\sgn(\vec(G)))$. 
	Taking the diagonal of $\polar(\tG)$ and reshaping it back to an $m\times n$ matrix yields
	\[ \reshape(\diag(\polar(\tG)), m, n) = \reshape(\sgn(\vec(G)), m, n) = \sgn(G). \]
\end{proof}
In other words, coordinate-wise sign descent can be viewed as spectral gradient descent applied to the highly degenerate diagonal lifting $\tG=\Diag(\vec(G))\in\RR^{mn\times mn}$. Thus, sign-based methods inherit only the trivial geometry of this pathological diagonal representation, rather than the intrinsic matrix geometry of $G$ itself. This highlights a \emph{fundamental geometric mismatch} in coordinate-wise updates, which is further worsened when the parameter dimensions $m$ and $n$ and hence model sizes grow.

\section{Proofs of Main Text}
\label{sec:proofs}

\begin{proof}[Proof of \Cref{prop:momentum}]
We also state the desired results for Polyak and Nesterov momentum. 

For EMA momentum, we define $M_k = \beta M_{k-1} + (1-\beta)G_k$ with $M_{-1}=0$, and, for the transformed sequence, $\tM_k = \beta \tM_{k-1} + (1-\beta)\tG_k$ with $\tM_{-1}=0$. Then we have $\tM_k = PM_kQ^\top$ and $\polar(\tM_k)=P\,\polar(M_k)Q^\top$. 

For Polyak heavy-ball momentum, we define $M_k = \beta M_{k-1} + G_k$ with $M_{-1}=0$, and $\tM_k = \beta \tM_{k-1} + \tG_k$ with $\tM_{-1}=0$. Then we have $\tM_k = PM_kQ^\top$ and $\polar(\tM_k)=P\,\polar(M_k)Q^\top$.

We first prove the stated transformation for the momentum sequences. For the EMA recursion, assume inductively that $\tM_{k-1}=PM_{k-1}Q^\top$. 
Using the bi-orthogonal equivariance of the gradient, we have 
\[\tG_k = \nabla_W f(\tW_k) = \nabla_W f(PW_kQ^\top) = P\nabla_W f(W_k)Q^\top = PG_kQ^\top. \]
Hence
\begin{multline*}
\tM_k
= \beta \tM_{k-1} + (1-\beta)\tG_k
= \beta PM_{k-1}Q^\top + (1-\beta)PG_kQ^\top \\
= P(\beta M_{k-1} + (1-\beta)G_k)Q^\top 
= PM_kQ^\top.
\end{multline*}
Thus the EMA momentum is bi-orthogonally equivariant. The Polyak case is identical, replacing $(1-\beta)$ by $1$, which gives $\tM_k = PM_kQ^\top$.

For Nesterov momentum, the momentum recursion is the same as in the Polyak case, so $\tM_k = PM_kQ^\top$.
Therefore, 
\begin{equation*}
\tN_k 
= \tG_k + \beta \tM_k 
= PG_kQ^\top + \beta PM_kQ^\top 
= P(G_k + \beta M_k)Q^\top 
= PN_kQ^\top.
\end{equation*}
All claims about orthogonal polar factors follow from \eqref{eqn:polar}. For example,
\[
\polar(\tN_k) = \polar(PN_kQ^\top) = P\,\polar(N_k)Q^\top.
\]
The proofs for the momentum polar factors are identical.
\end{proof}

\begin{proof}[Proof of \Cref{thm:right_spectral}]
   	Let $P\in\Prob^v$, $R\in\OO^d$, and $D\in\RR^{v\times d}$. Since $P$ is a permutation matrix, we have $P^\top P=I_v$. Therefore,
   	\[ (PDR^\top)^\top(PDR^\top) = R D^\top P^\top P D R^\top = R D^\top D R^\top. \]
   	Using the orthogonal equivariance of $\Phi$, it follows that 
    \[\Phi((PDR^\top)^\top(PDR^\top)) = \Phi(RD^\top D R^\top) = R\Phi(D^\top D)R^\top. \]
   	Hence we obtain 
   	\begin{multline*}
   		\scrU_{\sfR}(PDR^\top) = PDR^\top\Phi((PDR^\top)^\top(PDR^\top))
   		= PDR^\top (R\,\Phi(D^\top D)R^\top) \\
   		= PD\Phi(D^\top D)R^\top 
   		= P\,\scrU_{\sfR}(D)R^\top.
   	\end{multline*}
\end{proof}

\begin{proof}[Proof of \Cref{prop:closure_composition}]
For any $D\in\RR^{v\times d}$, permutation matrix $P\in\Prob^v$, and orthogonal matrix $R\in\OO^d$,
\[
(\scrU_2\circ \scrU_1)(PDR^\top) = \scrU_2(\scrU_1(PDR^\top)) = \scrU_2(P\,\scrU_1(D)R^\top) = P\,\scrU_2(\scrU_1(D))R^\top.
\]
Hence $\scrU_2\circ\scrU_1$ is left-permutation and right-orthogonal equivariant.
\end{proof}

\begin{proof}[Proof of \Cref{prop:swiglu_permutation_symmetry}]
	Since $\sigma$ is applied coordinatewise and $P$ is a permutation matrix, $\sigma(PW_{\mathrm{gate}}x)=P\sigma(W_{\mathrm{gate}}x)$. Moreover, $(Pu)\odot(Pv)=P(u\odot v)$ for all $u,v\in\RR^{d_{\mathrm{ff}}}$. Hence
	\begin{align*}
		\mathrm{SwiGLU}(x; \tW_{\mathrm{gate}}, \tW_{\mathrm{up}}, \tW_{\mathrm{down}})
		&= W_{\mathrm{down}}P^\top \left(\sigma(PW_{\mathrm{gate}}x)\odot (PW_{\mathrm{up}}x)\right) \\
		&= W_{\mathrm{down}}P^\top P\left(\sigma(W_{\mathrm{gate}}x)\odot(W_{\mathrm{up}}x)\right) \\
		&= W_{\mathrm{down}}\left(\sigma(W_{\mathrm{gate}}x)\odot(W_{\mathrm{up}}x)\right).
	\end{align*}
\end{proof}

\begin{proof}[Proof of \Cref{prop:router_compatible_updates}]
We first record two elementary identities. Since $P\in\Prob^e$ satisfies $P\One_e=\One_e$ and $P^\top\One_e=\One_e$, the centering projector $    \Pi_\perp=I_e-\frac1e\One_e\One_e^\top$ commutes with $P$, i.e., $\Pi_\perp P=P\Pi_\perp$. Moreover, for any $a\in\RR^d$, $\Pi_\perp \One_e a^\top=0$. Therefore, if $\widetilde D = PD+\One_e a^\top$, then its centered direction is
\[
    \widetilde D_c = \Pi_\perp \widetilde D = \Pi_\perp(PD+\One_e a^\top) = P\Pi_\perp D = P D_c.
\]
We first consider the centered left-spectral update. Since $\widetilde D_c=P D_c$, we have $\widetilde D_c\widetilde D_c^\top = P D_cD_c^\top P^\top$. 
Using the permutation equivariance of $\Psi$,
\[
    \Psi(\widetilde D_c\widetilde D_c^\top)
    =
    \Psi(PD_cD_c^\top P^\top)
    =
    P\Psi(D_cD_c^\top)P^\top .
\]
Hence
\[
    \scrU_{\sfL}^{\router}(\widetilde D)
    =
    \Psi(\widetilde D_c\widetilde D_c^\top)\widetilde D_c 
    =
    P\Psi(D_cD_c^\top)P^\top P D_c 
    =
    P\Psi(D_cD_c^\top)D_c 
    =
    P\,\scrU_{\sfL}^{\router}(D).
\]
This proves expert-permutation equivariance and shared-row-shift invariance for $\scrU_{\sfL}^{\router}$.

It remains to verify horizontality. Since $D_c=\Pi_\perp D$, every column of $D_c$ lies in $\One_e^\perp$. By assumption, $\Psi(D_cD_c^\top)$ maps the centered expert subspace to itself. Therefore every column of $\Psi(D_cD_c^\top)D_c$ also lies in $\One_e^\perp$, and hence
\[
    \One_e^\top \scrU_{\sfL}^{\router}(D) = 0 .
\]

We next consider the projected centered row-norm update. Define
\[
    \sfA(D_c) \coloneqq \Diag(\eta(\|D_{c,1:}\|_2),\dots,\eta(\|D_{c,e:}\|_2)).
\]
Since $\widetilde D_c=P D_c$, the row norms of $\widetilde D_c$ are the corresponding permutation of the row norms of $D_c$. Hence
\[
    \sfA(\widetilde D_c) = P \sfA(D_c)P^\top .
\]
Using again that $\Pi_\perp P=P\Pi_\perp$, we obtain
\[
    \scrU_{\row}^{\router}(\widetilde D)
    =
    \Pi_\perp \sfA(\widetilde D_c)\widetilde D_c 
    =
    \Pi_\perp P \sfA(D_c)P^\top P D_c 
    =
    P\Pi_\perp \sfA(D_c)D_c 
    =
    P\,\scrU_{\row}^{\router}(D).
\]
Thus the projected centered row-norm update is also expert-permutation equivariant and shared-row-shift invariant.

Finally, horizontality of the row-norm update follows directly from the final projection:
\[
    \One_e^\top \scrU_{\row}^{\router}(D) = \One_e^\top \Pi_\perp \sfA(D_c)D_c = 0.
\]
Therefore both update families are router-compatible.
\end{proof}

\begin{proof}[Proof of \Cref{thm:orth_invar}]
	Suppose first that $\scrU$ is of the stated spectral form. Let $P\in\OO^m$ and $Q\in\OO^n$ be orthogonal. If $D=U\Diag(\sigma(D))V^\top$, then $PDQ^\top=(PU)\Diag(\sigma(D))(QV)^\top$ is a singular value decomposition of $PDQ^\top$. Therefore, we have
	\[ \scrU(PDQ^\top) = (PU)\Diag(\psi(\sigma(D)))(QV)^\top = P\,\scrU(D)Q^\top, \]
	so $\scrU$ is bi-orthogonally equivariant.
	
	Conversely, suppose that $\scrU$ is bi-orthogonally equivariant. Let $D=U\Diag(\sigma(D))V^\top$ be a singular value decomposition of $D$. Then we have
	\[ \scrU(D) = \scrU(U\Diag(\sigma(D))V^\top) = U\scrU(\Diag(\sigma(D)))V^\top. \]
	Thus the action of $\scrU$ is completely determined by its action on diagonal matrices of singular values. 
    Since bi-orthogonal equivariance ensures $\scrU$ commutes with arbitrary diagonal sign matrices, forcing all off-diagonal elements to be zero, let us define $\Diag(\psi(s))\coloneqq \scrU(\Diag(s))$, where $s\in\Rp^r$. 
	The bi-orthogonal equivariance of $\scrU$ further implies that this definition is independent of the particular singular value decomposition and that $\psi$ is absolutely symmetric with respect to permutations and sign changes compatible with singular-value representations. Hence we have $\scrU(D)=U\Diag(\psi(\sigma(D)))V^\top$, which is exactly the claimed spectral form.
\end{proof}

\section{Implementation Details of Practical Optimizers}
\label{sec:practical}
We provide the implementation details of our proposed practical optimizers in this section. 

\subsection{Numerical Algorithm for Matrix Inverse Square Root via \textsc{Polar Express}}
We illustrate below the algorithm for computing matrix inverse square root via \textsc{Polar Express}, which is based on the intrinsic connection between polynomial iterations for computing the orthogonal polar factor and those for computing the inverse square root of a square matrix, stated in the following theorem. 

\begin{theorem}[\citet{higham1997stable}]
Let $A\in\RR^{n\times n}$ be a square matrix with nonnegative eigenvalues. Consider any iteration of the form $X_{k+1} = X_k h(X_k^2)$ that converges to $\msgn(X_0)$ for $X_0 = \left(\begin{smallmatrix}
0 & A\\ I_n & 0
\end{smallmatrix}\right)$ with order of convergence $q$. Then in the coupled iteration $X_{k+1} = X_k h(Y_k X_k)$, $Y_{k+1} = h(Y_k X_k)Y_k$, with $X_0=A$ and $Y_0=I_n$, we have $X_k \to A^{\half}$ and $Y_k \to A^{-\half}$, both with order of convergence $q$. Here $\msgn$ denotes the matrix sign function. 
\end{theorem}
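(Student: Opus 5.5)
The plan is to embed the coupled iteration inside the sign iteration applied to the block matrix $X_0=\left(\begin{smallmatrix}0 & A\\ I_n & 0\end{smallmatrix}\right)$, and read off the two blocks.

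\textbf{Step 1: compute the limit.} Note that $X_0^2=\Diag(A,A)$, written blockwise as $\left(\begin{smallmatrix}A&0\\0&A\end{smallmatrix}\right)$. The eigenvalues of $X_0$ are $\pm\sqrt{\lambda_i(A)}$. For $\msgn(X_0)$ to be defined we need these to avoid the imaginary axis, so we must implicitly assume $A$ is nonsingular, i.e.\ $A$ has positive real eigenvalues. We will make this assumption explicit. Then
\[
\msgn(X_0)=X_0(X_0^2)^{-\half}=\begin{pmatrix}0&A\\ I_n&0\end{pmatrix}\begin{pmatrix}A^{-\half}&0\\0&A^{-\half}\end{pmatrix}=\begin{pmatrix}0&A^{\half}\\ A^{-\half}&0\end{pmatrix},
\]
using the principal square root, which commutes with $A$.

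\textbf{Step 2: show the iterates keep their block structure.} We prove by induction that the sign iterates have the form $\widehat X_k=\left(\begin{smallmatrix}0&X_k\\ Y_k&0\end{smallmatrix}\right)$, where $(X_k,Y_k)$ is the coupled sequence with $X_0=A$ and $Y_0=I_n$. Squaring gives $\widehat X_k^2=\Diag(X_kY_k,\,Y_kX_k)$ in block form, and hence $h(\widehat X_k^2)=\Diag(h(X_kY_k),\,h(Y_kX_k))$ in block form. Therefore
\[
\widehat X_{k+1}=\widehat X_k\,h(\widehat X_k^2)=\begin{pmatrix}0& X_k h(Y_kX_k)\\ Y_k h(X_kY_k)&0\end{pmatrix}.
\]
The top-right block is exactly $X_{k+1}$. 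It remains to show that $Y_kh(X_kY_k)=h(Y_kX_k)Y_k=Y_{k+1}$. This follows from the intertwining identity $Y\,p(XY)=p(YX)\,Y$, which holds for every polynomial $p$ because $Y(XY)^j=(YX)^jY$. For a rational $h=p/q$, the identity extends by additionally writing $q(YX)\,Y=Y\,q(XY)$ and inverting both $q$ factors.

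\textbf{Step 3: transfer convergence.} Since $\widehat X_k\to\msgn(X_0)$ with order $q$, and the blocks of $\widehat X_k-\msgn(X_0)$ are exactly $X_k-A^{\half}$ and $Y_k-A^{-\half}$, both block sequences converge. Their errors are bounded by the full block error, which is in turn bounded by the sum of the block errors. So the order-$q$ estimate $\|\widehat X_{k+1}-S\|\le c\|\widehat X_k-S\|^q$, with $S\coloneqq\msgn(X_0)$, yields the same rate for the pair $(X_k,Y_k)$.

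I expect the main obstacle to be well-definedness in Step 2 when $h$ is rational. We need $q(X_kY_k)$ and $q(Y_kX_k)$ to be invertible. This should follow from the fact that $q(\widehat X_k^2)$ is invertible along the convergent sign iteration, since $XY$ and $YX$ share the same nonzero spectrum. The clean route is to assume, as in \citet{higham1997stable}, that the sign iteration is well defined at every step, and then deduce invertibility of the blocks from the block-diagonal form. For the polynomial iterations used in \textsc{Polar Express}, this issue does not arise.
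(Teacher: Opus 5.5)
Steps 1 and 2 are correct and are exactly Higham's argument. The paper gives no proof of its own and simply cites \citet{higham1997stable}. As in Higham, you embed the pair as $\widehat X_k=\bigl(\begin{smallmatrix}0&X_k\\ Y_k&0\end{smallmatrix}\bigr)$, note that $\widehat X_k^2$ is block diagonal with blocks $X_kY_k$ and $Y_kX_k$, and use $Y\,h(XY)=h(YX)\,Y$ to match the bottom-left block. You are also right that the stated hypothesis must exclude singular $A$. Higham's actual assumption is that $A$ has no eigenvalues on $(-\infty,0]$, and your argument works verbatim under it. Restricting to rational $h$ is inessential: for a general matrix function, replace $h$ by one polynomial interpolating it (with derivatives) on the spectrum of $\widehat X_k^2$. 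That single polynomial represents $h(X_kY_k)$ and $h(Y_kX_k)$ simultaneously.

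The gap is in Step 3. Set $E_k\coloneqq\|\widehat X_k-S\|$. The bound $E_{k+1}\le cE_k^q$ only gives $\|X_{k+1}-A^{1/2}\|\le c\bigl(\|X_k-A^{1/2}\|+\|Y_k-A^{-1/2}\|\bigr)^q$, whose right-hand side contains the error of the \emph{other} sequence. So you have shown that the pair converges with order $q$, i.e., each block error is dominated by an order-$q$ sequence. You have not shown that $X_k$ and $Y_k$ each converge with order $q$, which is what the theorem claims.

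The missing ingredient is the invariant $X_k=AY_k$ with $Y_k$ commuting with $A$; Higham records it as part of his theorem. It follows by induction. If $X_k=AY_k$ and $AY_k=Y_kA$, then $Y_kX_k=AY_k^2$ commutes with both $A$ and $Y_k$, hence so does $H_k\coloneqq h(Y_kX_k)$. This gives $X_{k+1}=AY_kH_k=AH_kY_k=AY_{k+1}$ and $AY_{k+1}=Y_{k+1}A$. Consequently $Y_k-A^{-1/2}=A^{-1}(X_k-A^{1/2})$, so the two block errors are equivalent up to the factors $\|A\|$ and $\|A^{-1}\|$. For example, $\|X_{k+1}-A^{1/2}\|\le c\,(1+\|A^{-1}\|)^q\,\|X_k-A^{1/2}\|^q$, and symmetrically for $Y_k$ with the factor $(1+\|A\|)^q$.
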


\begin{algorithm}[h!]
	\caption{Matrix Inverse Square Root via \textsc{Polar Express} \citep{amsel2025polar}}
	\label{alg:inv_sqrt}
    	\begin{algorithmic}[1]
    		\REQUIRE $A\in\RR^{n\times n}$, $\varepsilon>0$, $K\in\NN^*$, sequence of \textsc{Polar Express} coefficients $\{(a_k, b_k, c_k)\}_{k=1}^K$
            \STATE $A = (A + A^\top)/2 + \varepsilon I_n$
            \STATE $\alpha = 1.02\fronorm{A} + \varepsilon$
            \STATE $Y_1 = A/\alpha$
            \STATE $Z_1 = I_n$
    		\FOR{$k=1, \ldots, K$}
                \STATE $(\bar{a}_k, \bar{b}_k, \bar{c}_k) = (a_k+b_k+c_k, -b_k-2c_k, c_k)$
    			\STATE $R_k = I_n - Z_kY_k$
    			\STATE $R_k = (R_k + R_k^\top)/2$ (optional matrix symmetrization)
   				\STATE $M_k = \bar{a}_kI_n + \bar{b}_kR_k + \bar{c}_kR_k^2$
   				\STATE $Y_{k+1} = Y_k M_k$
   				\STATE $Z_{k+1} = M_k Z_k$
    		\ENDFOR
    		\ENSURE $Z_{K+1}/\sqrt{\alpha}$
	\end{algorithmic}
\end{algorithm}

\newpage
\section{Convergence Analysis of Symmetry-Compatible Optimizers}
\label{sec:conv}
In this section, we study the convergence of several symmetry-compatible optimizer classes introduced above, including full spectral, one-sided spectral, row-norm-based, and hybrid optimizers. To present the analysis in a unified way, we begin from a general first-order iteration and state the basic assumptions once. The subsequent subsections then specialize these assumptions to each optimizer class.

\subsection{General Update Scheme and Standing Assumptions}
We consider the generic iteration
\begin{equation}\label{eqn:general_update_scheme}
(\forall k\in\NN)\qquad W_{k+1}=W_k-\gamma_k\calT(G_k), \qquad G_k\coloneqq \nabla f(W_k),
\end{equation}
where $W_k\in\RR^{m\times n}$, $\gamma_k>0$ is the learning rate, and $\calT\colon\RR^{m\times n}\to\RR^{m\times n}$ is an update map whose precise form depends on the optimizer class under consideration. We do not consider momentum in our analysis for simplicity and leave it to future work. The layerwise loss function $f$ is assumed to satisfy the following standard regularity conditions.

\begin{assumption}[$L$-smoothness]
\label{ass:Lipschitz_smooth}
Let $f\colon\RR^{m\times n}\to\oRR$ be differentiable and $L$-smooth with respect to the Frobenius norm, i.e., there exists $L\in(0,\infty)$ such that 
\[
(\forall X,Y\in\RR^{m\times n})\qquad \fronorm{\nabla f(X)-\nabla f(Y)} \le L\fronorm{X-Y}.
\]
\end{assumption}

\begin{assumption}[$\mu$-Polyak--\L{}ojasiewicz]
\label{ass:PL}
Let $f\colon\RR^{m\times n}\to\oRR$ satisfy the $\mu$-Polyak--\L{}ojasiewicz (\PL) inequality: 
\[
(\forall X\in\RR^{m\times n})\qquad \fronorm{\nabla f(X)}^2 \ge 2\mu\bigl(f(X)-f^\star\bigr),
\]
where $f^\star\coloneqq \inf f$. 
\end{assumption}
The \PL condition will only be needed to obtain linear convergence. Under smoothness alone, we will still obtain monotonic descent and sublinear convergence to stationarity.

\begin{remark}[On the stochastic setting]
In large-scale deep learning, one typically replaces the full gradient $G$ by a stochastic gradient estimator $\widehat G$. However, for the symmetry-compatible update maps considered here, including spectral, one-sided spectral, row-norm-based, and hybrid updates, the map $\calT$ is generally nonlinear. As a result, unbiasedness of the stochastic gradient does not imply unbiasedness of the resulting update direction. A rigorous stochastic analysis therefore requires additional assumptions directly on the expected alignment and norm of $\calT(\widehat G)$, and is left for future work.
\end{remark}

Throughout the section, the basic descent estimate follows from the standard smoothness inequality:
\begin{equation*}
f(W_{k+1}) \le f(W_k) -\gamma_k\dotpF{G_k}{\calT(G_k)} +\frac{L\gamma_k^2}{2}\,\fronorm{\calT(G_k)}^2.
\end{equation*}
Thus, the convergence analysis for each optimizer class reduces to controlling two quantities $\dotpF{G}{\calT(G)}$ and $\fronorm{\calT(G)}^2$. The relevant bounds depend on the geometry of the update map $\calT$, and are stated separately in the subsections below.

We now specialize \eqref{eqn:general_update_scheme} to each symmetry-compatible optimizer class introduced earlier. In each case, the key step is to identify the geometry-dependent alignment term $\dotpF{G}{\calT(G)}$ and the corresponding update norm $\fronorm{\calT(G)}^2$, which together determine the admissible learning rates and convergence rates. 

The optimizer classes considered in this section fall into two broad regimes. The first consists of scale-compatible updates, for which $\fronorm{\calT(G)}$ scales proportionally to $\fronorm{G}$. This includes standard spectral, one-sided spectral, and bounded row-norm-based updates. The second consists of fully normalized updates, such as polar or row-normalized directions, whose magnitude is controlled by rank or support rather than gradient norm. The former admit convergence bounds through uniform alignment and norm-control constants, whereas the latter are more naturally analyzed through geometry-dependent ratio quantities.

\subsection{Full Spectral Optimizers}
We begin with full spectral optimizers. The following assumption captures the two structural properties needed for convergence: positive alignment with the gradient and control of the update norm. We specialize \eqref{eqn:general_update_scheme} to the full spectral case, where $\calT$ is a spectral operator.

\begin{assumption}[Singular-value alignment and boundedness]
\label{ass:singular_alignment}
Let $\calT\colon\RR^{m\times n}\to\RR^{m\times n}$ be the spectral update map defined by $\calT(G)=U\Diag(\psi(\sigma(G)))V^\top$ whenever $G=U\Diag(\sigma(G))V^\top$ is a singular value decomposition of $G$, for some absolutely symmetric map $\psi\colon\Rp^r\to\RR^r$. Assume there exist constants $0<c_1\le c_2<\infty$ such that for all $s\in\Rp^r$, 
\[\sum_{i=1}^r s_i\psi_i(s)\ge c_1\sum_{i=1}^r s_i^2 \quad \text{and} \quad \sum_{i=1}^r \psi_i(s)^2\le c_2\sum_{i=1}^r s_i^2, \]
where $r=\min\{m,n\}$.
\end{assumption}

\begin{lemma}[Alignment and norm bounds]
\label{lem:T_alignment}
Under \Cref{ass:singular_alignment}, for all $G\in\RR^{m\times n}$, we have $\dotpF{G}{\calT(G)}\ge c_1\fronorm{G}^2$ and $\fronorm{\calT(G)}^2\le c_2\fronorm{G}^2$. 
\end{lemma}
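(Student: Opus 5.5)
Fix $G\in\RR^{m\times n}$ and take a singular value decomposition $G=U\Diag(\sigma(G))V^\top$, written in thin form so that $U\in\RR^{m\times r}$ and $V\in\RR^{n\times r}$ have orthonormal columns, with $r=\min\{m,n\}$ and $s\coloneqq\sigma(G)\in\Rp^r$. By \Cref{ass:singular_alignment}, $\calT(G)=U\Diag(\psi(s))V^\top$ uses the same singular vectors. The plan is to reduce both claims to the two scalar inequalities in \Cref{ass:singular_alignment}. We do this by using the orthogonal invariance of the Frobenius inner product and the orthonormality $U^\top U=V^\top V=I_r$.

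For the alignment bound, compute
\[
\dotpF{G}{\calT(G)}=\tr\bigl(V\Diag(s)U^\top U\Diag(\psi(s))V^\top\bigr)=\tr\bigl(\Diag(s)\Diag(\psi(s))V^\top V\bigr)=\sum_{i=1}^r s_i\psi_i(s).
\]
The middle step uses the cyclic property of the trace. Next, $\fronorm{G}^2=\tr(V\Diag(s)^2V^\top)=\sum_i s_i^2$ by the same argument. The first inequality of \Cref{ass:singular_alignment} then gives $\dotpF{G}{\calT(G)}\ge c_1\fronorm{G}^2$.

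For the norm bound, we have
\[
\fronorm{\calT(G)}^2=\tr\bigl(V\Diag(\psi(s))U^\top U\Diag(\psi(s))V^\top\bigr)=\sum_{i=1}^r\psi_i(s)^2.
\]
The second inequality of \Cref{ass:singular_alignment} then yields $\fronorm{\calT(G)}^2\le c_2\sum_i s_i^2=c_2\fronorm{G}^2$.

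The only subtle point is well-definedness. The SVD is not unique when singular values repeat or vanish, so one must check that $\calT(G)$ does not depend on the choice of $U$ and $V$. We take this as part of the hypothesis that $\calT$ is a spectral operator with absolutely symmetric $\psi$. This is precisely the setting of \Cref{thm:orth_invar}. Beyond that, both computations are independent of which SVD is chosen, since they only use orthonormality of the columns of $U$ and $V$. So no further obstacle is expected; the step that most needs care is applying the trace identities with the thin SVD when $m\ne n$.
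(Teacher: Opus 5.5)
Your proposal is correct and follows essentially the same route as the paper: write $\calT(G)=U\Diag(\psi(s))V^\top$ and use the orthogonal invariance of the Frobenius inner product to reduce $\dotpF{G}{\calT(G)}$ and $\fronorm{\calT(G)}^2$ to $\sum_i s_i\psi_i(s)$ and $\sum_i \psi_i(s)^2$, then invoke the two scalar inequalities of the assumption. Your explicit thin-SVD trace computations and your remark on well-definedness are just a more detailed version of the paper's short argument.
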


\begin{proof}
If we write $s=\sigma(G)$, then we have $\calT(G)=U\Diag(\psi(s))V^\top$. Using orthogonal invariance of the Frobenius inner product, we have $\dotpF{G}{\calT(G)} = \dotpF{\Diag(s)}{\Diag(\psi(s))} = \sum_{i=1}^r s_i\psi_i(s)$. By \Cref{ass:singular_alignment}, we have $\dotpF{G}{\calT(G)} \ge c_1\sum_{i=1}^r s_i^2 = c_1\fronorm{G}^2$. Similarly, we also have $\fronorm{\calT(G)}^2 = \sum_{i=1}^r \psi_i(s)^2 \le c_2\sum_{i=1}^r s_i^2 = c_2\fronorm{G}^2$. 
\end{proof}

\begin{theorem}[Descent lemma for spectral optimizers]
\label{thm:descent_spectral}
Suppose \Cref{ass:Lipschitz_smooth,ass:singular_alignment} hold. Then the iteration \eqref{eqn:general_update_scheme} satisfies 
\begin{equation}\label{eqn:descent_spectral}
    f(W_{k+1}) \le f(W_k) -\gamma_k\dotpF{G_k}{\calT(G_k)} +\frac{L\gamma_k^2}{2}\fronorm{\calT(G_k)}^2.
\end{equation}
Consequently, we have $f(W_{k+1}) \le f(W_k) -\left(c_1\gamma_k-Lc_2\gamma_k^2/2\right)\fronorm{G_k}^2$. In particular, if $\gamma_k\in\left(0, 2c_1/(Lc_2)\right)$, then $f(W_{k+1})\le f(W_k)$.
\end{theorem}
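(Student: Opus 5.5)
The plan is to prove \eqref{eqn:descent_spectral} from the standard quadratic upper bound implied by \Cref{ass:Lipschitz_smooth}, and then to insert the bounds of \Cref{lem:T_alignment}.

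\textbf{Step 1: quadratic upper bound.} For any $X,Y\in\RR^{m\times n}$, $L$-smoothness in the Frobenius norm gives
\[
f(Y)\le f(X)+\dotpF{\nabla f(X)}{Y-X}+\tfrac{L}{2}\fronorm{Y-X}^2 .
\]
I will derive this in the usual way. Write $f(Y)-f(X)=\int_0^1\dotpF{\nabla f(X+t(Y-X))}{Y-X}\,dt$. Subtract $\dotpF{\nabla f(X)}{Y-X}$ from both sides. Bound the integrand by Cauchy--Schwarz for the Frobenius inner product together with the Lipschitz bound $\fronorm{\nabla f(X+t(Y-X))-\nabla f(X)}\le Lt\fronorm{Y-X}$. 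Integrating $Lt$ over $[0,1]$ gives the factor $L/2$.

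\textbf{Step 2: specialize to the iteration.} Take $X=W_k$ and $Y=W_{k+1}=W_k-\gamma_k\calT(G_k)$ from \eqref{eqn:general_update_scheme}. Then $Y-X=-\gamma_k\calT(G_k)$ and $\nabla f(W_k)=G_k$. Substituting into Step 1 yields \eqref{eqn:descent_spectral} directly. This part of the argument does not use the spectral structure of $\calT$.

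\textbf{Step 3: insert the spectral bounds.} By \Cref{lem:T_alignment}, we have $-\gamma_k\dotpF{G_k}{\calT(G_k)}\le -c_1\gamma_k\fronorm{G_k}^2$, since $\gamma_k>0$. The same lemma gives $\tfrac{L\gamma_k^2}{2}\fronorm{\calT(G_k)}^2\le \tfrac{Lc_2\gamma_k^2}{2}\fronorm{G_k}^2$. Combining these yields $f(W_{k+1})\le f(W_k)-(c_1\gamma_k-Lc_2\gamma_k^2/2)\fronorm{G_k}^2$.

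\textbf{Step 4: monotonicity.} The coefficient factors as $\gamma_k(c_1-Lc_2\gamma_k/2)$. It is strictly positive exactly when $0<\gamma_k<2c_1/(Lc_2)$. In that range $f(W_{k+1})\le f(W_k)$, with strict decrease whenever $G_k\neq0$.

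None of these steps is a real obstacle; the only point needing care is Step 1. There, $f$ is declared to take values in $\oRR$, but differentiability on all of $\RR^{m\times n}$ forces it to be finite everywhere, so the integral form of the mean-value theorem applies. I will state this explicitly before using the integral representation.
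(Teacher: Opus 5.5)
Your proposal is correct and follows the paper's proof: apply the quadratic upper bound from $L$-smoothness with $W_{k+1}-W_k=-\gamma_k\calT(G_k)$, then insert the bounds of \Cref{lem:T_alignment}. The only difference is that you derive the quadratic upper bound via the integral form, where the paper simply cites it. Your extra remarks on finiteness of $f$ and on strict decrease when $G_k\neq 0$ are harmless additions.
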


\begin{proof}
By $L$-smoothness of $f$, we have 
\[
f(W_{k+1}) \le f(W_k) + \dotpF{\nabla f(W_k)}{W_{k+1}-W_k} + \frac{L}{2}\fronorm{W_{k+1}-W_k}^2.
\]
Using $W_{k+1}-W_k=-\gamma_k\calT(G_k)$ and $G_k=\nabla f(W_k)$, we obtain \eqref{eqn:descent_spectral}. 
Now apply \Cref{lem:T_alignment} to obtain $\dotpF{G_k}{\calT(G_k)}\ge c_1\fronorm{G_k}^2$ and $\fronorm{\calT(G_k)}^2\le c_2\fronorm{G_k}^2$, which yields $f(W_{k+1}) \le f(W_k) -\left(c_1\gamma_k-Lc_2\gamma_k^2/2\right)\fronorm{G_k}^2$. 
\end{proof}

\begin{theorem}[Sublinear convergence to stationarity]
\label{thm:spectral_stationarity}
Suppose \Cref{ass:Lipschitz_smooth,ass:singular_alignment} hold. If the learning rate is constant and satisfies $\gamma\in(0, 2c_1/(Lc_2))$, then
\[
\sum_{k=0}^{T-1}\fronorm{G_k}^2 \le \frac{f(W_0)-f^\star} {\gamma\left(c_1-Lc_2\gamma/2\right)}, 
\quad\text{ and therefore }\quad 
\min_{0\le k<T}\,\fronorm{G_k}^2 \le \frac{f(W_0)-f^\star} {T\gamma\left(c_1-Lc_2\gamma/2\right)}.
\]
\end{theorem}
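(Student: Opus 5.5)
The plan is the standard telescoping argument built on \Cref{thm:descent_spectral}. With a constant learning rate $\gamma\in(0,2c_1/(Lc_2))$, that theorem gives, for every $k\in\NN$,
\[
f(W_{k+1}) \le f(W_k) - \gamma\left(c_1 - Lc_2\gamma/2\right)\fronorm{G_k}^2 .
\]
Set $\kappa\coloneqq \gamma(c_1-Lc_2\gamma/2)$. The restriction $\gamma<2c_1/(Lc_2)$ is exactly what makes $\kappa>0$, and I would note this first so that we may divide by it later. Rearranging gives $\kappa\fronorm{G_k}^2\le f(W_k)-f(W_{k+1})$.

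Next, sum this inequality over $k=0,\dots,T-1$. The right-hand side telescopes to $f(W_0)-f(W_T)$. Since $f^\star=\inf f$, we have $f(W_T)\ge f^\star$, so the sum is at most $f(W_0)-f^\star$. Dividing by $\kappa>0$ gives the first claimed bound.

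The second bound follows because the minimum of $T$ nonnegative numbers is at most their average:
\[
\min_{0\le k<T}\fronorm{G_k}^2 \le \frac1T\sum_{k=0}^{T-1}\fronorm{G_k}^2,
\]
and we then substitute the first bound.

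The only real subtlety is the lower bound on the objective. If $f^\star=-\infty$, the bound is vacuous but still true. The interesting case is $f^\star>-\infty$, which the \PL section implicitly assumes, and I would state this tacitly. Otherwise the argument is routine, and I do not expect any step to be an obstacle. \Cref{lem:T_alignment} already supplies the alignment and norm constants through \Cref{thm:descent_spectral}.
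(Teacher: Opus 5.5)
Your proposal is correct and follows essentially the same route as the paper: apply the per-step decrease from \Cref{thm:descent_spectral} with constant $\gamma$, telescope over $k=0,\dots,T-1$, use $f(W_T)\ge f^\star$, and bound the minimum by the average. Your explicit check that $\gamma<2c_1/(Lc_2)$ makes $\gamma(c_1-Lc_2\gamma/2)$ positive, so that dividing by it is legitimate, is a small point the paper leaves implicit in this proof.
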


\begin{proof}
By \Cref{thm:descent_spectral}, $f(W_{k+1}) \le f(W_k) -\gamma\left(c_1-Lc_2\gamma/2\right)\fronorm{G_k}^2$. 
Summing from $k=0$ to $T-1$ yields
\[
f(W_T) \le f(W_0) -\gamma\left(c_1-\frac{Lc_2}{2}\gamma\right) \sum_{k=0}^{T-1}\fronorm{G_k}^2.
\]
Since $f(W_T)\ge f^\star$, rearranging gives $\sum_{k=0}^{T-1}\fronorm{G_k}^2 \le (f(W_0)-f^\star)/(\gamma(c_1-Lc_2\gamma/2))$. The second inequality follows from $\min_{0\le k<T}\,\fronorm{G_k}^2 \le \frac1T\sum_{k=0}^{T-1}\fronorm{G_k}^2$. 
\end{proof}

\begin{theorem}[Linear convergence under the \PL condition]
\label{thm:PL_spectral}
Suppose \Cref{ass:Lipschitz_smooth,ass:PL,ass:singular_alignment} hold. If the learning rate is constant and satisfies $\gamma\in(0, 2c_1/(Lc_2))$, then the spectral iteration \eqref{eqn:general_update_scheme} obeys
\[
f(W_{k+1})-f^\star \le \left(1-2\mu\gamma\left(c_1-\frac{Lc_2}{2}\gamma\right)\right) \left(f(W_k)-f^\star\right).
\]
Hence $f(W_k)-f^\star \le \rho^k\bigl(f(W_0)-f^\star\bigr)$, where $\rho \coloneqq 1-2\mu\gamma\left(c_1-Lc_2\gamma/2\right)\in(0,1)$. 
\end{theorem}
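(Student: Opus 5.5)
The plan is to combine the per-step descent estimate of \Cref{thm:descent_spectral} with the \PL inequality of \Cref{ass:PL}, and then unroll the resulting one-step contraction. With a constant learning rate $\gamma\in(0,2c_1/(Lc_2))$, \Cref{thm:descent_spectral} gives
\[
f(W_{k+1}) \le f(W_k) - \gamma\Bigl(c_1-\frac{Lc_2}{2}\gamma\Bigr)\fronorm{G_k}^2 ,
\]
and the coefficient $\kappa\coloneqq\gamma(c_1-Lc_2\gamma/2)$ is strictly positive precisely because of the learning-rate range. Since $\kappa>0$, I can lower bound $\fronorm{G_k}^2$ by $2\mu(f(W_k)-f^\star)$ using \Cref{ass:PL} without flipping the inequality. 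Subtracting $f^\star$ from both sides then gives
\[
f(W_{k+1})-f^\star \le (1-2\mu\kappa)\bigl(f(W_k)-f^\star\bigr),
\]
which is the claimed one-step inequality. Induction on $k$ then yields $f(W_k)-f^\star\le\rho^k(f(W_0)-f^\star)$ with $\rho=1-2\mu\kappa$.

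The only step requiring care is verifying that $\rho\in(0,1)$. The bound $\rho<1$ is immediate from $\mu>0$ and $\kappa>0$. For $\rho>0$, I would maximize $\kappa$ over $\gamma$: the maximizer is $\gamma=c_1/(Lc_2)$, giving $\kappa\le c_1^2/(2Lc_2)$. Hence $2\mu\kappa\le \mu c_1^2/(Lc_2)$.

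Two standard facts finish the argument. First, combining $L$-smoothness with \PL gives $\mu\le L$: the descent lemma at the point $X-\nabla f(X)/L$ yields $f^\star\le f(X)-\fronorm{\nabla f(X)}^2/(2L)$, which, together with \PL, forces $\mu\le L$. Second, Cauchy--Schwarz applied to the two inequalities in \Cref{ass:singular_alignment} gives $c_1\fronorm{s}^2\le \fronorm{s}\,\|\psi(s)\|\le \sqrt{c_2}\,\fronorm{s}^2$, so $c_1^2\le c_2$. Together these give $2\mu\kappa\le 1$, i.e.\ $\rho\ge0$.

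This is the main obstacle. Strict positivity can fail in the degenerate case where all these bounds are tight. There $\rho=0$, which still gives convergence, even in a single step. So I would either state $\rho\in[0,1)$ or note that strictness holds except in that edge case; only $\rho<1$ is needed for the linear rate.
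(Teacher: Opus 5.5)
Your proof is correct and follows the paper's argument: the descent estimate of \Cref{thm:descent_spectral}, then the \PL inequality (legitimate since $\gamma(c_1-Lc_2\gamma/2)>0$), then induction. Your additional check that $\rho\ge 0$, via $\mu\le L$ (which needs $f$ nonconstant, though the claim is trivial otherwise) and $c_1^2\le c_2$, is more careful than the paper, which infers $\rho\in(0,1)$ only from $c_1-Lc_2\gamma/2>0$; that gives just $\rho<1$. You are also right that $\rho=0$ is attainable, for example $f(W)=\tfrac{L}{2}\fronorm{W}^2$ with $\calT=\mathrm{id}$ and $\gamma=1/L$, so the correct range is $[0,1)$.
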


\begin{proof}
By \Cref{thm:descent_spectral}, subtracting $f^\star$ from both sides gives
\[
f(W_{k+1})-f^\star \le f(W_k)-f^\star -\left(c_1\gamma-\frac{Lc_2}{2}\gamma^2\right)\fronorm{G_k}^2.
\]
Using the \PL inequality, we obtain the desired inequality. Since $0<\gamma<2c_1/(Lc_2)$, the factor $c_1-Lc_2\gamma/2$ is positive, so $\rho\in(0,1)$. Iterating the recursion yields $f(W_k)-f^\star\le \rho^k(f(W_0)-f^\star)$. 
\end{proof}

\begin{corollary}[Optimal constant learning rate within this bound]
\label{cor:best_stepsize}
Under the assumptions of \Cref{thm:PL_spectral}, the contraction factor in
\Cref{thm:PL_spectral} is minimized over constant learning rates $\gamma>0$ by $\gamma^\star=c_1/(Lc_2)$, for which $\rho^\star = 1-\mu c_1^2/(Lc_2)$. 
\end{corollary}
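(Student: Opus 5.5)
The plan is to treat the contraction factor from \Cref{thm:PL_spectral} as an explicit scalar function of the learning rate and minimize it in closed form. Define
\[
\rho(\gamma) \coloneqq 1-2\mu\gamma\left(c_1-\frac{Lc_2}{2}\gamma\right) = 1-2\mu c_1\gamma+\mu L c_2\gamma^2 .
\]
Minimizing $\rho$ over $\gamma>0$ is the same as maximizing the concave quadratic $g(\gamma)\coloneqq c_1\gamma-\frac{Lc_2}{2}\gamma^2$, since $\mu>0$. This quadratic has leading coefficient $-Lc_2/2<0$, so it is strictly concave and has a unique maximizer.

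Setting $g'(\gamma)=c_1-Lc_2\gamma=0$ gives $\gamma^\star=c_1/(Lc_2)$. This point is the midpoint of the admissible interval $(0,2c_1/(Lc_2))$ from \Cref{thm:PL_spectral}, so it lies inside that interval and the linear-rate guarantee applies there. Outside the interval we have $g\le 0$, hence $\rho\ge 1$, so no admissible learning rate outside it can do better.

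Substituting $\gamma^\star$ gives
\[
g(\gamma^\star)=\frac{c_1^2}{Lc_2}-\frac{c_1^2}{2Lc_2}=\frac{c_1^2}{2Lc_2},
\]
and therefore $\rho^\star=1-2\mu\, g(\gamma^\star)=1-\mu c_1^2/(Lc_2)$.

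The only point that needs care is confirming $\rho^\star\in(0,1)$, so that the bound is meaningful. The inequality $\rho^\star<1$ is immediate. For $\rho^\star>0$, I would combine two facts. First, \Cref{lem:T_alignment} and Cauchy--Schwarz give $c_1\fronorm{G}^2\le\dotpF{G}{\calT(G)}\le\sqrt{c_2}\fronorm{G}^2$, so $c_1^2\le c_2$. Second, the standard consequence of $L$-smoothness together with the \PL inequality is $\mu\le L$. Together these yield $\mu c_1^2/(Lc_2)\le 1$. The strict inequality then comes from the positivity of $\rho$ already asserted in \Cref{thm:PL_spectral}; in the degenerate case $\rho^\star=0$, the contraction bound holds trivially.
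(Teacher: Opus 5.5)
Your argument is correct and is essentially the paper's proof. Both proofs write $\rho(\gamma)=1-2\mu\gamma(c_1-Lc_2\gamma/2)$, maximize the concave quadratic $c_1\gamma-Lc_2\gamma^2/2$ at its vertex $\gamma^\star=c_1/(Lc_2)$, and substitute to obtain $\rho^\star=1-\mu c_1^2/(Lc_2)$.

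Your extra check that $\rho^\star\in(0,1)$ is not required by the statement. The bounds $c_1^2\le c_2$ and $\mu\le L$ do correctly give $\rho^\star\ge 0$. However, citing the strict positivity ``asserted'' in \Cref{thm:PL_spectral} is circular, since the paper's proof only establishes $\rho<1$. In fact $\rho^\star=0$ is attained, e.g.\ for $f(W)=\frac{L}{2}\fronorm{W}^2$ with $\mu=L$ and $\calT=\mathrm{id}$.
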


\begin{proof}
The contraction factor $\rho(\gamma)=1-2\mu\gamma\left(c_1-Lc_2\gamma/2\right)$ is minimized when the quadratic $-2\mu c_1\gamma+\mu Lc_2\gamma^2$ is minimized, equivalently when $c_1\gamma-Lc_2\gamma^2/2$ is maximized. This occurs at $\gamma^\star=c_1/(Lc_2)$. Substituting into $\rho(\gamma)$ gives $\rho^\star = 1-2\mu\frac{c_1}{Lc_2}\left(c_1-\frac{Lc_2}{2}\frac{c_1}{Lc_2}\right) = 1-\mu c_1^2/(Lc_2)$. 
\end{proof}

\subsubsection{Specialization to Normalized Polar-Type Spectral Methods}
The abstract convergence result above assumes that the spectral update $\calT(G)$ grows proportionally to the gradient norm. This covers many spectral maps, but excludes fully normalized polar updates such as $\calT(G)=\polar(G)=UV^\top$, for which $\fronorm{\calT(G)}^2=\rank(G)$ does not vanish as $G\to 0$ (cf.~\emph{null-gradient consistency} defined in \citep{lau2025polargrad}). To analyze such methods, it is more natural to work with a geometry-dependent ratio between the update norm and its alignment with the gradient, defined below.

\begin{assumption}[Positive alignment]
\label{ass:positive_alignment}
For every $G\in\RR^{m\times n}\setminus\{0\}$, we assume that $\dotpF{G}{\calT(G)} > 0$. 
\end{assumption}

\begin{definition}[Spectral advantage ratio]
\label{def:spectral_adv_ratio}
For a spectral update map $\calT$, define its \emph{spectral advantage ratio}
at $G\neq 0$ by
\[
\mathfrak R_{\calT}(G)
\coloneqq
\frac{\dotpF{G}{\calT(G)}}{\fronorm{\calT(G)}^2}.
\]
Equivalently, its reciprocal is $\mathfrak R_{\calT}^{-1}(G) = \fronorm{\calT(G)}^2/\dotpF{G}{\calT(G)}$. 
\end{definition}

The quantity $\mathfrak R_{\calT}(G)$ measures how much descent is obtained per unit squared update norm. Larger values correspond to more favorable geometry.

\begin{lemma}[Descent lemma in ratio form]
\label{lem:ratio_descent}
Suppose \Cref{ass:Lipschitz_smooth,ass:positive_alignment} hold. Then the iteration \eqref{eqn:general_update_scheme} satisfies
\[
f(W_{k+1}) \le f(W_k) - \left(\gamma_k-\frac{L\gamma_k^2}{2\,\mathfrak R_{\calT}(G_k)}\right)\dotpF{G_k}{\calT(G_k)}.
\]
In particular, if $0<\gamma_k<2\mathfrak R_{\calT}(G_k)/L$, then $f(W_{k+1})\le f(W_k)$.
\end{lemma}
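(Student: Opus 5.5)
The plan is to begin from the standard $L$-smoothness descent inequality and then rewrite its quadratic term through the spectral advantage ratio. First, under \Cref{ass:Lipschitz_smooth}, the quadratic upper bound $f(Y)\le f(X)+\dotpF{\nabla f(X)}{Y-X}+\frac L2\fronorm{Y-X}^2$ is applied with $X=W_k$ and $Y=W_{k+1}=W_k-\gamma_k\calT(G_k)$. This yields exactly the inequality \eqref{eqn:descent_spectral}, whose derivation uses only smoothness, not the spectral structure:
\[
f(W_{k+1})\le f(W_k)-\gamma_k\dotpF{G_k}{\calT(G_k)}+\frac{L\gamma_k^2}{2}\fronorm{\calT(G_k)}^2 .
\]

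Second, the quadratic term is expressed in terms of the alignment. If $G_k\neq0$, \Cref{ass:positive_alignment} gives $\dotpF{G_k}{\calT(G_k)}>0$, so $\calT(G_k)\neq 0$ and $\mathfrak R_{\calT}(G_k)$ is well defined and positive. By \Cref{def:spectral_adv_ratio},
\[
\fronorm{\calT(G_k)}^2=\dotpF{G_k}{\calT(G_k)}/\mathfrak R_{\calT}(G_k).
\]
Substituting this and factoring out $\dotpF{G_k}{\calT(G_k)}$ gives the stated bound
\[
f(W_{k+1})\le f(W_k)-\Bigl(\gamma_k-\frac{L\gamma_k^2}{2\mathfrak R_{\calT}(G_k)}\Bigr)\dotpF{G_k}{\calT(G_k)} .
\]

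Third, monotonic descent follows. If $0<\gamma_k<2\mathfrak R_{\calT}(G_k)/L$, the coefficient $\gamma_k\bigl(1-L\gamma_k/(2\mathfrak R_{\calT}(G_k))\bigr)$ is positive, and the alignment is positive, so the subtracted term is nonnegative and $f(W_{k+1})\le f(W_k)$.

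The only delicate point is the degenerate case $G_k=0$, where the ratio is undefined. There I would note that the inequality is to be read as the raw smoothness bound, or simply restrict the claim to $G_k\neq0$, which is implicit in the statement. For polar-type maps with $\calT(0)=\polar(0)$, the update norm term is handled directly by the first step, so no further argument is needed.
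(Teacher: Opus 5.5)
Your proposal is correct and matches the paper's argument. Both apply the $L$-smoothness quadratic upper bound to $W_{k+1}=W_k-\gamma_k\calT(G_k)$, then substitute $\fronorm{\calT(G_k)}^2=\dotpF{G_k}{\calT(G_k)}/\mathfrak R_{\calT}(G_k)$ and factor out the alignment. Your check that positive alignment makes the ratio well defined for $G_k\neq 0$ is a detail the paper leaves implicit.
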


\begin{proof}
The inequality follows from $L$-smoothness of $f$ and the definition of $\mathfrak R_{\calT}(G_k)$. The final claim follows immediately.
\end{proof}

The preceding lemma shows that normalized spectral methods are governed by two distinct quantities: the ratio term $\mathfrak R_{\calT}(G)$, which controls the admissible learning rate through the smoothness bound, and the alignment term $\dotpF{G}{\calT(G)}$, which must still be related to $\fronorm{G}^2$ in order to combine the descent estimate with the \PL inequality.

\begin{theorem}[Linear convergence under \PL and ratio/alignment bounds]
\label{thm:PL_ratio}
Suppose \Cref{ass:Lipschitz_smooth,ass:PL,ass:positive_alignment} hold. Assume there exists a constant $\underline{\mathfrak R}>0$ such that $\mathfrak R_{\calT}(G)\ge \underline{\mathfrak R}$ for all $G\neq 0$, and that there exists $\alpha>0$ such that $\dotpF{G}{\calT(G)}\ge \alpha \fronorm{G}^2$ for all $G\in\RR^{m\times n}$. 
If the learning rate is constant and satisfies $0<\gamma<2\underline{\mathfrak R}/L$, then
\[
f(W_{k+1})-f^\star \le \left(1-2\mu\alpha\gamma \left(1-\frac{L\gamma}{2\underline{\mathfrak R}} \right) \right)\bigl(f(W_k)-f^\star\bigr).
\]
Hence $f(W_k)-f^\star \le \rho^k\bigl(f(W_0)-f^\star\bigr)$, where $\rho \coloneqq 1-2\mu\alpha\gamma\left( 1-L\gamma/(2\underline{\mathfrak R})\right)\in(0,1)$. 
\end{theorem}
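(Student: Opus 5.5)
The plan is to combine the ratio-form descent estimate of \Cref{lem:ratio_descent} with the uniform lower bound on $\mathfrak R_{\calT}$, then use the alignment lower bound to pass from $\dotpF{G_k}{\calT(G_k)}$ to $\fronorm{G_k}^2$, and finally invoke the \PL inequality (\Cref{ass:PL}). This mirrors the proof of \Cref{thm:PL_spectral}. The only difference is that the constants $(c_1,c_2)$ are replaced by the pair $(\alpha,\underline{\mathfrak R})$.

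First, I treat the trivial case $G_k=0$ separately. Then $\calT(G_k)$ plays no role in the estimate (alternatively one just notes $f(W_k)=f^\star$ by \PL), so the claimed inequality holds because $f(W_{k+1})-f^\star\ge 0$ and the right-hand side is nonnegative. Strictly, if $\calT(0)\neq0$ the iterate might move; I will remark that \PL forces $f(W_k)=f^\star$, so the bound reads $f(W_{k+1})-f^\star\le 0$. This needs $\calT(0)=0$ or a convention, which I will state as a mild assumption, implicit in \eqref{eqn:general_update_scheme}.

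For $G_k\neq0$, \Cref{lem:ratio_descent} gives
\[
f(W_{k+1})\le f(W_k)-\gamma\Bigl(1-\tfrac{L\gamma}{2\mathfrak R_{\calT}(G_k)}\Bigr)\dotpF{G_k}{\calT(G_k)}.
\]
Since $\mathfrak R_{\calT}(G_k)\ge\underline{\mathfrak R}$, the factor $1-L\gamma/(2\mathfrak R_{\calT}(G_k))$ is at least $1-L\gamma/(2\underline{\mathfrak R})$, and this is positive under $\gamma<2\underline{\mathfrak R}/L$. Because $\dotpF{G_k}{\calT(G_k)}>0$ by \Cref{ass:positive_alignment}, I may replace the factor by this lower bound. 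I then apply $\dotpF{G_k}{\calT(G_k)}\ge\alpha\fronorm{G_k}^2$, which is legitimate since the coefficient in front is positive.

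Next I subtract $f^\star$ and use $\fronorm{G_k}^2\ge 2\mu(f(W_k)-f^\star)$, which yields the one-step contraction with the stated $\rho$. It remains to check $\rho\in(0,1)$. Positivity of $2\mu\alpha\gamma(1-L\gamma/(2\underline{\mathfrak R}))$ gives $\rho<1$.

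The main obstacle is $\rho>0$ (or at least $\rho\ge0$). I would argue it from the chain of inequalities already obtained: $0\le f(W_{k+1})-f^\star\le\rho(f(W_k)-f^\star)$ at any non-optimal point forces $\rho\ge0$. A cleaner bound comes from $\alpha\fronorm{G}^2\le\dotpF{G}{\calT(G)}\le\fronorm{G}\fronorm{\calT(G)}$ together with $\fronorm{\calT(G)}^2\le\dotpF{G}{\calT(G)}/\underline{\mathfrak R}$. These give $\alpha\underline{\mathfrak R}\le1$, and combined with $\mu\le L$ and maximizing over $\gamma$ this yields $2\mu\alpha\gamma(1-L\gamma/(2\underline{\mathfrak R}))\le\mu\alpha\underline{\mathfrak R}/L\le1$. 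Finally I iterate the recursion to obtain $f(W_k)-f^\star\le\rho^k(f(W_0)-f^\star)$.
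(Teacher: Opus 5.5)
Your proposal is correct and follows the paper's proof: \Cref{lem:ratio_descent}, then the uniform bound $\mathfrak R_{\calT}(G_k)\ge\underline{\mathfrak R}$, then the alignment bound $\dotpF{G_k}{\calT(G_k)}\ge\alpha\fronorm{G_k}^2$, then the \PL inequality and iteration. Your extra checks are sound and go beyond the paper, which skips the case $G_k=0$ (where $\calT(0)=0$ is genuinely needed, and follows e.g.\ from continuity at $0$ since $\fronorm{\calT(G)}\le\fronorm{G}/\underline{\mathfrak R}$) and justifies only $\rho<1$; your bound $2\mu\alpha\gamma(1-L\gamma/(2\underline{\mathfrak R}))\le\mu\alpha\underline{\mathfrak R}/L\le 1$ gives $\rho\ge0$, and since $\rho=0$ is actually attained (e.g.\ $f=\frac{L}{2}\fronorm{X}^2$, $\calT=\mathrm{id}$, $\gamma=1/L$), the accurate conclusion is $\rho\in[0,1)$ rather than $(0,1)$.
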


\begin{proof}
By \Cref{lem:ratio_descent}, $f(W_{k+1}) \le f(W_k) - \left(\gamma-\frac{L\gamma^2}{2\mathfrak R_{\calT}(G_k)}\right) \dotpF{G_k}{\calT(G_k)}$. 
Using $\mathfrak R_{\calT}(G)\ge \underline{\mathfrak R}$ and $\dotpF{G}{\calT(G)}\ge \alpha \fronorm{G}^2$, we obtain
\[
f(W_{k+1})
\le
f(W_k)
-
\alpha\gamma
\left(
1-\frac{L\gamma}{2\underline{\mathfrak R}}
\right)
\fronorm{G_k}^2.
\]
Applying the \PL inequality yields the desired inequality. Since $0<\gamma<2\underline{\mathfrak R}/L$, the factor $1-L\gamma/(2\underline{\mathfrak R})$ is positive, so $\rho\in(0,1)$. Iterating the recursion proves the claim.
\end{proof}

\paragraph{Specialization to the polar update.}
For the normalized polar update $\calT(G)=\polar(G)=UV^\top$ when $G=U\Sigma V^\top$, one has $\dotpF{G}{\polar(G)}=\nucnorm{G}$ and $\fronorm{\polar(G)}^2=\rank(G)$, and therefore $\mathfrak R_{\polar}(G) = \nucnorm{G}/\rank(G)$. This is the average nonzero singular value of $G$. In particular, the
descent condition becomes
\[
0 < \gamma_k < \frac{2\nucnorm{G_k}}{L\,\rank(G_k)}.
\]
Moreover, $\dotpF{G}{\polar(G)}=\nucnorm{G}\ge \fronorm{G}$, but in general one does not have a uniform lower bound of the form
\[
\dotpF{G}{\polar(G)}\ge \alpha \fronorm{G}^2
\]
without additional scale control. Thus, for normalized polar updates, the ratio $\mathfrak R_{\polar}(G)$ naturally governs the admissible learning rate, whereas stronger convergence rates require an additional lower bound relating $\nucnorm{G}$ to $\fronorm{G}^2$. Using the stable rank defined by $\srank(G)\coloneqq \fronorm{G}^2/\specnorm{G}^2$, 
one has
\[
\nucnorm{G} \ge \frac{\fronorm{G}^2}{\specnorm{G}} = \srank(G)\specnorm{G},
\]
and hence
\[
\mathfrak R_{\polar}(G) = \frac{\nucnorm{G}}{\rank(G)} \ge \frac{\srank(G)}{\rank(G)}\specnorm{G}.
\]
This lower bound shows explicitly how the descent margin depends on the
spectral spread of the gradient: flatter spectra, corresponding to larger
stable rank, lead to more favorable ratio bounds for the polar update.

\subsubsection{Specialization to \PolarGrad with Nuclear-Norm Scaling}
For \PolarGrad with nuclear-norm scaling \citep{lau2025polargrad}, the normalized polar direction is rescaled in such a way that both its alignment with the gradient and its squared Frobenius norm admit exact closed-form expressions. This removes the scale ambiguity present in fully normalized polar updates and leads to a particularly transparent descent analysis in terms of gradient rank and stable rank.

Given a gradient matrix $G=U\Diag(\sigma(G))V^\top$, define the update map 
\[
\calT_{\mathrm{PG}}(G) \coloneqq \nucnorm{G}\,\polar(G) = \nucnorm{G}\,UV^\top.
\]
The corresponding iteration is
\begin{equation}\label{eq:polargrad_nuc_iteration}
(\forall k\in\NN)\qquad
W_{k+1} = W_k-\gamma_k\nucnorm{G_k}\,\polar(G_k), \qquad G_k=\nabla f(W_k).
\end{equation}
In what follows, we write $r_k\coloneqq\rank(G_k)$. 

\begin{lemma}[Alignment and norm identities for \PolarGrad]
\label{lem:polargrad_nuc_identities}
For every $G\in\RR^{m\times n}$, we have $\dotpF{G}{\calT_{\mathrm{PG}}(G)} = \nucnorm{G}^2$ and $\fronorm{\calT_{\mathrm{PG}}(G)}^2 = \rank(G)\,\nucnorm{G}^2$. 
Consequently,
\[
\mathfrak R_{\calT_{\mathrm{PG}}}(G)
=
\frac{\dotpF{G}{\calT_{\mathrm{PG}}(G)}}{\fronorm{\calT_{\mathrm{PG}}(G)}^2}
=
\frac{1}{\rank(G)}.
\]
\end{lemma}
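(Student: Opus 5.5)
The plan is to compute both quantities directly from a compact singular value decomposition, using only the orthogonal invariance of the Frobenius inner product, exactly as in the proof of \Cref{lem:T_alignment}. The case $G=0$ is handled separately. With the convention $\polar(0)=0$, both identities read $0=0$. The ratio $\mathfrak R_{\calT_{\mathrm{PG}}}$ is only defined for $G\neq 0$, consistent with \Cref{def:spectral_adv_ratio}.

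For $G\neq 0$, let $r=\rank(G)$ and write the compact SVD $G=U_r\Sigma_rV_r^\top$. Here $U_r\in\RR^{m\times r}$ and $V_r\in\RR^{n\times r}$ have orthonormal columns, and $\Sigma_r=\Diag(\sigma_1,\dots,\sigma_r)$ with all $\sigma_i>0$. Then $\polar(G)=U_rV_r^\top$. I use the compact rather than the full SVD because the full SVD would put ones in the zero-singular-value slots and give the wrong norm. This is the only subtle point, and fixing $\polar$ as the partial isometry supported on the range of $G$ resolves it. It is also the convention implicitly used in the preceding paragraph, where $\fronorm{\polar(G)}^2=\rank(G)$.

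For the alignment, compute
\[
\dotpF{G}{\polar(G)}=\tr\bigl(V_r\Sigma_rU_r^\top U_rV_r^\top\bigr)=\tr(\Sigma_r V_r^\top V_r)=\tr(\Sigma_r)=\nucnorm{G}.
\]
Multiplying by the scalar $\nucnorm{G}$ gives $\dotpF{G}{\calT_{\mathrm{PG}}(G)}=\nucnorm{G}^2$.

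For the norm, compute
\[
\fronorm{U_rV_r^\top}^2=\tr(V_rU_r^\top U_rV_r^\top)=\tr(V_r^\top V_r)=\tr(I_r)=r.
\]
Scaling by $\nucnorm{G}$ then gives $\fronorm{\calT_{\mathrm{PG}}(G)}^2=\rank(G)\,\nucnorm{G}^2$.

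Dividing the alignment identity by the norm identity, and using $\nucnorm{G}>0$ for $G\neq 0$, yields $\mathfrak R_{\calT_{\mathrm{PG}}}(G)=1/\rank(G)$. I do not expect any real obstacle beyond the convention issue noted above.
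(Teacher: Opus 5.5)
Your proposal is correct and follows essentially the same route as the paper: expand in the SVD, use orthogonal invariance of the Frobenius inner product to get $\dotpF{G}{\polar(G)}=\nucnorm{G}$ and $\fronorm{\polar(G)}^2=\rank(G)$, then scale by $\nucnorm{G}$. Two conventions the paper's proof leaves implicit are made precise in your version: you insist on the compact SVD, so that $\polar(G)$ is the partial isometry on the range of $G$, and you treat $G=0$ separately.
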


\begin{proof}
Using orthogonal invariance of the Frobenius inner product,
\[
\dotpF{G}{\calT_{\mathrm{PG}}(G)}
=
\nucnorm{G}\,\dotpF{U\Diag(\sigma(G))V^\top}{UV^\top}
=
\nucnorm{G}\sum_{i=1}^r \sigma_i(G)
=
\nucnorm{G}^2.
\]
Also, $\fronorm{\calT_{\mathrm{PG}}(G)}^2 = \nucnorm{G}^2\,\fronorm{UV^\top}^2 = \nucnorm{G}^2\,\rank(G)$. 
The ratio identity follows immediately.
\end{proof}

\begin{theorem}[Descent lemma for nuclear-norm-scaled \PolarGrad]
\label{thm:polargrad_nuc_descent}
Suppose $f$ satisfies \Cref{ass:Lipschitz_smooth}. Then the iteration
\eqref{eq:polargrad_nuc_iteration} satisfies
\[
f(W_{k+1}) \le f(W_k) -\gamma_k \nucnorm{G_k}^2 +\frac{L\gamma_k^2}{2}r_k\nucnorm{G_k}^2.
\]
Equivalently,
\[
f(W_{k+1}) \le f(W_k) - \gamma_k\left(1-\frac{L\gamma_k}{2}r_k\right)\nucnorm{G_k}^2.
\]
In particular, if $\gamma_k<2/(Lr_k)$, then $f(W_{k+1})\le f(W_k)$.
\end{theorem}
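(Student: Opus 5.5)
The plan is to follow the template of the proof of \Cref{thm:descent_spectral}. The one new ingredient is the pair of exact identities from \Cref{lem:polargrad_nuc_identities}, which replace the generic alignment and norm constants $c_1,c_2$.

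First, I would invoke $L$-smoothness (\Cref{ass:Lipschitz_smooth}) through the standard quadratic upper bound
\[
f(W_{k+1}) \le f(W_k) + \dotpF{G_k}{W_{k+1}-W_k} + \frac{L}{2}\fronorm{W_{k+1}-W_k}^2 .
\]
I would then substitute the \PolarGrad step $W_{k+1}-W_k=-\gamma_k\calT_{\mathrm{PG}}(G_k)$ from \eqref{eq:polargrad_nuc_iteration}. This reproduces the generic estimate
\[
f(W_{k+1})\le f(W_k)-\gamma_k\dotpF{G_k}{\calT_{\mathrm{PG}}(G_k)}+\tfrac{L\gamma_k^2}{2}\fronorm{\calT_{\mathrm{PG}}(G_k)}^2 .
\]

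Second, I would apply \Cref{lem:polargrad_nuc_identities} with $G=G_k$. This gives $\dotpF{G_k}{\calT_{\mathrm{PG}}(G_k)}=\nucnorm{G_k}^2$ and $\fronorm{\calT_{\mathrm{PG}}(G_k)}^2=r_k\nucnorm{G_k}^2$. Substituting these yields the first displayed inequality, and factoring out $\gamma_k\nucnorm{G_k}^2$ gives the equivalent form. Since these are identities rather than inequalities, no constants are lost here.

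Finally, if $\gamma_k<2/(Lr_k)$, then the factor $1-L\gamma_k r_k/2$ is positive. Since $\nucnorm{G_k}^2\ge0$, this gives monotone descent.

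The only point needing care is the degenerate case $G_k=0$. There $r_k=0$, the polar factor is taken as zero (or any convention, since it is multiplied by $\nucnorm{G_k}=0$), and the step vanishes. Both sides of the inequality then equal $f(W_k)$, and the condition $\gamma_k<2/(Lr_k)$ should be read as vacuous. I would note this explicitly. Otherwise the argument is routine, and I expect no real obstacle.
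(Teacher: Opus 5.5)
Your proposal is correct and follows the paper's proof essentially verbatim: it applies the $L$-smoothness quadratic upper bound with $W_{k+1}-W_k=-\gamma_k\calT_{\mathrm{PG}}(G_k)$ and then substitutes the exact identities of \Cref{lem:polargrad_nuc_identities}. Two small additions go beyond the paper, which leaves both unstated: your explicit treatment of the degenerate case $G_k=0$, and your reliance on the standing assumption $\gamma_k>0$ for the monotone-descent conclusion.
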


\begin{proof}
By $L$-smoothness of $f$ and $W_{k+1}-W_k=-\gamma_k\calT_{\mathrm{PG}}(G_k)$, we obtain
\[
f(W_{k+1}) \le f(W_k) -\gamma_k\dotpF{G_k}{\calT_{\mathrm{PG}}(G_k)} +\frac{L\gamma_k^2}{2}\fronorm{\calT_{\mathrm{PG}}(G_k)}^2.
\]
Now apply \Cref{lem:polargrad_nuc_identities}.
\end{proof}

\begin{corollary}[Stable-rank improvement]
\label{cor:polargrad_nuc_srank}
Under the assumptions of \Cref{thm:polargrad_nuc_descent},
\[
f(W_{k+1})
\le
f(W_k)
-
\gamma_k\left(1-\frac{L\gamma_k}{2}r_k\right)\nucnorm{G_k}^2.
\]
Since $\nucnorm{G_k}^2 \ge \srank(G_k)\,\fronorm{G_k}^2$ with $\srank(G_k)\coloneqq \fronorm{G_k}^2/\specnorm{G_k}^2$, it follows that
\[
f(W_{k+1}) \le f(W_k) - \gamma_k\left(1-\frac{L\gamma_k}{2}r_k\right) \srank(G_k)\fronorm{G_k}^2.
\]
\end{corollary}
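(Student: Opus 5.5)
The first display of the corollary is exactly \Cref{thm:polargrad_nuc_descent}. So the only new work is the spectral inequality $\nucnorm{G}^2\ge \srank(G)\fronorm{G}^2$, plus a check that the coefficient multiplying $\nucnorm{G_k}^2$ is nonnegative, so that replacing $\nucnorm{G_k}^2$ by something smaller keeps the bound valid.

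\textbf{Step 1: the spectral inequality.} Fix $G\neq 0$ with singular values $\sigma_1\ge\dots\ge\sigma_r\ge 0$, so that $\specnorm{G}=\sigma_1>0$. I would bound each $\sigma_i^2$ by $\sigma_1\sigma_i$ and sum over $i$, which gives
\[
\fronorm{G}^2=\sum_i\sigma_i^2\le \sigma_1\sum_i\sigma_i=\specnorm{G}\,\nucnorm{G}.
\]
This is the same bound $\nucnorm{G}\ge \fronorm{G}^2/\specnorm{G}$ already used in the polar specialization above. Both sides are nonnegative, so squaring and dividing by $\specnorm{G}^2$ yields
\[
\nucnorm{G}^2\ge \frac{\fronorm{G}^4}{\specnorm{G}^2}=\srank(G)\,\fronorm{G}^2 .
\]

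\textbf{Step 2: combining with \Cref{thm:polargrad_nuc_descent}.} The descent bound has the form $f(W_{k+1})\le f(W_k)-c_k\nucnorm{G_k}^2$ with
\[
c_k=\gamma_k\Bigl(1-\tfrac{L\gamma_k}{2}r_k\Bigr).
\]
When $c_k\ge 0$, that is, when $\gamma_k\le 2/(Lr_k)$ as in the descent regime of \Cref{thm:polargrad_nuc_descent}, we get $-c_k\nucnorm{G_k}^2\le -c_k\,\srank(G_k)\fronorm{G_k}^2$ by Step 1, which is the claimed second display.

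\textbf{Main obstacle: the sign of $c_k$.} The second display as stated implicitly assumes $c_k\ge 0$. If $c_k<0$, the substitution in Step 2 goes the wrong way and the inequality can fail. I would therefore state explicitly that the second display is asserted under the step-size condition $\gamma_k\le 2/(Lr_k)$.

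\textbf{Degenerate case $G_k=0$.} Here $r_k=0$ and $\srank$ is undefined, so I would treat it separately. Then $W_{k+1}=W_k$, and with the convention $\srank(0)\fronorm{0}^2=0$ both sides agree trivially.
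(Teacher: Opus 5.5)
Your proposal is correct and follows the same route as the paper, which combines \Cref{thm:polargrad_nuc_descent} with the squared form of the bound $\nucnorm{G}\ge\fronorm{G}^2/\specnorm{G}$ stated earlier in the polar specialization. Your step-size caveat genuinely strengthens the paper's statement, which assumes only $L$-smoothness: for $f(W)=\frac{L}{2}\fronorm{W}^2$ the first display holds with equality, so the second display fails whenever $\gamma_k>2/(Lr_k)$ and the nonzero singular values of $G_k$ are not all equal, and the condition $\gamma_k\le 2/(Lr_k)$ therefore has to be added.
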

Thus, compared with vanilla gradient descent, nuclear-norm-scaled \PolarGrad admits a potentially stronger one-step decrease when the gradient has nontrivial spectral spread, measured by its stable rank. 

\begin{theorem}[Sublinear convergence to stationarity for nuclear-norm-scaled \PolarGrad]
\label{thm:polargrad_nuc_stationarity}
Suppose $f$ satisfies \Cref{ass:Lipschitz_smooth}. Consider the iteration \eqref{eq:polargrad_nuc_iteration} with constant learning rate $\gamma>0$. Assume there exists
$\bar r\in\NN^*$ such that $r_k\le \bar r$ for all $k\in\NN$, and that $\gamma\in(0, 2/(L\bar r))$. Then
\[
f(W_{k+1}) \le f(W_k) - \gamma\left(1-\frac{L\gamma}{2}\bar r\right)\nucnorm{G_k}^2. 
\]
Consequently, we have 
\[
\sum_{k=0}^{T-1}\fronorm{G_k}^2 \le \sum_{k=0}^{T-1}\nucnorm{G_k}^2 \le \frac{f(W_0)-f^\star}{\gamma\left(1-L\gamma\bar r/2\right)}
\quad\text{ and }\quad
\min_{0\le k<T}\,\fronorm{G_k}^2 \le \frac{f(W_0)-f^\star} {T\gamma\left(1-L\gamma\bar r/2\right)}.
\]
Hence the method converges to stationarity at the standard $\scrO(1/T)$ rate in
the minimum gradient norm.
\end{theorem}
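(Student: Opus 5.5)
The plan is to follow the template of \Cref{thm:spectral_stationarity}, replacing the uniform alignment constants by the exact identities of \Cref{lem:polargrad_nuc_identities}.

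\textbf{Step 1: per-step decrease with a uniform constant.} Start from \Cref{thm:polargrad_nuc_descent} with constant $\gamma_k=\gamma$:
\[
f(W_{k+1})\le f(W_k)-\gamma\left(1-\frac{L\gamma}{2}r_k\right)\nucnorm{G_k}^2 .
\]
Since $r_k\le\bar r$ and $\gamma>0$, we have $1-\frac{L\gamma}{2}r_k\ge 1-\frac{L\gamma}{2}\bar r$. The assumption $\gamma<2/(L\bar r)$ makes the right-hand side strictly positive. Multiplying this lower bound by $\gamma\nucnorm{G_k}^2\ge0$ gives the first displayed inequality. If $G_k=0$, then $r_k=0$ and the inequality holds trivially.

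\textbf{Step 2: telescoping.} Sum the inequality from $k=0$ to $T-1$ and use $f(W_T)\ge f^\star$. This gives
\[
\gamma\left(1-\frac{L\gamma\bar r}{2}\right)\sum_{k<T}\nucnorm{G_k}^2\le f(W_0)-f^\star .
\]
Dividing by the positive constant yields the bound on $\sum_k\nucnorm{G_k}^2$. Here I implicitly assume $f^\star>-\infty$, as in the earlier stationarity theorem.

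\textbf{Step 3: comparison of norms and the minimum bound.} The inequality $\fronorm{G}^2\le\nucnorm{G}^2$ follows from $\sum_i\sigma_i^2\le(\sum_i\sigma_i)^2$ for nonnegative singular values. Equivalently, it follows from \Cref{cor:polargrad_nuc_srank}, because $\srank(G)\ge1$. This gives the left inequality of the chain. For the minimum bound, use $\min_{k<T}\fronorm{G_k}^2\le\frac1T\sum_{k<T}\fronorm{G_k}^2$, which gives the $\scrO(1/T)$ rate.

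No step is a genuine obstacle. The only care needed is in Step 1: the rank-dependent factor must be replaced by its worst case $\bar r$ in the correct direction. This uses that larger $r_k$ gives a smaller coefficient, which is also why the step size must scale like $1/\bar r$.
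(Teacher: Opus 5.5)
Your proposal is correct and follows essentially the same route as the paper's proof. Both start from \Cref{thm:polargrad_nuc_descent}, replace $r_k$ by $\bar r$, telescope using $f(W_T)\ge f^\star$, and then pass from $\nucnorm{G_k}\ge\fronorm{G_k}$ and from the average to the minimum. Your extra remarks on the trivial case $G_k=0$ and the implicit need for $f^\star>-\infty$ are accurate details that the paper leaves unstated.
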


\begin{proof}
By \Cref{thm:polargrad_nuc_descent}, $f(W_{k+1}) \le f(W_k) - \gamma\left(1-\frac{L\gamma}{2}r_k\right)\nucnorm{G_k}^2$. Using $r_k\le \bar r$, we obtain $f(W_{k+1}) \le f(W_k) - \gamma\left(1-\frac{L\gamma}{2}\bar r\right)\nucnorm{G_k}^2$. Summing from $k=0$ to $T-1$ yields
\[
f(W_T) \le f(W_0) - \gamma\left(1-\frac{L\gamma}{2}\bar r\right) \sum_{k=0}^{T-1}\nucnorm{G_k}^2.
\]
Since $f(W_T)\ge f^\star$ and $\nucnorm{G_k}\ge \fronorm{G_k}$,
\[
\sum_{k=0}^{T-1}\fronorm{G_k}^2 \le \sum_{k=0}^{T-1}\nucnorm{G_k}^2 \le \frac{f(W_0)-f^\star}{\gamma\left(1-L\gamma\bar r/2\right)}.
\]
Finally, $\min_{0\le k<T}\,\fronorm{G_k}^2 \le \frac1T\sum_{k=0}^{T-1}\fronorm{G_k}^2$ proves the stated $\scrO(1/T)$ bound.
\end{proof}

\begin{theorem}[Linear convergence under \PL for nuclear-norm-scaled \PolarGrad]
\label{thm:polargrad_nuc_PL}
Suppose $f$ satisfies \Cref{ass:Lipschitz_smooth,ass:PL}. Assume there exists $\bar r\in\NN^*$ such that $r_k\le \bar r$ for all $k\in\NN$. Then any constant learning rate satisfying $\gamma\in(0, 2/(L\bar r))$ yields
\[
f(W_{k+1})-f^\star \le \left(1-2\mu\gamma\left(1-\frac{L\gamma}{2}\bar r\right)\right) \bigl(f(W_k)-f^\star\bigr).
\]
Hence $f(W_k)-f^\star \le \rho^k\bigl(f(W_0)-f^\star\bigr)$, where $\rho = 1-2\mu\gamma\left(1-L\gamma\bar r/2\right)\in(0,1)$.
\end{theorem}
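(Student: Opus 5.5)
The plan follows the template of \Cref{thm:PL_spectral}, with \Cref{thm:polargrad_nuc_stationarity} supplying the one-step bound. Fix $\gamma\in(0,2/(L\bar r))$.

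First, I will invoke \Cref{thm:polargrad_nuc_descent} together with $r_k\le\bar r$, exactly as in the first display of the proof of \Cref{thm:polargrad_nuc_stationarity}. This gives
\[
f(W_{k+1})\le f(W_k)-\gamma\Bigl(1-\tfrac{L\gamma}{2}\bar r\Bigr)\nucnorm{G_k}^2 .
\]
The coefficient $c_\gamma\coloneqq\gamma(1-L\gamma\bar r/2)$ is strictly positive precisely because $\gamma<2/(L\bar r)$.

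Next, I will replace the nuclear norm by the Frobenius norm. Since $\nucnorm{G}=\sum_i\sigma_i(G)\ge(\sum_i\sigma_i(G)^2)^{1/2}=\fronorm{G}$, we have $\nucnorm{G_k}^2\ge\fronorm{G_k}^2$. Because $c_\gamma>0$, this gives $f(W_{k+1})\le f(W_k)-c_\gamma\fronorm{G_k}^2$. I will then subtract $f^\star$ from both sides and apply \Cref{ass:PL}, namely $\fronorm{G_k}^2\ge 2\mu(f(W_k)-f^\star)$, to get
\[
f(W_{k+1})-f^\star\le(1-2\mu c_\gamma)\bigl(f(W_k)-f^\star\bigr),
\]
which is the claimed recursion. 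Iterating it yields $f(W_k)-f^\star\le\rho^k(f(W_0)-f^\star)$.

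The only step needing care is $\rho\in(0,1)$. The bound $\rho<1$ follows from $c_\gamma>0$. For $\rho>0$, I need $2\mu c_\gamma<1$. The function $\gamma\mapsto\gamma(1-L\gamma\bar r/2)$ is maximized at $\gamma=1/(L\bar r)$, where it equals $1/(2L\bar r)$. Hence $2\mu c_\gamma\le\mu/(L\bar r)$.

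\Cref{ass:PL} combined with $L$-smoothness gives the standard relation $\mu\le L$. To see this, apply the upper quadratic bound at the point $X-\nabla f(X)/L$, which gives $f^\star\le f(X)-\fronorm{\nabla f(X)}^2/(2L)$. Combining with PL yields $2\mu(f(X)-f^\star)\le 2L(f(X)-f^\star)$, so $\mu\le L$ at any non-minimizer.

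Thus $\mu/(L\bar r)\le 1/\bar r\le 1$. Equality can occur only in the degenerate case $\bar r=1$, $\mu=L$ and $\gamma=1/(L\bar r)$, where $\rho=0$ still gives convergence. So in general $\rho\in[0,1)$, with strict positivity whenever $\mu<L$ or $\bar r\ge 2$. I will note this edge case explicitly rather than leave it implicit.
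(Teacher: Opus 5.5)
Your argument is the same as the paper's: the descent bound of \Cref{thm:polargrad_nuc_descent} with $r_k\le\bar r$, then $\nucnorm{G_k}^2\ge\fronorm{G_k}^2$ and the \PL inequality, then iteration. Your extra check of $\rho>0$ via $\mu\le L$ goes beyond the paper, whose proof only justifies $\rho<1$. Your edge case is real: for $f(W)=\frac{L}{2}\fronorm{W}^2$ (so $\mu=L$) with rank-one $W_0$, $\bar r=1$ and $\gamma=1/L$, one step gives $W_1=0$ and $\rho=0$. So $[0,1)$ is the correct range whenever $f$ is non-constant; for constant $f$ the bound $\mu\le L$ cannot be inferred, but the recursion is then trivial.
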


\begin{proof}
By \Cref{thm:polargrad_nuc_descent}, and using $r_k\le \bar r$ and $\nucnorm{G_k}^2\ge \fronorm{G_k}^2$, we obtain
\[
f(W_{k+1}) \le f(W_k) -\gamma\left(1-\frac{L\gamma}{2}\bar r\right)\fronorm{G_k}^2.
\]
Applying the \PL inequality gives
\[
f(W_{k+1})-f^\star \le \left(1-2\mu\gamma\left(1-\frac{L\gamma}{2}\bar r\right)\right) \bigl(f(W_k)-f^\star\bigr).
\]
Since $0<\gamma<2/(L\bar r)$, the factor $1-L\gamma\bar r/2$ is positive, hence $\rho\in(0,1)$.
\end{proof}

\begin{remark}[Relaxing the global \PL condition]
The global \PL condition is used here only to obtain global linear convergence. Under smoothness alone, nuclear-norm-scaled \PolarGrad still enjoys monotonic descent and an $\scrO(1/T)$ convergence rate to stationarity in the minimum gradient norm. If the \PL inequality holds only locally in a neighborhood of a limit point, then the same descent argument yields eventual linear convergence once the iterates enter that neighborhood. More generally, one may replace the \PL condition by a Kurdyka--\L{}ojasiewicz (\KLo) inequality \citep{lojasiewicz1993geometrie,kurdyka1998gradients}, which provides a broader convergence framework and recovers linear or sublinear rates depending on the local \KLo exponent.
\end{remark}

\subsection{One-Sided Spectral Optimizers}
We now specialize the preceding convergence analysis to one-sided spectral optimizers. For one-sided spectral optimizers, the convergence analysis follows the same smoothness-based template as in the full spectral case. The only additional ingredients are alignment and norm-control conditions adapted to the relevant one-sided Gram operator. These yield standard descent, sublinear stationarity, and linear convergence under the \PL condition.

Recall that right-spectral updates take the form $\calT_{\sfR}(G)=G\,\Psi(G^\top G)$, while left-spectral updates take the form $\calT_{\sfL}(G)=\Phi(GG^\top)G$, where $\Psi$ and $\Phi$ are orthogonally equivariant spectral operators on the corresponding Gram matrices.

\subsubsection{Right-Spectral Optimizers}
Consider the iteration
\begin{equation}\label{eq:right_spectral_iteration}
(\forall k\in\NN)\qquad W_{k+1}=W_k-\gamma_k\calT_{\sfR}(G_k), \qquad G_k\coloneqq \nabla f(W_k), 
\end{equation}
with $\calT_{\sfR}(G)=G\,\Psi(G^\top G)$. 

\begin{assumption}[Right-spectral eigenvalue alignment and boundedness]
\label{ass:right_spectral_eig}
For every $G\in\RR^{m\times n}$, let $G^\top G = V \Diag(\lambda(G^\top G))V^\top$ and $\Psi(G^\top G) = V\Diag(\psi(\lambda(G^\top G)))V^\top$, 
where $\lambda(G^\top G)=(\lambda_1,\dots,\lambda_n)\in\Rp^n$. Assume there exist constants $0<c_{\sfR,1}\le c_{\sfR,2}<\infty$ such that for all $\lambda\in\Rp^n$, 
\[
\sumn \lambda_i\psi_i(\lambda)\ge c_{\sfR,1}\sumn \lambda_i \quad \text{and} \quad \sumn \lambda_i\psi_i(\lambda)^2\le c_{\sfR,2}\sumn \lambda_i.
\]
\end{assumption}

\begin{lemma}[Right-spectral alignment identities]
\label{lem:right_spectral_alignment}
Under \Cref{ass:right_spectral_eig}, for all $G\in\RR^{m\times n}$, $\dotpF{G}{\calT_{\sfR}(G)} = \sumn \lambda_i\psi_i(\lambda)$ and $\fronorm{\calT_{\sfR}(G)}^2 = \sumn \lambda_i\psi_i(\lambda)^2$, where $\lambda=\lambda(G^\top G)$. Consequently, $\dotpF{G}{\calT_{\sfR}(G)}\ge c_{\sfR,1}\fronorm{G}^2$ and $\fronorm{\calT_{\sfR}(G)}^2\le c_{\sfR,2}\fronorm{G}^2$. 
\end{lemma}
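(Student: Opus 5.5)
The plan is to compute both quantities directly in the eigenbasis of the right Gram matrix, mirroring the proof of \Cref{lem:T_alignment}.

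Fix $G\in\RR^{m\times n}$ and write $G^\top G=V\Diag(\lambda)V^\top$ with $V\in\OO^n$ and $\lambda=\lambda(G^\top G)\in\Rp^n$. By \Cref{ass:right_spectral_eig}, $\Psi(G^\top G)=V\Diag(\psi(\lambda))V^\top$, so $\Psi(G^\top G)$ is symmetric and commutes with $G^\top G$.

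For the alignment term, use $\dotpF{A}{B}=\tr(A^\top B)$ to write
\[
\dotpF{G}{\calT_{\sfR}(G)}=\tr\bigl(G^\top G\,\Psi(G^\top G)\bigr)=\tr\bigl(V\Diag(\lambda)\Diag(\psi(\lambda))V^\top\bigr)=\sumn\lambda_i\psi_i(\lambda).
\]
For the norm term,
\[
\fronorm{\calT_{\sfR}(G)}^2=\tr\bigl(\Psi(G^\top G)^\top G^\top G\,\Psi(G^\top G)\bigr).
\]
Here I will use the symmetry of $\Psi(G^\top G)$ and the fact that all three matrices are simultaneously diagonalized by $V$. This gives $\tr\bigl(V\Diag(\psi(\lambda))\Diag(\lambda)\Diag(\psi(\lambda))V^\top\bigr)=\sumn\lambda_i\psi_i(\lambda)^2$.

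For the consequences, note that $\fronorm{G}^2=\tr(G^\top G)=\sumn\lambda_i$. The two inequalities in \Cref{ass:right_spectral_eig}, evaluated at $\lambda=\lambda(G^\top G)$, then give $\dotpF{G}{\calT_{\sfR}(G)}\ge c_{\sfR,1}\fronorm{G}^2$ and $\fronorm{\calT_{\sfR}(G)}^2\le c_{\sfR,2}\fronorm{G}^2$ immediately.

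The only point needing care is the norm identity. It relies on $\Psi(G^\top G)$ being symmetric and sharing eigenvectors with $G^\top G$, which is exactly the spectral form assumed. When $\lambda$ has repeated eigenvalues, $V$ is not unique. Absolute symmetry of $\psi$, which makes $\Psi$ well defined, resolves this, and in any case both sides are computed in the same fixed $V$. There is no substantive obstacle beyond this.
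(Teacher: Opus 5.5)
Your proof is correct and takes essentially the same approach as the paper. Both work in the eigenbasis $V$ of $G^\top G$, read off $\sum_i\lambda_i\psi_i(\lambda)$ and $\sum_i\lambda_i\psi_i(\lambda)^2$, and conclude with $\fronorm{G}^2=\sum_i\lambda_i$; the only difference is that the paper uses the SVD $G=U\Diag(\sigma(G))V^\top$ to write $\calT_{\sfR}(G)=U\Diag(\sigma_i(G)\psi_i(\lambda))V^\top$, whereas your trace identities never need the left singular vectors and so also avoid the rectangular-diagonal bookkeeping when $m<n$.
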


\begin{proof}
Let $G=U\Diag(\sigma(G))V^\top$ be a singular value decomposition of $G$. Since $\lambda_i(G^\top G)=\sigma_i(G)^2$, we may write
$G^\top G = V\Diag(\lambda)V^\top$ and $\Psi(G^\top G)=V\Diag(\psi(\lambda))V^\top$.
Therefore,
\[
\calT_{\sfR}(G)
=
G\Psi(G^\top G)
=
U\Diag(\sigma(G))V^\top V\Diag(\psi(\lambda))V^\top
=
U\Diag(\sigma_i(G)\psi_i(\lambda))V^\top.
\]
Hence $\dotpF{G}{\calT_{\sfR}(G)} = \sumn \sigma_i(G)^2\psi_i(\lambda) = \sumn \lambda_i\psi_i(\lambda)$, and $\fronorm{\calT_{\sfR}(G)}^2 = \sumn \sigma_i(G)^2\psi_i(\lambda)^2 = \sumn \lambda_i\psi_i(\lambda)^2$. 
The final inequalities follow from \Cref{ass:right_spectral_eig} and $\sumn \lambda_i=\fronorm{G}^2$. 
\end{proof}

\begin{theorem}[Right-spectral descent and convergence]
\label{thm:right_spectral_conv}
Suppose \Cref{ass:Lipschitz_smooth,ass:right_spectral_eig} hold. Then the
iteration \eqref{eq:right_spectral_iteration} satisfies
\[
f(W_{k+1})
\le
f(W_k)
-
\left(
c_{\sfR,1}\gamma_k-\frac{Lc_{\sfR,2}}{2}\gamma_k^2
\right)\fronorm{G_k}^2.
\]
In particular, if $\gamma_k\in(0, 2c_{\sfR,1}/(Lc_{\sfR,2}))$, then $f(W_{k+1})\le f(W_k)$. 
If, in addition, $f$ satisfies \Cref{ass:PL} and $\gamma_k\equiv \gamma$ is constant with $\gamma\in(0, 2c_{\sfR,1}/(Lc_{\sfR,2}))$, then
\[
f(W_{k+1})-f^\star
\le
\left(
1-2\mu\gamma\left(c_{\sfR,1}-\frac{Lc_{\sfR,2}}{2}\gamma\right)
\right)
\bigl(f(W_k)-f^\star\bigr),
\]
and therefore $f(W_k)-f^\star \le \rho_{\sfR}^k\bigl(f(W_0)-f^\star\bigr)$, where $\rho_{\sfR} = 1-2\mu\gamma\left(c_{\sfR,1}-Lc_{\sfR,2}\gamma/2\right)\in(0,1)$. 
Moreover, without the \PL condition, if $\gamma_k\equiv\gamma$ is constant,
then
\[
\min_{0\le k<T}\,\fronorm{G_k}^2 \le \frac{f(W_0)-f^\star} {T\gamma\left(c_{\sfR,1}-Lc_{\sfR,2}\gamma/2\right)}.
\]
\end{theorem}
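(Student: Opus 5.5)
The plan is to follow exactly the template already used for full spectral optimizers in \Cref{thm:descent_spectral,thm:spectral_stationarity,thm:PL_spectral}. The only geometry-specific inputs are supplied by \Cref{lem:right_spectral_alignment}: the alignment bound $\dotpF{G}{\calT_{\sfR}(G)}\ge c_{\sfR,1}\fronorm{G}^2$ and the norm bound $\fronorm{\calT_{\sfR}(G)}^2\le c_{\sfR,2}\fronorm{G}^2$.

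\textbf{Step 1: the one-step descent inequality.} Start from the $L$-smoothness quadratic upper bound of \Cref{ass:Lipschitz_smooth}. Substitute $W_{k+1}-W_k=-\gamma_k\calT_{\sfR}(G_k)$ to get
\[
f(W_{k+1})\le f(W_k)-\gamma_k\dotpF{G_k}{\calT_{\sfR}(G_k)}+\frac{L\gamma_k^2}{2}\fronorm{\calT_{\sfR}(G_k)}^2 .
\]
Then insert the two bounds from \Cref{lem:right_spectral_alignment}. This gives the decrease factor $c_{\sfR,1}\gamma_k-\tfrac{L c_{\sfR,2}}{2}\gamma_k^2$, which is positive precisely when $\gamma_k<2c_{\sfR,1}/(Lc_{\sfR,2})$. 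Monotonicity follows.

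\textbf{Step 2: linear convergence under \PL.} Take a constant step $\gamma$ in that range and subtract $f^\star$. Bound $\fronorm{G_k}^2\ge 2\mu(f(W_k)-f^\star)$ via \Cref{ass:PL}; this is legitimate because the coefficient in front of $\fronorm{G_k}^2$ is positive. This yields the contraction factor $\rho_{\sfR}$. We have $\rho_{\sfR}<1$ because the coefficient is positive. For $\rho_{\sfR}\ge0$ (stated as $\rho_{\sfR}>0$), note that $f(W_{k+1})-f^\star\ge0$ forces $\rho_{\sfR}\ge0$ whenever $f(W_k)>f^\star$. Alternatively, argue directly: $2\mu\gamma(c_{\sfR,1}-Lc_{\sfR,2}\gamma/2)\le \mu c_{\sfR,1}^2/(Lc_{\sfR,2})\le \mu/L\cdot c_{\sfR,1}/c_{\sfR,2}\le1$. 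This uses $\mu\le L$ (standard for \PL plus smoothness) and $c_{\sfR,1}\le c_{\sfR,2}$, and the latter follows from Cauchy--Schwarz applied to the two assumed inequalities. Iterating the recursion gives the geometric rate.

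\textbf{Step 3: sublinear stationarity without \PL.} Sum the Step~1 inequality telescopically over $k=0,\dots,T-1$. Use $f(W_T)\ge f^\star$, then bound the minimum by the average, exactly as in the proof of \Cref{thm:spectral_stationarity}.

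The only mildly delicate point is the strict positivity claim $\rho_{\sfR}\in(0,1)$ in Step~2. The rest is routine once \Cref{lem:right_spectral_alignment} is invoked. I would handle this point with the $\mu\le L$ and $c_{\sfR,1}\le c_{\sfR,2}$ bound sketched above. If strictness at $0$ fails in a boundary case, I would note that the stated bound still holds with $\rho_{\sfR}\in[0,1)$.
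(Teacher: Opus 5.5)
Your proof is correct and is the paper's own argument: the smoothness inequality combined with \Cref{lem:right_spectral_alignment}, followed by either the \PL inequality or telescoping. The one slip is in your optional bound for $\rho_{\sfR}\ge 0$. The step $\mu c_{\sfR,1}^2/(Lc_{\sfR,2})\le (\mu/L)\,c_{\sfR,1}/c_{\sfR,2}$ requires $c_{\sfR,1}\le 1$, and it fails, e.g., for $\psi\equiv 5$ with $c_{\sfR,1}=5$, $c_{\sfR,2}=25$. Cauchy--Schwarz applied to the two assumed inequalities does not give $c_{\sfR,1}\le c_{\sfR,2}$ (which is assumed anyway). It gives $c_{\sfR,1}^2\le c_{\sfR,2}$, and that directly yields $\mu c_{\sfR,1}^2/(Lc_{\sfR,2})\le\mu/L\le 1$. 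Your other argument, via $f(W_{k+1})-f^\star\ge0$, is fine. You are also right that only $\rho_{\sfR}\in[0,1)$ holds in general: $\rho_{\sfR}=0$ for $f=\tfrac L2\fronorm{\cdot}^2$, $\psi\equiv1$, $\mu=L$, $\gamma=1/L$. The paper's proof passes over this point.
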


\begin{proof}
Combine the smoothness inequality with \Cref{lem:right_spectral_alignment}, as
in the proof of \Cref{thm:descent_spectral}, and then use either the \PL inequality or summation of the descent bound.
\end{proof}

\subsubsection{Left-Spectral Optimizers}
Consider the iteration
\begin{equation}\label{eq:left_spectral_iteration}
(\forall k\in\NN)\qquad W_{k+1}=W_k-\gamma_k\calT_{\sfL}(G_k), \qquad G_k\coloneqq \nabla f(W_k),
\end{equation}
with $\calT_{\sfL}(G)=\Phi(GG^\top)\,G$. 

\begin{assumption}[Left-spectral eigenvalue alignment and boundedness]
\label{ass:left_spectral_eig}
For every $G\in\RR^{m\times n}$, let $GG^\top = U \Diag(\lambda(GG^\top))U^\top$ and $\Phi(GG^\top)=U\Diag(\phi(\lambda(GG^\top)))U^\top$, where $\lambda(GG^\top)=(\lambda_1,\dots,\lambda_m)\in\Rp^m$. Assume there exist constants $0<c_{\sfL,1}\le c_{\sfL,2}<\infty$ such that for all
$\lambda\in\Rp^m$, 
\[\summ \lambda_i\phi_i(\lambda)\ge c_{\sfL,1}\summ \lambda_i \quad \text{and}\quad \summ \lambda_i\phi_i(\lambda)^2\le c_{\sfL,2}\summ \lambda_i. \]
\end{assumption}

\begin{lemma}[Left-spectral alignment identities]
\label{lem:left_spectral_alignment}
Under \Cref{ass:left_spectral_eig}, for all $G\in\RR^{m\times n}$, $\dotpF{G}{\calT_{\sfL}(G)} = \summ \lambda_i\,\phi_i(\lambda)$ and $\fronorm{\calT_{\sfL}(G)}^2 = \sum_{i=1}^m \lambda_i\,\phi_i(\lambda)^2$, where $\lambda=\lambda(GG^\top)$. Consequently, $\dotpF{G}{\calT_{\sfL}(G)}\ge c_{\sfL,1}\fronorm{G}^2$ and $\fronorm{\calT_{\sfL}(G)}^2\le c_{\sfL,2}\fronorm{G}^2$. 
\end{lemma}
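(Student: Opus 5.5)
The plan mirrors the proof of \Cref{lem:right_spectral_alignment}, with the roles of left and right Gram matrices exchanged. Take a full singular value decomposition $G=U\Sigma V^\top$ with $U\in\OO^m$, $V\in\OO^n$ and $\Sigma\in\RR^{m\times n}$ rectangular diagonal with entries $\sigma_1(G),\dots,\sigma_r(G)$, $r=\min\{m,n\}$. Then $GG^\top=U\,\Sigma\Sigma^\top U^\top$, so we may take the eigendecomposition in \Cref{ass:left_spectral_eig} with this $U$. The eigenvalues are $\lambda_i=\sigma_i(G)^2$ for $i\le r$ and $\lambda_i=0$ for $r<i\le m$. We then have $\Phi(GG^\top)=U\Diag(\phi(\lambda))U^\top$.

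Substituting gives
\[
\calT_{\sfL}(G)=U\Diag(\phi(\lambda))U^\top U\Sigma V^\top=U\,\Diag(\phi(\lambda))\Sigma\,V^\top .
\]
The middle factor is rectangular diagonal with entries $\phi_i(\lambda)\sigma_i(G)$. By orthogonal invariance of the Frobenius inner product,
\[
\dotpF{G}{\calT_{\sfL}(G)}=\sum_{i=1}^r \sigma_i(G)^2\phi_i(\lambda)=\summ \lambda_i\phi_i(\lambda).
\]
The terms with $i>r$ vanish because $\lambda_i=0$ there. Likewise,
\[
\fronorm{\calT_{\sfL}(G)}^2=\sum_{i=1}^r\sigma_i(G)^2\phi_i(\lambda)^2=\summ\lambda_i\phi_i(\lambda)^2 .
\]
Finally, $\summ\lambda_i=\tr(GG^\top)=\fronorm{G}^2$. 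Combining this with the two inequalities in \Cref{ass:left_spectral_eig} gives both bounds.

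The only delicate point is the choice of eigenbasis. When $GG^\top$ has repeated eigenvalues (in particular the zero eigenvalue when $m>n$), the eigenvector matrix is not unique. We must therefore justify using the left singular vectors $U$ of $G$ as the eigenbasis in the expression for $\Phi(GG^\top)$.

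This is handled by the assumption that $\Phi$ is an orthogonally equivariant spectral operator, with $\phi$ symmetric under permutations. Such a $\Phi(X)$ does not depend on which eigenbasis of $X$ is used. The reason is that $\phi$ assigns equal values to equal eigenvalues, so $\Phi(X)$ acts as a scalar on each eigenspace. Hence any eigenbasis, in particular $U$, yields the same matrix, and the computation above is valid.
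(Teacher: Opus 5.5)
Your proof is correct and follows essentially the same route as the paper: take an SVD $G=U\Sigma V^\top$, use $U$ as the eigenbasis of $GG^\top$ so that $\calT_{\sfL}(G)=U\Diag(\phi(\lambda))\Sigma V^\top$, read off both identities by orthogonal invariance of the Frobenius inner product, and conclude with $\summ\lambda_i=\fronorm{G}^2$. Two points that the paper's ``we may write'' passes over are made explicit in your version: you use a full SVD, padding with $\lambda_i=0$ for $i>r$, and you justify that the choice of eigenbasis is immaterial because a permutation-symmetric $\phi$ takes equal values on equal eigenvalues.
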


\begin{proof}
Let $G=U\Diag(\sigma(G))V^\top$ be a singular value decomposition of $G$. Since $\lambda_i(GG^\top)=\sigma_i(G)^2$, we may write $GG^\top = U\Diag(\lambda)U^\top$ and $\Phi(GG^\top)=U\Diag(\phi(\lambda))U^\top$. 
Therefore,
\[
\calT_{\sfL}(G)
=
\Phi(GG^\top)G
=
U\Diag(\phi(\lambda))U^\top U\Diag(\sigma(G))V^\top
=
U\Diag(\phi_i(\lambda)\sigma_i(G))V^\top.
\]
Hence $\dotpF{G}{\calT_{\sfL}(G)} = \summ \sigma_i(G)^2\phi_i(\lambda) = \summ \lambda_i\phi_i(\lambda)$, and $\fronorm{\calT_{\sfL}(G)}^2 = \summ \sigma_i(G)^2\phi_i(\lambda)^2 = \summ \lambda_i\phi_i(\lambda)^2$. The final inequalities follow from \Cref{ass:left_spectral_eig} and $\summ \lambda_i=\fronorm{G}^2$. 
\end{proof}

\begin{theorem}[Left-spectral descent and convergence]
\label{thm:left_spectral_conv}
Suppose \Cref{ass:Lipschitz_smooth,ass:left_spectral_eig} hold. Then the
iteration \eqref{eq:left_spectral_iteration} satisfies
\[
f(W_{k+1}) \le f(W_k) - \left( c_{\sfL,1}\gamma_k-\frac{Lc_{\sfL,2}}{2}\gamma_k^2 \right)\fronorm{G_k}^2.
\]
In particular, if $\gamma_k\in(0, 2c_{\sfL,1}/(Lc_{\sfL,2}))$, then $f(W_{k+1})\le f(W_k)$. 
If, in addition, $f$ satisfies \Cref{ass:PL} and $\gamma_k\equiv \gamma$ is constant with $\gamma\in(0, 2c_{\sfL,1}/(Lc_{\sfL,2}))$, then $f(W_{k+1})\le f(W_k)$ then \[
f(W_{k+1})-f^\star \le \left(1-2\mu\gamma\left(c_{\sfL,1}-\frac{Lc_{\sfL,2}}{2}\gamma\right)\right)\bigl(f(W_k)-f^\star\bigr), 
\]
and therefore $f(W_k)-f^\star \le \rho_{\sfL}^k\bigl(f(W_0)-f^\star\bigr)$, where $\rho_{\sfL} = 1-2\mu\gamma\left(c_{\sfL,1}-Lc_{\sfL,2}\gamma/2\right)\in(0,1)$. 
Moreover, without the \PL condition, if $\gamma_k\equiv\gamma$ is constant,
then
\[
\min_{0\le k<T}\,\fronorm{G_k}^2 \le \frac{f(W_0)-f^\star}{T\gamma\left(c_{\sfL,1}-Lc_{\sfL,2}\gamma/2\right)}.
\]
\end{theorem}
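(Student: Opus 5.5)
The plan is to mirror the argument given for \Cref{thm:right_spectral_conv}: the left-spectral case is the transpose of the right-spectral case. The whole argument runs through the generic smoothness descent inequality combined with \Cref{lem:left_spectral_alignment}.

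\textbf{Descent estimate.} By \Cref{ass:Lipschitz_smooth}, the quadratic upper bound holds for $f$. Substituting $W_{k+1}-W_k=-\gamma_k\calT_{\sfL}(G_k)$ gives
\[
f(W_{k+1})\le f(W_k)-\gamma_k\dotpF{G_k}{\calT_{\sfL}(G_k)}+\frac{L\gamma_k^2}{2}\fronorm{\calT_{\sfL}(G_k)}^2,
\]
exactly as in the proof of \Cref{thm:descent_spectral}. \Cref{lem:left_spectral_alignment} then supplies the two bounds
\[
\dotpF{G_k}{\calT_{\sfL}(G_k)}\ge c_{\sfL,1}\fronorm{G_k}^2,
\qquad
\fronorm{\calT_{\sfL}(G_k)}^2\le c_{\sfL,2}\fronorm{G_k}^2.
\]
Inserting them yields the stated one-step decrease with coefficient $c_{\sfL,1}\gamma_k-Lc_{\sfL,2}\gamma_k^2/2$. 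This coefficient is positive precisely when $\gamma_k\in(0,2c_{\sfL,1}/(Lc_{\sfL,2}))$, which gives monotonicity.

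\textbf{Linear rate under \PL.} Take a constant step $\gamma$ in the admissible range and subtract $f^\star$ from both sides. Then apply \Cref{ass:PL} in the form $\fronorm{G_k}^2\ge 2\mu(f(W_k)-f^\star)$, which turns the descent estimate into the contraction with factor $\rho_{\sfL}$. To see $\rho_{\sfL}\in(0,1)$, first note that positivity of $c_{\sfL,1}-Lc_{\sfL,2}\gamma/2$ gives $\rho_{\sfL}<1$. For $\rho_{\sfL}\ge0$, maximize over $\gamma$ as in \Cref{cor:best_stepsize} to get $\rho_{\sfL}\ge 1-\mu c_{\sfL,1}^2/(Lc_{\sfL,2})$. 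Since $\mu\le L$ under \PL plus smoothness and $c_{\sfL,1}^2\le c_{\sfL,2}$ by Cauchy--Schwarz, this lower bound is nonnegative. Iterating the contraction gives the geometric bound.

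\textbf{Sublinear rate without \PL.} Sum the constant-step descent inequality over $k=0,\dots,T-1$ and telescope. Bound $f(W_T)\ge f^\star$, then use that the minimum over $k$ is at most the average, as in \Cref{thm:spectral_stationarity}.

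\textbf{Main obstacle.} The only nonroutine point is the range check $\rho_{\sfL}\in(0,1)$, specifically the two inequalities $\mu\le L$ and $c_{\sfL,1}^2\le c_{\sfL,2}$. The first is standard. For the second, apply Cauchy--Schwarz to the two sums in \Cref{ass:left_spectral_eig} with weights $\lambda_i$:
\[
\Bigl(\summ\lambda_i\phi_i\Bigr)^2\le\Bigl(\summ\lambda_i\Bigr)\Bigl(\summ\lambda_i\phi_i^2\Bigr),
\]
which gives $c_{\sfL,1}^2\bigl(\summ\lambda_i\bigr)^2\le c_{\sfL,2}\bigl(\summ\lambda_i\bigr)^2$. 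If one prefers to avoid this, it suffices to assert $\rho_{\sfL}<1$ and note that the bound is trivial whenever $\rho_{\sfL}$ would be negative.
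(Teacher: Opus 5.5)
Your proposal is correct and follows the paper's route: the paper proves this theorem by pointing to the right-spectral case, namely the smoothness descent inequality combined with \Cref{lem:left_spectral_alignment}, followed by either the \PL inequality or telescoping. Your extra range check is more careful than the paper, which only justifies $\rho_{\sfL}<1$; it actually yields $\rho_{\sfL}\in[0,1)$, and the endpoint $0$ is attained (e.g.\ $f(X)=\frac{L}{2}\fronorm{X}^2$, $\Phi\equiv I$, $\mu=L$, $\gamma=1/L$ gives $\rho_{\sfL}=0$), so the open interval in the statement should be the half-open interval $[0,1)$.
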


\begin{proof}
Identical to the proof of \Cref{thm:right_spectral_conv}, replacing $\Psi(G^\top G)$ by $\Phi(GG^\top)$.
\end{proof}

For the canonical one-sided polar updates, the abstract $(c_1,c_2)$-based analysis is less natural, and is better replaced by the same ratio-style viewpoint used for normalized \PolarGrad. After nuclear-norm scaling, however, both one-sided variants recover the same closed-form alignment and norm identities as full nuclear-norm-scaled \PolarGrad.

\begin{theorem}[Convergence of nuclear-norm-scaled one-sided \PolarGrad]
\label{thm:one_sided_polargrad_nuc}
Consider either the right-sided update 
\[\calT_{\mathsf{PG},\sfR}(G) = \nucnorm{G}\,G(G^\top G)^{\sfrac{\dagger}{2}}\] 
or the left-sided update 
\[\calT_{\mathsf{PG},\sfL}(G) = \nucnorm{G}\,(GG^\top)^{\sfrac{\dagger}{2}}G. \]
Then for both updates, $\dotpF{G}{\calT(G)}=\nucnorm{G}^2$ and $\fronorm{\calT(G)}^2=\rank(G)\nucnorm{G}^2$. Hence the identities in \Cref{lem:polargrad_nuc_identities} remain valid for both one-sided nuclear-norm-scaled variants. Therefore, the descent, stationarity, and \PL linear convergence results of \Cref{thm:polargrad_nuc_descent,thm:polargrad_nuc_stationarity,thm:polargrad_nuc_PL} apply verbatim.
\end{theorem}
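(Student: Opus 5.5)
The plan is to reduce both one-sided updates to the full polar factor via the SVD. Then the identities of \Cref{lem:polargrad_nuc_identities} carry over unchanged, and the downstream theorems apply because they only use those identities.

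Write a compact SVD $G=U_r\Sigma_r V_r^\top$, where $r=\rank(G)$, $U_r\in\RR^{m\times r}$ and $V_r\in\RR^{n\times r}$ have orthonormal columns, and $\Sigma_r=\Diag(\sigma_1,\dots,\sigma_r)$ has positive diagonal. The case $G=0$ is trivial, since both sides of every identity vanish.

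For the right-sided map, note that $G^\top G=V_r\Sigma_r^2V_r^\top$. Its pseudo-inverse square root is therefore $(G^\top G)^{\sfrac{\dagger}{2}}=V_r\Sigma_r^{-1}V_r^\top$, since the Moore--Penrose pseudo-inverse of a PSD matrix inverts only its nonzero eigenvalues. It follows that
\[
G(G^\top G)^{\sfrac{\dagger}{2}}=U_r\Sigma_rV_r^\top V_r\Sigma_r^{-1}V_r^\top=U_rV_r^\top=\polar(G).
\]
Symmetrically, $GG^\top=U_r\Sigma_r^2U_r^\top$, so
\[
(GG^\top)^{\sfrac{\dagger}{2}}G=U_r\Sigma_r^{-1}U_r^\top U_r\Sigma_rV_r^\top=U_rV_r^\top.
\]
Hence $\calT_{\mathsf{PG},\sfR}(G)=\calT_{\mathsf{PG},\sfL}(G)=\nucnorm{G}\,U_rV_r^\top=\calT_{\mathrm{PG}}(G)$ for every $G$.

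This coincides with the earlier remark that left and right polar directions agree when both are well-defined. Here it holds even in the rank-deficient case because of the pseudo-inverse.

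Once the maps coincide, the identities follow as in \Cref{lem:polargrad_nuc_identities}:
\[
\dotpF{G}{\nucnorm{G}U_rV_r^\top}=\nucnorm{G}\tr(\Sigma_r)=\nucnorm{G}^2,
\]
and
\[
\fronorm{\nucnorm{G}U_rV_r^\top}^2=\nucnorm{G}^2\,\tr(V_rU_r^\top U_rV_r^\top)=r\nucnorm{G}^2.
\]
The iterations generated by the one-sided maps are then literally the iteration \eqref{eq:polargrad_nuc_iteration}. The proofs of \Cref{thm:polargrad_nuc_descent,thm:polargrad_nuc_stationarity,thm:polargrad_nuc_PL} invoke only $L$-smoothness, the \PL inequality, and these two identities, so they apply verbatim.

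The only delicate step is justifying $(G^\top G)^{\sfrac{\dagger}{2}}=V_r\Sigma_r^{-1}V_r^\top$. I would fix the convention that $X^{\sfrac{\dagger}{2}}$ denotes the pseudo-inverse of the PSD square root; equivalently, it is the spectral function $t\mapsto t^{-1/2}$ on $t>0$ and $0$ on $t=0$. I would then check that this annihilates exactly the null-space directions of $G$. Those directions are orthogonal to the range of $V_r$, so the zero eigenvalues contribute nothing to the product with $G$.
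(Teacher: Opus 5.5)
Your proposal is correct and follows essentially the same route as the paper. Both arguments show via the SVD that $G(G^\top G)^{\sfrac{\dagger}{2}}$ and $(GG^\top)^{\sfrac{\dagger}{2}}G$ equal $\polar(G)$, so both one-sided maps coincide with $\calT_{\mathrm{PG}}$, and the identities and the downstream theorems carry over directly. Your use of a compact SVD, with the pseudo-inverse acting only on the nonzero singular values, is a slightly cleaner treatment of the rank-deficient case, which the paper's full-SVD write-up passes over.
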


\begin{proof}
We first consider the right-sided update $\calT_{\mathsf{PG},\sfR}(G)$. Let $G=U\Diag(\sigma(G))V^\top$ be a singular value decomposition of $G$. Then
\[
G(G^\top G)^{\sfrac{\dagger}{2}} = U\Diag(\sigma(G))V^\top \left(V\Diag(\sigma(G)^2)V^\top \right)^{\negthickspace\sfrac{\dagger}{2}} = UV^\top = \polar(G),
\]
so $\calT_{\mathsf{PG},\sfR}(G) = \nucnorm{G}\,\polar(G)$. 
Therefore, $\dotpF{G}{\calT_{\mathsf{PG},\sfR}(G)} = \nucnorm{G}\,\dotpF{G}{\polar(G)} = \nucnorm{G}^2$, and $\fronorm{\calT_{\mathsf{PG},\sfR}(G)}^2 = \nucnorm{G}^2\,\fronorm{\polar(G)}^2 = \rank(G)\nucnorm{G}^2$. The left-sided update follows similarly. The final claim follows immediately by invoking
\Cref{thm:polargrad_nuc_descent,thm:polargrad_nuc_stationarity,thm:polargrad_nuc_PL}.
\end{proof}

\subsection{Row-Norm-Based Optimizers}
We next study row-norm-based optimizers, whose update maps act locally on each row of the gradient matrix. Such methods are especially natural for parameter matrices whose row axis carries the relevant structural symmetry, as in embeddings, LM heads, and \MoE routers.

Consider the iteration
\begin{equation}\label{eq:row_iteration}
(\forall k\in\NN)\qquad W_{k+1}=W_k-\gamma_k\calT_{\row}(G_k), \qquad G_k\coloneqq \nabla f(W_k),
\end{equation}
where $\calT_{\row}(G) = D_\eta(G)\,G$ and $D_\eta(G) \coloneqq \Diag(\eta(\|G_{1:}\|_2),\dots,\eta(\|G_{v:}\|_2))$, for some scalar function $\eta\colon\Rp\to\RR$.

\begin{assumption}[Uniform row-scaling bounds]
\label{ass:row_eta_bounds}
There exist constants $0<\underline\eta\le \overline\eta<\infty$ such that $\underline\eta\le \eta(t)\le \overline\eta$ for all $ t\ge 0$. 
\end{assumption}

\begin{lemma}[Alignment and norm bounds for row-norm updates]
\label{lem:row_alignment}
Under \Cref{ass:row_eta_bounds}, for all $G\in\RR^{v\times d}$,
\[
\dotpF{G}{\calT_{\row}(G)} = \sum_{i=1}^v \eta(\|G_{i:}\|_2)\,\|G_{i:}\|_2^2 \ge \underline\eta\,\fronorm{G}^2, \]
and
\[
\fronorm{\calT_{\row}(G)}^2 = \sum_{i=1}^v \eta(\|G_{i:}\|_2)^2\,\|G_{i:}\|_2^2 \le \overline\eta^2\,\fronorm{G}^2.
\]
\end{lemma}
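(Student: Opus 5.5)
The plan is a direct row-by-row computation. Everything follows from the row decomposition of the Frobenius inner product and norm. First, I will write $\calT_{\row}(G)=D_\eta(G)G$. Since $D_\eta(G)$ is diagonal, the $i$-th row of $\calT_{\row}(G)$ is $\eta(\|G_{i:}\|_2)\,G_{i:}$.

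For the alignment identity, I will expand
\[
\dotpF{G}{\calT_{\row}(G)}=\sum_{i=1}^v \dotp{G_{i:}}{\eta(\|G_{i:}\|_2)G_{i:}}=\sum_{i=1}^v \eta(\|G_{i:}\|_2)\,\|G_{i:}\|_2^2 .
\]
This uses only bilinearity of the Euclidean inner product on each row. The norm identity is obtained the same way:
\[
\fronorm{\calT_{\row}(G)}^2=\sum_{i=1}^v \|\eta(\|G_{i:}\|_2)G_{i:}\|_2^2=\sum_{i=1}^v \eta(\|G_{i:}\|_2)^2\|G_{i:}\|_2^2 .
\]

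For the inequalities, I will use \Cref{ass:row_eta_bounds}. Each row weight satisfies $\eta(\|G_{i:}\|_2)\ge\underline\eta>0$. The same assumption gives $0<\eta(\|G_{i:}\|_2)\le\overline\eta$, so $\eta(\|G_{i:}\|_2)^2\le\overline\eta^2$; positivity is what makes squaring the upper bound legitimate. Summing against the nonnegative weights $\|G_{i:}\|_2^2$ then gives the lower bound $\underline\eta\sum_i\|G_{i:}\|_2^2$ and the upper bound $\overline\eta^2\sum_i\|G_{i:}\|_2^2$. Finally, $\sum_i\|G_{i:}\|_2^2=\fronorm{G}^2$ yields both claimed bounds.

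There is no real obstacle here. The only point needing care is the positivity of $\eta$, which is needed when squaring the upper bound, and \Cref{ass:row_eta_bounds} supplies it. Rows with $G_{i:}=0$ contribute zero to every sum, so they cause no issue.
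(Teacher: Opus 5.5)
Your proposal is correct and follows the paper's proof essentially step for step: you expand the Frobenius inner product and norm row by row using the diagonal structure of $D_\eta(G)$, apply the bounds $\underline\eta\le\eta\le\overline\eta$, and finish with $\sum_i\|G_{i:}\|_2^2=\fronorm{G}^2$. Your remark that positivity of $\eta$ is what justifies squaring the upper bound makes explicit a point the paper uses without stating.
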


\begin{proof}
By definition, we have 
\[\dotpF{G}{\calT_{\row}(G)} = \sum_{i=1}^v \dotpF{G_{i:}}{\eta(\|G_{i:}\|_2)G_{i:}} = \sum_{i=1}^v \eta(\|G_{i:}\|_2)\,\|G_{i:}\|_2^2. \]
Using $\eta(\|G_{i:}\|_2)\ge \underline\eta$, we obtain $\dotpF{G}{\calT_{\row}(G)} \ge \underline\eta\sum_{i=1}^v \|G_{i:}\|_2^2 = \underline\eta\,\fronorm{G}^2$. 
Similarly, we also have 
\[\fronorm{\calT_{\row}(G)}^2 = \sum_{i=1}^v \eta(\|G_{i:}\|_2)^2\,\|G_{i:}\|_2^2 \le \overline\eta^2\sum_{i=1}^v \|G_{i:}\|_2^2 = \overline\eta^2\,\fronorm{G}^2. \]
\end{proof}

\begin{theorem}[One-step descent for row-norm-based optimizers]
\label{thm:row_descent}
Suppose \Cref{ass:Lipschitz_smooth,ass:row_eta_bounds} hold. Then the
iteration \eqref{eq:row_iteration} satisfies
\[
f(W_{k+1})
\le
f(W_k)
-
\left(
\underline\eta\,\gamma_k-\frac{L\overline\eta^2}{2}\gamma_k^2
\right)\fronorm{G_k}^2.
\]
In particular, if $\gamma_k \in (0,2\underline\eta/(L\overline\eta^2))$, then $f(W_{k+1})\le f(W_k)$.
\end{theorem}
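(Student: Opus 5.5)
The plan follows the same template as \Cref{thm:descent_spectral}, with \Cref{lem:row_alignment} supplying the alignment and norm constants in place of \Cref{lem:T_alignment}.

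First, we apply \Cref{ass:Lipschitz_smooth} in its standard quadratic upper bound form:
\[
f(W_{k+1}) \le f(W_k) + \dotpF{\nabla f(W_k)}{W_{k+1}-W_k} + \frac{L}{2}\fronorm{W_{k+1}-W_k}^2 .
\]
Substituting $W_{k+1}-W_k=-\gamma_k\calT_{\row}(G_k)$ from \eqref{eq:row_iteration} gives
\[
f(W_{k+1}) \le f(W_k) -\gamma_k\dotpF{G_k}{\calT_{\row}(G_k)} +\frac{L\gamma_k^2}{2}\fronorm{\calT_{\row}(G_k)}^2 .
\]

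Second, we invoke \Cref{lem:row_alignment}, which holds under \Cref{ass:row_eta_bounds}, to control the two terms. The alignment term is bounded below by $\underline\eta\,\fronorm{G_k}^2$, and the squared update norm is bounded above by $\overline\eta^2\fronorm{G_k}^2$. Because $\gamma_k>0$ and $L>0$, both substitutions move the right-hand side in the correct direction. Factoring out $\fronorm{G_k}^2$ yields the claimed bound with coefficient $\underline\eta\,\gamma_k-\frac{L\overline\eta^2}{2}\gamma_k^2$.

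Finally, for monotonicity, the coefficient factors as $\gamma_k(\underline\eta-L\overline\eta^2\gamma_k/2)$. This is nonnegative, indeed positive, exactly when $0<\gamma_k<2\underline\eta/(L\overline\eta^2)$, so in that range $f(W_{k+1})\le f(W_k)$.

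No step is a real obstacle. The only point to check is that the lower bound $\underline\eta>0$ from \Cref{ass:row_eta_bounds} is what makes the alignment inequality in \Cref{lem:row_alignment} hold uniformly, including at rows with $\|G_{i:}\|_2=0$. Those rows contribute zero to both sums, so the bounds remain valid there.
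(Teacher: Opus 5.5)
Your proposal is correct and matches the paper's proof: both apply the $L$-smoothness quadratic upper bound with $W_{k+1}-W_k=-\gamma_k\calT_{\row}(G_k)$, then use \Cref{lem:row_alignment} to bound the alignment term below by $\underline\eta\,\fronorm{G_k}^2$ and the squared update norm above by $\overline\eta^2\fronorm{G_k}^2$. The monotonicity claim then follows from the sign of the factored coefficient, exactly as you describe.
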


\begin{proof}
By $L$-smoothness of $f$ and $W_{k+1}-W_k=-\gamma_k\calT_{\row}(G_k)$, we obtain
\[
f(W_{k+1})
\le
f(W_k)
-\gamma_k\dotpF{G_k}{\calT_{\row}(G_k)}
+\frac{L\gamma_k^2}{2}\fronorm{\calT_{\row}(G_k)}^2.
\]
Apply \Cref{lem:row_alignment}.
\end{proof}

\begin{theorem}[Sublinear convergence to stationarity]
\label{thm:row_stationarity}
Suppose \Cref{ass:Lipschitz_smooth,ass:row_eta_bounds} hold, and let $\gamma>0$ be constant with $\gamma \in (0,2\underline\eta/(L\overline\eta^2))$. Then
\[
\sum_{k=0}^{T-1}\fronorm{G_k}^2 \le \frac{f(W_0)-f^\star}{\gamma\left(\underline\eta-L\overline\eta^2\gamma/2\right)},
\quad\text{ and therefore }\quad 
\min_{0\le k<T}\,\fronorm{G_k}^2 \le \frac{f(W_0)-f^\star} {T\gamma\left(\underline\eta-L\overline\eta^2\gamma/2\right)}.
\]
\end{theorem}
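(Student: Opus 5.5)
The plan follows the same telescoping template as \Cref{thm:spectral_stationarity}, with the row-norm descent estimate of \Cref{thm:row_descent} in place of \Cref{thm:descent_spectral}. Set $\kappa\coloneqq \gamma\left(\underline\eta-L\overline\eta^2\gamma/2\right)$ and first check that $\kappa>0$. The hypothesis $\gamma<2\underline\eta/(L\overline\eta^2)$ gives $L\overline\eta^2\gamma/2<\underline\eta$, and \Cref{ass:row_eta_bounds} gives $\underline\eta>0$, so $\kappa$ is strictly positive.

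With a constant learning rate $\gamma_k\equiv\gamma$, \Cref{thm:row_descent} gives, for every $k$,
\[
f(W_{k+1})\le f(W_k)-\kappa\fronorm{G_k}^2 .
\]
Summing this inequality from $k=0$ to $T-1$ makes the left-hand side telescope, yielding
\[
f(W_T)\le f(W_0)-\kappa\sum_{k=0}^{T-1}\fronorm{G_k}^2 .
\]
Since $f^\star=\inf f$, we have $f(W_T)\ge f^\star$. Rearranging and dividing by $\kappa>0$ gives the first claimed bound. Here we assume, as is implicit throughout the section, that $f^\star>-\infty$; otherwise the right-hand side is $+\infty$ and the bound holds trivially.

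For the second claim, the minimum of finitely many nonnegative numbers is at most their average:
\[
\min_{0\le k<T}\fronorm{G_k}^2\le\frac1T\sum_{k=0}^{T-1}\fronorm{G_k}^2 .
\]
Combining this with the first bound gives the stated $\scrO(1/T)$ estimate.

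None of these steps is a genuine obstacle. The only point requiring care is that the upper bound uses $\overline\eta^2$ and not $\overline\eta$, because the Frobenius-norm bound in \Cref{lem:row_alignment} involves the square of the scaling factor.
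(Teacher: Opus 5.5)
Your proposal is correct and matches the paper's proof, which sums the descent inequality of \Cref{thm:row_descent} over $k=0,\dots,T-1$, uses $f(W_T)\ge f^\star$, and bounds the minimum by the average. Your explicit check that $\gamma(\underline\eta-L\overline\eta^2\gamma/2)>0$ is a detail the paper leaves implicit.
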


\begin{proof}
Sum the descent inequality in \Cref{thm:row_descent}.
\end{proof}

\begin{theorem}[Linear convergence under the \PL condition]
\label{thm:row_PL}
Suppose \Cref{ass:Lipschitz_smooth,ass:PL,ass:row_eta_bounds} hold, and let $\gamma>0$ be constant with $\gamma \in (0,2\underline\eta/(L\overline\eta^2))$.
Then
\[
f(W_{k+1})-f^\star
\le
\left(
1-2\mu\gamma\left(\underline\eta-\frac{L\overline\eta^2}{2}\gamma\right)
\right)
\bigl(f(W_k)-f^\star\bigr).
\]
Hence $f(W_k)-f^\star \le \rho_{\row}^k\bigl(f(W_0)-f^\star\bigr)$, where $\rho_{\row} = 1-2\mu\gamma\left(\underline\eta-L\overline\eta^2\gamma/2\right)\in(0,1)$. 
\end{theorem}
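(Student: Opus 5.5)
The plan follows the template already used for \Cref{thm:PL_spectral} and \Cref{thm:right_spectral_conv}. The argument needs only the one-step descent inequality of \Cref{thm:row_descent} and the \PL inequality of \Cref{ass:PL}, so no new geometric input is required.

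First, fix the constant learning rate $\gamma\in(0,2\underline\eta/(L\overline\eta^2))$. \Cref{thm:row_descent}, which itself comes from \Cref{lem:row_alignment}, gives
\[
f(W_{k+1})\le f(W_k)-\gamma\Bigl(\underline\eta-\tfrac{L\overline\eta^2}{2}\gamma\Bigr)\fronorm{G_k}^2 .
\]
Write $a\coloneqq\gamma(\underline\eta-L\overline\eta^2\gamma/2)$. The step-size restriction makes $a>0$. Subtract $f^\star$ from both sides. Then lower bound $\fronorm{G_k}^2\ge 2\mu(f(W_k)-f^\star)$ using \Cref{ass:PL}; this is valid because the coefficient $-a$ in front of the gradient term is nonpositive. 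The result is $f(W_{k+1})-f^\star\le(1-2\mu a)(f(W_k)-f^\star)$, which is the claimed one-step contraction with $\rho_{\row}=1-2\mu a$. Iterating by induction on $k$ yields $f(W_k)-f^\star\le\rho_{\row}^k(f(W_0)-f^\star)$.

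The only point requiring care is $\rho_{\row}\in(0,1)$. The bound $\rho_{\row}<1$ is immediate from $a>0$ and $\mu>0$. For $\rho_{\row}>0$, i.e.\ $2\mu a<1$, I would combine smoothness with \PL. The standard consequence of $L$-smoothness, $\fronorm{\nabla f(X)}^2\le 2L(f(X)-f^\star)$, together with \PL forces $\mu\le L$. Moreover, $a$ is maximized at $\gamma=\underline\eta/(L\overline\eta^2)$, which gives $a\le \underline\eta^2/(2L\overline\eta^2)\le 1/(2L)$ because $\underline\eta\le\overline\eta$. Hence $2\mu a\le \mu/L\le 1$. In the degenerate case of equality one still has $\rho_{\row}\ge0$, and the stated recursion holds. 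I expect this positivity check to be the main (mild) obstacle; it mirrors the corresponding claim in \Cref{thm:PL_spectral}, and the remainder is routine.
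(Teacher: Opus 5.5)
Your proposal is correct and is essentially the paper's proof, which simply combines \Cref{thm:row_descent} with the \PL inequality and iterates. Your extra check that $\rho_{\row}\ge 0$ (via $\mu\le L$, which holds whenever $f$ is nonconstant) is something the paper omits, yet it is exactly what the induction step needs. You are also right that strict positivity can fail: for $f(X)=\frac{L}{2}\fronorm{X}^2$, $\eta\equiv 1$ and $\gamma=1/L$ one has $\mu=L$ and $\rho_{\row}=0$, so the conclusion should read $\rho_{\row}\in[0,1)$.
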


\begin{proof}
Combine \Cref{thm:row_descent} with the \PL inequality.
\end{proof}

\subsubsection{Specialization to Smoothed Row Normalization}
A useful smoothed variant of row normalization is obtained by taking $\eta(t)=1/(t+\varepsilon)$ for some $\varepsilon>0$. 
The corresponding update map is
\[
\calT_{\row,\varepsilon}(G) = \Diag\left( \frac{1}{\|G_{1:}\|_2+\varepsilon}, \dots, \frac{1}{\|G_{v:}\|_2+\varepsilon} \right)G.
\]
Unlike the fully normalized choice $\eta(t)=1/t$, this smoothed row-norm-based map remains bounded at zero and therefore fits naturally into the preceding  bounded-scaling framework.

\begin{assumption}[Uniform row-norm upper bound]
\label{ass:row_norm_upper_bound}
There exists $M>0$ such that $\|G_{k,i:}\|_2\le M$ for all $k\in\NN$, $i\in\set{v}\coloneqq\{1,\ldots, v\}$.
\end{assumption}

\begin{corollary}[Descent and convergence for smoothed row normalization]
\label{cor:row_eps}
Suppose \Cref{ass:Lipschitz_smooth,ass:row_norm_upper_bound} hold, and let $\eta(t)=1/(t+\varepsilon)$ for some $\varepsilon>0$. 
Then $1/(M+\varepsilon)\le \eta(\|G_{k,i:}\|_2)\le 1/\varepsilon$ for all $k\in\NN$ and $i\in\set{v}$. Hence \Cref{ass:row_eta_bounds} holds with $\underline\eta=1/(M+\varepsilon)$ and $\overline\eta=1/\varepsilon$. Consequently, the descent, stationarity, and \PL linear-convergence results of \Cref{thm:row_descent,thm:row_stationarity,thm:row_PL} apply directly. In particular, any constant learning rate satisfying $\gamma\in(0, 2\varepsilon^2/(L(M+\varepsilon)))$ guarantees monotonic descent, and under \Cref{ass:PL} one obtains linear convergence with contraction factor
\[
\rho_{\row,\varepsilon} = 1-2\mu\gamma\left( \frac{1}{M+\varepsilon}-\frac{L\gamma}{2\varepsilon^2} \right).
\]
\end{corollary}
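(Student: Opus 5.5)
The plan is to reduce \Cref{cor:row_eps} to the bounded-scaling framework of \Cref{thm:row_descent,thm:row_stationarity,thm:row_PL}. The only new ingredient is to verify the row-scaling bounds along the iterates. The one subtlety is that \Cref{ass:row_eta_bounds} is stated for all $t\ge 0$. For $\eta(t)=1/(t+\varepsilon)$, the upper bound $1/\varepsilon$ holds globally, but the lower bound $1/(M+\varepsilon)$ only holds on $[0,M]$.

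First, I will observe that $\eta$ is positive and decreasing on $\Rp$. Under \Cref{ass:row_norm_upper_bound}, each row norm satisfies $\|G_{k,i:}\|_2\in[0,M]$, which gives
\[
\frac{1}{M+\varepsilon}\le \eta(\|G_{k,i:}\|_2)\le \frac{1}{\varepsilon}
\]
for all $k\in\NN$ and $i\in\set{v}$.

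Second, I will note that the proofs of \Cref{lem:row_alignment} and \Cref{thm:row_descent} use the bounds on $\eta$ only at the row norms of the current gradient $G_k$. Hence the descent inequality
\[
f(W_{k+1})\le f(W_k)-\left(\underline\eta\gamma_k-\frac{L\overline\eta^2}{2}\gamma_k^2\right)\fronorm{G_k}^2
\]
holds verbatim with $\underline\eta=1/(M+\varepsilon)$ and $\overline\eta=1/\varepsilon$. Alternatively, one can replace $\eta$ by the clipped function $\tilde\eta(t)=\eta(\min\{t,M\})$. This function satisfies \Cref{ass:row_eta_bounds} globally and agrees with $\eta$ along the iterates, so the iteration is unchanged. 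I would take this clipping route to make the appeal to the earlier theorems literal.

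Third, I will substitute these constants. The descent condition $\gamma<2\underline\eta/(L\overline\eta^2)$ becomes
\[
\gamma<\frac{2\varepsilon^2}{L(M+\varepsilon)}.
\]
The contraction factor of \Cref{thm:row_PL} becomes
\[
\rho_{\row,\varepsilon}=1-2\mu\gamma\left(\frac{1}{M+\varepsilon}-\frac{L\gamma}{2\varepsilon^2}\right).
\]
The stationarity bound follows in the same way from \Cref{thm:row_stationarity}.

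The main obstacle is the localization issue just described, not any computation. The corollary presumes \Cref{ass:row_norm_upper_bound} for the iterates generated by the method itself, so there is no circularity: one need not prove boundedness, only use it. I expect the remaining steps to be routine substitutions.
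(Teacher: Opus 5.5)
Your proposal is correct and follows the paper's proof. The paper likewise uses monotonicity of $t\mapsto 1/(t+\varepsilon)$ to obtain $1/(M+\varepsilon)\le\eta(\|G_{k,i:}\|_2)\le 1/\varepsilon$, then substitutes $\underline\eta=1/(M+\varepsilon)$ and $\overline\eta=1/\varepsilon$ into \Cref{thm:row_descent,thm:row_stationarity,thm:row_PL}. Your localization step, using the clipped $\tilde\eta(t)=\eta(\min\{t,M\})$ (which leaves the iterates unchanged by induction), is a genuine tightening: the paper simply asserts \Cref{ass:row_eta_bounds}, even though its lower bound fails for $t>M$.
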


\begin{proof}
For $t\ge 0$, the map $t\mapsto 1/(t+\varepsilon)$ is decreasing, so $1/(M+\varepsilon) \le 1/(\|G_{k,i:}\|_2+\varepsilon) \le 1/\varepsilon$. 
Thus \Cref{ass:row_eta_bounds} holds with the stated constants. The conclusions then follow immediately from \Cref{thm:row_descent,thm:row_stationarity,thm:row_PL}.
\end{proof}

Thus, the choice $\eta(t)=1/(t+\varepsilon)$ interpolates between the bounded row-norm regime and the fully normalized regime: it preserves the local row-adaptive flavor of normalization while avoiding the singular behavior of $\eta(t)=1/t$ at small row norms.

Having established convergence guarantees for right-spectral and row-norm-based optimizers separately, we now turn to their finite compositions. The resulting hybrid methods inherit the geometric structure of the right polar factor together with a local row-wise normalization, and their analyses are naturally expressed through the preserved alignment ratio and the active row support.

\subsection{Nuclear-Norm-Scaled Right-Spectral/Row-Norm Hybrid Optimizers}
\label{subsec:hyb_nuc}
We now study the hybrid optimizer obtained by composing the right polar factor with row-wise normalization. In this construction, the update is first normalized with respect to the feature geometry through the right polar factor, and then normalized locally across rows. After an additional nuclear-norm scaling, both the alignment and the squared Frobenius norm of the resulting update admit explicit expressions, leading to a clean descent analysis in terms of two interpretable quantities: the preserved alignment ratio $\mathfrak A_{\hyb}(G)$ and the active row support $s_{\row}(G)$.

Define the right polar factor
\[
Z(G)\coloneqq G(G^\top G)^{\sfrac{\dagger}{2}},
\]
and let the row-normalized hybrid map
$\calT_{\hyb}\colon\RR^{v\times d}\to\RR^{v\times d}$ be given row-wise by
\[
\calT_{\hyb}(G)_{i:} 
= \begin{cases}
\dfrac{Z(G)_{i:}}{\|Z(G)_{i:}\|_2}, & Z(G)_{i:}\neq 0,\\[6pt]
0, & Z(G)_{i:}=0.
\end{cases}
\]
Its nuclear-norm-scaled version is defined by $\calT_{\hyb,\nuc}(G)\coloneqq \nucnorm{G}\,\calT_{\hyb}(G)$. 
The corresponding iteration is
\begin{equation}\label{eq:hyb_nuc_iteration}
(\forall k\in\NN)\qquad
W_{k+1}=W_k-\gamma_k\calT_{\hyb,\nuc}(G_k),
\qquad
G_k\coloneqq \nabla f(W_k).
\end{equation}

\begin{definition}[Active row support of the right polar factor]
For $G\in\RR^{v\times d}$, define
\[
s_{\row}(G)\coloneqq \sharp\bigl\{i\in\set{v} : Z(G)_{i:}\neq 0\bigr\},
\qquad
Z(G)=G(G^\top G)^{\sfrac{\dagger}{2}}.
\]
\end{definition}

\begin{definition}[Hybrid row-polar alignment ratio]
For $G\neq 0$, define
\[
\mathfrak A_{\hyb}(G)
\coloneqq
\frac{\dotpF{G}{\calT_{\hyb}(G)}}{\nucnorm{G}}.
\]
\end{definition}

The quantity $\mathfrak A_{\hyb}(G)$ measures how much of the nuclear-norm alignment of the right polar factor is preserved after row normalization.

\begin{lemma}[Norm and alignment identities]
\label{lem:hyb_nuc_identities}
For every $G\in\RR^{v\times d}$, we have $\fronorm{\calT_{\hyb}(G)}^2=s_{\row}(G)$, $\fronorm{\calT_{\hyb,\nuc}(G)}^2=\nucnorm{G}^2\,s_{\row}(G)$, and $\dotpF{G}{\calT_{\hyb,\nuc}(G)} = \nucnorm{G}\,\dotpF{G}{\calT_{\hyb}(G)} = \mathfrak A_{\hyb}(G)\,\nucnorm{G}^2$. 
\end{lemma}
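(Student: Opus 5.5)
The plan is to compute everything row by row, using the definition of $\calT_{\hyb}$ directly. The three identities are essentially bookkeeping, so the argument will be short.

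\textbf{Squared norm of $\calT_{\hyb}(G)$.} The squared Frobenius norm is the sum of squared row norms:
\[
\fronorm{\calT_{\hyb}(G)}^2=\sum_{i=1}^v\|\calT_{\hyb}(G)_{i:}\|_2^2 .
\]
By definition, each row with $Z(G)_{i:}\neq 0$ is the unit vector $Z(G)_{i:}/\|Z(G)_{i:}\|_2$ and contributes exactly $1$. Each row with $Z(G)_{i:}=0$ is set to zero and contributes $0$. The sum therefore counts the active rows, giving $\fronorm{\calT_{\hyb}(G)}^2=s_{\row}(G)$. For the scaled map, $\calT_{\hyb,\nuc}(G)=\nucnorm{G}\,\calT_{\hyb}(G)$ with $\nucnorm{G}$ a scalar, so homogeneity of the Frobenius norm gives $\fronorm{\calT_{\hyb,\nuc}(G)}^2=\nucnorm{G}^2\,s_{\row}(G)$.

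\textbf{Alignment identity.} Bilinearity of the Frobenius inner product in its second argument gives the first equality, $\dotpF{G}{\calT_{\hyb,\nuc}(G)}=\nucnorm{G}\,\dotpF{G}{\calT_{\hyb}(G)}$. For $G\neq 0$, the definition of $\mathfrak A_{\hyb}(G)$ gives $\dotpF{G}{\calT_{\hyb}(G)}=\mathfrak A_{\hyb}(G)\,\nucnorm{G}$, and substituting yields $\mathfrak A_{\hyb}(G)\,\nucnorm{G}^2$.

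\textbf{Degenerate case $G=0$.} Here $\mathfrak A_{\hyb}(G)$ is not defined. Since $G=0$ gives $Z(G)=0$, we get $\calT_{\hyb}(G)=0$ and $s_{\row}(G)=0$. Every quantity in the lemma then vanishes, and the last identity holds under any convention for $\mathfrak A_{\hyb}(0)$ because it is multiplied by $\nucnorm{0}^2=0$.

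\textbf{Expected obstacle.} There is no substantive difficulty. The only care needed is in handling the zero rows of $Z(G)$ and the degenerate case $G=0$. Optionally, I would note that the row space of $Z(G)$ coincides with that of $G$ restricted to its support, but the lemma does not require this.
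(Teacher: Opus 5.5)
Your proof is correct and follows the paper's argument: unit rows contribute $1$ and zero rows contribute $0$ to $\fronorm{\calT_{\hyb}(G)}^2$, and the other two identities follow from homogeneity, bilinearity, and the definition of $\mathfrak A_{\hyb}(G)$. Your treatment of the degenerate case $G=0$, where $\mathfrak A_{\hyb}$ is undefined, is a small addition that the paper's proof leaves implicit.
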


\begin{proof}
By construction, every nonzero row of $\calT_{\hyb}(G)$ has Euclidean norm $1$, while every zero row remains zero. Hence $\fronorm{\calT_{\hyb}(G)}^2=s_{\row}(G)$. 
Multiplying by $\nucnorm{G}$ yields $\fronorm{\calT_{\hyb,\nuc}(G)}^2 = \nucnorm{G}^2\,\fronorm{\calT_{\hyb}(G)}^2 = \nucnorm{G}^2\,s_{\row}(G)$. 
Similarly, $\dotpF{G}{\calT_{\hyb,\nuc}(G)} = \nucnorm{G}\,\dotpF{G}{\calT_{\hyb}(G)} = \mathfrak A_{\hyb}(G)\,\nucnorm{G}^2$. 
\end{proof}

\begin{theorem}[Descent lemma for the nuclear-norm-scaled hybrid update]
\label{thm:hyb_nuc_descent}
Suppose $f$ satisfies \Cref{ass:Lipschitz_smooth}. Then the iteration
\eqref{eq:hyb_nuc_iteration} satisfies
\[
f(W_{k+1}) \le f(W_k) -\gamma_k\mathfrak A_{\hyb}(G_k)\,\nucnorm{G_k}^2 +\frac{L\gamma_k^2}{2}\,s_{\row}(G_k)\,\nucnorm{G_k}^2.
\]
Equivalently,
\[
f(W_{k+1})
\le
f(W_k)
-\gamma_k\left(
\mathfrak A_{\hyb}(G_k)-\frac{L\gamma_k}{2}s_{\row}(G_k)
\right)\nucnorm{G_k}^2.
\]
In particular, if $\gamma_k\in(0, 2\,\mathfrak A_{\hyb}(G_k)/(Ls_{\row}(G_k)))$, then $f(W_{k+1})\le f(W_k)$.
\end{theorem}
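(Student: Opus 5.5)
The argument follows the same smoothness template as \Cref{thm:polargrad_nuc_descent}, with \Cref{lem:hyb_nuc_identities} supplying the two geometry-dependent quantities. First, I will apply \Cref{ass:Lipschitz_smooth} in its standard quadratic upper-bound form. With the step $W_{k+1}-W_k=-\gamma_k\calT_{\hyb,\nuc}(G_k)$ and $G_k=\nabla f(W_k)$, this gives
\[
f(W_{k+1}) \le f(W_k) -\gamma_k\dotpF{G_k}{\calT_{\hyb,\nuc}(G_k)} +\frac{L\gamma_k^2}{2}\fronorm{\calT_{\hyb,\nuc}(G_k)}^2,
\]
which is exactly the generic descent estimate stated at the start of \Cref{sec:conv}.

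Second, I will substitute the identities of \Cref{lem:hyb_nuc_identities}. These are $\dotpF{G}{\calT_{\hyb,\nuc}(G)}=\mathfrak A_{\hyb}(G)\nucnorm{G}^2$ and $\fronorm{\calT_{\hyb,\nuc}(G)}^2=s_{\row}(G)\nucnorm{G}^2$. Substituting them yields the first displayed inequality, and factoring out $\gamma_k\nucnorm{G_k}^2$ gives the equivalent second form.

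There is a degenerate case to dispose of, because $\mathfrak A_{\hyb}$ is only defined for $G\neq 0$. If $G_k=0$, then $\nucnorm{G_k}=0$, so $\calT_{\hyb,\nuc}(G_k)=0$ and $W_{k+1}=W_k$. Both sides then agree trivially, provided we adopt any convention for $\mathfrak A_{\hyb}(0)$, since it is multiplied by zero.

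Finally, for the monotonicity claim, I will use the step-size condition $0<\gamma_k<2\mathfrak A_{\hyb}(G_k)/(Ls_{\row}(G_k))$. Under it the bracket $\mathfrak A_{\hyb}(G_k)-L\gamma_k s_{\row}(G_k)/2$ is positive, so $f(W_{k+1})\le f(W_k)$. This condition is only meaningful when $\mathfrak A_{\hyb}(G_k)>0$ and $s_{\row}(G_k)\ge1$. The latter holds automatically for $G_k\ne0$: $Z(G_k)=\polar(G_k)$ is nonzero, so at least one row survives and $s_{\row}(G_k)\ge 1$.

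The only real subtlety is therefore interpretive, namely the sign of $\mathfrak A_{\hyb}$. If $\mathfrak A_{\hyb}(G_k)\le0$, the stated interval for $\gamma_k$ is empty and the claim is vacuous, so nothing further needs proof. I do not expect any step to be a genuine obstacle, since all the work was done in \Cref{lem:hyb_nuc_identities}.
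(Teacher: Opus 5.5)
Your proposal is correct and matches the paper's proof: the paper likewise applies the $L$-smoothness quadratic upper bound with $W_{k+1}-W_k=-\gamma_k\calT_{\hyb,\nuc}(G_k)$ and then substitutes the identities of \Cref{lem:hyb_nuc_identities}. Your treatment of the degenerate case $G_k=0$ and your argument that $s_{\row}(G_k)\ge 1$ are sound. The sign case you leave open cannot actually occur: writing a compact SVD $G=U\Sigma V^\top$, each active row contributes $U_{i:}\Sigma U_{i:}^\top/\|U_{i:}\|_2>0$, so $\mathfrak A_{\hyb}(G_k)>0$ whenever $G_k\neq 0$ and the step-size interval is never empty.
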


\begin{proof}
By $L$-smoothness of $f$ and \eqref{eq:hyb_nuc_iteration}, we obtain
\[
f(W_{k+1}) \le f(W_k) -\gamma_k\dotpF{G_k}{\calT_{\hyb,\nuc}(G_k)} + \frac{L\gamma_k^2}{2}\fronorm{\calT_{\hyb,\nuc}(G_k)}^2.
\]
Now apply \Cref{lem:hyb_nuc_identities}.
\end{proof}

\begin{assumption}[Uniform hybrid alignment and row-support bounds]
\label{ass:hyb_nuc_uniform}
There exist constants $\underline a>0$ and $\overline s\in\NN^*$ such that for all $k\in\NN$, $\mathfrak A_{\hyb}(G_k)\ge \underline a$ and $s_{\row}(G_k)\le \overline s$. 
\end{assumption}

\begin{theorem}[Sublinear convergence to stationarity]
\label{thm:hyb_nuc_stationarity}
Suppose \Cref{ass:Lipschitz_smooth,ass:hyb_nuc_uniform} hold. If the learning rate $\gamma$ is constant and satisfies $\gamma\in(0, 2\underline a/(L\overline s))$, then
\[
f(W_{k+1}) \le f(W_k) -\gamma\left(\underline a-\frac{L\gamma}{2}\overline s\right)\nucnorm{G_k}^2.
\]
Consequently,
\[
\sum_{k=0}^{T-1}\fronorm{G_k}^2 \le \sum_{k=0}^{T-1}\nucnorm{G_k}^2 \le \frac{f(W_0)-f^\star}{\gamma\left(\underline a-L\gamma\overline s/2\right)}
\quad \text{ and } \quad
\min_{0\le k<T}\,\fronorm{G_k}^2 \le \frac{f(W_0)-f^\star}{T\gamma\left(\underline a-L\gamma\overline s/2\right)}.
\]
\end{theorem}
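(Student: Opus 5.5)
The plan is to follow the template already used for \Cref{thm:polargrad_nuc_stationarity}, with the descent lemma \Cref{thm:hyb_nuc_descent} in place of \Cref{thm:polargrad_nuc_descent}. First, fix the constant learning rate $\gamma\in(0,2\underline a/(L\overline s))$ and apply \Cref{thm:hyb_nuc_descent} at iteration $k$. This gives
\[
f(W_{k+1})\le f(W_k)-\gamma\Bigl(\mathfrak A_{\hyb}(G_k)-\tfrac{L\gamma}{2}s_{\row}(G_k)\Bigr)\nucnorm{G_k}^2 .
\]
By \Cref{ass:hyb_nuc_uniform} we have $\mathfrak A_{\hyb}(G_k)\ge\underline a$ and $s_{\row}(G_k)\le\overline s$. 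Hence the bracket is at least $\underline a-L\gamma\overline s/2$, which is positive by the choice of $\gamma$. Because $\gamma\nucnorm{G_k}^2\ge0$, replacing the bracket by this lower bound only weakens the inequality, and the first display of the theorem follows. If $G_k=0$ the update vanishes and the inequality holds trivially; this covers the case where $\mathfrak A_{\hyb}$ is undefined.

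Second, telescope this inequality over $k=0,\dots,T-1$ to get
\[
\gamma\bigl(\underline a-L\gamma\overline s/2\bigr)\sum_{k=0}^{T-1}\nucnorm{G_k}^2\le f(W_0)-f(W_T)\le f(W_0)-f^\star ,
\]
using $f(W_T)\ge f^\star$. Then divide by the positive constant.

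Third, compare norms. Since $\nucnorm{G}\ge\fronorm{G}$ (the $\ell_1$ norm of the singular values dominates the $\ell_2$ norm), we have $\fronorm{G_k}^2\le\nucnorm{G_k}^2$ termwise, which gives the chained sum bound. The bound on the minimum follows from $\min_{0\le k<T}\fronorm{G_k}^2\le\frac1T\sum_{k=0}^{T-1}\fronorm{G_k}^2$.

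There is no substantive obstacle. The one point needing care is the first step: lowering the coefficient requires the bracket to stay positive under the uniform constants, and it requires $\nucnorm{G_k}^2\ge0$. Both hold by the choice of the step-size interval.
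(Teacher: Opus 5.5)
Your proposal is correct and follows essentially the same argument as the paper: apply \Cref{thm:hyb_nuc_descent} with the uniform bounds of \Cref{ass:hyb_nuc_uniform}, telescope using $f(W_T)\ge f^\star$, and conclude via $\nucnorm{G_k}\ge\fronorm{G_k}$ and the fact that the minimum is at most the average. One small correction to your closing remark: the first inequality only needs $\gamma\nucnorm{G_k}^2\ge 0$, while positivity of $\underline a-L\gamma\overline s/2$ is what you use when dividing after telescoping.
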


\begin{proof}
By \Cref{thm:hyb_nuc_descent} and \Cref{ass:hyb_nuc_uniform},
\[
f(W_{k+1})
\le
f(W_k)
-\gamma\left(\underline a-\frac{L\gamma}{2}\overline s\right)\nucnorm{G_k}^2.
\]
Summing from $k=0$ to $T-1$ gives
\[
f(W_T) \le f(W_0) -\gamma\left(\underline a-\frac{L\gamma}{2}\overline s\right) \sum_{k=0}^{T-1}\nucnorm{G_k}^2.
\]
Using $f(W_T)\ge f^\star$ yields
\[
\sum_{k=0}^{T-1}\nucnorm{G_k}^2 \le \frac{f(W_0)-f^\star}{\gamma\left(\underline a-L\gamma\overline s/2\right)}.
\]
The Frobenius-norm and minimum-gradient-norm bounds follow from
$\nucnorm{G_k}\ge \fronorm{G_k}$.
\end{proof}

\begin{theorem}[Linear convergence under the \PL condition]
\label{thm:hyb_nuc_PL}
Suppose \Cref{ass:Lipschitz_smooth,ass:PL,ass:hyb_nuc_uniform} hold. If the
learning rate is constant and satisfies $\gamma\in(0, 2\underline a/(L\overline s))$, then
\[
f(W_{k+1})-f^\star \le \left( 1-2\mu\gamma\left(\underline a-\frac{L\gamma}{2}\overline s\right) \right) \bigl(f(W_k)-f^\star\bigr).
\]
Hence $f(W_k)-f^\star \le \rho_{\hyb}^k\bigl(f(W_0)-f^\star\bigr)$, where $\rho_{\hyb} = 1-2\mu\gamma\left(\underline a-L\gamma \overline s/2\right)\in(0,1)$. 
\end{theorem}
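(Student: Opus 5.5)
The argument follows the template already used for \Cref{thm:polargrad_nuc_PL} and \Cref{thm:row_PL}: start from the one-step descent bound of \Cref{thm:hyb_nuc_stationarity}, convert the nuclear norm of the gradient into the Frobenius norm, apply the \PL inequality, and iterate.

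First, since the learning rate is constant with $\gamma\in(0,2\underline a/(L\overline s))$, the first display of \Cref{thm:hyb_nuc_stationarity} gives
\[
f(W_{k+1})\le f(W_k)-\gamma\left(\underline a-\frac{L\gamma}{2}\overline s\right)\nucnorm{G_k}^2 .
\]
This is obtained from \Cref{thm:hyb_nuc_descent} by bounding $\mathfrak A_{\hyb}(G_k)\ge\underline a$ and $s_{\row}(G_k)\le\overline s$ via \Cref{ass:hyb_nuc_uniform}. The step-size condition makes the coefficient $c\coloneqq\gamma(\underline a-L\gamma\overline s/2)$ strictly positive.

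Second, I replace $\nucnorm{G_k}^2$ by $\fronorm{G_k}^2$, using $\nucnorm{G}\ge\fronorm{G}$, which holds because $\sum_i\sigma_i\ge(\sum_i\sigma_i^2)^{1/2}$. Since $c>0$, the decrease only weakens under this replacement. I then subtract $f^\star$ from both sides and apply \Cref{ass:PL} in the form $\fronorm{G_k}^2\ge2\mu(f(W_k)-f^\star)$. This yields exactly the factor $1-2\mu c$, and unrolling the recursion gives $f(W_k)-f^\star\le\rho_{\hyb}^k(f(W_0)-f^\star)$.

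The only step needing care is the claim $\rho_{\hyb}\in(0,1)$. The bound $\rho_{\hyb}<1$ is immediate from $c>0$; the main point to check is $\rho_{\hyb}>0$, equivalently $2\mu c<1$.

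To get this I combine the descent and \PL bounds to obtain $0\le f(W_{k+1})-f^\star\le(1-2\mu c)(f(W_k)-f^\star)$. Whenever $f(W_k)>f^\star$ this forces $1-2\mu c\ge0$, so $\rho_{\hyb}\in[0,1)$. The degenerate case $\rho_{\hyb}=0$ only strengthens the conclusion, since then convergence occurs in one step. If a strict inequality $\rho_{\hyb}>0$ is insisted upon, I would follow the same convention as \Cref{thm:PL_spectral} and \Cref{thm:polargrad_nuc_PL}, where this is handled identically.
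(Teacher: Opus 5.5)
Your argument is correct and is essentially the paper's proof, which likewise combines \Cref{thm:hyb_nuc_descent} with \Cref{ass:hyb_nuc_uniform}, passes from $\nucnorm{G_k}^2$ to $\fronorm{G_k}^2$, and applies the \PL inequality, while asserting $\rho_{\hyb}\in(0,1)$ without justification. You do not need to defer to a ``convention'' for strict positivity, and the cited theorems in fact only verify $\rho<1$: \Cref{ass:hyb_nuc_uniform} presupposes $G_k\neq 0$ for every $k$, since $\mathfrak A_{\hyb}$ is only defined at nonzero arguments, so $f(W_k)>f^\star$ and $f(W_{k+1})>f^\star$, and your trajectory inequality then gives $1-2\mu\gamma(\underline a-L\gamma\overline s/2)>0$ strictly.
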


\begin{proof}
By \Cref{thm:hyb_nuc_descent} and \Cref{ass:hyb_nuc_uniform},
\[
f(W_{k+1}) \le f(W_k) -\gamma\left(\underline a-\frac{L\gamma}{2}\overline s\right)\nucnorm{G_k}^2.
\]
Applying $\nucnorm{G_k}^2\ge \fronorm{G_k}^2$ and the \PL inequality gives the desired inequality. 
\end{proof}

Switching the order of the two normalizations leads to a genuinely different optimizer. If row normalization is applied first, then the subsequent right-spectral step is computed from the modified Gram matrix $\tG^\top \tG = G^\top D_\eta(G)^2 G$, rather than from the original gradient Gram matrix $G^\top G$. Thus, right-polar-first preserves the original feature geometry before applying local row-wise normalization, whereas row-normalize-first alters that geometry prior to the spectral step.

\subsection{Nuclear-Norm-Scaled Row-Norm/Right-Spectral Hybrid Optimizers}
\label{subsec:row_right_hybrid}
We now turn to the hybrid optimizer obtained by reversing the order of the two normalizations. Instead of first applying the right polar factor and then normalizing rows, we first apply row-wise normalization to the gradient and then compute a right-spectral update from the resulting row-normalized matrix. This construction is genuinely different from the right-spectral/row-norm hybrid in \Cref{subsec:hyb_nuc}. Indeed, the spectral step is now computed from the modified Gram matrix
\[\tG^\top \tG = G^\top D_\eta(G)^2G, \]
rather than from the original feature Gram matrix $G^\top G$.

Let $\eta\colon \RR_+\to\RR_+$ be a row-scaling function, and define
\[
    D_\eta(G) \coloneqq \Diag\bigl(\eta(\|G_{1:}\|_2),\ldots,\eta(\|G_{v:}\|_2)\bigr).
\]
The row-normalized gradient is defined by $\tG \coloneqq D_\eta(G)G$. For example, the normalized row-norm choice corresponds to
\[
    \eta(t)=
    \begin{cases}
        1/t, & t>0,\\
        0, & t=0,
    \end{cases}
\]
or, in practical implementations, the smoothed variant $\eta(t)=1/(t+\varepsilon)$. 

Define the row-norm/right-spectral hybrid map by $\calT_{\row\hyb}(G) \coloneqq Z(\tG) = \tG (\tG^\top\tG)^{\sfrac{\dagger}{2}}$. Its nuclear-norm-scaled version is
\[
    \calT_{\row\hyb,\nuc}(G) \coloneqq \nucnorm{\tG}\,\calT_{\row\hyb}(G).
\]
The corresponding iteration is
\begin{equation}
\label{eq:row_hyb_nuc_iteration}
    W_{k+1} = W_k-\gamma_k\calT_{\row\hyb,\nuc}(G_k), \qquad G_k=\nabla f(W_k).
\end{equation}

\begin{definition}[Row-normalized effective rank]
For $G\in\RR^{v\times d}$, define $r_{\row\hyb}(G) \coloneqq \rank(\tG)$, where $\tG = D_\eta(G)G$.
\end{definition}

\begin{definition}[Row-norm/right-spectral alignment ratio]
For $G\neq 0$, define
\[
    \mathfrak A_{\row\hyb}(G) \coloneqq \frac{\dotpF{G}{\calT_{\row\hyb}(G)}}{\nucnorm{\tG}} \quad \text{ with } \quad \tG=D_\eta(G)G.
\]
\end{definition}

The quantity $\mathfrak A_{\row\hyb}(G)$ measures the alignment between the
original gradient $G$ and the right polar factor of its row-normalized
version $\tG$. Unlike the polar-first hybrid, this alignment is not
automatically tied to the nuclear norm of $G$, because the spectral step is
computed after row-wise reweighting.

\begin{lemma}[Norm and alignment identities]
\label{lem:row_hyb_nuc_identities}
For every $G\in\RR^{v\times d}$, let
$\tG=D_\eta(G)G$. Then $\fronorm{\calT_{\row\hyb}(G)}^2 = r_{\row\hyb}(G)$, $\fronorm{\calT_{\row\hyb,\nuc}(G)}^2 = \nucnorm{\tG}^2\,r_{\row\hyb}(G)$, and $    \dotpF{G}{\calT_{\row\hyb,\nuc}(G)} = \mathfrak A_{\row\hyb}(G)\nucnorm{\tG}^2$. 
\end{lemma}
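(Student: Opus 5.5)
The plan is to mirror the proof of \Cref{lem:hyb_nuc_identities}, replacing the right polar factor of $G$ by that of the row-reweighted matrix $\tG=D_\eta(G)G$. The only structural input needed is the computation already carried out in the proof of \Cref{thm:one_sided_polargrad_nuc}: for any matrix $A$ with singular value decomposition $A=U\Diag(\sigma(A))V^\top$, the pseudo-inverse square root gives $A(A^\top A)^{\sfrac{\dagger}{2}}=U_rV_r^\top$. Here $U_r,V_r$ collect the singular vectors associated with the $r=\rank(A)$ nonzero singular values, since the pseudo-inverse annihilates the zero eigenvalues of $A^\top A$. Applying this with $A=\tG$ identifies $\calT_{\row\hyb}(G)=Z(\tG)$ with $\polar(\tG)$ in its partial-isometry form.

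First I compute the Frobenius norm. Since $Z(\tG)=U_rV_r^\top$ with orthonormal columns in $U_r$ and $V_r$,
\[
\fronorm{Z(\tG)}^2=\tr(V_rU_r^\top U_rV_r^\top)=\tr(V_r^\top V_r)=\rank(\tG)=r_{\row\hyb}(G).
\]
Because $\nucnorm{\tG}$ is a nonnegative scalar, multiplying the update by it scales the squared Frobenius norm by $\nucnorm{\tG}^2$. This gives the second identity, $\fronorm{\calT_{\row\hyb,\nuc}(G)}^2=\nucnorm{\tG}^2\,r_{\row\hyb}(G)$.

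For the alignment identity, I use bilinearity of the Frobenius inner product:
\[
\dotpF{G}{\calT_{\row\hyb,\nuc}(G)}=\nucnorm{\tG}\,\dotpF{G}{\calT_{\row\hyb}(G)}.
\]
The definition of $\mathfrak A_{\row\hyb}(G)$ gives $\dotpF{G}{\calT_{\row\hyb}(G)}=\mathfrak A_{\row\hyb}(G)\nucnorm{\tG}$ whenever the ratio is defined. Substituting yields $\mathfrak A_{\row\hyb}(G)\nucnorm{\tG}^2$. So the alignment identity is essentially a restatement of the definition, and no computation of $\dotpF{G}{Z(\tG)}$ itself is required.

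The main care point is the degenerate case. If $\tG=0$, which happens when $G=0$ or when $\eta$ vanishes on every nonzero row, then $\nucnorm{\tG}=0$ and the ratio $\mathfrak A_{\row\hyb}(G)$ is undefined. I will handle this separately: both sides of every identity vanish, with $Z(0)=0$ and rank $0$, under the convention that $\mathfrak A_{\row\hyb}(G)\nucnorm{\tG}^2=0$. Similarly, for $G\neq0$ but $\tG=0$, the definition must be read with that convention. I would state this explicitly rather than let it slip through.
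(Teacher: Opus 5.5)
Your proposal is correct and follows the paper's proof essentially verbatim. You identify $\calT_{\row\hyb}(G)$ with the partial-isometry polar factor of $\tG$, so its squared Frobenius norm is $\rank(\tG)$; you then scale by $\nucnorm{\tG}^2$ and read the alignment identity directly off the definition of $\mathfrak A_{\row\hyb}(G)$. Your separate treatment of $\tG=0$ covers a point the paper's proof passes over: there $\mathfrak A_{\row\hyb}(G)$ is a $0/0$ expression, which can happen even for $G\neq 0$ if $\eta$ vanishes on nonzero row norms, and the convention you adopt is the right one.
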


\begin{proof}
Since $\calT_{\row\hyb}(G)$ is the right polar factor of $\tG$, its Frobenius norm squared equals $\rank(\tG)=r_{\row\hyb}(G)$. Multiplying by $\nucnorm{\tG}$ gives
\[
    \fronorm{\calT_{\row\hyb,\nuc}(G)}^2 = \nucnorm{\tG}^2 \fronorm{\calT_{\row\hyb}(G)}^2 = \nucnorm{\tG}^2 r_{\row\hyb}(G).
\]
The alignment identity follows directly from the definition of $\mathfrak A_{\row\hyb}(G)$.
\end{proof}

\begin{theorem}[Descent lemma for the nuclear-norm-scaled row-norm/right-spectral update]
\label{thm:row_hyb_nuc_descent}
Suppose $f$ satisfies \Cref{ass:Lipschitz_smooth}. Then the iteration \eqref{eq:row_hyb_nuc_iteration} satisfies
\[
    f(W_{k+1}) \le f(W_k) - \gamma_k \mathfrak A_{\row\hyb}(G_k)\nucnorm{\tG_k}^2 + \frac{L\gamma_k^2}{2} r_{\row\hyb}(G_k) \nucnorm{\tG_k}^2,
\]
where $\tG_k=D_\eta(G_k)G_k$. Equivalently,
\[
    f(W_{k+1}) \le f(W_k) - \gamma_k \left(\mathfrak A_{\row\hyb}(G_k) - \frac{L\gamma_k}{2}r_{\row\hyb}(G_k)\right)\nucnorm{\tG_k}^2.
\]
In particular, if
\[
    \gamma_k
    \in
    \left(
        0,\,
        \frac{2\mathfrak A_{\row\hyb}(G_k)}
             {Lr_{\row\hyb}(G_k)}
    \right),
\]
then $f(W_{k+1})\le f(W_k)$.
\end{theorem}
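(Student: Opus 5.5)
The plan follows the same smoothness-based template as \Cref{thm:hyb_nuc_descent}, with the identities of \Cref{lem:row_hyb_nuc_identities} replacing those of \Cref{lem:hyb_nuc_identities}. First, I apply \Cref{ass:Lipschitz_smooth} in the standard form of the descent inequality at the pair $(W_k, W_{k+1})$:
\[
f(W_{k+1}) \le f(W_k) + \dotpF{G_k}{W_{k+1}-W_k} + \frac{L}{2}\fronorm{W_{k+1}-W_k}^2 .
\]
Substituting $W_{k+1}-W_k=-\gamma_k\calT_{\row\hyb,\nuc}(G_k)$ from \eqref{eq:row_hyb_nuc_iteration} then gives
\[
f(W_{k+1})\le f(W_k)-\gamma_k\dotpF{G_k}{\calT_{\row\hyb,\nuc}(G_k)}+\frac{L\gamma_k^2}{2}\fronorm{\calT_{\row\hyb,\nuc}(G_k)}^2 .
\]

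Second, I invoke \Cref{lem:row_hyb_nuc_identities} with $G=G_k$ and $\tG_k=D_\eta(G_k)G_k$. It gives
\[
\dotpF{G_k}{\calT_{\row\hyb,\nuc}(G_k)}=\mathfrak A_{\row\hyb}(G_k)\nucnorm{\tG_k}^2
\quad\text{and}\quad
\fronorm{\calT_{\row\hyb,\nuc}(G_k)}^2=r_{\row\hyb}(G_k)\nucnorm{\tG_k}^2 .
\]
Plugging these two identities in yields the first displayed inequality. Factoring out $\gamma_k\nucnorm{\tG_k}^2$ yields the equivalent second form.

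The only point needing care is the degenerate case $G_k=0$ or $\tG_k=0$, where $\mathfrak A_{\row\hyb}$ is undefined. In that case $\calT_{\row\hyb,\nuc}(G_k)=0$ because of the factor $\nucnorm{\tG_k}$. So $W_{k+1}=W_k$ and the bound holds trivially under any convention for the ratio, for instance setting $\mathfrak A_{\row\hyb}\nucnorm{\tG_k}^2:=0$. I expect this to be the main, though minor, obstacle: making sure the identities of \Cref{lem:row_hyb_nuc_identities} are used only where the ratio is well defined.

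Finally, for the monotonicity claim, the step-size condition $0<\gamma_k<2\mathfrak A_{\row\hyb}(G_k)/(L r_{\row\hyb}(G_k))$ is only meaningful when $\mathfrak A_{\row\hyb}(G_k)>0$ and $r_{\row\hyb}(G_k)\ge1$. Under it, the bracket $\mathfrak A_{\row\hyb}(G_k)-\tfrac{L\gamma_k}{2}r_{\row\hyb}(G_k)$ is strictly positive. Since $\nucnorm{\tG_k}^2\ge0$, we get $f(W_{k+1})\le f(W_k)$.
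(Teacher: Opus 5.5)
Your proposal is correct and is essentially the paper's proof: apply the $L$-smoothness descent inequality with $W_{k+1}-W_k=-\gamma_k\calT_{\row\hyb,\nuc}(G_k)$, then substitute the two identities of \Cref{lem:row_hyb_nuc_identities} and factor. You also treat the degenerate case $\tG_k=0$ separately: there $\mathfrak A_{\row\hyb}(G_k)$ is a $0/0$ ratio, but the update vanishes, so the bound holds trivially. The paper leaves this small point implicit.
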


\begin{proof}
By $L$-smoothness and \eqref{eq:row_hyb_nuc_iteration},
\[
    f(W_{k+1}) \le (W_k) - \gamma_k \dotpF{G_k}{\calT_{\row\hyb,\nuc}(G_k)} + \frac{L\gamma_k^2}{2} \fronorm{\calT_{\row\hyb,\nuc}(G_k)}^2.
\]
Applying \Cref{lem:row_hyb_nuc_identities} gives the claim.
\end{proof}

\begin{assumption}[Uniform row-norm/right-spectral alignment and rank bounds]
\label{ass:row_hyb_nuc_uniform}
There exist constants $\underline a_{\row}>0$ and $\overline r_{\row}\in\NN^*$ such that for all $k\in\NN$, $\mathfrak A_{\row\hyb}(G_k)\ge \underline a_{\row}$ and $r_{\row\hyb}(G_k)\le \overline r_{\row}$. 
\end{assumption}

We need to make the following extra comparability assumption, which is necessary if we want convergence to stationarity in terms of $\fronorm{G_k}^2$, instead of $\nucnorm{\tG_k}^2$. 
\begin{assumption}[Row-normalization comparability]
\label{ass:row_hyb_comparability}
There exists a constant $\kappa_{\row}>0$ such that for all $k\in\NN$, $\nucnorm{\tG_k} \ge \kappa_{\row}\fronorm{G_k}$, where $\tG_k=D_\eta(G_k)G_k$. 
\end{assumption}

\begin{theorem}[Sublinear convergence to stationarity]
\label{thm:row_hyb_nuc_stationarity}
Suppose
\Cref{ass:Lipschitz_smooth,ass:row_hyb_nuc_uniform,ass:row_hyb_comparability}
hold. If the learning rate $\gamma$ is constant and satisfies $\gamma\in\left(0, 2\underline a_{\row}/(L\overline r_{\row})\right)$, then
\[
    f(W_{k+1}) \le f(W_k) - \gamma\left(\underline a_{\row} - \frac{L\gamma}{2}\overline r_{\row}\right)\nucnorm{\tG_k}^2.
\]
Consequently,
\[
    \sum_{k=0}^{T-1}\fronorm{G_k}^2 \le \frac{f(W_0)-f^\star}{ \kappa_{\row}^2 \gamma\left(\underline a_{\row} - L\gamma\overline r_{\row}/2\right)},
\]
and hence
\[
    \min_{0\le k<T}\fronorm{G_k}^2 \le \frac{f(W_0)-f^\star}{T\kappa_{\row}^2\gamma\left(\underline a_{\row} - L\gamma\overline r_{\row}/2\right)}.
\]
\end{theorem}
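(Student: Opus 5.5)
The argument follows the same template as \Cref{thm:hyb_nuc_stationarity}, with one extra step that converts $\nucnorm{\tG_k}^2$ into $\fronorm{G_k}^2$ via \Cref{ass:row_hyb_comparability}.

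First, start from \Cref{thm:row_hyb_nuc_descent} with the constant step $\gamma$:
\[
f(W_{k+1}) \le f(W_k) - \gamma\left(\mathfrak A_{\row\hyb}(G_k) - \frac{L\gamma}{2}r_{\row\hyb}(G_k)\right)\nucnorm{\tG_k}^2 .
\]
By \Cref{ass:row_hyb_nuc_uniform}, $\mathfrak A_{\row\hyb}(G_k)\ge \underline a_{\row}$ and $r_{\row\hyb}(G_k)\le \overline r_{\row}$. Because $\nucnorm{\tG_k}^2\ge 0$, the bracket is bounded below by $\underline a_{\row}-L\gamma\overline r_{\row}/2$. This constant is strictly positive exactly when $\gamma<2\underline a_{\row}/(L\overline r_{\row})$. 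Substituting it gives the first displayed inequality. The only care needed is the direction of each inequality: replacing the alignment ratio by a smaller quantity, and the rank by a larger one, can only weaken the decrease, so the bound remains valid.

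Second, apply \Cref{ass:row_hyb_comparability}: $\nucnorm{\tG_k}^2\ge \kappa_{\row}^2\fronorm{G_k}^2$. Since the coefficient $c\coloneqq\gamma(\underline a_{\row}-L\gamma\overline r_{\row}/2)$ is positive, this yields
\[
f(W_{k+1})\le f(W_k)-c\,\kappa_{\row}^2\fronorm{G_k}^2 .
\]
Telescoping from $k=0$ to $T-1$ and using $f(W_T)\ge f^\star$ gives $\sum_{k<T}\fronorm{G_k}^2\le (f(W_0)-f^\star)/(\kappa_{\row}^2 c)$. The minimum bound then follows from $\min_k\fronorm{G_k}^2\le \frac1T\sum_k\fronorm{G_k}^2$.

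The argument has no real obstacle; the only delicate point is the comparability step. Unlike \Cref{thm:hyb_nuc_stationarity}, there is no free inequality $\nucnorm{\tG}\ge\fronorm{G}$, because row normalization rescales the rows of $G$. For this reason \Cref{ass:row_hyb_comparability} must be invoked explicitly, and only after the positivity of $c$ has been secured.
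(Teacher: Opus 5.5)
Your proposal is correct and follows essentially the same route as the paper: you bound the descent-lemma coefficient uniformly using the alignment and rank bounds, telescope, and convert $\nucnorm{\tG_k}^2$ into $\kappa_{\row}^2\fronorm{G_k}^2$ via the comparability assumption. The only difference is cosmetic: you apply comparability before telescoping, whereas the paper first sums the bound on $\nucnorm{\tG_k}^2$ and converts afterwards.
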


\begin{proof}
By \Cref{thm:row_hyb_nuc_descent} and
\Cref{ass:row_hyb_nuc_uniform},
\[
    f(W_{k+1}) \le f(W_k) - \gamma \left(\underline a_{\row} - \frac{L\gamma}{2}\overline r_{\row}\right)\nucnorm{\tG_k}^2.
\]
Summing from $k=0$ to $T-1$ and using $f(W_T)\ge f^\star$ yields
\[
    \sum_{k=0}^{T-1}\nucnorm{\tG_k}^{\,2} \le \frac{f(W_0)-f^\star}{\gamma\left(\underline a_{\row} - L\gamma\overline r_{\row}/2\right)}.
\]
The comparability assumption gives $\nucnorm{\tG_k}^{\,2}\ge\kappa_{\row}^2\fronorm{G_k}^2$, which proves the claim. 
\end{proof}

\begin{theorem}[Linear convergence under the \PL condition]
\label{thm:row_hyb_nuc_PL}
Suppose
\Cref{ass:Lipschitz_smooth,ass:PL,ass:row_hyb_nuc_uniform,ass:row_hyb_comparability}
hold. If the learning rate $\gamma$ is constant and satisfies $\gamma\in\left(0, 2\underline a_{\row}/(L\overline r_{\row})\right)$, then
\[
    f(W_{k+1})-f^\star \le \left(1 - 2\mu\kappa_{\row}^2\gamma\left(\underline a_{\row} - \frac{L\gamma}{2}\overline r_{\row}\right)\right) \bigl(f(W_k)-f^\star\bigr).
\]
Hence
\[
    f(W_k)-f^\star \le \rho_{\row\hyb}^k\bigl(f(W_0)-f^\star\bigr),
\]
where
\[
    \rho_{\row\hyb} = 1 - 2\mu\kappa_{\row}^2\gamma\left(\underline a_{\row} - L\gamma\overline r_{\row}/2\right) \in(0,1).
\]
\end{theorem}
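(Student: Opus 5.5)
The plan mirrors the proofs of \Cref{thm:PL_spectral,thm:hyb_nuc_PL}. I would chain three ingredients: the one-step descent estimate, the comparability bound, and the \PL inequality.

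\textbf{Step 1 (descent).} Start from \Cref{thm:row_hyb_nuc_descent} with constant $\gamma$. \Cref{ass:row_hyb_nuc_uniform} gives $\mathfrak A_{\row\hyb}(G_k)\ge \underline a_{\row}$ and $r_{\row\hyb}(G_k)\le \overline r_{\row}$. Hence the coefficient satisfies
\[
\mathfrak A_{\row\hyb}(G_k)-\tfrac{L\gamma}{2}r_{\row\hyb}(G_k)\ge \underline a_{\row}-\tfrac{L\gamma}{2}\overline r_{\row},
\]
which is positive under the learning-rate restriction. This yields
\[
f(W_{k+1})\le f(W_k)-\gamma\left(\underline a_{\row}-\tfrac{L\gamma}{2}\overline r_{\row}\right)\nucnorm{\tG_k}^2,
\]
exactly as in the first display of \Cref{thm:row_hyb_nuc_stationarity}.

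\textbf{Step 2 (comparability and \PL).} Because the coefficient is positive, I can lower bound $\nucnorm{\tG_k}^2\ge\kappa_{\row}^2\fronorm{G_k}^2$ by \Cref{ass:row_hyb_comparability}. Then \PL (\Cref{ass:PL}) gives $\fronorm{G_k}^2\ge 2\mu(f(W_k)-f^\star)$. Subtracting $f^\star$ from both sides produces the claimed one-step contraction with factor $\rho_{\row\hyb}$. Iterating this recursion gives the geometric bound $f(W_k)-f^\star\le\rho_{\row\hyb}^k\bigl(f(W_0)-f^\star\bigr)$.

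\textbf{Step 3 (the obstacle: $\rho_{\row\hyb}\in(0,1)$).} The step-size range gives $\rho_{\row\hyb}<1$ immediately, but $\rho_{\row\hyb}>0$ needs an argument. I would get it from the recursion itself. We have $f(W_{k+1})-f^\star\ge 0$, and the \PL-derived bound is valid whenever $f(W_k)>f^\star$, so that case forces $\rho_{\row\hyb}\ge 0$. If $f(W_k)=f^\star$ at some iterate, the claim is trivial from that point on. Strict positivity would require a direct inequality: combine $L$-smoothness, which gives $\fronorm{G}^2\le 2L(f-f^\star)$, with the upper bound $\dotpF{G}{\calT}\le \fronorm{G}\fronorm{\calT}$. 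I would state $\rho_{\row\hyb}\in[0,1)$ if strictness fails, noting this is the same convention used in \Cref{thm:PL_spectral}.
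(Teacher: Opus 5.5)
Your Steps 1--2 are exactly the paper's proof: \Cref{thm:row_hyb_nuc_descent} with the uniform bounds of \Cref{ass:row_hyb_nuc_uniform}, then $\nucnorm{\tG_k}^2\ge\kappa_{\row}^2\fronorm{G_k}^2$, then the \PL inequality, then iteration. Your Step 3 goes beyond the paper, which here and in \Cref{thm:PL_spectral} simply asserts $\rho_{\row\hyb}\in(0,1)$ without arguing positivity; your recursion argument correctly supplies the nonnegativity $\rho_{\row\hyb}\ge 0$ that iterating the bound needs, and it even gives $\rho_{\row\hyb}>0$ whenever some $f(W_{k+1})>f^\star$, since then $0<f(W_{k+1})-f^\star\le\rho_{\row\hyb}\bigl(f(W_k)-f^\star\bigr)$.
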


\begin{proof}
By \Cref{thm:row_hyb_nuc_descent} and
\Cref{ass:row_hyb_nuc_uniform},
\[
    f(W_{k+1}) \le f(W_k) - \gamma\left(\underline a_{\row} - \frac{L\gamma}{2}\overline r_{\row}\right)\nucnorm{\tG_k}^2.
\]
Using \Cref{ass:row_hyb_comparability},
\[
    \nucnorm{\tG_k}^2 \ge \kappa_{\row}^2\fronorm{G_k}^2.
\]
The \PL inequality then proves the claimed recursion. Iterating the recursion yields the linear
rate.
\end{proof}

\begin{remark}[Comparison with the right-spectral/row-norm hybrid]
The right-spectral/row-norm hybrid in \Cref{subsec:hyb_nuc} first computes the right polar factor from the original gradient Gram matrix $G^\top G$ and then applies row-wise normalization. In contrast, the row-norm/right-spectral hybrid first replaces $G$ by $\tG=D_\eta(G)G$ and then computes the right polar factor from $\tG^\top\tG$. Thus, the former preserves the original feature geometry before applying local row-wise normalization, while the latter modifies the feature geometry before the spectral step. This distinction is important in practice: row-normalize-first can better suppress row-scale imbalance before the spectral computation, whereas polar-first preserves more of the original right singular geometry.
\end{remark}

Now, we specialize the convergence results when $\eta(t) = 1/(t+\varepsilon)$ for $\varepsilon>0$, further assuming a uniform row-norm bound. 

\begin{assumption}[Uniform row-norm bound]
\label{ass:row_hyb_row_bound}
There exists $R>0$ such that, for all $k\in\NN$, $\max_{i\in\set{v}}\|G_{k,i:}\|_2 \le R$. 
\end{assumption}

\begin{lemma}[Verification for $\eta(t)=1/(t+\varepsilon)$]
\label{lem:row_hyb_eta_eps}
Let $\varepsilon>0$ and define
\[
    D_\varepsilon(G) = \Diag\left(\frac{1}{\|G_{1:}\|_2+\varepsilon}, \ldots, \frac{1}{\|G_{v:}\|_2+\varepsilon} \right),
\]
and $\tG = D_\varepsilon(G)G$. Then, for every $G\neq 0$,
\[
    \mathfrak A_{\row\hyb}(G) = \frac{\dotpF{G}{\polar(\tG)}}{\nucnorm{\tG}} \ge \varepsilon.
\]
Moreover, if $\max_{i\in\set{v}}\|G_{i:}\|_2\le R$, then
\[
    \nucnorm{\tG} \ge \fronorm{\tG} \ge \frac{1}{R+\varepsilon}\fronorm{G}.
\]
Consequently, along any sequence satisfying \Cref{ass:row_hyb_row_bound}, \Cref{ass:row_hyb_nuc_uniform} and \Cref{ass:row_hyb_comparability} hold with
$\underline a_{\row}=\varepsilon$, $\overline r_{\row}=d$, and $\kappa_{\row}=\frac{1}{R+\varepsilon}$. 
\end{lemma}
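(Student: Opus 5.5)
The plan is to prove the three claims separately. For the alignment bound I would invert the row scaling, use the SVD of $\tG$, and appeal to the positivity of a diagonal weight. The Frobenius bound and the rank bound are direct entrywise estimates.

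\emph{Step 1 (identification and inversion).} First I would note that $\calT_{\row\hyb}(G)=\tG(\tG^\top\tG)^{\sfrac{\dagger}{2}}=\polar(\tG)$. This is the same SVD computation as in the proof of \Cref{thm:one_sided_polargrad_nuc}, and it gives the displayed formula for $\mathfrak A_{\row\hyb}(G)$. The key observation is that $D_\varepsilon(G)$ is always invertible, since every diagonal entry is finite and positive even for zero rows. Hence
\[
G=\Lambda\tG,\qquad \Lambda\coloneqq D_\varepsilon(G)^{-1}=\Diag(\|G_{1:}\|_2+\varepsilon,\dots,\|G_{v:}\|_2+\varepsilon)\succeq\varepsilon I_v .
\]
Also, $G\neq0$ implies $\tG\neq0$, so the ratio is well defined.

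\emph{Step 2 (alignment bound).} Write a compact SVD $\tG=U\Diag(s)V^\top$ with $U\in\RR^{v\times r}$ and $V\in\RR^{d\times r}$ having orthonormal columns and $s>0$. Then $\polar(\tG)=UV^\top$, and by the cyclic property of the trace,
\[
\dotpF{G}{\polar(\tG)}=\tr\bigl(V\Diag(s)U^\top\Lambda UV^\top\bigr)=\tr\bigl(\Diag(s)\,U^\top\Lambda U\bigr)=\sum_{i=1}^r s_i\,u_i^\top\Lambda u_i .
\]
Since $\Lambda\succeq\varepsilon I$ and $\|u_i\|_2=1$, each factor satisfies $u_i^\top\Lambda u_i\ge\varepsilon$. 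The sum is therefore at least $\varepsilon\sum_i s_i=\varepsilon\nucnorm{\tG}$, which gives $\mathfrak A_{\row\hyb}(G)\ge\varepsilon$. This is the only nontrivial step. The obstacle is recognizing that the alignment should be written as a $\Lambda$-weighted sum of singular values, rather than compared with $\nucnorm{G}$, which need not be controlled.

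\emph{Step 3 (comparability and rank).} The inequality $\nucnorm{\tG}\ge\fronorm{\tG}$ is the standard $\ell_1\ge\ell_2$ inequality applied to the singular values. Rowwise, using $\|G_{i:}\|_2\le R$, we have
\[
\fronorm{\tG}^2=\sum_{i=1}^v\frac{\|G_{i:}\|_2^2}{(\|G_{i:}\|_2+\varepsilon)^2}\ge\frac{1}{(R+\varepsilon)^2}\sum_{i=1}^v\|G_{i:}\|_2^2=\frac{\fronorm{G}^2}{(R+\varepsilon)^2}.
\]
Finally, $r_{\row\hyb}(G)=\rank(\tG)\le\min\{v,d\}\le d$. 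Applying these bounds at each $G_k$ along a sequence satisfying \Cref{ass:row_hyb_row_bound} verifies \Cref{ass:row_hyb_nuc_uniform} and \Cref{ass:row_hyb_comparability} with $\underline a_{\row}=\varepsilon$, $\overline r_{\row}=d$, and $\kappa_{\row}=1/(R+\varepsilon)$. If some $G_k=0$, the iteration is already stationary and the bounds are not needed there.
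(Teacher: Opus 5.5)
Your proposal is correct and follows essentially the same route as the paper's proof: write $G = D_\varepsilon(G)^{-1}\tG$, reduce the alignment via a compact SVD of $\tG$ to $\tr\bigl(\tSigma\,\tU^\top D_\varepsilon(G)^{-1}\tU\bigr)$, and conclude from $D_\varepsilon(G)^{-1}\succeq\varepsilon I_v$; the comparability bound uses the same rowwise estimate, and the rank bound uses $\rank(\tG)\le d$. Beyond the paper, you spell out the diagonal-entry bound $u_i^\top\Lambda u_i\ge\varepsilon$ behind the trace inequality, check that $\tG\neq 0$ so the ratio is defined, and treat the degenerate case $G_k=0$; the paper leaves these points implicit.
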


\begin{proof}
Let $\tG=D_\varepsilon(G)G$ and write $\tU_\sfp=\polar(\tG)$. Since $G=D_\varepsilon(G)^{-1}\tG$, we have $\dotpF{G}{\tU_\sfp} = \dotpF{D_\varepsilon(G)^{-1}\tG}{\tU_\sfp}$. 
Let $\tG=\tU\tSigma\tV^\top$ be a compact singular value decomposition. Then $\tU_\sfp=\tU\tV^\top$, and therefore
\[
    \dotpF{D_\varepsilon(G)^{-1}\tG}{\tU_\sfp} = \tr\left( \tSigma \tU^\top D_\varepsilon(G)^{-1}\tU \right).
\]
Because $D_\varepsilon(G)^{-1} = \Diag(\|G_{1:}\|_2+\varepsilon,\ldots,\|G_{v:}\|_2+\varepsilon) \succeq \varepsilon I_v$, we obtain
\[
    \tr\left( \tSigma \tU^\top D_\varepsilon(G)^{-1}\tU \right) \ge \varepsilon\,\tr(\tSigma) = \varepsilon\nucnorm{\tG}.
\]
This proves $\mathfrak A_{\row\hyb}(G)\ge \varepsilon$. 

For the comparability bound, if $\max_{i\in\set{v}}\|G_{i:}\|_2\le R$, then
\[
    \frac{1}{\|G_{i:}\|_2+\varepsilon} \ge \frac{1}{R+\varepsilon}.
\]
Hence
\[
    \fronorm{\tG}^2 = \sum_{i=1}^v \frac{\|G_{i:}\|_2^2}{(\|G_{i:}\|_2+\varepsilon)^2} \ge \frac{1}{(R+\varepsilon)^2} \fronorm{G}^2.
\]
Since $\nucnorm{\tG}\ge \fronorm{\tG}$, the claimed comparability bound follows. Finally, $r_{\row\hyb}(G)=\rank(\tG)\le d$, so we may take $\overline r_{\row}=d$.
\end{proof}

\begin{corollary}[Convergence for $\eta(t)=1/(t+\varepsilon)$]
\label{cor:row_hyb_eta_eps_convergence}
Suppose \Cref{ass:Lipschitz_smooth,ass:row_hyb_row_bound} hold and consider the row-norm/right-spectral hybrid optimizer with $\eta(t)=1/(t+\varepsilon)$. If the learning rate $\gamma$ is constant and satisfies $\gamma\in(0, 2\varepsilon/(Ld))$, then
\[
    f(W_{k+1}) \le f(W_k) - \gamma\left(\varepsilon-\frac{L\gamma}{2}d\right)\nucnorm{\tG_k}^2.
\]
Moreover,
\[
    \min_{0\le k<T}\fronorm{G_k}^2 \le \frac{(R+\varepsilon)^2(f(W_0)-f^\star)}{\gamma(\varepsilon-L\gamma d/2)}.
\]
If, in addition, $f$ satisfies the $\mu$-\PL condition, then
\[
    f(W_k)-f^\star \le \left(1 - \frac{ 2\mu\gamma (\varepsilon-L\gamma d/2)}{(R+\varepsilon)^2}\right)^{\negthickspace k}\bigl(f(W_0)-f^\star\bigr).
\]
\end{corollary}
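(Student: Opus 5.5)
The corollary follows by instantiating the general row-norm/right-spectral results with the constants supplied by \Cref{lem:row_hyb_eta_eps}. First, I will invoke that lemma along the iterate sequence. Under \Cref{ass:row_hyb_row_bound}, it gives $\mathfrak A_{\row\hyb}(G_k)\ge\varepsilon$, $r_{\row\hyb}(G_k)=\rank(\tG_k)\le d$, and $\nucnorm{\tG_k}\ge \fronorm{G_k}/(R+\varepsilon)$. Hence \Cref{ass:row_hyb_nuc_uniform} holds with $\underline a_{\row}=\varepsilon$ and $\overline r_{\row}=d$, and \Cref{ass:row_hyb_comparability} holds with $\kappa_{\row}=1/(R+\varepsilon)$. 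The admissible step-size window $(0,2\underline a_{\row}/(L\overline r_{\row}))$ then becomes exactly $(0,2\varepsilon/(Ld))$.

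For the descent claim, I will apply \Cref{thm:row_hyb_nuc_descent} at each $k$. With $\gamma<2\varepsilon/(Ld)$, the coefficient $\mathfrak A_{\row\hyb}(G_k)-\tfrac{L\gamma}{2}r_{\row\hyb}(G_k)$ is bounded below by $\varepsilon-\tfrac{L\gamma}{2}d>0$, which yields the one-step inequality. This is precisely the first display of \Cref{thm:row_hyb_nuc_stationarity} with the substituted constants.

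For the stationarity bound, I will quote \Cref{thm:row_hyb_nuc_stationarity}. Substituting $\kappa_{\row}^2=(R+\varepsilon)^{-2}$ gives
\[
\min_{0\le k<T}\fronorm{G_k}^2\le \frac{(R+\varepsilon)^2(f(W_0)-f^\star)}{T\gamma(\varepsilon-L\gamma d/2)}.
\]
The statement as printed omits the factor $1/T$. Because $T\ge1$, the printed bound is weaker than this and therefore follows from it. I will present the sharper version and then note that it implies the stated one. The \PL part then follows from \Cref{thm:row_hyb_nuc_PL} with the same constants: the contraction factor is $1-2\mu\gamma(\varepsilon-L\gamma d/2)/(R+\varepsilon)^2$, and iterating the one-step recursion gives the $k$-th power.

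The one step that needs care is confirming that the contraction factor lies in $[0,1)$. Positivity of the subtracted term follows from the step-size window. For the upper end, the descent inequality together with \PL and $f(W_{k+1})\ge f^\star$ forces the factor to be nonnegative whenever $f(W_k)>f^\star$. I will argue this directly rather than assert it.

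I will also check that the reuse of \Cref{lem:row_hyb_eta_eps} is legitimate even though the hybrid map is built from the generic $\eta$. Here $\eta(t)=1/(t+\varepsilon)>0$, so $D_\varepsilon(G)$ is invertible and $\calT_{\row\hyb}(G)=\polar(\tG)$, which is exactly the setting of that lemma. Finally, the case $G_k=0$, where $\mathfrak A_{\row\hyb}$ is undefined, is trivial: the update then vanishes and every inequality holds.
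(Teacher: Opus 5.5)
Your proposal is correct and is the intended argument. \Cref{lem:row_hyb_eta_eps} supplies $\underline a_{\row}=\varepsilon$, $\overline r_{\row}=d$ and $\kappa_{\row}=1/(R+\varepsilon)$, and substituting these into \Cref{thm:row_hyb_nuc_descent,thm:row_hyb_nuc_stationarity,thm:row_hyb_nuc_PL} gives all three claims; the paper offers no separate proof beyond this instantiation. You are also right that the printed stationarity bound drops the factor $1/T$ and is implied by the sharper bound for $T\ge 1$, and your extra checks on the case $G_k=0$ and on the nonnegativity of the contraction factor are sound.
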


\newpage
\section{Experimental Details}
\label{sec:details_expt}
Note that we reduce the number of hidden layers and the number of experts from the original architecture. We give the main modified model architecture specifications of the model in \Cref{table:architectures}. Their detailed designs can be found in their corresponding technical reports \citep{qwen3technicalreport,gemmateam2025gemma3technicalreport,muennighoff2024olmoe,agarwal2025gpt}. We initialize all 2D weights by Gaussian random numbers with zero mean and standard deviation 0.02. 

\begin{table}[H]    
    \centering
    \caption{Modified model architectures of language models.}
    \label{table:architectures}
    \small
    \begin{tabular}{rrrrr}
    	\toprule
    	Model & Qwen3-0.6B & Gemma 3 1B & \OLMoE-1B-7B & gpt-oss \\
    	\midrule
    	\# trainable parameters & 625,784,832 & 1,087,138,944  & 2,824,177,664 & 3,467,779,008 \\
    	$d_{\mathrm{model}}$ & 1024 & 1152 & 2048 & 2048 \\
        $d_{\mathrm{ff}}$ & 3072 & 6912 & 1024 & 2048 \\
    	$n_{\mathrm{layers}}$ & 20 & 18 & 12 & 12 \\
    	$n_{\mathrm{heads}}$ (Q / KV) & 16 / 8 & 4 / 1 & 16 / 16 & 64 / 8 \\
        $d_{\mathrm{heads}}$ & 128 & 256 & 128 & 64 \\
        $n_{\mathrm{experts}}$ & 1 & 1 & 32 & 16 \\
        $n_{\mathrm{experts}}$ activated & N/A & N/A & 8 & 4 \\
    	vocabulary size & 151,936 & 262,144 & 50,304 & 201,088\\
    	layer norm & RMSNorm & RMSNorm & RMSNorm & RMSNorm \\
    	activation function & SwiGLU & GeGLU with $\tanh$ & SwiGLU & SwiGLU\tablefootnote{The SwiGLU implementation in gpt-oss is unconventional as it includes clamping and residual connection. } \\
    	\bottomrule
    \end{tabular}
\end{table}

In the following experiments, we use \textsc{Polar Express} \citep{amsel2025polar} for computing the matrix inverse square root in \LeftPolarGradM and Gram Newton--Schulz \citep{zhang2026gram} for computing the orthogonal polar factor directly in \HybridPolarGradM, respectively. Unless otherwise specified, for \textsc{Polar Express} and Gram Newton--Schulz, we use 5 inner steps with $\varepsilon_{\mathrm{NS}} = 10^{-7}$. We use $\varepsilon=10^{-8}$ for all of \RowNormM, \LeftPolarGradM and \HybridPolarGradM. The row-scaling rule in \RowNormM and \HybridPolarGradM is chosen to be the \emph{smoothed row normalization}, i.e., $\eta(t) = 1/(t+\varepsilon)$ with $\varepsilon=10^{-8}$. 

For \AdamW on scalar and vector parameters, we use a linear warmup with cosine decay learning rate schedule with 100 warmup steps and a half-cosine decay to $0$. For all other parameters, regardless of the choice of the optimizers, we use a stable-decay schedule with an initial learning rate $\gamma_0$ for the first 60\% of training steps and linear decay to $0$ for the last 40\% training steps. We use the fused implementation of \AdamW in PyTorch, while the implementations of \RowNormM, \LeftPolarGradM, \RightPolarGradM and \HybridPolarGradM are not optimized with customized kernels, except for the usage of Gram Newton--Schulz \citep{zhang2026gram}. 

We give the training configurations and optimizer hyperparameters of all four model pre-training experiments in the following subsections.

\newpage
\subsection{Qwen3-0.6B-Style Pre-Training}
In terms of wall-clock training time, for (a), configurations (i)--(iii) take 7.360 hours, 7.498 hours and 7.369 hours respectively, while for (b), configurations (i)--(iii) take 7.765 hours, 7.886 hours and 7.771 hours respectively. The time difference between (a) and (b) is expected as \HybridPolarGradM for SwiGLU MLP projection matrices have additional computational overheads than \Muon. 
\begin{table}[H]
   	\centering
   	\caption{Training configurations for Qwen3-0.6B-style pre-training.}
   	\label{table:config_qwen3}
    \small
   	\begin{tabular}{lc}
        \toprule
        Model & Qwen3-0.6B on FineWeb-Edu-10B \\
        \midrule
        Context length & 1024 tokens \\
        Per-device batch size & 28 sequences \\
        Training steps & 30,000 \\
        Training tokens & 6,881,280,000 \\        
        Validation steps & 46 \\
        Validation tokens & 10,551,296 \\
        Precision & \texttt{bfloat16} \\         
        Data-parallel size & 8 (Nvidia H200) \\   
        \bottomrule
   	\end{tabular}
\end{table}

\begin{table}[H]
    \centering
    \caption{Optimizer hyperparameters for Qwen3-0.6B-style pre-training.}
    \label{table:optim_hyperparams_qwen3}
    \scriptsize
    \begin{tabular}{cccccc}
        \toprule
        Configuration & Parameter type & Optimizer & $\gamma_0$ & momentum $\beta$ & weight decay $\lambda$ \\
        \midrule
        \multirow{5}{*}{\shortstack{(a)(i) \AdamW + \Muon \\ + \Muon \\ + \RowNormM}}
            & scalar / vector     & \AdamW             & $0.05$ & $(0.9, 0.95)$    & $0.001$ \\
            & linear / attention    & \Muon              & $0.02$ & $0.95$ & $0.001$ \\
            & SwiGLU MLP    & \Muon              & $0.02$ & $0.95$ & $0.001$ \\
            & embedding  & \RowNormM   & $0.50$ & $0.95$  & $0$ \\
            & head    & \RowNormM   & $0.005$ & $0.95$  & $0$ \\
        \midrule
        \multirow{5}{*}{\shortstack{(a)(ii) \AdamW + \Muon \\ + \Muon \\ + \HybridPolarGradM\\ (row-norm/right-spectral; $\alpha=1$)}}
            & scalar / vector     & \AdamW             & $0.05$ & $(0.9, 0.95)$   & $0.001$ \\
            & linear / attention     & \Muon              & $0.02$ & $0.95$ & $0.001$ \\
            & SwiGLU MLP    & \Muon              & $0.02$ & $0.95$ & $0.001$ \\
            & embedding  & \HybridPolarGradM   & $1.00$ & $0.95$  & $0$ \\
            & head    & \HybridPolarGradM      & $0.01$ & $0.95$  & $0$ \\
        \midrule
        \multirow{5}{*}{\shortstack{(a)(iii) \AdamW + \Muon \\ + \Muon + \AdamW}}
            & scalar / vector     & \AdamW             & $0.05$ & $(0.9, 0.95)$    & $0.001$ \\
            & linear / attention     & \Muon              & $0.02$ & $0.95$ & $0.001$ \\
            & SwiGLU MLP    & \Muon              & $0.02$ & $0.95$ & $0.001$ \\
            & embedding  & \AdamW             & $0.10$ & $(0.9, 0.95)$   & $0$ \\
            & head    & \AdamW             & $0.001$ & $(0.9, 0.95)$  & $0$ \\
        \midrule
        \multirow{5}{*}{\shortstack{(b)(i) \AdamW + \Muon \\+ \HybridPolarGradM \\ (row-norm/right-spectral; $\alpha=1$)\\ + \RowNormM}}
            & scalar / vector     & \AdamW             & $0.05$ & $(0.9, 0.95)$    & $0.001$ \\
            & linear / attention    & \Muon              & $0.02$ & $0.95$ & $0.001$ \\
            & SwiGLU MLP    & \HybridPolarGradM              & $0.02$ & $0.95$ & $0.001$ \\
            & embedding  & \RowNormM   & $0.50$ & $0.95$  & $0$ \\
            & head    & \RowNormM   & $0.005$ & $0.95$  & $0$ \\
        \midrule
        \multirow{5}{*}{\shortstack{(b)(ii) \AdamW + \Muon \\ + \HybridPolarGradM \\+ \HybridPolarGradM \\ (row-norm/right-spectral; $\alpha=1$)}}
            & scalar / vector     & \AdamW             & $0.05$ & $(0.9, 0.95)$   & $0.001$ \\
            & linear / attention     & \Muon              & $0.02$ & $0.95$ & $0.001$ \\
            & SwiGLU MLP    & \HybridPolarGradM             & $0.02$ & $0.95$ & $0.001$ \\
            & embedding  & \HybridPolarGradM   & $1.00$ & $0.95$  & $0$ \\
            & head    & \HybridPolarGradM      & $0.01$ & $0.95$  & $0$ \\
        \midrule
        \multirow{5}{*}{\shortstack{(b)(iii) \AdamW + \Muon \\ + \HybridPolarGradM \\ (row-norm/right-spectral; $\alpha=1$)\\ + \AdamW}}
            & scalar / vector     & \AdamW             & $0.05$ & $(0.9, 0.95)$    & $0.001$ \\
            & linear / attention     & \Muon              & $0.02$ & $0.95$ & $0.001$ \\
            & SwiGLU MLP    & \HybridPolarGradM             & $0.02$ & $0.95$ & $0.001$ \\
            & embedding  & \AdamW             & $0.10$ & $(0.9, 0.95)$   & $0$ \\
            & head    & \AdamW             & $0.001$ & $(0.9, 0.95)$  & $0$ \\
        \bottomrule
    \end{tabular}
\end{table}

\newpage
\subsection{Gemma 3 1B-Style Pre-Training}
\label{subsec:gemma3_add_expt}
In terms of wall-clock training time, for (a), configurations (i)--(iii) take 8.341 hours, 8.807 hours and 8.132 hours respectively, whereas for (b), configurations (i)--(iii) take 8.675 hours, 9.101 hours and 8.469 hours respectively. 

\begin{table}[H]
   	\centering
   	\caption{Training configurations for Gemma 3 1B-style pre-training.}
   	\label{table:config_gemma3}
    \small
   	\begin{tabular}{lc}
        \toprule
        Model & Gemma 3 1B on FineWeb-Edu-10B \\
        \midrule
        Context length & 1024 tokens \\
        Per-device batch size &  18 sequences \\
        Training steps & 50,000 \\
        Training tokens & 7,372,800,000 \\        
        Validation steps & 71 \\
        Validation tokens & 10,469,376 \\
        Precision & \texttt{bfloat16} \\         
        Data-parallel size & 8 (Nvidia H200) \\   
        \bottomrule
   	\end{tabular}
\end{table}

\begin{table}[H]
    \centering
    \caption{Optimizer hyperparameters for Gemma 3 1B-style pre-training.}
    \label{table:optim_hyperparams_gemma3}
    \scriptsize
    \begin{tabular}{cccccc}
        \toprule
        Configuration & Parameter type & Optimizer & $\gamma_0$ & momentum $\beta$ & weight decay $\lambda$ \\
        \midrule
        \multirow{5}{*}{\shortstack{(a)(i) \AdamW + \Muon \\ + \Muon \\ + \RowNormM}}
            & scalar / vector     & \AdamW             & $0.05$ & $(0.9, 0.95)$    & $0.001$ \\
            & linear / attention     & \Muon              & $0.02$ & $0.95$ & $0.001$ \\
            & SwiGLU MLP     & \Muon              & $0.02$ & $0.95$ & $0.001$ \\
            & embedding  & \RowNormM   & $0.0025$ & $0.95$  & $0$ \\
            & head    & \RowNormM   & $0.0025$ & $0.95$  & $0$ \\
        \midrule
        \multirow{5}{*}{\shortstack{(a)(ii) \AdamW + \Muon \\ + \Muon \\ + \HybridPolarGradM\\ (row-norm/right-spectral; $\alpha=1$)}}
            & scalar / vector     & \AdamW             & $0.05$ & $(0.9, 0.95)$   & $0.001$ \\
            & linear / attention    & \Muon              & $0.02$ & $0.95$ & $0.001$ \\
            & SwiGLU MLP     & \Muon              & $0.02$ & $0.95$ & $0.001$ \\
            & embedding  & \HybridPolarGradM   & $0.0025$ & $0.95$  & $0$ \\
            & head    & \HybridPolarGradM      & $0.0025$ & $0.95$  & $0$ \\
        \midrule
        \multirow{5}{*}{\shortstack{(a)(iii) \AdamW + \Muon \\ + \Muon + \AdamW}}
            & scalar / vector     & \AdamW             & $0.05$ & $(0.9, 0.95)$    & $0.001$ \\
            & linear / attention     & \Muon              & $0.02$ & $0.95$ & $0.001$ \\
            & SwiGLU MLP     & \Muon              & $0.02$ & $0.95$ & $0.001$ \\
            & embedding  & \AdamW             & $0.0005$ & $(0.9, 0.95)$   & $0$ \\
            & head    & \AdamW             & $0.0005$ & $(0.9, 0.95)$  & $0$ \\
        \midrule
        \multirow{5}{*}{\shortstack{(b)(i) \AdamW + \Muon \\+ \HybridPolarGradM \\ (row-norm/right-spectral; $\alpha=1$)\\ + \RowNormM}}
            & scalar / vector     & \AdamW             & $0.05$ & $(0.9, 0.95)$    & $0.001$ \\
            & linear / attention     & \Muon              & $0.02$ & $0.95$ & $0.001$ \\
            & SwiGLU MLP     & \HybridPolarGradM              & $0.02$ & $0.95$ & $0.001$ \\
            & embedding  & \RowNormM   & $0.0025$ & $0.95$  & $0$ \\
            & head    & \RowNormM   & $0.0025$ & $0.95$  & $0$ \\
        \midrule
        \multirow{5}{*}{\shortstack{(b)(ii) \AdamW + \Muon \\ + \HybridPolarGradM \\+ \HybridPolarGradM \\ (row-norm/right-spectral; $\alpha=1$)}}
            & scalar / vector     & \AdamW             & $0.05$ & $(0.9, 0.95)$   & $0.001$ \\
            & linear / attention    & \Muon              & $0.02$ & $0.95$ & $0.001$ \\
            & SwiGLU MLP     & \HybridPolarGradM              & $0.02$ & $0.95$ & $0.001$ \\
            & embedding  & \HybridPolarGradM   & $0.001$ & $0.95$  & $0$ \\
            & head    & \HybridPolarGradM      & $0.001$ & $0.95$  & $0$ \\
        \midrule
        \multirow{5}{*}{\shortstack{(b)(iii) \AdamW + \Muon \\ + \HybridPolarGradM \\ (row-norm/right-spectral; $\alpha=1$)\\ + \AdamW}}
            & scalar / vector     & \AdamW             & $0.05$ & $(0.9, 0.95)$    & $0.001$ \\
            & linear / attention     & \Muon              & $0.02$ & $0.95$ & $0.001$ \\
            & SwiGLU MLP     & \HybridPolarGradM              & $0.02$ & $0.95$ & $0.001$ \\
            & embedding  & \AdamW             & $0.0005$ & $(0.9, 0.95)$   & $0$ \\
            & head    & \AdamW             & $0.0005$ & $(0.9, 0.95)$  & $0$ \\
        \bottomrule
    \end{tabular}
\end{table}

\subsubsection{Gemma 3 1B-Style Pre-Training Learning Rate Sweep}
For this pre-training experiment, we also perform a base learning rate sweep for the embedding and LM head matrices, keeping the learning rates for scalars/vector and matrices fixed. For simplicity, we keep both base learning rates for the embedding and LM head matrices to be equal, although more delicate tuning is possible. 

\begin{table}[h!]
    \centering
    \caption{Base learning-rate sweep for the input embedding and LM head matrices in Gemma 3 1B-style pre-training. We sweep only     $\gamma_{0,\mathrm{emb}}=\gamma_{0,\mathrm{head}}$. In setting (a), SwiGLU MLP projection matrices use \Muon; in setting (b), they use \HybridPolarGradM with a row-norm/right-spectral composition.}
    \label{table:gemma3_lr_sweep}
    \small
    \begin{tabular}{cccc}
        \toprule
        Setting &
        Embedding/LM head optimizer &
        $\gamma_{0,\mathrm{emb}}=\gamma_{0,\mathrm{head}}$ &
        Final validation loss \\
        \midrule
        \multirow{12}{*}{\shortstack{(a) SwiGLU MLP:\\ \Muon}}
        & \multirow{4}{*}{\RowNormM}
            & $0.001$  & $4.0810$  \\
        &   & $0.0025$ & $\mathbf{4.0702}$  \\
        &   & $0.005$  & $4.0703$ \\
        &   & $0.01$   & $4.0735$ \\
        \cmidrule(lr){2-4}
        & \multirow{4}{*}{\shortstack{\HybridPolarGradM\\ (row-norm/right-spectral, $\alpha=1$)}}
            & $0.001$  & $4.0662$  \\
        &   & $0.0025$ & $\mathbf{4.0655}$  \\
        &   & $0.005$  & $4.0734$  \\
        &   & $0.01$   & $4.0765$ \\
        \cmidrule(lr){2-4}
        & \multirow{4}{*}{\AdamW}
            & $0.0005$ & $\mathbf{4.1046}$ \\
        &   & $0.001$  & $4.1075$ \\
        &   & $0.002$  & $4.1060$ \\
        &   & $0.005$  & $4.1120$ \\
        \midrule
        \multirow{12}{*}{\shortstack{(b) SwiGLU MLP:\\ \HybridPolarGradM}}
        & \multirow{4}{*}{\RowNormM}
            & $0.001$  & $4.0626$ \\
        &   & $0.0025$ &  $\mathbf{4.0516}$ \\
        &   & $0.005$  & $4.0528$ \\
        &   & $0.01$   & $4.0572$ \\
        \cmidrule(lr){2-4}
        & \multirow{4}{*}{\shortstack{\HybridPolarGradM\\ (row-norm/right-spectral, $\alpha=1$)}}
            & $0.001$  & $\mathbf{4.0435}$  \\
        &   & $0.0025$ & $4.0441$  \\
        &   & $0.005$  & $4.0481$  \\
        &   & $0.01$   & $4.0477$ \\
        \cmidrule(lr){2-4}
        & \multirow{4}{*}{\AdamW}
            & $0.0005$ & $\mathbf{4.0862}$ \\
        &   & $0.001$  & $4.0886$ \\
        &   & $0.002$  & $4.0872$ \\
        &   & $0.005$  & $4.0905$ \\
        \bottomrule
    \end{tabular}
\end{table}

We observe from \Cref{table:gemma3_lr_sweep} that the validation loss gaps between configuration (iii) and configurations (i)--(ii) remain substantial across the swept base learning rates. As shown in \Cref{fig:gemma3_2}, the separation between the \AdamW embedding/LM head baselines and the symmetry-compatible alternatives is not explained by a single learning-rate choice. Across the swept values of $\gamma_{0,\mathrm{emb}}=\gamma_{0,\mathrm{head}}$, the \AdamW curves make comparable or slightly faster initial progress, but remain consistently above the \RowNormM and \HybridPolarGradM curves later in training. This qualitative ordering holds both when the SwiGLU MLP projection matrices use \Muon and when they use \HybridPolarGradM. Thus, the improvement from symmetry-compatible vocabulary-indexed updates is robust to reasonable base learning-rate variation and does not depend on simultaneously replacing \Muon on the SwiGLU MLP projections. 

\begin{figure}[h!]
    \centering
    \begin{subfigure}[t]{\textwidth}
        \centering
        \includegraphics[width=\textwidth]{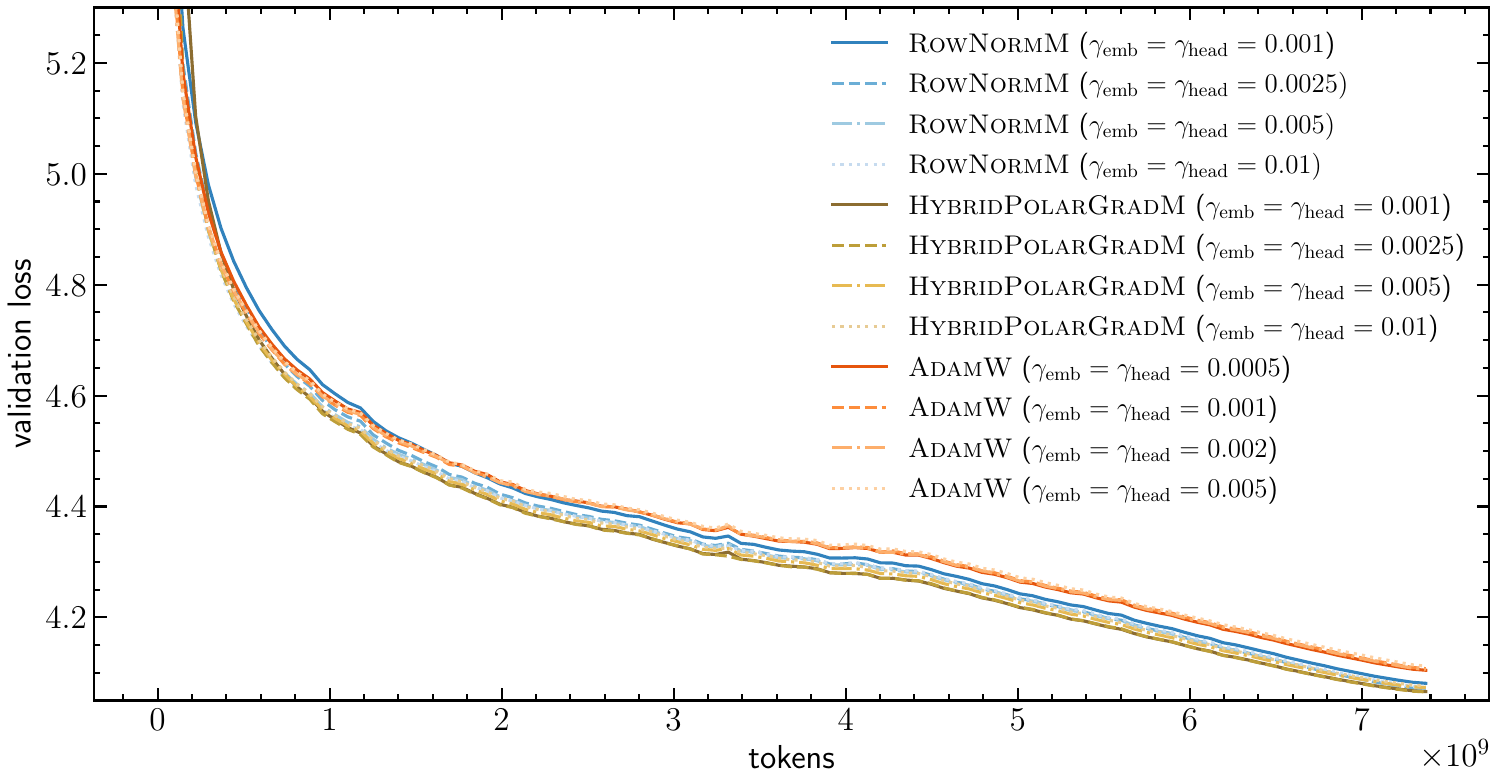}
        \caption{SwiGLU MLP projection matrices use \Muon.}
        \label{fig:gemma3_2_muon_swiglu}
    \end{subfigure}%
    \vspace*{2.5mm}
    \begin{subfigure}[t]{\textwidth}
        \centering
        \includegraphics[width=\textwidth]{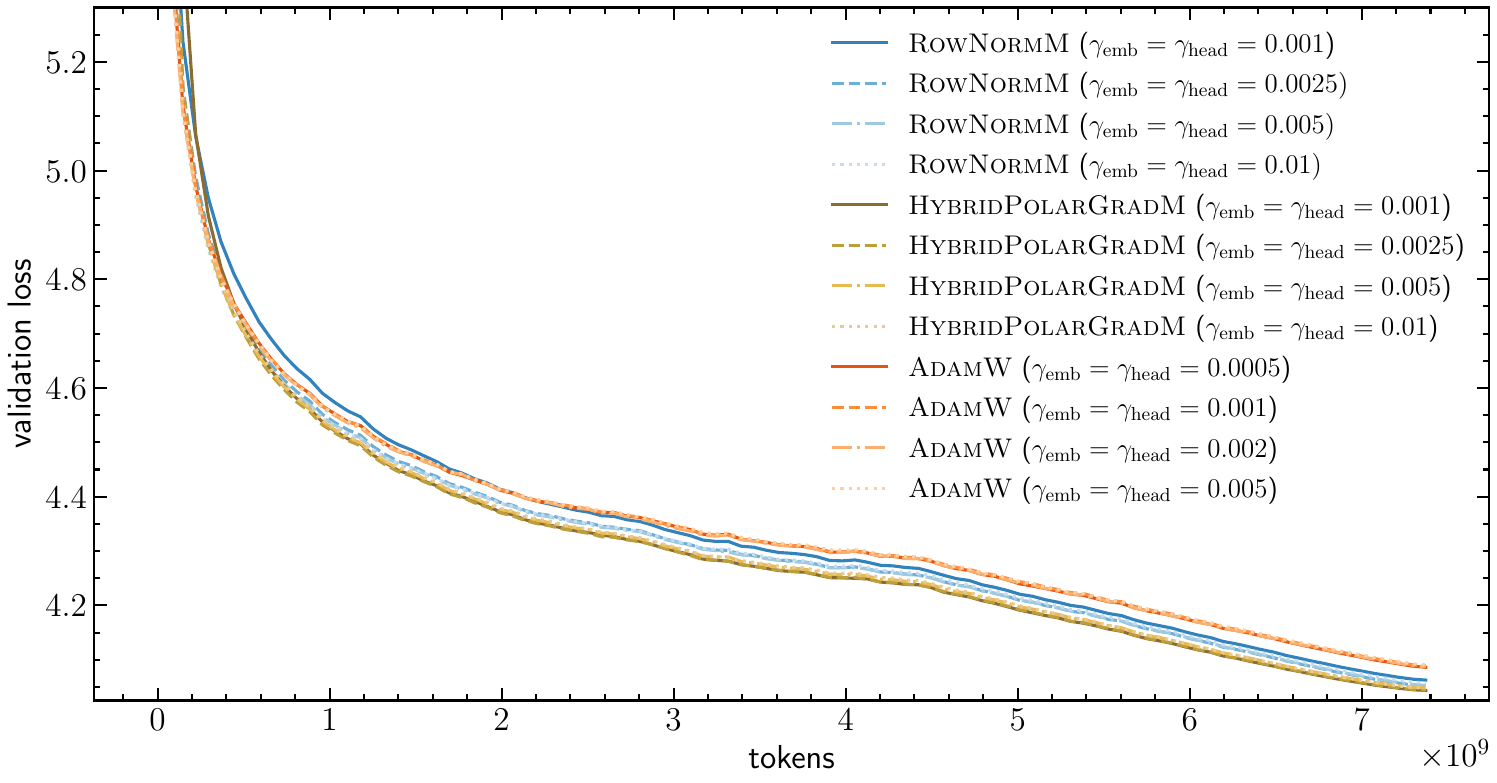}
        \caption{SwiGLU MLP projection matrices use \HybridPolarGradM with a row-norm/right-spectral composition.}
        \label{fig:gemma3_2_hybrid_swiglu}
    \end{subfigure}
    \caption{Validation loss curves for the Gemma 3 1B-style embedding/LM head learning-rate sweep. The swept learning rate is $\gamma_{0,\mathrm{emb}}=\gamma_{0,\mathrm{head}}$. The two panels differ only in the optimizer assigned to the SwiGLU MLP projection matrices: \Muon in \Cref{fig:gemma3_2_muon_swiglu} and \HybridPolarGradM in \Cref{fig:gemma3_2_hybrid_swiglu}. Across both settings, \RowNormM and \HybridPolarGradM remain consistently better than \AdamW for the input embedding and LM head matrices.}
    \label{fig:gemma3_2}
\end{figure}

\subsubsection{Gemma 3 1B-Style Pre-Training Across Random Seeds}
\label{subsubsec:gemma3_seeds}
To assess the robustness of the observed optimizer ordering, we repeat the Gemma 3 1B-style pre-training experiment with two additional random seeds. As in \Cref{subsec:gemma3}, we consider the same three optimizer assignments for the input embedding and LM head matrices: (i) \RowNormM, (ii) \HybridPolarGradM, and (iii) \AdamW. In all runs in this subsection, the SwiGLU MLP projection matrices use \HybridPolarGradM with a row-norm/right-spectral composition.

\begin{figure}[h!]
    \centering
    \begin{subfigure}[t]{\textwidth}
        \centering
        \includegraphics[width=\textwidth]{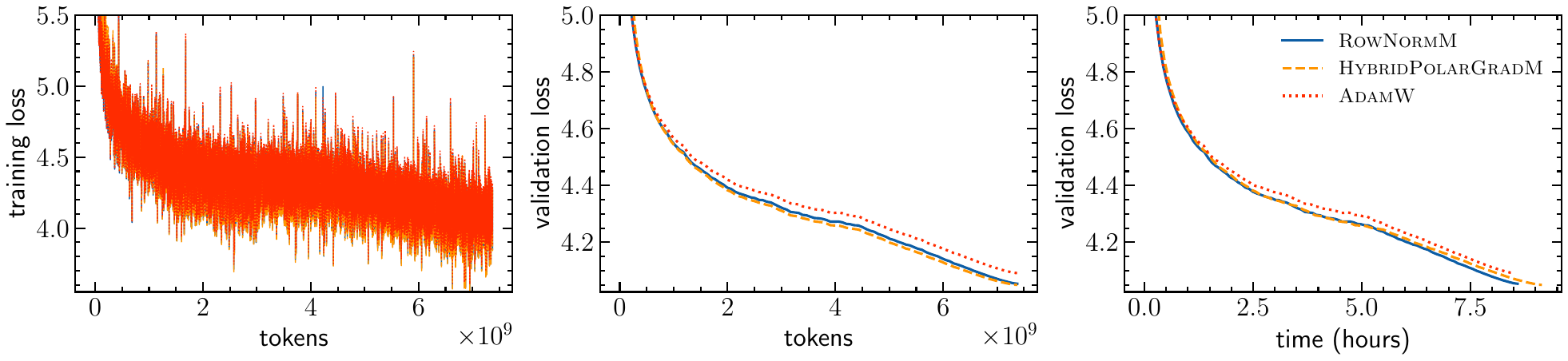}
        \caption{Second random seed.}
        \label{fig:gemma3_hybrid_mlp_142}
    \end{subfigure}%
    \vspace*{2.5mm}
    \begin{subfigure}[t]{\textwidth}
        \centering
        \includegraphics[width=\textwidth]{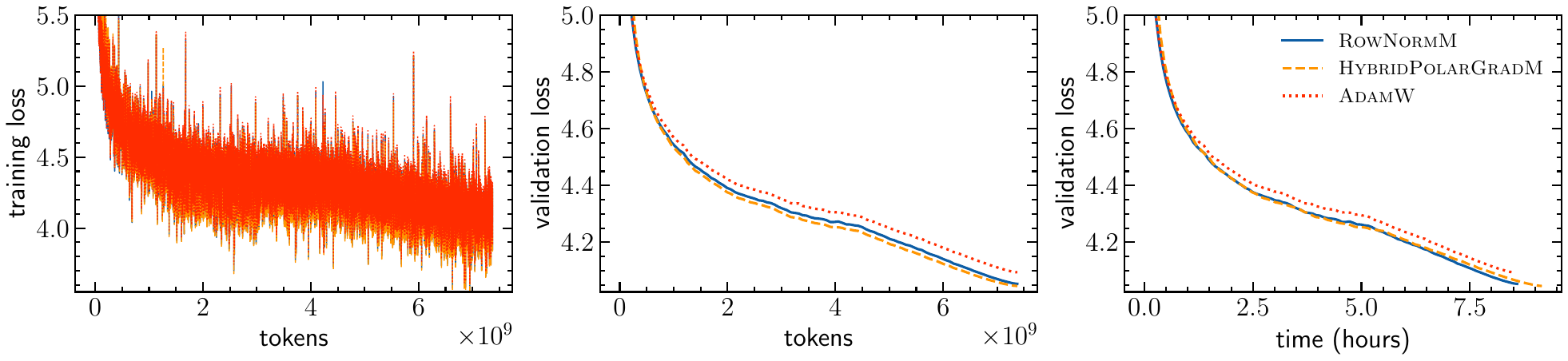}
        \caption{Third random seed.}
        \label{fig:gemma3_hybrid_mlp_242}
    \end{subfigure}
    \caption{Training and validation losses for Gemma 3 1B-style pre-training under two additional random seeds. In each subfigure, the three configurations differ only in the optimizer assigned to the input embedding and LM head matrices: \RowNormM, \HybridPolarGradM, or \AdamW. The SwiGLU MLP projection matrices use \HybridPolarGradM in all runs.}
    \label{fig:gemma3_seeds}
\end{figure}

\vspace*{-2.5mm}
\begin{table}[h!]
    \centering
    \caption{Final validation losses across three random seeds for Gemma 3 1B-style pre-training. The optimizer assignment varies only for the input embedding and LM head matrices; SwiGLU MLP projection matrices use \HybridPolarGradM throughout.}
    \label{tab:gemma3_seeds}
    \small
    \begin{tabular}{@{}lcccc@{}}
        \toprule
        Embedding/LM head optimizer & Seed 1 & Seed 2 & Seed 3 & Mean loss $\pm$ std \\
        \midrule
        \RowNormM & $4.0516$ & $4.0539$ & $4.0535$ & $4.0530\pm0.0012$ \\
        \HybridPolarGradM & $4.0435$ & $4.0497$ & $4.0459$ & $\mathbf{4.0464\pm0.0031}$ \\
        \AdamW & $4.0862$ & $4.0912$ & $4.0941$ & $4.0905\pm0.0040$ \\
        \bottomrule
    \end{tabular}
\end{table}

As shown in \Cref{tab:gemma3_seeds}, the qualitative ordering is stable across random seeds. Both symmetry-compatible optimizer assignments, \RowNormM and \HybridPolarGradM, consistently outperform the \AdamW baseline on the vocabulary-indexed matrices. \HybridPolarGradM achieves the lowest mean final validation loss, improving over \AdamW by approximately $0.0441$, while \RowNormM improves over \AdamW by approximately $0.0375$. These results indicate that the gains observed in the main Gemma 3 1B-style experiment are not an artifact of a single random initialization.

\subsubsection{Vocabulary-Logit Growth and Final Logit Soft-Capping}
\label{subsubsec:gemma_logit_softcapping}
We additionally study whether symmetry-compatible LM head updates affect the growth of final vocabulary logits in dense pre-training. This diagnostic is motivated by prior work on training instabilities in large Transformer models. \citet{chowdhery2022palm} introduce an auxiliary output z-loss, proportional to $\log^2 Z$, to keep the softmax normalizer $\log Z$ close to zero and improve stability. \citet{wortsman2024small} study small-scale proxies for large-scale Transformer instabilities, including attention-logit growth and divergence between output logits and log probabilities. Gemma 2 \citep{team2024gemma2} instead uses explicit logit soft-capping in both attention layers and the final vocabulary layer, applying $z \leftarrow c\tanh(z/c)$ with soft-cap values $c=50$ for attention logits and $c=30$ for final logits.

Our goal is to test whether optimizer geometry provides a complementary mechanism for controlling final vocabulary logits. For an LM head $W_{\rmout}\in\RR^{v\times d}$, the final logits are $z_t=W_{\rmout}h_t\in\RR^v$. Since the softmax distribution is invariant under shared shifts $z_t\mapsto z_t+c\One_v$, the LM head has a natural shared-logit-shift quotient geometry. The projected LM head updates proposed in this paper remove this symmetry-redundant direction from the actual parameter update, and may therefore reduce unnecessary or unstable growth of the final vocabulary logits.

We run Gemma 3 1B-style pre-training while varying the LM head optimizer and the final-logit soft-capping setting. Attention-logit soft-capping is disabled in all runs, so the diagnostic isolates the final vocabulary logits. In addition to validation cross-entropy, we log final-logit diagnostics on sampled validation token positions. Let $\calS$ denote the sampled positions. For each $t\in\calS$, let 
\[z_t = (z_{t,i})_{1\le i \le v} =W_{\rmout}h_t\in\RR^v, \qquad\oz_t\coloneqq \frac{1}{v}\sum_{i=1}^v z_{t,i}, \qquad \tz_t\coloneqq z_t-\oz_t\One_v. \]
We report the raw final-logit RMS, centered final-logit RMS, and maximum log-sum-exp given by
\[
    \left(\frac{1}{|\calS|}\sum_{t\in \calS}\frac{\euclidnorm{z_t}^2}{v}\right)^{\negthickspace\negthinspace\half},
    \qquad
    \left(\frac{1}{|\calS|}\sum_{t\in \calS}\frac{\euclidnorm{\tz_t}^2}{v}\right)^{\negthickspace\negthinspace\half},
    \qquad
    \max_{t\in \calS}\log\sum_{i=1}^{v}\exp(z_{t,i}).
\]
The raw RMS measures the overall logit scale, including the shared-logit-shift component. The centered RMS removes this shared component and measures logit growth on the LM head quotient space. The maximum log-sum-exp is a spike-sensitive diagnostic for unusually large final vocabulary logits. Under distributed training, this maximum is reduced across devices and therefore corresponds to the maximum over the union of sampled positions across ranks.

This experiment tests whether projected symmetry-compatible LM head updates provide an optimizer-side form of logit control. The quotient-geometry prediction is not that such updates shrink all logit variation, but that they suppress drift in the shared-logit-shift direction while preserving the centered logits that determine the softmax distribution. Thus, a positive outcome is that, without final logit soft-capping, projected \RowNormM or \HybridPolarGradM LM head updates reduce raw logit RMS and log-sum-exp growth while maintaining comparable centered-logit diagnostics and comparable or improved validation cross-entropy relative to the corresponding \AdamW LM head baseline.

In the figures below, \RowNormM and \HybridPolarGradM are configurations (b)(i) and (b)(ii) in \Cref{table:optim_hyperparams_gemma3}, respectively, where the same optimizer is used for both the embedding and LM head matrices. \RowNormM + \AdamW and \HybridPolarGradM + \AdamW use the first optimizer for the embedding and \AdamW for the LM head. All other optimizer hyperparameters are kept as in the original Gemma 3 1B-style setting given in \Cref{table:optim_hyperparams_gemma3}.

\paragraph{Without final logit soft-capping.}
We first consider Gemma 3 1B-style pre-training without final logit soft-capping. As shown in \Cref{fig:gemma3_logit_no_softcapping}, projected \RowNormM and \HybridPolarGradM LM head updates achieve slightly lower validation loss than the corresponding configurations that use \AdamW on the LM head. The logit diagnostics reveal a sharper distinction. The \AdamW LM head variants exhibit substantially larger raw logit RMS, and the \RowNormM + \AdamW configuration shows a pronounced late-stage increase in both raw logit RMS and maximum log-sum-exp. In contrast, the projected \RowNormM and \HybridPolarGradM LM head updates keep the raw logit RMS much smaller and avoid this late-stage growth. 

The centered diagnostics show that this growth is not primarily caused by larger relative differences among vocabulary logits. The centered logit RMS for the \AdamW LM head variants remains comparable to, and is often smaller than, that of the projected \RowNormM and \HybridPolarGradM LM head updates. Thus, the raw logit growth under \AdamW is driven mainly by the shared-logit-shift component rather than by the centered component that determines the softmax distribution. This supports the quotient-geometry interpretation: projected LM-head updates suppress drift in a symmetry-redundant direction while preserving the softmax-relevant centered logits and slightly improving validation performance. 

\begin{figure}[h!]
    \centering
    \begin{subfigure}[t]{\textwidth}
        \centering
        \includegraphics[width=\textwidth]{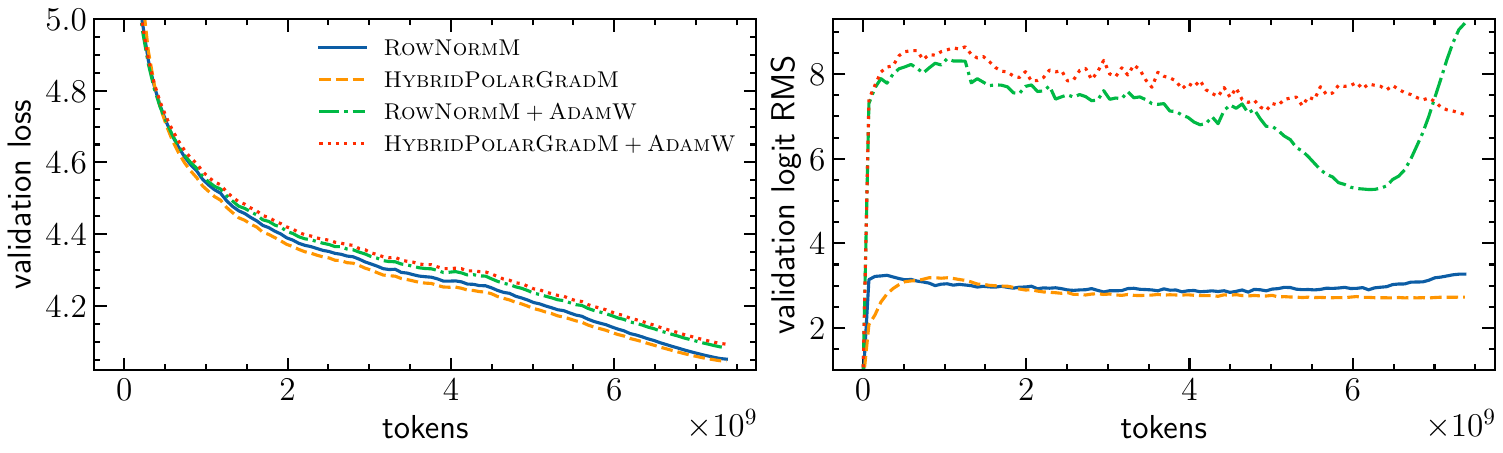}
        \caption{Validation cross-entropy and raw final-logit RMS.}
        \label{fig:gemma3_no_logit_1}
    \end{subfigure}%
    \vspace*{2.5mm}
    \begin{subfigure}[t]{\textwidth}
        \centering
        \includegraphics[width=\textwidth]{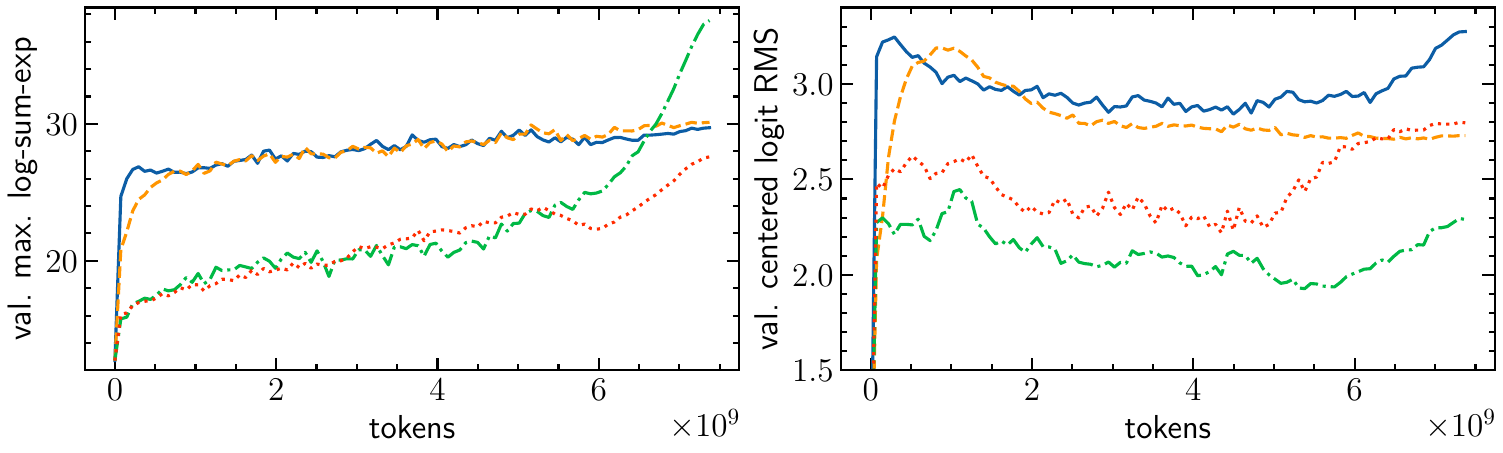}
        \caption{Validation maximum log-sum-exp and centered final-logit RMS.}
        \label{fig:gemma3_no_logit_2}
    \end{subfigure}
    \caption{Validation loss and final vocabulary-logit diagnostics for Gemma 3 1B-style pre-training without final logit soft-capping. The \AdamW LM head variants exhibit larger raw logit RMS and, for \RowNormM+\AdamW, a late-stage increase in maximum log-sum-exp, while the centered logit RMS remains comparable. This indicates that the growth is mainly in the shared-logit-shift component.}
    \label{fig:gemma3_logit_no_softcapping}
\end{figure}

\paragraph{With final logit soft-capping.}
We next repeat the same comparison with final logit soft-capping enabled. As shown in \Cref{fig:gemma3_logit_softcapping}, final logit soft-capping largely suppresses the raw logit-scale growth of the \AdamW LM head variants. In contrast to the no-soft-capping setting, the raw logit RMS no longer exhibits sharp late-stage growth, and the maximum log-sum-exp curves are kept in a comparable range across methods. This confirms that final logit soft-capping acts as an effective direct stabilizer of the final vocabulary logits.

Nevertheless, optimizer geometry remains important. The projected \RowNormM and \HybridPolarGradM LM head updates still achieve lower validation loss than the \AdamW LM head variants. Moreover, with soft-capping enabled, the projected methods have larger centered logit RMS than the \AdamW LM head variants while maintaining better validation loss. This suggests that the projected updates do not simply shrink all logit variation. Rather, they suppress redundant shared-shift drift while allowing useful centered vocabulary separation. Thus, final logit soft-capping and projected symmetry-compatible LM head updates are complementary: soft-capping directly bounds the effective final logits, while the optimizer geometry controls the quotient direction and improves language modeling performance.

\begin{figure}[h!]
    \centering
    \begin{subfigure}[t]{\textwidth}
        \centering
        \includegraphics[width=\textwidth]{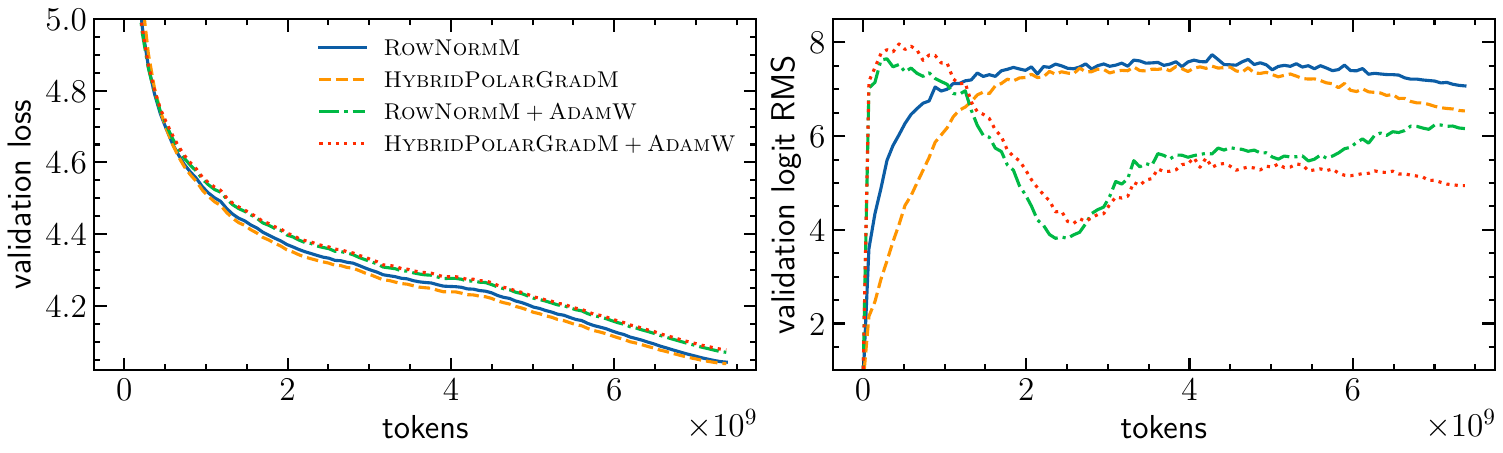}
        \caption{Validation cross-entropy and raw final-logit RMS.}
        \label{fig:gemma3_logit_1}
    \end{subfigure}%
    \vspace*{2.5mm}
    \begin{subfigure}[t]{\textwidth}
        \centering
        \includegraphics[width=\textwidth]{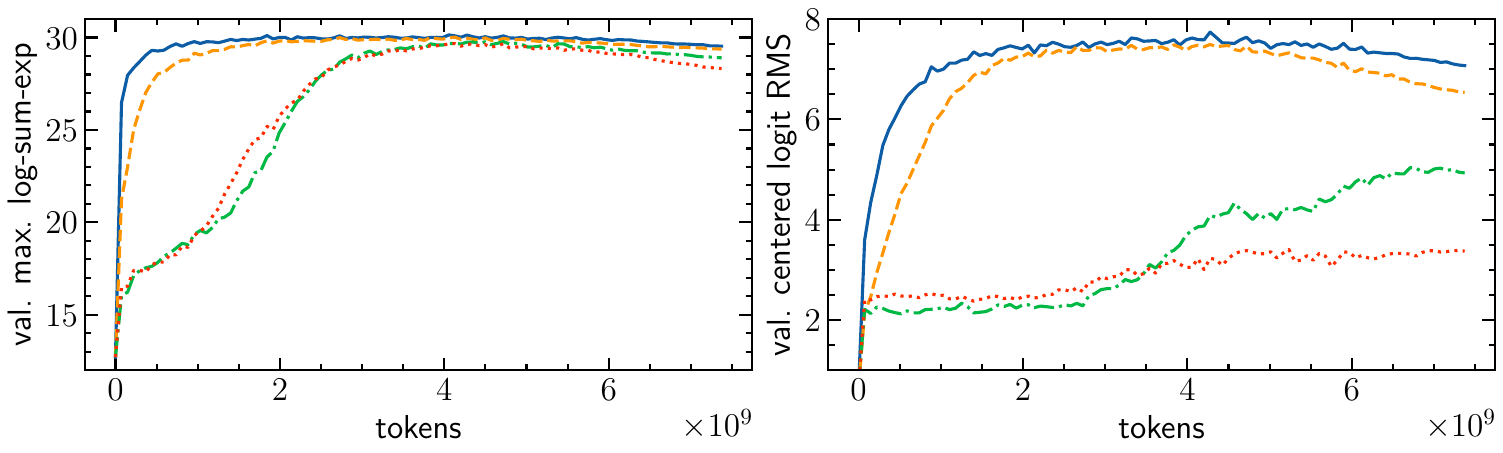}
        \caption{Validation maximum log-sum-exp and centered final-logit RMS.}
        \label{fig:gemma3_logit_2}
    \end{subfigure}
    \caption{Validation loss and final vocabulary-logit diagnostics for Gemma 3 1B-style pre-training with final logit soft-capping enabled. Soft-capping suppresses the raw logit-scale growth of the \AdamW LM head variants, while projected \RowNormM and \HybridPolarGradM LM head updates still achieve lower validation loss and stronger centered logit variation.}
    \label{fig:gemma3_logit_softcapping}
\end{figure}

\newpage
\subsection{\OLMoE-1B-7B-Style Pre-Training}
In terms of wall-clock training time, configurations (i)--(iv) take 10.79 hours, 10.82 hours, 10.71 hours and 10.75 hours respectively. 
\begin{table}[H]
   	\centering
   	\caption{Training configurations for \OLMoE-1B-7B-style pre-training.}
   	\label{table:config_olmoe}
    \small
   	\begin{tabular}{lc}
        \toprule
        Model & \OLMoE-1B-7B on FineWeb-Edu-10B \\
        \midrule
        Context length & 1024 tokens \\
        Per-device batch size & 20 sequences \\
        Training steps & 30,000 \\
        Training tokens & 4,915,200,000 \\        
        Validation steps & 64 \\
        Validation tokens & 10,485,760 \\
        Precision & \texttt{bfloat16} \\         
        Data-parallel size & 8 (Nvidia H200) \\   
        \bottomrule
   	\end{tabular}
\end{table}

\begin{table}[H]
    \centering
    \caption{Optimizer hyperparameters for \OLMoE-1B-7B-style pre-training.}
    \label{table:optim_hyperparams_olmoe}
    \scriptsize
    \begin{tabular}{cccccc}
        \toprule
        Configuration & Parameter type & Optimizer & $\gamma_0$ & momentum $\beta$ & weight decay $\lambda$ \\
        \midrule
        \multirow{5}{*}{\shortstack{(i) \AdamW + \Muon \\+ \RowNormM}}
            & scalar / vector     & \AdamW             & $0.01$ & $(0.9, 0.95)$  & $0.001$ \\
            & matrix     & \Muon              & $0.005$ & $0.95$ & $0.001$ \\
            & embedding  & \RowNormM   & $0.5$ & $0.95$  & $0$ \\
            & head    & \RowNormM   & $0.005$ & $0.95$  & $0$ \\
            & router & \RowNormM      & $0.00075$ & $0.95$    & $0$ \\
        \midrule
        \multirow{5}{*}{\shortstack{(ii) \AdamW + \Muon \\ + \RowNormM \\ + \LeftPolarGradM \\ ($\alpha=1$)}}
            & scalar / vector     & \AdamW             & $0.01$ & $(0.9, 0.95)$  & $0.001$ \\
            & matrix     & \Muon              & $0.005$ & $0.95$ & $0.001$ \\
            & embedding  & \RowNormM   & $0.5$ & $0.95$  & $0$ \\
            & head    & \RowNormM   & $0.005$ & $0.95$  & $0$ \\
            & router    & \LeftPolarGradM    & $0.00075$ & $0.95$  & $0$ \\ 
        \midrule
        \multirow{5}{*}{\shortstack{(iii) \AdamW + \Muon \\ + \RowNormM \\ + \AdamW}}
            & scalar / vector     & \AdamW             & $0.01$ & $(0.9, 0.95)$  & $0.001$ \\
            & matrix     & \Muon              & $0.005$ & $0.95$ & $0.001$ \\
            & embedding  & \RowNormM   & $0.5$ & $0.95$  & $0$ \\
            & head    & \RowNormM   & $0.005$ & $0.95$  & $0$ \\
            & router    & \AdamW     & $0.00075$ & $0.95$  & $0$ \\
        \midrule
        \multirow{5}{*}{\shortstack{(iv) \AdamW + \Muon \\ + \AdamW}}
            & scalar / vector     & \AdamW             & $0.01$ & $(0.9, 0.95)$   & $0.001$ \\
            & matrix     & \Muon              & $0.005$ & $0.95$ & $0.001$ \\
            & embedding  & \AdamW             & $0.0005$ & $(0.9, 0.95)$   & $0$ \\
            & head    & \AdamW             & $0.0005$ & $(0.9, 0.95)$ & $0$ \\
            & router & \AdamW             & $0.00075$ & $(0.9, 0.95)$  & $0$ \\        
        \bottomrule
    \end{tabular}
\end{table}

\newpage
\subsection{Downsized gpt-oss Pre-Training}
In terms of wall-clock training time, configurations (i)--(iv) take 19.08 hours, 18.91 hours, 19.34 hours and 19.22 hours respectively. 
\begin{table}[H]
   	\centering
   	\caption{Training configurations for downsized gpt-oss pre-training.}
   	\label{table:config_gpt_oss}
    \small
   	\begin{tabular}{lc}
        \toprule
        Model & Downsized gpt-oss on FineWeb-Edu-10B \\
        \midrule
        Context length & 1024 tokens \\
        Per-device batch size & 8 sequences \\
        Training steps & 60,000 \\
        Training tokens & 3,932,160,000 \\        
        Validation steps & 160 \\
        Validation tokens & 10,485,760 \\
        Precision & \texttt{bfloat16} \\         
        Data-parallel size & 8 (Nvidia H200) \\   
        \bottomrule
   	\end{tabular}
\end{table}

\begin{table}[H]
    \centering
    \caption{Optimizer hyperparameters for downsized gpt-oss pre-training.}
    \label{table:optim_hyperparams_gpt_oss}
    \scriptsize
    \begin{tabular}{cccccc}
        \toprule
        Configuration & Parameter type & Optimizer & $\gamma_0$ & momentum $\beta$ & weight decay $\lambda$ \\
        \midrule
        \multirow{5}{*}{\shortstack{(i) \AdamW + \Muon \\+ \RowNormM}}
            & scalar / vector     & \AdamW             & $0.005$ & $(0.9, 0.95)$  & $0.001$ \\
            & matrix     & \Muon              & $0.001$ & $0.95$ & $0.001$ \\
            & embedding  & \RowNormM   & $0.1$ & $0.95$  & $0$ \\
            & head    & \RowNormM   & $0.001$ & $0.95$  & $0$ \\
            & router & \RowNormM      & $0.00075$ & $0.95$    & $0$ \\
        \midrule
        \multirow{5}{*}{\shortstack{(ii) \AdamW + \Muon \\ + \RowNormM \\ + \LeftPolarGradM \\ ($\alpha=1$)}}
            & scalar / vector     & \AdamW             & $0.005$ & $(0.9, 0.95)$  & $0.001$ \\
            & matrix     & \Muon              & $0.001$ & $0.95$ & $0.001$ \\
            & embedding  & \RowNormM   & $0.1$ & $0.95$  & $0$ \\
            & head    & \RowNormM   & $0.001$ & $0.95$  & $0$ \\
            & router    & \LeftPolarGradM    & $0.00075$ & $0.95$  & $0$ \\ 
        \midrule
        \multirow{5}{*}{\shortstack{(iii) \AdamW + \Muon \\ + \RowNormM \\ + \AdamW}}
            & scalar / vector     & \AdamW             & $0.005$ & $(0.9, 0.95)$  & $0.001$ \\
            & matrix     & \Muon              & $0.001$ & $0.95$ & $0.001$ \\
            & embedding  & \RowNormM   & $0.1$ & $0.95$  & $0$ \\
            & head    & \RowNormM   & $0.001$ & $0.95$  & $0$ \\
            & router    & \AdamW     & $0.00075$ & $0.95$  & $0$ \\
        \midrule
        \multirow{5}{*}{\shortstack{(iv) \AdamW + \Muon \\ + \AdamW}}
            & scalar / vector     & \AdamW             & $0.005$ & $(0.9, 0.95)$   & $0.001$ \\
            & matrix     & \Muon              & $0.001$ & $0.95$ & $0.001$ \\
            & embedding  & \AdamW             & $0.0005$ & $(0.9, 0.95)$   & $0$ \\
            & head    & \AdamW             & $0.0005$ & $(0.9, 0.95)$ & $0$ \\
            & router & \AdamW             & $0.00075$ & $(0.9, 0.95)$  & $0$ \\        
        \bottomrule
    \end{tabular}
\end{table}

\subsubsection{Load-Balancing Loss and Router z-Loss Diagnostics}
\label{subsubsec:gpt_oss_router_diagnostics}
Following the diagnostic protocol of \citep{muennighoff2024olmoe}, we evaluate whether symmetry-compatible router updates improve expert load balancing and router stability. For each \MoE layer, we log the fraction of routed assignments sent to each expert, the total router probability mass assigned to each expert, the auxiliary load-balancing loss \citep{shazeer2017outrageously}, and the router z-loss \citep{zoph2022st}. We also track load entropy, load coefficient of variation, dead-expert fraction, maximum expert load, and router-logit scale. These diagnostics allow us to distinguish improvements in language model validation loss from improvements in routing dynamics.

Let $E$ denote the number of experts and let $k$ denote the number of activated experts per token. We measure the router load-balancing loss by $\calL_{\mathrm{LB}} = E\sum_{i=1}^{E} f_i P_i$, where $f_i$ is the fraction of routed top-$k$ assignments sent to expert $i$, and $P_i$ is the average router probability mass assigned to expert $i$. A perfectly balanced router has $\calL_{\mathrm{LB}}$ close to $1$. We also measure the router z-loss $\calL_{\mathrm{z}} = \frac{1}{N}\sum_{t=1}^{N} \left( \log\sum_{i=1}^{E}\exp (z_{t,i}) \right)^{\negthickspace2}$, where $z_{t,i}$ is the router logit for token $t$ and expert $i$ and $N$ is the batch size. This term penalizes excessive router-logit scale. 

We compare \RowNormM, \LeftPolarGradM, uncentered \LeftPolarGradM, \HybridPolarGradM, and \AdamW on router matrices while keeping all non-router optimizer assignments fixed: \AdamW for scalars and vectors, \Muon for linear, attention, and SwiGLU MLP matrices, and \RowNormM for embedding and LM head matrices. The uncentered \LeftPolarGradM baseline applies a Muon-style left-polar update directly to the router momentum without removing the shared-row component, and therefore ablates the centering operation induced by shared-logit-shift invariance. We keep the settings from \Cref{table:config_gpt_oss,table:optim_hyperparams_gpt_oss}; the hyperparameters of uncentered \LeftPolarGradM and \HybridPolarGradM are set to be the same as those of \RowNormM and \LeftPolarGradM.

\paragraph{Training objective without auxiliary losses.}
We first evaluate router dynamics when neither $\calL_{\mathrm{LB}}$ nor $\calL_{\mathrm{z}}$ is included in the training objective. As shown in \Cref{fig:gpt_oss_router_load_balancing,fig:gpt_oss_router_z_losses}, the choice of router optimizer has a large effect on both expert load balancing and router-logit scale. Compared with \AdamW, all symmetry-compatible router updates achieve substantially lower measured load-balancing loss and router z-loss on both training and validation batches. Among them, \LeftPolarGradM and uncentered \LeftPolarGradM give the strongest load-balancing performance, while \RowNormM also improves substantially over \AdamW. \HybridPolarGradM remains clearly better than \AdamW, but is less effective than the purely left-polar variants on the load-balancing diagnostics.

The expert-assignment summaries in \Cref{fig:gpt_oss_router_expert_assignment} show the same pattern. \AdamW has lower load entropy, higher load coefficient of variation, a larger dead-expert fraction, and higher maximum expert load. In contrast, \LeftPolarGradM and uncentered \LeftPolarGradM nearly eliminate dead experts and achieve the most balanced expert assignment. \RowNormM provides a moderate improvement, whereas \HybridPolarGradM is intermediate: it reduces dead experts and maximum load relative to \AdamW, but does not match the balance achieved by the left-polar updates. These results suggest that matrix-aware router updates can improve load balancing intrinsically, even without explicitly optimizing a load-balancing auxiliary objective.

The validation cross-entropy curves in \Cref{fig:gpt_oss_router_load_balancing} show that these routing improvements do not hurt language modeling performance and slightly improve over the \AdamW router baseline. The centered and uncentered \LeftPolarGradM variants behave similarly in this experiment, suggesting that the dominant empirical gain comes from the left-polar matrix geometry of the router update, while centering provides the symmetry-compatible quotient-space formulation induced by shared-logit-shift invariance.

\paragraph{Training objective with auxiliary losses.}
We next include $\calL_{\mathrm{LB}}$ and $\calL_{\mathrm{z}}$ in the training objective with weights $0.01$ and $0.001$ respectively, following the choices of similar work \citep{muennighoff2024olmoe,xue2024openmoe,shen2024jetmoe}. In this setting, we report validation cross-entropy as the primary language modeling metric, since the total training objective additionally contains auxiliary router penalties. As shown in \Cref{fig:gpt_oss_router_load_balancing_aux,fig:gpt_oss_router_z_losses_aux}, auxiliary losses improve routing behavior for all methods: the measured load-balancing loss moves closer to its balanced value and the router z-loss decreases rapidly.

Nevertheless, optimizer geometry remains important. Compared with \AdamW, the symmetry-compatible router updates still yield lower measured load-balancing loss, lower router z-loss, and better expert-assignment statistics; see \Cref{fig:gpt_oss_router_expert_assignment_aux}. With auxiliary losses enabled, \RowNormM, \LeftPolarGradM, and uncentered \LeftPolarGradM are closely clustered and achieve the best balance, while \HybridPolarGradM improves over \AdamW but remains somewhat less balanced than the strongest router-specific updates. Thus, auxiliary router losses and symmetry-compatible router updates are complementary: auxiliary losses improve all methods, but symmetry-compatible updates still yield better-controlled routing dynamics than coordinate-wise \AdamW.

\begin{figure}[h!]
    \centering
    \begin{subfigure}[t]{\textwidth}
        \centering
        \includegraphics[width=\textwidth]{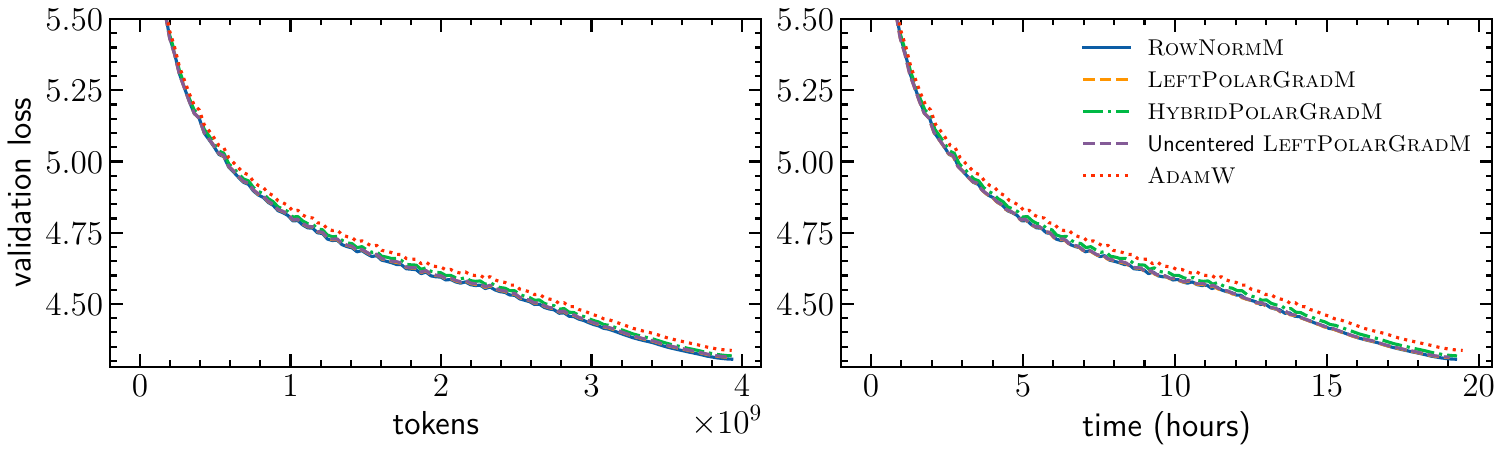}
        \caption{Validation cross-entropy loss.}
        \label{fig:gpt_oss_router_val_losses}
    \end{subfigure}%
    \vspace*{2.5mm}
    \begin{subfigure}[t]{\textwidth}
        \centering
        \includegraphics[width=\textwidth]{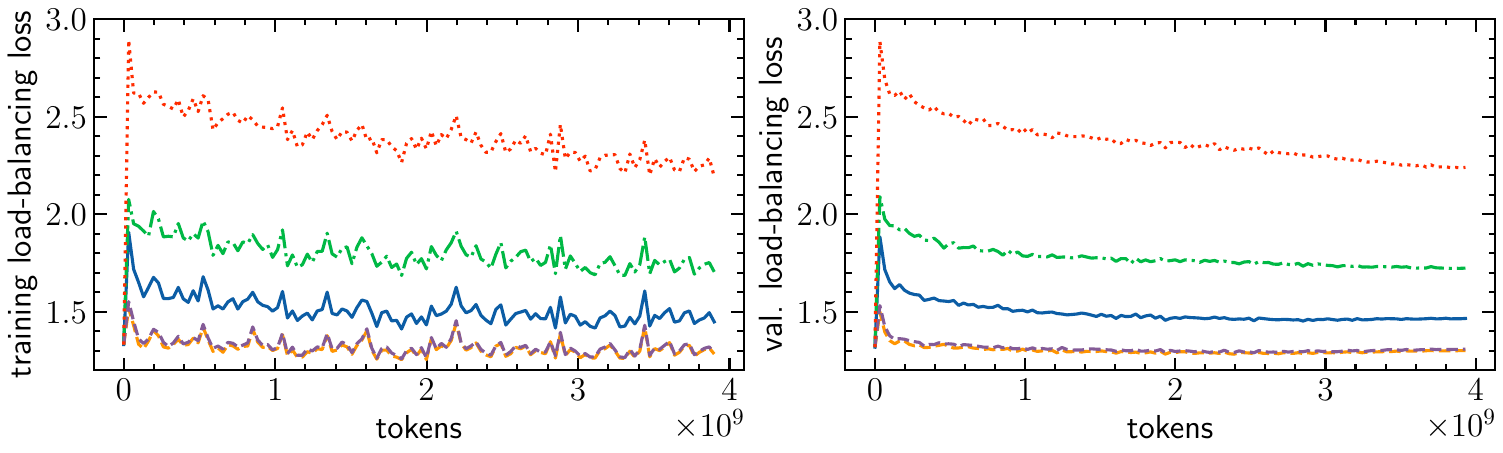}
        \caption{Training and validation load-balancing loss.}
        \label{fig:gpt_oss_router_load_balancing_losses}
    \end{subfigure}%
    \caption{Validation loss and measured load-balancing loss for downsized gpt-oss pre-training. The configurations differ only in the optimizer assigned to the \MoE router matrices. Lower load-balancing loss indicates more balanced expert usage.}
    \label{fig:gpt_oss_router_load_balancing}
\end{figure}

\begin{figure}[h!]
    \centering
    \includegraphics[width=\textwidth]{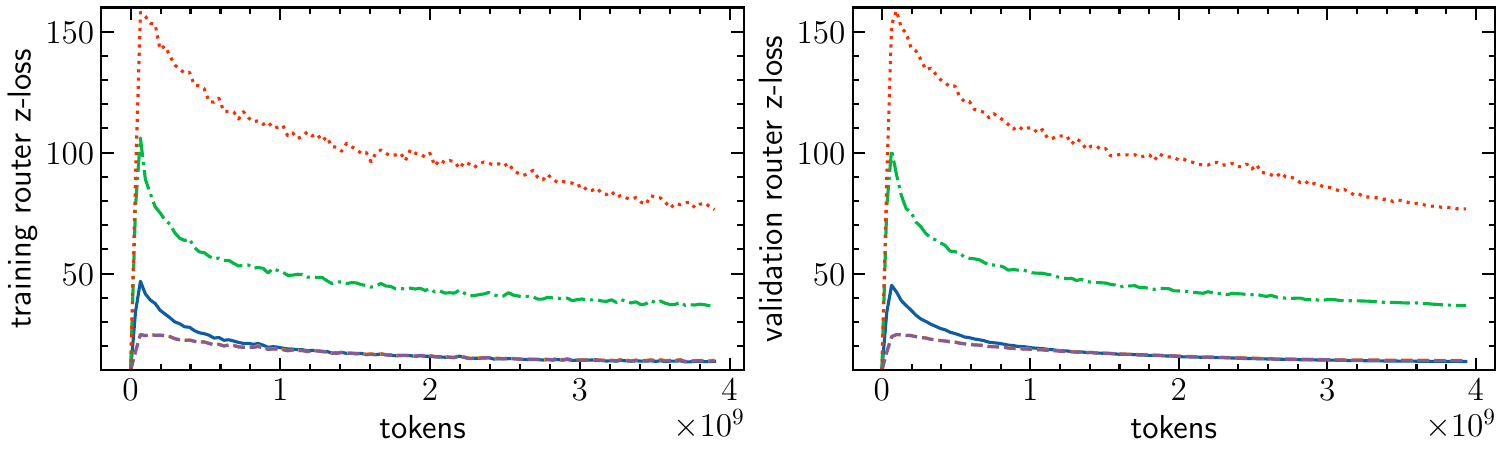}
    \caption{Training and validation router z-loss for downsized gpt-oss pre-training. Lower router z-loss indicates better control of router-logit scale.}
    \label{fig:gpt_oss_router_z_losses}
\end{figure}

\begin{figure}[h!]
    \centering
    \begin{subfigure}[t]{\textwidth}
        \centering
        \includegraphics[width=\textwidth]{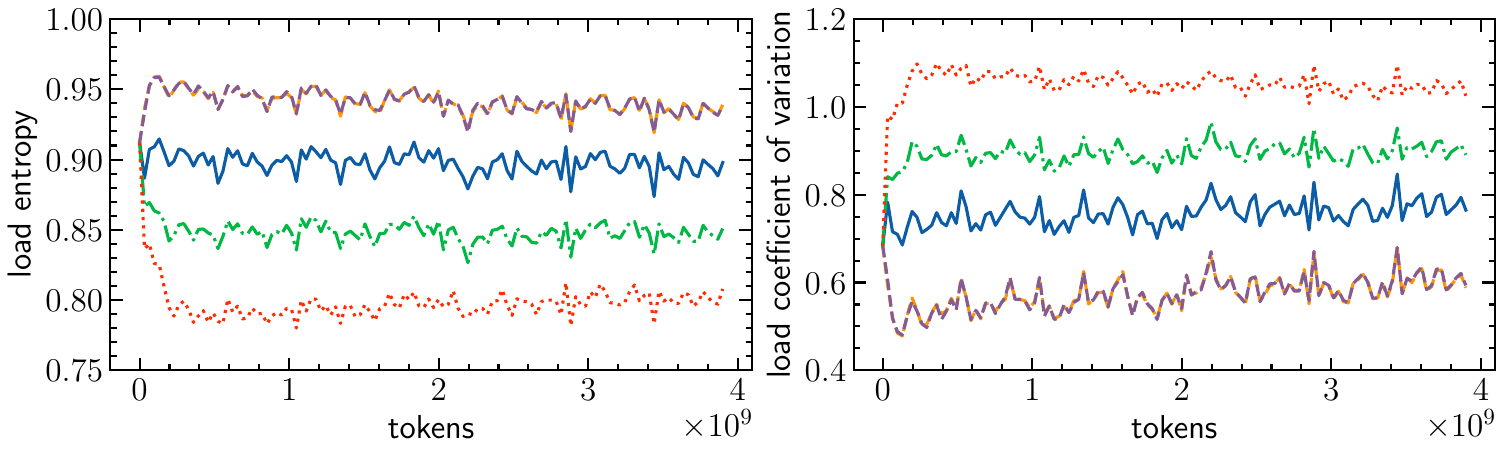}
        \caption{Load entropy and load coefficient of variation. Higher entropy and lower coefficient of variation indicate more balanced expert assignment.}
        \label{fig:gpt_oss_load_metrics}
    \end{subfigure}%
    \vspace*{2.5mm}
    \begin{subfigure}[t]{\textwidth}
        \centering
        \includegraphics[width=\textwidth]{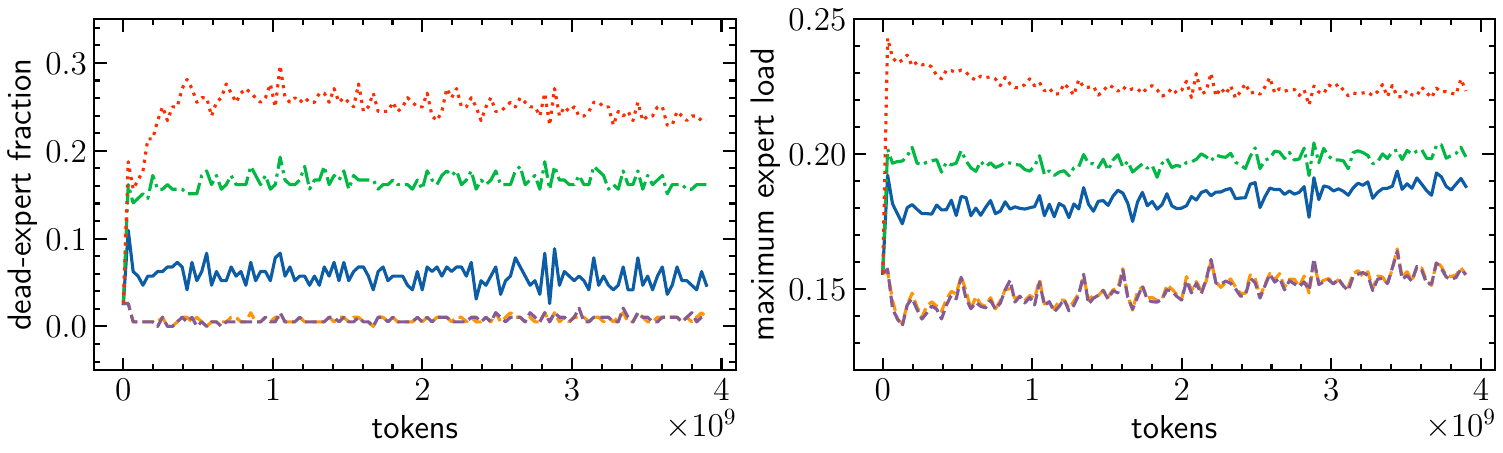}
        \caption{Dead-expert fraction and maximum expert load. Lower values indicate fewer unused experts and less expert dominance.}
        \label{fig:gpt_oss_expert_metrics}
    \end{subfigure}%
    \caption{Expert-assignment balance summaries for downsized gpt-oss pre-training. These diagnostics directly measure whether routed assignments are evenly distributed across experts.}
    \label{fig:gpt_oss_router_expert_assignment}
\end{figure}

\begin{figure}[h!]
    \centering
    \begin{subfigure}[t]{\textwidth}
        \centering
        \includegraphics[width=\textwidth]{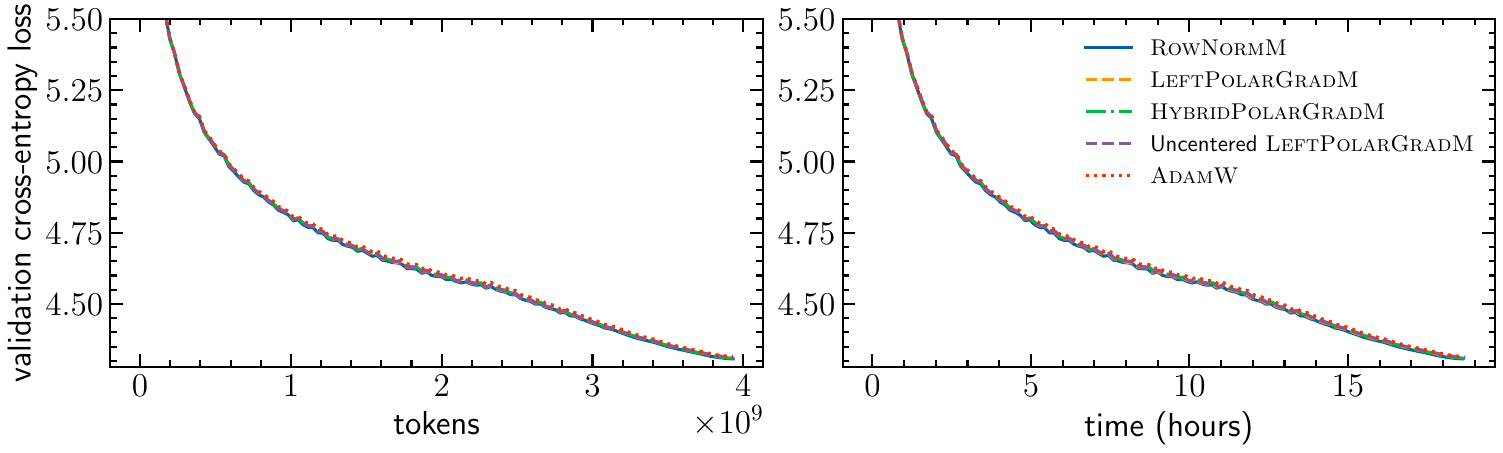}
        \caption{Validation cross-entropy loss.}
        \label{fig:gpt_oss_router_val_losses_aux}
    \end{subfigure}%
    \vspace*{2.5mm}
    \begin{subfigure}[t]{\textwidth}
        \centering
        \includegraphics[width=\textwidth]{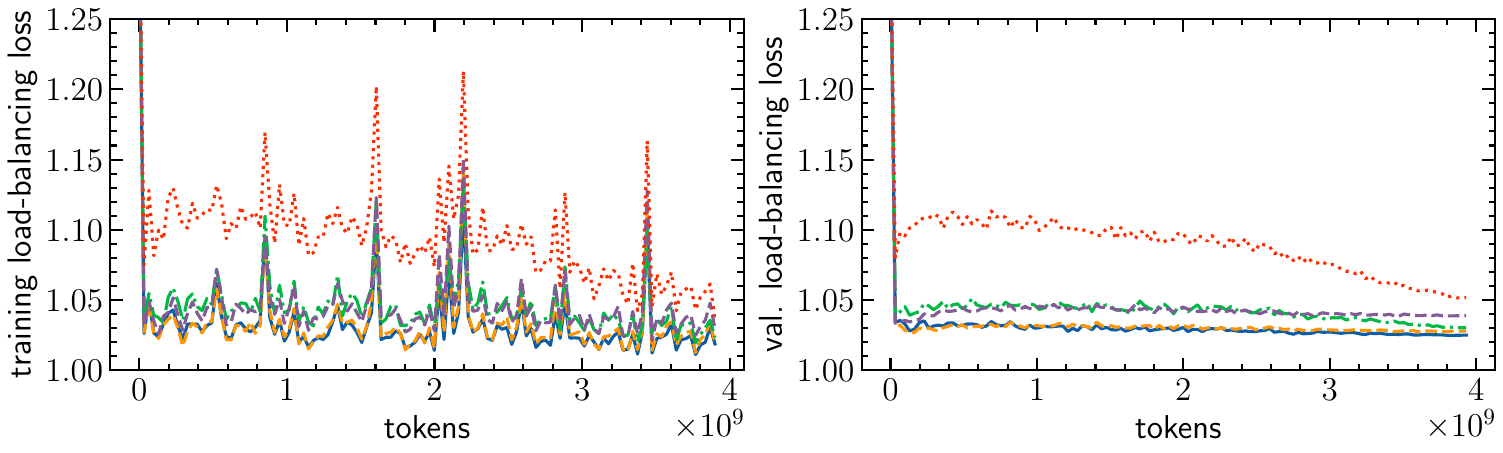}
        \caption{Training and validation load-balancing loss.}
        \label{fig:gpt_oss_router_load_balancing_losses_aux}
    \end{subfigure}%
    \caption{Validation loss and measured load-balancing loss for downsized gpt-oss pre-training with auxiliary router losses enabled. We report validation cross-entropy as the primary language modeling metric because the total training objective also includes auxiliary router losses.}
    \label{fig:gpt_oss_router_load_balancing_aux}
\end{figure}

\begin{figure}[h!]
    \centering
    \includegraphics[width=\textwidth]{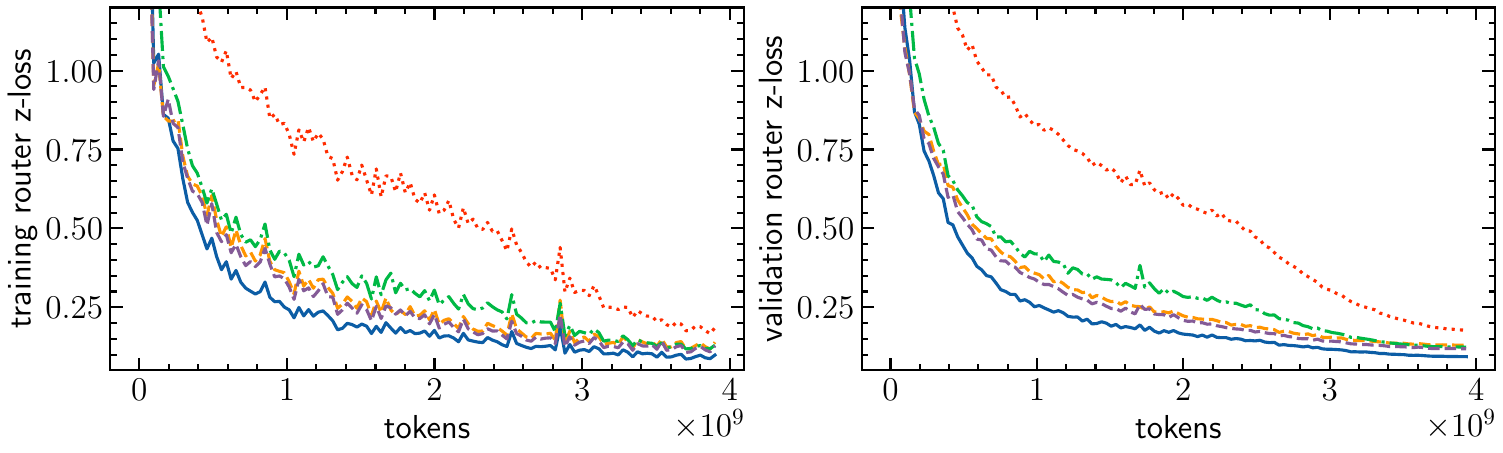}
    \caption{Training and validation router z-loss for downsized gpt-oss pre-training with auxiliary router losses enabled. Lower router z-loss indicates better control of router-logit scale.}
    \label{fig:gpt_oss_router_z_losses_aux}
\end{figure}

\begin{figure}[h!]
    \centering
    \begin{subfigure}[t]{\textwidth}
        \centering
        \includegraphics[width=\textwidth]{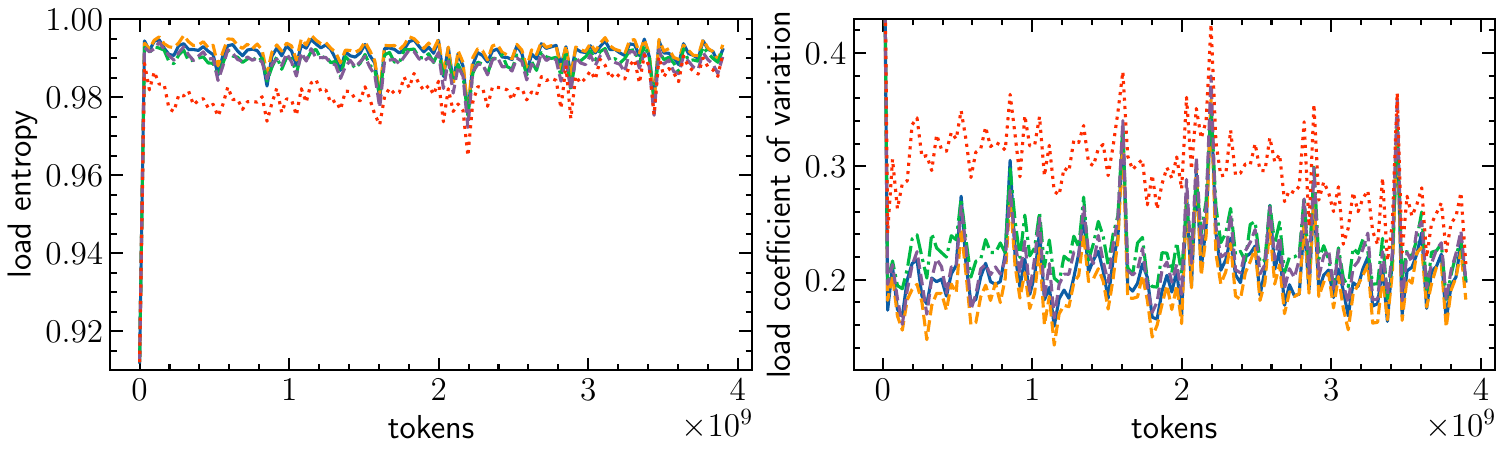}
        \caption{Load entropy and load coefficient of variation. Higher entropy and lower coefficient of variation indicate more balanced expert assignment.}
        \label{fig:gpt_oss_load_metrics_aux}
    \end{subfigure}%
    \vspace*{2.5mm}
    \begin{subfigure}[t]{\textwidth}
        \centering
        \includegraphics[width=\textwidth]{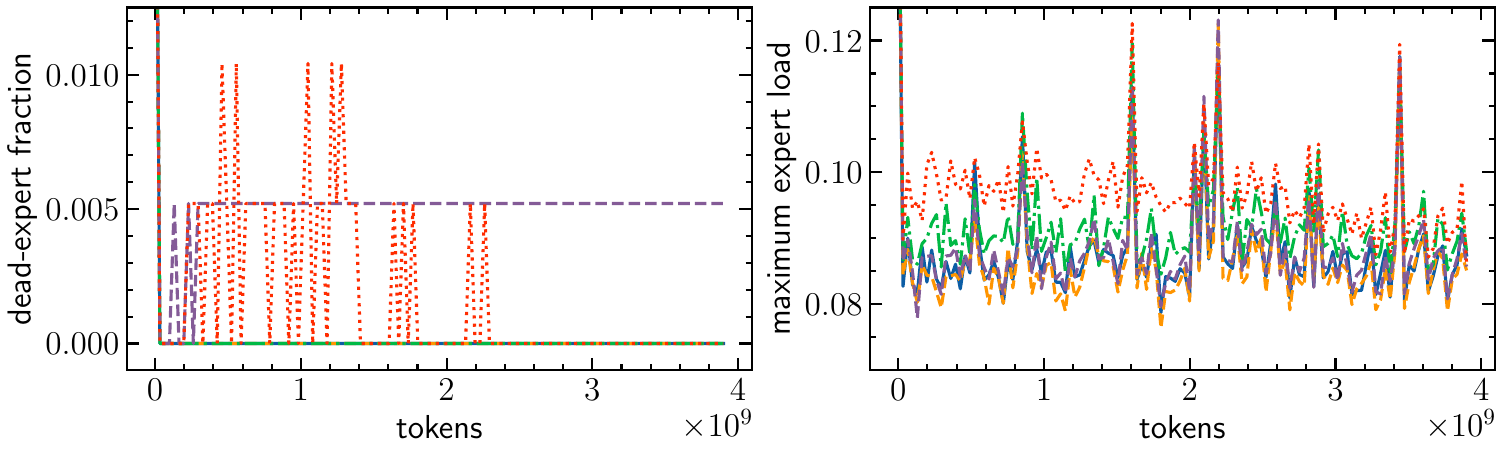}
        \caption{Dead-expert fraction and maximum expert load. Lower values indicate fewer unused experts and less expert dominance.}
        \label{fig:gpt_oss_expert_metrics_aux}
    \end{subfigure}%
    \caption{Expert-assignment balance summaries for downsized gpt-oss pre-training with auxiliary router losses enabled. These diagnostics directly measure whether routed assignments are evenly distributed across experts.}
    \label{fig:gpt_oss_router_expert_assignment_aux}
\end{figure}

\end{document}